\title{\mbox{Orbits of real forms}
\mbox{in complex flag manifolds}}
\author[A.~Altomani]{Andrea Altomani}
\address{A.\ Altomani:
University of Luxembourg \\ 162a, avenue de 
la Fa\"{\i}encerie\\\mbox{L-2309} Luxembourg}
\email{andrea.altomani@uni.lu}
\author[C.~Medori]{Costantino Medori}
\address{C.\ Medori:
Dipartimento di Matematica\\ Universit\`a di Parma\\ Viale G.P.
Usberti, 53/A
\\ 43100 Parma (Italy)} \email{costantino.medori@unipr.it}
\author[M.~Nacinovich]{Mauro Nacinovich}
\address{M.\ Nacinovich:
Dipartimento di Matematica\\ II Universit\`a di Roma
``Tor Ver\-ga\-ta''\\ Via della Ricerca Scientifica\\ 00133 Roma
(Italy)}
\email{nacinovi@mat.uniroma2.it}
\date{October 20, 2008}
\subjclass[2000]{Primary: 53C30
Secondary: 14M15, 17B20, 32V05, 32V35, 32V40, 57T20}
\keywords{Complex flag manifold, real form of a semisimple Lie group,
homogeneous $CR$ manifold, 
parabolic $CR$ algebra, $CR$ fibration, Mostow fibration}
\numberwithin{equation}{section}
\theoremstyle{plain}
\newtheorem{thm}{Theorem}[section]
\newtheorem{lem}[thm]{Lemma}
\newtheorem{cor}[thm]{Corollary}
\newtheorem{prop}[thm]{Proposition}
\theoremstyle{definition}
\newtheorem{exam}[thm]{Example}
\newtheorem{dfn}[thm]{Definition}
\newtheorem{rmk}[thm]{Remark}
\begin{document}
\begin{abstract} We investigate the $CR$ geometry of the orbits
$M$ of a real form ${{\mathbf{G}_0}}$ of a complex semisimple Lie group 
${{{\mathbf{G}}}}$
in a complex flag manifold ${X}={{{\mathbf{G}}}}/\mathbf{Q}$.
We are mainly concerned with finite type and holomorphic nondegeneracy
conditions, canonical ${{\mathbf{G}_0}}$-equivariant 
and Mostow fibrations, and topological
properties of the orbits.
\end{abstract}
\maketitle
\section*{Introduction}
In this paper we study a large class of homogeneous $CR$ manifolds,
that 
come up
as orbits of real forms in complex flag manifolds.
These objects, that we call here \emph{parabolic $CR$ manifolds},
were first considered 
by J. A. Wolf (see  \cite{Wolf69} and
also \cite{FHW06} for a comprehensive 
introduction to this topic).
He studied the action of 
a real form ${{\mathbf{G}_0}}$
of a complex semisimple Lie group ${{{\mathbf{G}}}}$
on a flag manifold ${X}$
of ${{{\mathbf{G}}}}$.
He showed that $\mathbf{G}_0$ has 
finitely many orbits in ${X}$, so that the
union of the open orbits is dense in $X$, and
that there is just one that is closed. 
He systematically investigated 
their properties, especially for
the open orbits and for the holomorphic
arc components of general orbits, outlining a framework
that includes, as special cases, the 
bounded symmetric domains.\par
We aim here
to give a contribution to the study of the general
${{\mathbf{G}_0}}$-orbits in ${X}$, 
by considering and utilizing their natural
$CR$ structure. 
\par
In \cite{AMN06} we began this program by investigating
the closed orbits. 
These can be combinatorially described
in terms of their \emph{cross-marked Satake diagrams}. 
This approach was possible 
because their isotropy subgroup always contains a maximally noncompact
Cartan subgroup of ${{\mathbf{G}_0}}$. This is no longer the case for
general orbits, and therefore 
we need here
to deal  with general
Cartan subgroups. 
This different situation lead us to
introduce
the notions of 
\emph{adapted Cartan pairs}
and \emph{fit Weyl chambers}, 
that we utilized to extend to
general ${{\mathbf{G}_0}}$-orbits, and even simplify, 
the criteria of 
\cite{AMN06}, that characterize finite type (see \cite{BG77}) and
holomorphic nondegeneracy (see \cite{BER99}).
\par
A deeper analysis of the equivariant ${{\mathbf{G}_0}}$-maps between
${{\mathbf{G}_0}}$-orbits of various complex flag manifolds of
${{\mathbf{G}}}$ yields an accurate description of the smooth
structure of the orbits. We show here 
(Theorem \ref{thm:ha})
that each orbit $M$ is
${{\mathbf{G}_0}}$-equivariantly
equivalent to a tower of fibrations over a 
canonically associated \emph{real} flag manifold
$M_{\mathfrak{e}}$, with fibers that are products of
Euclidean complex spaces 
and open orbits in complex flag manifolds. This result
can be utilized
to investigate some 
topological properties of $M$. It turns out, for instance, that the 
fundamental group $\pi_1(M)$ of $M$ 
only depends on 
$M_{\mathfrak{e}}$,
and on the conjugacy class of the
maximally noncompact Cartan subgroups
of the isotropy of the action of $\mathbf{G}_0$ on $M$
(see Theorem \ref{thm:ia}).
This explains why 
the fundamental group of a closed orbit 
$M$ is always isomorphic to that of~$M_{\mathfrak{e}}$ 
(see e.g.~\mbox{\cite[\S 8]{AMN06}}).\par
Let $M$ be any $\mathbf{G}_0$-orbit in $X$.
Fix a maximal compact subgroup
$\mathbf{K}_0$ 
of ${{\mathbf{G}_0}}$, containing a
maximal compact subgroup of
the isotropy subgroup $\mathbf{I}_0$ of $M$, and let
$\mathbf{K}\subset{{\mathbf{G}}}$ be
its complexification. The Matsuki-dual (see e.g. 
\cite{Mats88}, \cite{BrLo02})
complex $\mathbf{K}$-orbit $M^*$ intersects
$M$ into a $\mathbf{K}_0$-orbit $N$, that is the basis of
a Mostow fibration of $M$ (see \cite{Most55, Most62}).
We have $N=M$ when $M$ is closed, but otherwise 
$N$ is a $\mathbf{K}_0$-homogeneous
$CR$ manifold with the same $CR$-codimension of $M$, but 
with a smaller $CR$-dimension.
From the structure Theorem \ref{thm:ha}
we obtain that $N$ is 
$\mathbf{K}_0$-equivariantly
a tower of fiber bundles, with basis $M_{\mathfrak{e}}$, and
fibers that are complex flag manifolds.
Since $N$ is a deformation retract of $M$, the
$\mathbf{K}_0$-equivariant fibration $N\to{M}_{\mathfrak{e}}$
yields information on the topology of $M$ in terms
of that of real and complex flag manifolds.
The latter
has been investigated by several
Authors, see e.g. \cite{BCF01, CS99, F71, T89, TK68}.
\par
We stress the fact that our methods and results 
are effective and permit to 
explicitly describe the objects involved and actually compute
some of their invariants. \par
Let us turn now to a more detailed description
of
the contents of this paper.
In the first section we collect 
some general
definitions and results on 
$CR$ manifolds $M$, homogeneous for the transitive action of a real
Lie group ${{\mathbf{G}_0}}$ of $CR$ automorphisms, and on their
corresponding $CR$-\emph{algebras}.
These objects 
were defined in
\cite{MN05}: they are pairs $(\mathfrak{g}_0,\mathfrak{q})$, where
$\mathfrak{g}_0$ is the \emph{real} Lie algebra
of ${{\mathbf{G}_0}}$, and $\mathfrak{q}$ a \emph{complex}
Lie subalgebra
of its complexification $\mathfrak{g}=\mathfrak{g}_0^{\mathbb{C}}$.
Having fixed a point $x_0\in{M}$, the elements of $\mathfrak{q}$
correspond to the left invariant elements of the lift
to $\mathbf{G}_0$, by
the principal fibering \mbox{${{\mathbf{G}_0}}\ni{g}\to g\cdot{x}_0\in{M}$},
of the complex tangent vector fields 
of type $(0,1)$ \mbox{on $M$}.
\par
The notions of finite type and holomorphic degeneracy of
$CR$ geometry 
translate, in the case of homogeneous $CR$ manifolds,
into fundamentality and weak degeneracy
of their $CR$ algebras (see Definition \ref{dfn:ac} and
Proposition \ref{prop:ac}).
When $M$ is not of finite type, or
is holomorphically degenerate, then it was shown in
\cite{MN05} that
there are nontrivial canonical ${\mathbf{G}_0}$-equivariant 
$CR$-fibrations $M\to{M}'$,
called the fundamental and the weakly
nondegenerate reductions, respectively. 
\par
We need to make a remark to fill a gap between the results of
\cite{MN05} and the statement of Proposition \ref{prop:ac}
of this article.
G. Fels (see \cite{fels06}) pointed out to us that
the notion of \emph{holomorphic degeneracy}
of \cite{BER99}, in the case of
homogeneous $CR$ manifolds,  coincides with
our notion of \emph{weak
degeneracy}.
This equivalence is used in
point (3) of Proposition \ref{prop:ac}.
\par
In this paper, since we consider orbits of real forms in complex
flag manifolds, we shall mostly restrict to
$CR$ algebras
$(\mathfrak{g}_0,\mathfrak{q})$ that consist
of a real form $\mathfrak{g}_0$ of a complex semisimple Lie algebra 
$\mathfrak{g}$ and of a complex parabolic Lie subalgebra
$\mathfrak{q}$ of
$\mathfrak{g}$. These were called \emph{parabolic $CR$ algebras}
in \cite{AMN06, MN05}.
\par
In \S\ref{sec:b} we collect some facts about complex flag manifolds,
for which we refer to \cite{Kn:2002, Wolf69}. 
We describe the Chevalley
decomposition of a complex parabolic subgroup 
$\mathbf{Q}$ of a semisimple
complex linear group $\mathbf{G}$ (Proposition \ref{prop:bc}),
to prepare for the
description of the isotropy subgroup
$\mathbf{I}_0$ of a $\mathbf{G}_0$-orbit $M$ in
\S\ref{sec:c}. There we obtain a suitable Chevalley decomposition of
$\mathbf{I}_0$ (Proposition \ref{prop:cb})  
from that of
its Lie algebra
$\mathfrak{i}_0$ (Proposition \ref{prop:ca}).
Note that, 
in general, $\mathbf{I}_0$ is not a parabolic subgroup
of $\mathbf{G}_0$, while this is true in the \textit{standard} case,
corresponding to the 
semisimple Levi-Tanaka algebras (see e.g. \cite{MN97, MN98}).
This Chevalley decomposition is fundamental to  
understanding the structure of the orbits. Our proof
strongly relies on the semialgebraic nature of the objects under
consideration, and on 
the properties of complex parabolic subgroups.
In \S\ref{sec:c} we also consider the
Cartan decomposition of the reductive factor of 
the Chevalley decomposition of $\mathbf{I}_0$, from which
we derive information about
the group of connected components of $\mathbf{I}_0$, 
in terms of its maximally noncompact Cartan subgroups
 (\mbox{Theorem \ref{thm:cf}}).
\par
The $\mathbf{G}_0$-orbits of the complex flag manifold $X$ of
$\mathbf{G}$ are completely described by their associated $CR$
algebras $(\mathfrak{g}_0,\mathfrak{q})$. 
Recall that a complex parabolic subalgebra $\mathfrak{q}$
of $\mathfrak{g}$ 
contains a 
Borel subalgebra $\mathfrak{b}$ of $\mathfrak{g}$, and hence
a Cartan subalgebra $\mathfrak{h}$
of $\mathfrak{g}$. Then $\mathfrak{q}$
is conveniently characterized in terms
of the root system $\mathcal{R}$ defined by $\mathfrak{h}$.
In fact, $\mathfrak{b}$ corresponds to a Weyl chamber $C$ of $\mathcal{R}$,
and the 
parabolic $\mathfrak{q}$'s containing
$\mathfrak{b}$
are in one-to-one correspondence with the subsets $\Phi$ of the basis
$\mathcal{B}$ of $C$-positive simple roots
(see Formula
\eqref{eq:bg}). Next, we observe that,
for every real form $\mathfrak{g}_0$ of $\mathfrak{g}$,
a complex parabolic subalgebra $\mathfrak{q}$ 
of $\mathfrak{g}$ always
contains a Cartan subalgebra $\mathfrak{h}_0$ of 
$\mathfrak{g}_0$.
It is convenient to employ
the root system
$\mathcal{R}$ of $\mathfrak{g}$ defined by
the complexification $\mathfrak{h}$
of $\mathfrak{h}_0$, and   
Weyl chambers $C$ of $\mathcal{R}$ yielding a
$\mathfrak{b}\subset\mathfrak{q}$.
They are called \emph{fit} for
$(\mathfrak{g}_0,\mathfrak{q})$ (Definition \ref{dfn:ba}). Then the
conjugation rules for the set $\mathcal{B}$
of simple $C$-positive roots, and the 
datum of the $\Phi\subset\mathcal{B}$ corresponding to~$\mathfrak{q}$,
encode all relevant information for the
orbit $M$ associated to $(\mathfrak{g}_0,\mathfrak{q})$.
\par
In \cite{AMN06} it was natural to
restrict to
maximally noncompact Cartan subalgebras $\mathfrak{h}_0$ of
$\mathfrak{g}_0$ and to fit Weyl chambers $C$
yielding
Satake diagrams of $\mathfrak{g}_0$ (see \cite{Ara62}). Here 
the consideration of \emph{general} orbits leads 
us first to
consider \emph{all} Cartan subalgebras 
$\mathfrak{h}_0$ of $\mathfrak{g}_0$, and then, 
after having fixed $\mathfrak{h}_0$, 
to
select, among the fit 
Weyl chambers $C$,
those for which $\mathcal{B}$ enjoys the best conjugation properties
(see Lemma \ref{lem:dd}). These are called
$S$-fit or $V$-fit, as 
they yield conjugation
rules for $\mathcal{B}$ that are \textit{as close as possible} to those 
of a Satake or a Vogan diagram, respectively (see Definition \ref{dfn:de}).
\par
An important feature of 
parabolic $CR$ manifolds 
is the fact
that to every $\mathfrak{g}_0$-equivariant
morphism of parabolic $CR$ algebras (see \cite{MN05})
always corresponds a smooth 
$\mathbf{G}_0$-equivariant  
fibration of the associated 
$\mathbf{G}_0$-orbits.
We prove indeed  that
the inclusions between the isotropy subgroups of the $\mathbf{G}_0$-orbits
are equivalent to those between their corresponding Lie subalgebras
(Theorem \ref{thm:ea}). Besides providing the
desired fibrations, this result also 
allows to consider
\emph{changes of
$CR$ structures}. This amounts to considering the given orbit $M$ 
simply as a homogeneous ${{\mathbf{G}_0}}$-manifold, and looking for
its possible realizations as a real submanifold of a
complex flag manifold of ${{\mathbf{G}}}$. In particular,
we can inquire about maximal and minimal 
${{\mathbf{G}_0}}$-homogeneous parabolic
$CR$ structures on $M$. In fact, 
one of these,
the \emph{weakening} of the $CR$ structure (see 
Definition \ref{dfn:ed}) 
will be one of the main ingredients in the proof of the
structure Theorem \ref{thm:ha}. These 
$\mathbf{G}_0$-equivariant fibrations are not, in general,
$CR$ maps. 
When they are 
$CR$, 
they are restrictions of $\mathbf{G}$-equivariant
fibrations of flag manifolds,
and their fibers $F$ have special features.
Indeed, $F$ consists 
of finitely many connected components, each being a 
copy of a Cartesian product $F'\times{F}''$, in which 
$F'$ is the orbit of a real form in a suitable complex
flag manifold $Y$,
and $F''$ a complex nilmanifold
(Theorem \ref{thm:eh}).
\par
The $S$-fit Weyl chambers are especially suited to discuss finite type
(see Theorem \ref{thm:fb}), while the $V$-fit Weyl chambers
are the proper setting for weak degeneracy
(see Proposition \ref{prop:fc} and Theorem \ref{thm:fd}). Thus the results of
\S\ref{sec:f} give a combinatoric way of constructing,
in the special case of the orbits of a real form in
a complex flag manifold, the 
basis of the fundamental
and weakly nondegenerate
reductions of a $CR$ algebra, that where discussed in general
in \cite[\S 5]{MN05},
and for the minimal orbits in \cite[\S 9,10,11]{AMN06}.
The basis of these reductions can indeed be
obtained by first representing the parabolic $\mathfrak{q}$
that characterizes $M$ by a subset $\Phi$ of $C$-positive simple roots
for an $S$- or $V$-fit Weyl chamber $C$, and then obtaining a new set
$\Psi\subset\Phi$ by dropping roots of $\Phi$ according to some
fairly easy
rules, that are stated in Theorems \ref{thm:fb}, \ref{thm:fd}, respectively.
These $\Psi$ give then the parabolic $\mathfrak{q}'$ for which the
$(\mathfrak{g}_0,\mathfrak{q}')$ are the basis of the desired reductions.\par
The construction of the fibration in
the structure 
Theorem \ref{thm:ha}
is obtained by alternating weakenings
of the
$CR$ structures, that
produce weak degeneracy, and the corresponding canonical
weakly nondegenerate reductions, that have
complex fibers
(see Theorem \ref{thm:fd}).
This process ends by yielding 
a fibering over
a real flag manifold $M_{\mathfrak{e}}$. 
An important feature is that, although
the fibering $M\to{M}_{\mathfrak{e}}$ may not be $CR$, at each step
the weakly nondegenerate reductions are $CR$ fibration, and therefore
we obtain a better control of the fibers (by Theorem \ref{thm:eh}).
Our approach is a substitute for the use of holomorphic arc components
in \cite[Theorem 8.15]{Wolf69}.
\par
If $M'\to{M}$ is a ${{\mathbf{G}_0}}$-equivariant
$CR$ fibering, with typical fiber $F$,
the number of connected components of $F$ is proved 
 to be equal to the quotient of the orders of
the analytic Weyl
groups of the reductive and of the semisimple part of the isotropy
$\mathbf{I}_0$
of the basis, computed  with respect to
a maximally noncompact Cartan
subgroup $\mathbf{H}_0$ of the reductive part of 
the isotropy ${{\mathbf{I}_0'}}$ of the total space $M'$
(Theorem \ref{thm:gc}).\par
The results of \S{\ref{sec:h}} and \S{\ref{sec:g}} are used,
within \S{\ref{sec:i}} and the following \S\ref{sec:j},
to study various topological properties of the ${{\mathbf{G}_0}}$-orbits
$M$ and of the basis $N$ of their Mostow fibrations.
In particular, we compute  the first homotopy and homology groups of
$M$ and $N$ (Theorem \ref{thm:ia}, Corollary \ref{cor:id}
and Theorem \ref{thm:ig}).  
In \S\ref{sec:j} we show that the basis $N$ of the
Mostow fibration $M\to{N}$ is a $CR$ manifold with the same
$CR$ codimension of $M$ (Proposition \ref{prop:jb}), and then
construct a $\mathbf{K}_0$-equivariant map $N\to{M}_{\mathfrak{e}}$
from $N$ to the real flag manifold
$M_{\mathfrak{e}}$ (the \emph{real core} of $M$ of 
\S{\ref{sec:h}}),
as a tower of fibrations, with fibers that are diffeomorphic to
complex flag manifolds
(Theorem \ref{thm:jc}). This allows, in Corollary \ref{cor:jd},
to relate the Euler-Poincar\'e characteristics of  
$X$, $M_{\mathfrak{e}}$ and $N$ (see \cite{AMN08} for related results
concerning the minimal orbits).\par
In the final section \S\ref{sec:k} we compare our construction of
the \emph{real core} $M_{\mathfrak{e}}$ of $M$ with the space
$M_a$ of its \emph{algebraic arc components},
introduced in \cite{Wolf69}.
In many cases, e.g. when $M$ is a closed orbit, we have $M_a=M_{\mathfrak{e}}$,
but it may happen for some $M$ that  $M_{\mathfrak{e}}$ and ${M}_a$
are not diffeomorphic. 
We point out that 
the $\mathbf{G}_0$-equivariant fibration $M\to{M}_{\mathfrak{e}}$
has always simply connected fibers, while the fibers 
of $M\to{M}_a$ may
not be simply connected.
\section{Homogeneous $CR$ manifolds and $CR$ algebras}
\label{sec:a}
Let $M$ be a smooth manifold.
A \emph{$CR$ structure} on $M$ is the datum of
a smooth complex subbundle $T^{0,1}M$ of its complexified tangent bundle
$T^{\mathbb{C}}M$, with $T^{0,1}M\cap\overline{T^{0,1}M}=0$, 
that is \emph{formally integrable}, i.e. satisfies:
\begin{equation}
  \label{eq:aa}
  [\Gamma(M,T^{0,1}M),\Gamma(M,T^{0,1}M)]\subset\Gamma(M,T^{0,1}M).
\end{equation}
The complex rank $n$ of $T^{0,1}M$ is called the 
$CR$-dimension, and $k=\mathrm{dim}_{\mathbb{R}}{M}-2n$ the
$CR$-codimension of $M$.
If $n=0$, we say that $M$ is totally real; if $k=0$,
$M$ is a complex 
manifold in view of the Newlander-Nirenberg theorem.\par
Let $M$ be a real submanifold of a complex manifold ${X}$.
For $x\in{M}$ set $T^{0,1}_xM=T^{0,1}_x{X}\cap{T}^{\mathbb{C}}_xM$.
When the dimension of $T^{0,1}_xM$ is independent of $x\in{M}$,
then $T^{0,1}M=\bigcup_{x\in{M}}T^{0,1}_xM$ is a formally integrable
complex subbundle of $T^{\mathbb{C}}M$, defining on $M$ the structure of a
\emph{$CR$ submanifold} of ${X}$.
If the complex dimension of ${X}$ is the sum of the
$CR$ dimension and the $CR$ codimension of $M$, we say that
the inclusion $M\hookrightarrow{X}$ is \emph{generic}.
\par
If $M$ and $M'$ are $CR$ manifolds, a smooth map $f:M'\to{M}$ is
$CR$ if $df^{\mathbb{C}}(T^{0,1}M')\subset{T}^{0,1}M$.
In an obvious way, we define the notions of $CR$ immersion, submersion
and diffeomorphism. In particular a smooth bundle $\pi:M'\to{M}$ for
which $\pi$ is a $CR$ submersion (i.e. $\pi$ is a $CR$ map and
$d\pi^{\mathbb{C}}(T^{0,1}M')=T^{0,1}M$) is called a \emph{$CR$ bundle}
or a \emph{$CR$ fibration}.
\begin{dfn} \label{dfn:aa}
Let ${{\mathbf{G}_0}}$ be a Lie group.
A ${{\mathbf{G}_0}}$-homogeneous $CR$ manifold is a
${{\mathbf{G}_0}}$-homogeneous smooth manifold endowed with a 
${{\mathbf{G}_0}}$-invariant
$CR$ structure.
\end{dfn}
Fix a point $x_0$ of a ${{\mathbf{G}_0}}$-homogeneous $CR$ manifold
${M}$, let $\mathbf{I}_0\subset{{\mathbf{G}_0}}$ 
be the isotropy subgroup of $x_0$ and
$\pi:{{\mathbf{G}_0}}\to{M}$ the 
corresponding principal $\mathbf{I}_0$-bundle. 
Denote by $\mathfrak{Z}({{\mathbf{G}_0}})$ the space of smooth
sections of the pullback by $\pi$ 
of 
$T^{0,1}M$ to ${{\mathbf{G}_0}}$, i.e.
the set of complex valued
vector fields $Z$ in ${{\mathbf{G}_0}}$ such that 
$d\pi^{\mathbb{C}}(Z_g)\in{T}_{\pi(g)}^{0,1}{M}$ for all $g\in{{\mathbf{G}_0}}$.
By \eqref{eq:aa}, also $\mathfrak{Z}({{\mathbf{G}_0}})$ is formally integrable,
i.e. 
$[\mathfrak{Z}({{\mathbf{G}_0}}),\mathfrak{Z}({{\mathbf{G}_0}})]
\subset\mathfrak{Z}({{\mathbf{G}_0}})$. Moreover, 
$\mathfrak{Z}({{\mathbf{G}_0}})$ is invariant by left translations in
${{\mathbf{G}_0}}$. Hence
its left invariant vector
fields generate $\mathfrak{Z}({{\mathbf{G}_0}})$
as a left $\mathcal{C}^{\infty}({{\mathbf{G}_0}},\mathbb{C})$-module.\par
Thus, denoting by $\mathfrak{g}$ the complexification of the
Lie algebra $\mathfrak{g}_0$ of
${{\mathbf{G}_0}}$, by
the formal integrability condition \eqref{eq:aa}, the subspace
\begin{equation}\label{eq:ab}
\mathfrak{q}=(d\pi^{\mathbb{C}})^{-1}(T^{0,1}_{x_0}M)\subset{\mathfrak{g}}
=T^{\mathbb{C}}_e{{\mathbf{G}_0}}
\end{equation} 
is a complex subalgebra of
$\mathfrak{g}$. We can summarize these observations by:
\begin{lem} 
Let $\mathfrak{i}_0$ be the Lie algebra of the isotropy
$\mathbf{I}_0$. 
Then \eqref{eq:ab} establishes a one-to-one correspondence
between 
the ${{\mathbf{G}_0}}$-homogeneous $CR$ structures on 
$M={{\mathbf{G}_0}}/\mathbf{I}_0$
and the complex Lie subalgebras
$\mathfrak{q}$ of $\mathfrak{g}$ such that
$\mathfrak{q}\cap\mathfrak{g}_0=\mathfrak{i}_0$.\qed
\end{lem}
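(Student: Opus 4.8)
The plan is to realize \eqref{eq:ab} as a bijection by producing its inverse explicitly and checking three things: that \eqref{eq:ab} really takes values in the set described, that it is injective, and that every admissible $\mathfrak{q}$ is attained. Throughout I would work with the differential $\pi_{*}=d\pi^{\mathbb{C}}\colon\mathfrak{g}\to T^{\mathbb{C}}_{x_{0}}M$; it is $\mathbb{C}$-linear and surjective, has kernel the complexification $\mathfrak{i}=\mathfrak{i}_{0}^{\mathbb{C}}$ of $\mathfrak{i}_{0}$, and, being the complexification of the real linear map $d\pi\colon\mathfrak{g}_{0}\to T_{x_{0}}M$, it intertwines conjugation of $\mathfrak{g}$ with respect to $\mathfrak{g}_{0}$ with conjugation of $T^{\mathbb{C}}_{x_{0}}M$ with respect to $T_{x_{0}}M$.

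For the first point, $\mathfrak{q}$ as in \eqref{eq:ab} is already known to be a complex subalgebra, and it contains $\ker\pi_{*}=\mathfrak{i}$; to get $\mathfrak{q}\cap\mathfrak{g}_{0}=\mathfrak{i}_{0}$ I would take $X\in\mathfrak{q}\cap\mathfrak{g}_{0}$ and observe that $\pi_{*}(X)$ is a \emph{real} tangent vector lying in $T^{0,1}_{x_{0}}M$, hence fixed by conjugation and therefore in $T^{0,1}_{x_{0}}M\cap\overline{T^{0,1}_{x_{0}}M}=0$, which forces $X\in\mathfrak{i}_{0}$; the reverse inclusion is trivial. Injectivity is then formal: a ${{\mathbf{G}_0}}$-invariant subbundle of $T^{\mathbb{C}}M$ is recovered from its fibre at $x_{0}$, and that fibre is $\pi_{*}(\mathfrak{q})$ since $\pi_{*}$ maps $\mathfrak{q}$ onto $T^{0,1}_{x_{0}}M$ and $\mathfrak{q}\supseteq\ker\pi_{*}$.

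The substance is surjectivity. Given a complex subalgebra $\mathfrak{q}$ with $\mathfrak{q}\cap\mathfrak{g}_{0}=\mathfrak{i}_{0}$, I would first note $\mathfrak{i}\subseteq\mathfrak{q}$, since $\mathfrak{q}$ is a complex subspace containing $\mathfrak{i}_{0}$, and then that $\mathfrak{q}\cap\overline{\mathfrak{q}}=\mathfrak{i}$: the subalgebra $\mathfrak{q}\cap\overline{\mathfrak{q}}$ is stable under conjugation, hence the complexification of its real points $\mathfrak{q}\cap\overline{\mathfrak{q}}\cap\mathfrak{g}_{0}=\mathfrak{q}\cap\mathfrak{g}_{0}=\mathfrak{i}_{0}$. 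Putting $W=\pi_{*}(\mathfrak{q})\subseteq T^{\mathbb{C}}_{x_{0}}M$, the inclusion $\ker\pi_{*}=\mathfrak{i}\subseteq\mathfrak{q}\cap\overline{\mathfrak{q}}$ yields $W\cap\overline{W}=\pi_{*}(\mathfrak{q}\cap\overline{\mathfrak{q}})=\pi_{*}(\mathfrak{i})=0$. I would then transport $W$ by the group, declaring $T^{0,1}_{g\cdot x_{0}}M$ to be the image of $W$ under the differential at $x_{0}$ of $y\mapsto g\cdot y$; the resulting complex subbundle satisfies $T^{0,1}M\cap\overline{T^{0,1}M}=0$ everywhere, immediately from the case $g=e$ together with ${{\mathbf{G}_0}}$-invariance, and it is formally integrable because the space $\mathfrak{Z}({{\mathbf{G}_0}})$ of lifts of its $(0,1)$-fields is generated as a $\mathcal{C}^{\infty}$-module by the left invariant vector fields with values in $\mathfrak{q}$, which close under Lie bracket precisely because $\mathfrak{q}$ is a subalgebra, so that $[\mathfrak{Z}({{\mathbf{G}_0}}),\mathfrak{Z}({{\mathbf{G}_0}})]\subseteq\mathfrak{Z}({{\mathbf{G}_0}})$ and, $\pi_{*}$ being fibrewise surjective, \eqref{eq:aa} follows. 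Finally $(d\pi^{\mathbb{C}})^{-1}(W)=\mathfrak{q}+\ker\pi_{*}=\mathfrak{q}$, so \eqref{eq:ab} sends this $CR$ structure back to $\mathfrak{q}$.

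The place I expect to need the most care is making $T^{0,1}M$ \emph{well defined} on $M={{\mathbf{G}_0}}/\mathbf{I}_{0}$ in the surjectivity step: this requires $W$ to be stable under the isotropy representation of $\mathbf{I}_{0}$ on $T^{\mathbb{C}}_{x_{0}}M\cong\mathfrak{g}/\mathfrak{i}$, i.e.\ $\mathrm{Ad}(h)\mathfrak{q}=\mathfrak{q}$ for all $h\in\mathbf{I}_{0}$. From $\mathfrak{i}_{0}\subseteq\mathfrak{q}$ and $[\mathfrak{q},\mathfrak{q}]\subseteq\mathfrak{q}$ this holds over the identity component $\mathbf{I}_{0}^{\circ}$, but the finite group $\mathbf{I}_{0}/\mathbf{I}_{0}^{\circ}$ could a priori permute the several admissible subalgebras; closing this gap means controlling the connected components of $\mathbf{I}_{0}$ — for instance through its Chevalley decomposition, obtained in \S\ref{sec:c} — or else restricting attention to connected isotropy.
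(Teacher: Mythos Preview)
The paper does not actually prove this lemma: it is stated with a \qed\ immediately after the paragraph introducing $\mathfrak{Z}(\mathbf{G}_0)$ and \eqref{eq:ab}, and is meant as a summary of that discussion. Your argument supplies exactly the details the paper leaves implicit, along the same lines (left-invariant generators of $\mathfrak{Z}(\mathbf{G}_0)$, $\mathfrak{q}$ closed under bracket, transport by the group action), so in spirit the approaches coincide.

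Your caveat about well-definedness is on point and is a genuine subtlety that the paper glosses over at this stage. In the generality of \S\ref{sec:a} the condition $\mathfrak{q}\cap\mathfrak{g}_0=\mathfrak{i}_0$ alone does \emph{not} force $\mathrm{Ad}(\mathbf{I}_0)(\mathfrak{q})=\mathfrak{q}$ when $\mathbf{I}_0$ is disconnected, so strictly speaking the inverse map is only defined on the $\mathrm{Ad}(\mathbf{I}_0)$-stable $\mathfrak{q}$'s (and the forward map automatically lands there). You are right that this is harmless in the paper's intended applications: once one is in the parabolic setting of \S\ref{sec:c}, formula \eqref{eq:cb} gives $\mathbf{I}_0=\mathbf{N}_{\mathbf{G}_0}(\mathfrak{q})$, so $\mathrm{Ad}(\mathbf{I}_0)$-invariance of $\mathfrak{q}$ is built in. For the lemma as stated in full generality, the cleanest fix is the one you already suggest --- either add $\mathrm{Ad}(\mathbf{I}_0)$-stability to the hypotheses on $\mathfrak{q}$, or read the statement under the standing assumption that $\mathbf{I}_0$ is connected.
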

This was our motivation to introduce and discuss 
\emph{$CR$ algebras} in \cite{MN05}. We rehearse some definitions.
\begin{dfn} \label{dfn:ad}
A \emph{$CR$ algebra} is a pair $(\mathfrak{g}_0,\mathfrak{q})$, consisting
of a real Lie algebra $\mathfrak{g}_0$ and of a complex Lie subalgebra
$\mathfrak{q}$ of its complexification $\mathfrak{g}$, such that
the quotient $\mathfrak{g}_0/(\mathfrak{q}\cap\mathfrak{g}_0)$ is a finite
dimensional real vector space.\par
If $M$ is a ${{\mathbf{G}_0}}$-homogeneous $CR$ manifold and $\mathfrak{q}$
is defined by
\eqref{eq:ab}, 
we say that the $CR$ algebra
$(\mathfrak{g}_0,\mathfrak{q})$ is \emph{associated} to $M$.
\end{dfn}
\begin{rmk}\label{rmk:ac}
The $CR$-dimension and $CR$-codimension of $M$ can be computed in terms
of its associated 
$CR$ algebra $(\mathfrak{g}_0,\mathfrak{q})$. We have indeed
\begin{align}
  CR\text{-}\mathrm{dim}\,{M}&=\mathrm{dim}_{\mathbb{C}}\mathfrak{q}-
\mathrm{dim}_{\mathbb{C}}(\mathfrak{q}\cap\bar{\mathfrak{q}}),\\
CR\text{-}\mathrm{codim}\,{M}&=\mathrm{dim}_{\mathbb{C}}\mathfrak{g}-
\mathrm{dim}_{\mathbb{C}}(\mathfrak{q}+\bar{\mathfrak{q}}).
\end{align}
\end{rmk}
The $CR$ algebra
$(\mathfrak{g}_0,\mathfrak{q})$, is said to be \emph{totally real} when
$CR\text{-}\mathrm{dim}(M)=0$,
\emph{totally complex} when
$CR\text{-}\mathrm{codim}(M)=0$. 
\begin{dfn}\label{dfn:ac}
A $CR$ algebra $(\mathfrak{g}_0,\mathfrak{q})$ is
called:
\begin{itemize}
\item \emph{fundamental} if there is no complex Lie subalgebra
$\mathfrak{q}'$ of $\mathfrak{g}$ with:
\begin{equation}
  \label{eq:ac}
  \mathfrak{q}+\bar{\mathfrak{q}}\subset\mathfrak{q}'\subsetneqq
\mathfrak{g};
\end{equation}
\item \emph{strictly}, or \emph{Levi-nondegenerate} if
\begin{equation}
\{Z\in\mathfrak{q}\,|\,\mathrm{ad}(Z)(\bar{\mathfrak{q}}) \subset
\mathfrak{q}+\bar{\mathfrak{q}}\}=\mathfrak{q}\cap\bar{\mathfrak{q}}.
\end{equation}
\item \emph{weakly degenerate} if there is a complex Lie subalgebra
$\mathfrak{q}'$ of $\mathfrak{g}$ with:
\begin{equation}
  \label{eq:ad}
  \mathfrak{q}\subsetneqq \mathfrak{q}'\subset\mathfrak{q}+\bar{\mathfrak{q}};
\end{equation}
\end{itemize}
\end{dfn}
We have:
\begin{prop}\label{prop:ac}
Let $M$ be a ${{\mathbf{G}_0}}$-homogeneous $CR$ manifold with associated
$CR$ algebra $(\mathfrak{g}_0,\mathfrak{q})$. Then:
\begin{enumerate}
\item $(\mathfrak{g}_0,\mathfrak{q})$ is fundamental if and only
if $M$ is of finite type in the sense of \cite{BG77}.\smallskip
\item[]
\qquad Assume that $M$ is of finite type in the sense of \cite{BG77}.
Then:
\item $(\mathfrak{g}_0,\mathfrak{q})$ is strictly nondegenerate
if and only if the vector valued Levi form 
(see e.g. \cite{MN00}) on $M$ is nondegenerate.
Strict nondegeneracy implies weak nondegeneracy.
\item $(\mathfrak{g}_0,\mathfrak{q})$ is weakly nondegenerate
if and only if the group of germs of $CR$ diffeomorphisms at $x_0\in{M}$
that stabilize $x_0$
is a finite dimensional Lie group, i.e. $M$ is holomorphically 
nondegenerate
(see e.g.~\cite{BER99}, \cite{fels06}).
\item $(\mathfrak{g}_0,\mathfrak{q})$ is weakly degenerate
if and only if there exists a local 
$\mathbf{G}_0$-equivariant $CR$ fibration $M\to{M}'$, with
nontrivial complex fibers.
\end{enumerate}
\end{prop}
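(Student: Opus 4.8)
The plan is to prove all four statements by translating each differential-geometric condition on $M$ at $x_0$ into a linear-algebraic statement about $(\mathfrak{g}_0,\mathfrak{q})$, using the dictionary implicit in the discussion around \eqref{eq:ab}. Set $\mathfrak{i}=\mathfrak{q}\cap\bar{\mathfrak{q}}$; this is the complexification of $\mathfrak{i}_0$ and equals $\ker d\pi^{\mathbb{C}}$, so $d\pi^{\mathbb{C}}$ induces conjugation-compatible isomorphisms $T^{0,1}_{x_0}M\cong\mathfrak{q}/\mathfrak{i}$, $\mathcal{H}_{x_0}M\otimes\mathbb{C}\cong(\mathfrak{q}+\bar{\mathfrak{q}})/\mathfrak{i}$ and $T^{\mathbb{C}}_{x_0}M\cong\mathfrak{g}/\mathfrak{i}$, where $\mathcal{H}M$ denotes the real distribution with $\mathcal{H}M\otimes\mathbb{C}=T^{0,1}M\oplus\overline{T^{0,1}M}$. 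Furthermore the sections of $T^{0,1}M$ lift to the left $\mathcal{C}^{\infty}(\mathbf{G}_0,\mathbb{C})$-module generated by the left-invariant fields valued in $\mathfrak{q}$, and those of $\mathcal{H}M\otimes\mathbb{C}$ to the module generated by the fields valued in $\mathfrak{q}+\bar{\mathfrak{q}}$, so that iterated Lie brackets of such fields, read through $d\pi^{\mathbb{C}}$, compute the successive terms of the Lie flag of $\mathfrak{g}$ based at the subspace $\mathfrak{q}+\bar{\mathfrak{q}}$.

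For (1): by Bloom--Graham, $M$ is of finite type at $x_0$ iff the iterated brackets of local sections of $\mathcal{H}M$ span $T_{x_0}M$, which by the previous paragraph says that the smallest Lie subalgebra of $\mathfrak{g}_0$ containing $(\mathfrak{q}+\bar{\mathfrak{q}})\cap\mathfrak{g}_0$ is $\mathfrak{g}_0$; complexifying, and using that $\mathfrak{q}+\bar{\mathfrak{q}}$ is the complexification of that real subspace, this is equivalent to the smallest complex subalgebra $\mathfrak{q}''$ containing $\mathfrak{q}+\bar{\mathfrak{q}}$ being $\mathfrak{g}$. Since $\mathfrak{q}$ is itself a subalgebra, $\mathfrak{q}''$ is well defined, and $\mathfrak{q}''=\mathfrak{g}$ fails exactly when some complex subalgebra interpolates as in \eqref{eq:ac} — which is the assertion. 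For (2): the vector-valued Levi form at $x_0$ is, up to sign and normalization, the Hermitian map $\mathfrak{q}/\mathfrak{i}\times\mathfrak{q}/\mathfrak{i}\to\mathfrak{g}/(\mathfrak{q}+\bar{\mathfrak{q}})$ sending $(Z,W)$ to $[Z,\bar{W}]$ modulo $\mathfrak{q}+\bar{\mathfrak{q}}$ (well posed because $\mathfrak{q}$, $\bar{\mathfrak{q}}$ and $\mathfrak{q}+\bar{\mathfrak{q}}$ are subalgebras), and its kernel is the image of $\{Z\in\mathfrak{q}\mid\mathrm{ad}(Z)(\bar{\mathfrak{q}})\subset\mathfrak{q}+\bar{\mathfrak{q}}\}$; thus the Levi form is nondegenerate iff $(\mathfrak{g}_0,\mathfrak{q})$ is strictly nondegenerate. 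For the last clause, if $\mathfrak{q}'$ were a complex subalgebra with $\mathfrak{q}\subsetneq\mathfrak{q}'\subset\mathfrak{q}+\bar{\mathfrak{q}}$, then $\mathfrak{q}'=\mathfrak{q}+(\mathfrak{q}'\cap\bar{\mathfrak{q}})$ (using $\mathfrak{q}\subset\mathfrak{q}'$), so one can pick $\bar{b}\in(\mathfrak{q}'\cap\bar{\mathfrak{q}})\setminus\mathfrak{q}$, whence $b\in\mathfrak{q}\setminus\mathfrak{i}$; since $b\in\overline{\mathfrak{q}'}$ and $\bar{\mathfrak{q}}\subset\overline{\mathfrak{q}'}$ we get $\mathrm{ad}(b)(\bar{\mathfrak{q}})\subset\overline{\mathfrak{q}'}\subset\mathfrak{q}+\bar{\mathfrak{q}}$, contradicting strict nondegeneracy; hence strict nondegeneracy implies weak nondegeneracy.

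For (4): this is the translation, through the correspondence of \cite{MN05} between $\mathfrak{g}_0$-equivariant morphisms of $CR$ algebras and $\mathbf{G}_0$-equivariant $CR$ maps of orbits (made effective in the parabolic case by Theorem \ref{thm:ea}), of the same intermediate-subalgebra picture. Given $\mathfrak{q}'$ with $\mathfrak{q}\subsetneq\mathfrak{q}'\subset\mathfrak{q}+\bar{\mathfrak{q}}$, put $\mathfrak{i}_0'=\mathfrak{q}'\cap\mathfrak{g}_0$; since $\mathfrak{q}'\cap\overline{\mathfrak{q}'}$ is conjugation invariant it equals $(\mathfrak{i}_0')^{\mathbb{C}}$, the fiber of $M\to M'=\mathbf{G}_0/\mathbf{I}_0'$ is $\mathbf{I}_0'/\mathbf{I}_0$ with $CR$ algebra $(\mathfrak{i}_0',\mathfrak{q}\cap(\mathfrak{i}_0')^{\mathbb{C}})$, and the decomposition $\mathfrak{q}'=\mathfrak{q}+(\mathfrak{q}'\cap\bar{\mathfrak{q}})$ yields both $\mathfrak{i}_0'\supsetneq\mathfrak{i}_0$ and $(\mathfrak{q}\cap(\mathfrak{i}_0')^{\mathbb{C}})+(\bar{\mathfrak{q}}\cap(\mathfrak{i}_0')^{\mathbb{C}})=(\mathfrak{i}_0')^{\mathbb{C}}$ — that is, the fiber is a positive-dimensional complex manifold. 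Conversely, a $\mathbf{G}_0$-equivariant $CR$ fibration $M\to M'$ with nontrivial complex fibers is governed by the complex subalgebra $\mathfrak{q}'$ lifting the $CR$ structure of $M'$, and the fibers being complex forces $\mathfrak{q}'\subset\mathfrak{q}+\bar{\mathfrak{q}}$ while nontriviality forces $\mathfrak{q}'\supsetneq\mathfrak{q}$, so $(\mathfrak{g}_0,\mathfrak{q})$ is weakly degenerate.

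Finally (3) would be obtained by combining the construction of the weakly nondegenerate reduction in \cite[\S 5]{MN05} with the two external facts recalled in the remark preceding the Proposition: the identification by Fels \cite{fels06} of our weak degeneracy with the holomorphic degeneracy of \cite{BER99}, and the result of \cite{BER99} that, for $M$ of finite type, holomorphic nondegeneracy is equivalent to finite-dimensionality of the Lie group of germs at $x_0$ of $CR$ diffeomorphisms fixing $x_0$. I expect (3) to be the real obstacle: parts (1), (2) and (4) stay inside the algebraic formalism and are essentially the content of \cite{MN05}, whereas (3) rests on comparing two a priori unrelated nondegeneracy notions and on the Lie-theoretic structure of the local $CR$ automorphism group — precisely the point the preliminary remark is there to settle.
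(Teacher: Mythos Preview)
Your proposal is correct and follows the same strategy as the paper: items (1), (2) and (4) are the algebraic translations already established in \cite{MN05} and \cite{AMN06}, which the paper simply cites while you sketch the arguments, and item (3) is reduced --- exactly as in the paper --- to Fels's identification in \cite{fels06} of weak (non)degeneracy with holomorphic (non)degeneracy. One minor correction: in your treatment of (2) you assert that $\mathfrak{q}+\bar{\mathfrak{q}}$ is a subalgebra, which is false in general; however, well-posedness of the Levi map $(Z,W)\mapsto[Z,\bar W]\bmod(\mathfrak{q}+\bar{\mathfrak{q}})$ on $\mathfrak{q}/\mathfrak{i}\times\mathfrak{q}/\mathfrak{i}$ only requires that $\mathfrak{q}$ and $\bar{\mathfrak{q}}$ separately be subalgebras (if $Z\in\mathfrak{i}$ then $[Z,\bar W]\in\bar{\mathfrak{q}}$, and if $W\in\mathfrak{i}$ then $[Z,\bar W]\in\mathfrak{q}$), so your argument goes through unchanged once that parenthetical claim is dropped.
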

\begin{proof}
  All the statements, except (3), were proved in \cite[Proposition
  13.3]{MN05}
and \cite[Proposition 4.1]{AMN06}. 
Item (3) follows from the observation in \cite{fels06}
  that, for homogeneous $CR$ manifolds, weak nondegeneracy is
  equivalent to the finite nondegeneracy of \cite{fels06}, which is in
  turn equivalent to the holomorphic nondegeneracy of \cite{BER99}.
\end{proof}
\section{Complex flag manifolds}\label{sec:b}
In this section we collect some 
notions on complex flag manifolds.\par
A {\it complex flag manifold} is the quotient
${X}=\mathbf{G}/\mathbf{Q}$ of a 
connected
complex semisimple Lie group
$\mathbf{G}$ by a \emph{parabolic} subgroup $\mathbf{Q}$.
We recall that 
$\mathbf{Q}$ is {parabolic} in $\mathbf{G}$
if and only if
its Lie algebra $\mathfrak{q}$ is a parabolic Lie subalgebra of 
 the Lie algebra $\mathfrak{g}$ of
$\mathbf{G}$. This means that $\mathfrak{q}$
contains a Borel subalgebra, i.e. a maximal solvable Lie
subalgebra of $\mathfrak{g}$. We also note that $\mathbf{G}$ 
is necessarily
a linear group, and that $\mathbf{Q}$ is connected, contains the center
of $\mathbf{G}$ and equals the normalizer of $\mathfrak{q}$ in
$\mathbf{G}$\,:
\begin{equation}\label{eq:ba}
\mathbf{Q}\,=\,\left.
\left\{g\in\mathbf{G}\,\right|\, \mathrm{Ad}(g)
(\mathfrak{q})=\mathfrak{q}\right\}\,.
\end{equation}
The isotropy subgroup of a point $x=g\mathbf{Q}$ of ${X}$ is
$\mathrm{ad}(g)(\mathbf{Q})$. Since $\mathbf{Q}$ is its own normalizer
in $\mathbf{G}$, we can identify 
$x$ with $\mathrm{ad}(g)(\mathbf{Q})$.
Moreover, since $\mathbf{Q}$ is connected, 
$\mathrm{ad}(g)(\mathbf{Q})$ is completely determined by
its Lie algebra $\mathrm{Ad}(g)(\mathfrak{q})$. Thus ${X}$
can be viewed as the space of parabolic subalgebras of 
$\mathfrak{g}$ that are $\mathrm{Ad}(\mathbf{G})$-conjugate to
$\mathfrak{q}$. Hence  we obtain an isomorphic
${X}$ by substituting $\mathbf{Int}_{\mathbb{C}}(\mathfrak{g})$
to $\mathbf{G}$.
\par
In particular, a different choice of 
a connected $\mathbf{G}'$
and of a parabolic $\mathbf{Q}'$, 
with Lie algebras $\mathfrak{g}'$ and
$\mathfrak{q}'$ isomorphic to $\mathfrak{g}$ and
$\mathfrak{q}$, yields a complex flag manifold $X'$ that is
complex-projectively isomorphic to ${X}$.
Thus a flag manifold ${X}$ is 
completely described by the
pair consisting of the Lie
algebras~$\mathfrak{g}$ and~${\mathfrak{q}}$. \par
Fix a Cartan subalgebra 
$\mathfrak{h}$ of
$\mathfrak{g}$, contained in $\mathfrak{q}$, and
let $\mathcal{R}=\mathcal{R}(\mathfrak{g},\mathfrak{h})$ be the corresponding
root system. For each $\alpha\in\mathcal{R}$, let 
$\mathfrak{g}^{\alpha}=\{Z\in\mathfrak{g}\,|\, [H,Z]=\alpha(H)Z,
\; 
\forall{H}\in\mathfrak{h}\}$ be
the root subspace of $\alpha$.
Denote by
$\mathfrak{h}_{\mathbb{R}}$ the real subspace of $\mathfrak{h}$
on which the roots are real valued. The choice of a Weyl chamber
$C\subset\mathfrak{h}_{\mathbb{R}}$ defines a partial
order\, 
$\prec$\, in the dual space 
$\mathfrak{h}^*_{\mathbb{R}}$
of $\mathfrak{h}_{\mathbb{R}}$.
Since $\mathfrak{q}$ contains $\mathfrak{h}$, it is the direct
sum of $\mathfrak{h}$ and of the root spaces contained in
$\mathfrak{q}$,
i.e.
\begin{equation}
  \mathfrak{q}= \mathfrak{h} + \sum_{\alpha\in\mathcal{Q}}\mathfrak{g}^{\alpha},
\quad\text{where}\quad
  \mathcal{Q}= \{\alpha\in\mathcal{R} \mid
  \mathfrak{g}^{\alpha}\subset\mathfrak{q}\}.   
\end{equation}
Then the fact that $\mathfrak{q}$ is parabolic means that one can choose
$C$ in such a way that:
\begin{equation}\label{eq:bb}
\alpha\in\mathcal{Q}\quad\text{for all}\quad
\alpha\succ0.
\end{equation}
The set
$\mathcal{Q}$ is {\it parabolic}, i.e. 
is closed under root addition
and \mbox{$\mathcal{Q}\cup\left(-\mathcal{Q}\right)=\mathcal{R}$}.
\begin{dfn} \label{dfn:ba}
A Weyl chamber $C$ for which \eqref{eq:bb} holds true is called
\emph{fit} for $\mathcal{Q}$. We denote by $\mathfrak{C}(\mathcal{R})$
the set of all Weyl chambers for $\mathcal{R}$ and by
$\mathfrak{C}(\mathcal{R},\mathcal{Q})$ the subset of those that
are fit for $\mathcal{Q}$.
\end{dfn}
Let $C\in\mathfrak{C}(\mathcal{R})$
and $\mathcal{B}=\mathcal{B}(C)$ be the corresponding system of 
$C$-positive simple roots.
All $\alpha\in\mathcal{R}$ are linear combinations of elements 
of the basis
${\mathcal{B}}$\,:
\begin{equation}\label{eq:bc}
\alpha=\sum_{\beta\in{\mathcal{B}}}{k_{\alpha}^{\beta}\beta}\,,\quad 
k_{\alpha}^{\beta}\in\mathbb{Z}.
\end{equation}
\begin{dfn}\label{dfn:bb}
We define 
the \emph{support} $\mathrm{supp}(\alpha)$ 
of $\alpha$ with respect to ${\mathcal{B}}$ as the set
of $\beta\in{\mathcal{B}}$ for which $k^{\beta}_{\alpha}\neq 0$.
\end{dfn}
Having fixed $C\in\mathfrak{C}(\mathcal{R},\mathcal{Q})$, 
we associate to
$\mathfrak{q}$ the subset $\Phi$ of $\mathcal{B}$,
consisting of the 
simple $C$-positive roots $\alpha$ for which 
${\mathfrak{g}}^{-\alpha}\not\subset\mathfrak{q}$. \par
Then $\mathcal{Q}$ and
$\mathfrak{q}$ are completely determined by $\Phi$. Indeed:
\begin{align} \label{eq:bd}
&&\mathcal{Q}\; &=\mathcal{Q}_{\Phi}=\{\alpha\! \succ\!  0\}
\cup\{\alpha \!\prec\!  0\mid
\mathrm{supp}(\alpha)\!
\cap\!\Phi\!=\!\emptyset\}=\mathcal{Q}^n_{\Phi}\cup\mathcal{Q}^r_{\Phi},
\;\text{with:}\\
\label{eq:bf}
&&\mathcal{Q}^n&=\mathcal{Q}_{\Phi}^n=\{\alpha\in\mathcal{R}\,|\,
\alpha \succ  0\;\mathrm{and}\;\mathrm{supp}(\alpha)
\cap\Phi\neq\emptyset\},\\
\label{eq:be}
&&\mathcal{Q}^r&=\mathcal{Q}_{\Phi}^r=\{\alpha\in\mathcal{R}\,|\,
\mathrm{supp}(\alpha)
\cap\Phi=\emptyset\},\\
\intertext{and for the parabolic 
subalgebra $\mathfrak{q}$ we have the decomposition:}\label{eq:bg}
&&\mathfrak{q}\; &=\mathfrak{q}_{\Phi}={\mathfrak{h}}+{\sum}_{\alpha
\in\mathcal{Q}_{\Phi}}{{\mathfrak{g}}^{\alpha}}=\mathfrak{q}^r
\oplus\mathfrak{q}^n,\quad\text{where:}\\\label{eq:bi}
&&\mathfrak{q}^n&=\mathfrak{q}^n_{\Phi}={\sum}_{\alpha\in\mathcal{Q}^n_{\Phi}}
{\mathfrak{g}^{\alpha}}\qquad
\text{is the nilradical of $\mathfrak{q}$ and }\\
\label{eq:bj}
&&\mathfrak{q}^r&=\mathfrak{q}^r_{\Phi}={\mathfrak{h}}+\!\!
{\sum}_{\alpha\in\mathcal{Q}^r_{\Phi}}
{\mathfrak{g}^{\alpha}}\;\,\text{is a reductive
complement of $\mathfrak{q}^n_{\Phi}$ in $\mathfrak{q}_{\Phi}$}.\\
\intertext{We explicitly note
that $\Phi$ is related to $\mathcal{Q}$ and $C$ by}
\label{eq:bfa}
&&\Phi&=\mathcal{Q}^n\cap\mathcal{B}.
\end{align}
\par\smallskip
All Cartan subalgebras ${\mathfrak{h}}$
of ${\mathfrak{g}}$ are equivalent, modulo 
inner automorphisms. After ${\mathfrak{h}}$, and
hence $\mathcal{R}$, have been  fixed,
all basis of simple roots of
$\mathcal{R}$ are equivalent for the
transpose of inner automorphisms of ${\mathfrak{g}}$ normalizing
${\mathfrak{h}}$. Thus, after picking a Weyl chamber $C$, and
having fixed in this way a system $\mathcal{B}$ of $C$-positive simple
roots,
the correspondence $\Phi\leftrightarrow
\mathfrak{q}_{\Phi}$ is one-to-one between subsets $\Phi$ of 
${\mathcal{B}}$ and 
complex parabolic Lie subalgebras
of ${\mathfrak{g}}$, modulo inner automorphisms.
In other words, the subsets $\Phi$ of our fixed $\mathcal{B}$
parametrize all different flag manifolds $X$ of 
a connected semisimple
complex Lie group $\mathbf{G}$ with Lie algebra ${\mathfrak{g}}$.
\par
The choice of a Cartan subalgebra $\mathfrak{h}$ of
$\mathfrak{g}$ contained in $\mathfrak{q}$ yields a canonical
Chevalley decomposition of the parabolic subgroup $\mathbf{Q}$\,:
\begin{prop}\label{prop:bc}
Let $\mathbf{Q}$ be a parabolic subgroup of $\mathbf{G}$,
corresponding to the complex parabolic
Lie subalgebra $\mathfrak{q}$ of $\mathfrak{g}$.
With the notation above, we have a Chevalley decomposition\,:
\begin{equation}\label{eq:bk}
\mathbf{Q}=\mathbf{Q}^n\rtimes\mathbf{Q}^r
\end{equation}
where the unipotent radical
$\mathbf{Q}^n$ is the connected and
simply connected Lie subgroup 
of ${{{\mathbf{G}}}}$ with Lie algebra $\mathfrak{q}^n$
given by \eqref{eq:bi},
and $\mathbf{Q}^r$ is 
the reductive\footnote{According to \cite{Kn:2002} we call
reductive a linear Lie group ${{\mathbf{G}_0}}$, 
having finitely many connected components,
with 
a reductive Lie algebra
$\mathfrak{g}_0$, 
and such that,
for the complexification $\mathfrak{g}$ of $\mathfrak{g}_0$, we have
 $\mathrm{Ad}_{\mathfrak{g}}(\mathbf{G}_0)
\subset\mathbf{Int}_{\mathbb{C}}(\mathfrak{g})$.
While stating here the
reductiveness of $\mathbf{Q}^r$,
we consider the given group $\mathbf{G}$ as a real linear group.}
complement of $\mathbf{Q}^n$ in $\mathbf{Q}$, whose
Lie algebra $\mathfrak{q}^r$ is given by \eqref{eq:bj}.\par
Let
$\mathfrak{c}\subset\mathfrak{h}$ 
be the center of $\mathfrak{q}^r$:
\begin{equation}
  \label{eq:bl}
  \mathfrak{c}=\mathfrak{z}({\mathfrak{q}}^r)=
\left\{H\in\mathfrak{h}\mid
\mathrm{ad}(H)(\mathfrak{q}^r)=0\right\}.
\end{equation}
Then the reductive complement $\mathbf{Q}^r$ is characterized by\,:
\begin{equation}\label{eq:bm}
\mathbf{Q}^r
=\mathbf{Z}_{\mathbf{G}}
\left(\mathfrak{c}\right)=
\left\{g\in\mathbf{G}\mid
\mathrm{Ad}(g)(H)=H\quad\forall{H}\in
\mathfrak{c}\right\} .
\end{equation}
\par
Moreover,
$\mathbf{Q}^r$ is a subgroup of finite index in
the normalizer 
$\mathbf{N}_{\mathbf{G}}(\mathfrak{q}^r)$
of $\mathfrak{q}^r$ in $\mathbf{G}$,
and
$\mathbf{Q}\cap\mathbf{N}_{\mathbf{G}}(\mathfrak{q}^r)=
\mathbf{Q}^r$.
\end{prop}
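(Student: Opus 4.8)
The plan is to regard $\mathbf{Q}$ as a complex linear algebraic subgroup of $\mathbf{G}$ (it is the stabilizer of a point of $X$, equivalently it is cut out by \eqref{eq:ba}) and to argue in characteristic zero throughout. The proof rests on two purely root-theoretic computations of Lie algebras of centralizers and normalizers, together with a short list of standard facts: parabolic subgroups are connected; the centralizer of a torus in a connected group is connected; and in characteristic zero a connected unipotent group is, via $\exp$, biholomorphic to an affine space, while a closed subgroup of a unipotent group is connected.

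\emph{Lie-algebra computations.} Since $\mathfrak{q}^r$ is reductive with Cartan subalgebra $\mathfrak{h}$, its center $\mathfrak{c}=\mathfrak{z}(\mathfrak{q}^r)$ lies in $\mathfrak{h}$; an $H\in\mathfrak{h}$ centralizes $\mathfrak{q}^r$ if and only if $\alpha(H)=0$ for every $\alpha\in\mathcal{Q}^r_\Phi$, and since $\mathcal{Q}^r_\Phi$ spans the same subspace of $\mathfrak{h}^*_{\mathbb{R}}$ as $\mathcal{B}\setminus\Phi$ we get $\mathfrak{c}=\bigcap_{\beta\in\mathcal{B}\setminus\Phi}\ker\beta$. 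As $\mathcal{B}$ is a basis of $\mathfrak{h}^*_{\mathbb{R}}$, the restrictions to $\mathfrak{c}$ of the roots $\beta\in\Phi$ are linearly independent, so for a root $\alpha=\sum_\beta k^\beta_\alpha\beta$ one has $\alpha|_{\mathfrak{c}}=0$ exactly when $\mathrm{supp}(\alpha)\cap\Phi=\emptyset$, i.e.\ exactly when $\alpha\in\mathcal{Q}^r_\Phi$. Because $\mathfrak{c}\subset\mathfrak{h}$ this yields
\[
\mathrm{Lie}\bigl(\mathbf{Z}_{\mathbf{G}}(\mathfrak{c})\bigr)=\{X\in\mathfrak{g}\mid [X,\mathfrak{c}]=0\}=\mathfrak{h}\oplus\bigoplus_{\alpha\in\mathcal{Q}^r_\Phi}\mathfrak{g}^\alpha=\mathfrak{q}^r ,
\]
and the same bookkeeping, applied to the condition $[X,\mathfrak{h}]\subset\mathfrak{q}^r$ (which follows from $[X,\mathfrak{q}^r]\subset\mathfrak{q}^r$ since $\mathfrak{h}\subset\mathfrak{q}^r$), shows that the Lie-algebra normalizer of $\mathfrak{q}^r$ in $\mathfrak{g}$ is again $\mathfrak{q}^r$.

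\emph{The two factors and the product.} I would then set $\mathbf{Q}^r:=\mathbf{Z}_{\mathbf{G}}(\mathfrak{c})$ and let $\mathbf{Q}^n$ be the connected subgroup with Lie algebra the nilradical $\mathfrak{q}^n$. As $\mathfrak{q}^n$ is a nilpotent ideal of $\mathfrak{q}$ of ad-nilpotent elements, $\mathbf{Q}^n$ is a closed connected unipotent subgroup, hence (in characteristic zero) simply connected; it is normal in $\mathbf{Q}$ and equals the unipotent radical, its Lie algebra being the nilradical of $\mathfrak{q}$. Writing $\mathbf{C}$ for the subtorus of $\mathbf{G}$ generated by $\mathfrak{c}$, one has $\mathbf{Z}_{\mathbf{G}}(\mathfrak{c})=\mathbf{Z}_{\mathbf{G}}(\mathbf{C})$, so $\mathbf{Q}^r$ is connected; by the computation above its Lie algebra is the reductive algebra $\mathfrak{q}^r$, so $\mathbf{Q}^r$ is connected reductive, and as the connected subgroup with Lie algebra $\mathfrak{q}^r\subset\mathfrak{q}$ it lies in the connected group $\mathbf{Q}$. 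Now $\mathbf{Q}^n\cap\mathbf{Q}^r$ is a closed, hence connected (being a subgroup of a unipotent group), subgroup with Lie algebra $\mathfrak{q}^n\cap\mathfrak{q}^r=0$, so it is trivial; and $\mathbf{Q}^n\mathbf{Q}^r$ is a subgroup (as $\mathbf{Q}^n$ is normal), closed, connected, with Lie algebra $\mathfrak{q}^n+\mathfrak{q}^r=\mathfrak{q}$, hence all of $\mathbf{Q}$. This gives \eqref{eq:bk}, and \eqref{eq:bm} holds by the very definition of $\mathbf{Q}^r$.

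\emph{The normalizer statements, and the main obstacle.} We have $\mathbf{Q}^r\subset\mathbf{N}_{\mathbf{G}}(\mathfrak{q}^r)$; the latter is algebraic with Lie algebra $\mathfrak{q}^r$ by the computation above, hence its identity component is the connected subgroup with Lie algebra $\mathfrak{q}^r$, namely $\mathbf{Q}^r$, which is therefore of finite index in $\mathbf{N}_{\mathbf{G}}(\mathfrak{q}^r)$ (algebraic groups have finitely many components). For $\mathbf{Q}\cap\mathbf{N}_{\mathbf{G}}(\mathfrak{q}^r)=\mathbf{Q}^r$ the inclusion $\supseteq$ is clear; conversely, writing $g=u\ell$ with $u\in\mathbf{Q}^n$ and $\ell\in\mathbf{Q}^r$, the hypothesis $\mathrm{Ad}(g)(\mathfrak{q}^r)=\mathfrak{q}^r$ together with the fact that $\ell$ normalizes $\mathfrak{q}^r$ forces $\mathrm{Ad}(u)(\mathfrak{q}^r)=\mathfrak{q}^r$, so $u\in\mathbf{N}_{\mathbf{G}}(\mathfrak{q}^r)\cap\mathbf{Q}^n$, a closed (hence connected) subgroup of $\mathbf{Q}^n$ with trivial Lie algebra $\mathfrak{q}^r\cap\mathfrak{q}^n=0$, whence $u=e$ and $g=\ell\in\mathbf{Q}^r$. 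I expect the only delicate point to be the identification $\mathrm{Lie}(\mathbf{Z}_{\mathbf{G}}(\mathfrak{c}))=\mathfrak{q}^r$ — it hinges on $\mathcal{B}$ being a basis of $\mathfrak{h}^*_{\mathbb{R}}$, so that the restrictions to $\mathfrak{c}$ of the simple roots in $\Phi$ are independent and $\alpha|_{\mathfrak{c}}=0$ occurs precisely on $\mathcal{Q}^r_\Phi$ — together with keeping careful track of which of the algebraic subgroups in play are connected a priori; everything else is routine.
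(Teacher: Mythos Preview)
Your proof is correct and takes a genuinely different route from the paper's. The paper simply observes that a complex parabolic is in particular a \emph{real} parabolic and invokes the Langlands decomposition $\mathbf{Q}=\mathbf{MAN}$ from \cite[Proposition~7.82(a)]{Kn:2002}, identifying $\mathbf{N}=\mathbf{Q}^n$ and $\mathbf{MA}=\mathbf{Q}^r$; this gives \eqref{eq:bk} and \eqref{eq:bm} in one stroke by citation. You instead stay in the complex algebraic category, compute $\mathrm{Lie}(\mathbf{Z}_{\mathbf{G}}(\mathfrak{c}))=\mathfrak{q}^r$ directly from the root description of $\mathfrak{c}$, and use the standard facts that centralizers of tori in connected groups are connected and that closed subgroups of unipotent groups (in characteristic zero) are connected. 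Your approach is longer but self-contained and stays within the natural complex setting; the paper's buys brevity at the cost of a somewhat oblique appeal to real structure theory. For the normalizer statements the two arguments are essentially the same: both reduce $\mathbf{Q}\cap\mathbf{N}_{\mathbf{G}}(\mathfrak{q}^r)=\mathbf{Q}^r$ to showing $\mathbf{N}_{\mathbf{G}}(\mathfrak{q}^r)\cap\mathbf{Q}^n$ is trivial, the paper via ``discrete and finite in a simply connected unipotent group'', you via ``closed in unipotent implies connected, with zero Lie algebra''.
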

\begin{proof}
A complex parabolic subgroup can also be considered as a \emph{real}
parabolic subgroup. The Chevalley decomposition  \eqref{eq:bk}
reduces then to the Langlands decomposition 
$\mathbf{Q}=\mathbf{MAN}$, with $\mathbf{N}=\mathbf{Q}^n$ and
$\mathbf{MA}=\mathbf{Q}^r$. Thus our statement is a consequence of
\cite[Proposition 7.82(a)]{Kn:2002}. \par
Note that $\mathfrak{q}^r$ is the centralizer of
$\mathfrak{c}$ 
in $\mathfrak{g}$ and is its own
normalizer. This yields the inclusion
$\mathbf{Q}^r\subset
\mathbf{N}_{\mathbf{G}}(\mathfrak{q}^r)$.
Since $\mathbf{N}_{\mathbf{G}}(\mathfrak{q}^r)$ is
semi-algebraic, it has finitely many connected components.
Thus its intersection with $\mathbf{Q}^n$ is discrete and
finite, and thus trivial because $\mathbf{Q}^n$ is connected,
simply connected and unipotent. 
\end{proof}
\section{${{\mathbf{G}_0}}$-orbits and their isotropy subgroups}
\label{sec:c}
We keep the notation of \S\ref{sec:b}. Let
$X=\mathbf{G}/\mathbf{Q}$ be a complex flag manifold, and
${{\mathbf{G}_0}}$ a connected real form of  $\mathbf{G}$.
Note that ${{\mathbf{G}_0}}$ is semi-algebraic, being a connected
component
of an algebraic group for the Euclidean topology.
We know from \cite{Wolf69} that there are 
finitely many ${{\mathbf{G}_0}}$-orbits
in ${X}$.
Fix any such orbit $M$ and a point $x\in{M}$. We can assume that
$\mathbf{Q}\subset\mathbf{G}$ is the stabilizer of $x$ 
for the action of $\mathbf{G}$ in ${X}$. 
Let
$\mathbf{I}_0=\mathbf{Q}\cap{{\mathbf{G}_0}}$ be the stabilizer of $x$ in
${{\mathbf{G}_0}}$, so that $M\simeq{{\mathbf{G}_0}}/\mathbf{I}_0$, as
a smooth manifold. 
The Lie algebra
$\mathfrak{g}_0$  of ${{\mathbf{G}_0}}$
is a real form of $\mathfrak{g}$ and
the Lie algebra of the isotropy subgroup $\mathbf{I}_0$
is the intersection
$\mathfrak{i}_0=\mathfrak{q}\cap\mathfrak{g}_0$,
where $\mathfrak{q}$ is the Lie algebra of $\mathbf{Q}$. \par
Since $\mathbf{I}_0$ always contains the center of $\mathbf{G}_0$,
the orbit $M$ actually only depends 
on the pair of Lie subalgebras $\mathfrak{g}_0$ 
and $\mathfrak{q}$ of $\mathfrak{g}$. 
\begin{dfn} \label{dfn:ca}
A pair $(\mathfrak{g}_0,\mathfrak{q})$, 
consisting of a 
semisimple real Lie algebra $\mathfrak{g}_0$ and of a complex
parabolic Lie subalgebra $\mathfrak{q}$ of its complexification
$\mathfrak{g}$ is a \emph{parabolic $CR$ algebra}.
The corresponding ${{\mathbf{G}_0}}$-orbit $M$ in 
${X}=\mathbf{G}/\mathbf{Q}$
is said to be a \emph{parabolic $CR$ manifold}.
\end{dfn}
We summarize the results of
\cite[p.491]{AMN06} by stating the following\,:
\begin{prop}[Decomposition of the isotropy subalgebra] \label{prop:ca}
Let $(\mathfrak{g}_0,\mathfrak{q})$ be a parabolic $CR$ algebra,
and \, $\mathfrak{i}_0=\mathfrak{q}\cap\mathfrak{g}_0$ the corresponding
isotropy subalgebra.
Then
$\mathfrak{i}_0$ 
contains a Cartan subalgebra $\mathfrak{h}_0$ of
$\mathfrak{g}_0$. \par
If $\mathfrak{h}_0$ is any Cartan subalgebra of $\mathfrak{g}_0$
contained in $\mathfrak{i}_0$, there is a Cartan involution
$\vartheta:\mathfrak{g}_0\to\mathfrak{g}_0$, 
with $\vartheta(\mathfrak{h}_0)=\mathfrak{h}_0$,
yielding 
a decomposition
\begin{equation}\label{eq:ca}
\mathfrak{i}_0=\mathfrak{n}_0\oplus\mathfrak{l}_0=\mathfrak{n}_0\oplus
\mathfrak{s}_0\oplus\mathfrak{z}_0
\end{equation} 
such that\,:
\begin{enumerate}
\item
$\mathfrak{n}_0$ is the nilpotent ideal 
of $\mathfrak{i}_0$, consisting of the elements $Y\in\mathfrak{i}_0$
for which $\mathrm{ad}_{\mathfrak{g}_0}(Y)$
is nilpotent;
\item
$\mathfrak{l}_0=\mathfrak{s}_0\oplus\mathfrak{z}_0$ is reductive;
\item
$\mathfrak{z}_0\subset\mathfrak{h}_0$
is the center of $\mathfrak{l}_0$ and
$\mathfrak{s}_0=[\mathfrak{l}_0,\mathfrak{l}_0]$ its semisimple ideal;
\item $\mathfrak{n}_0=[\mathfrak{z}_0,\mathfrak{n}_0]=
[\mathfrak{z}_0,\mathfrak{i}_0]$;
\item $\mathfrak{l}_0=\mathfrak{i}_0\cap\vartheta(\mathfrak{i}_0)$
is a $\vartheta$-invariant Lie subalgebra of $\mathfrak{g}_0$,
and also
$\mathfrak{z}_0$ and $\mathfrak{s}_0$ are $\vartheta$-invariant;
\item $\mathfrak{s}_0$ and $\mathfrak{z}_0$ are orthogonal
for the Killing form of $\mathfrak{g}_0$; 
\item the complexifications $\mathfrak{n}$ of $\mathfrak{n}_0$ 
and $\mathfrak{l}$ of $\mathfrak{l}_0$ are:
\begin{equation}
  \label{eq:ck}
  \mathfrak{n}=\mathfrak{q}^n\cap\bar{\mathfrak{q}}+
\bar{\mathfrak{q}}^n\cap\mathfrak{q},\quad \mathfrak{l}=\mathfrak{q}^r\cap
\bar{\mathfrak{q}}^r.\qed
\end{equation}
\end{enumerate}
\end{prop}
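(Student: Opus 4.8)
The plan is to build the decomposition \eqref{eq:ca} directly from the root-space description of $\mathfrak{q}$ in \S\ref{sec:b}, exploiting the fact that, by Proposition \ref{prop:ca}'s first assertion, $\mathfrak{i}_0$ contains a Cartan subalgebra $\mathfrak{h}_0$ of $\mathfrak{g}_0$; then $\mathfrak{h}=\mathfrak{h}_0^{\mathbb{C}}$ is a Cartan subalgebra of $\mathfrak{g}$ contained in $\mathfrak{q}$ and stable under the conjugation $\sigma$ of $\mathfrak{g}$ with respect to $\mathfrak{g}_0$. First I would fix a Weyl chamber $C$ fit for $\mathcal{Q}$ (Definition \ref{dfn:ba}), write $\mathfrak{q}=\mathfrak{q}^r\oplus\mathfrak{q}^n$ with the nilradical $\mathfrak{q}^n=\mathfrak{q}^n_\Phi$ and reductive complement $\mathfrak{q}^r=\mathfrak{q}^r_\Phi$ as in \eqref{eq:bi}--\eqref{eq:bj}, and observe that $\bar{\mathfrak{q}}=\sigma(\mathfrak{q})$ is again parabolic with nilradical $\bar{\mathfrak{q}}^n$ and reductive part $\bar{\mathfrak{q}}^r$. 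Define $\mathfrak{n}$ and $\mathfrak{l}$ by \eqref{eq:ck}; since the right-hand sides are $\sigma$-stable (each summand is carried to the other by $\sigma$, as $\sigma$ swaps $\mathfrak{q}$ and $\bar{\mathfrak{q}}$), they are complexifications of real subalgebras $\mathfrak{n}_0=\mathfrak{n}\cap\mathfrak{g}_0$ and $\mathfrak{l}_0=\mathfrak{l}\cap\mathfrak{g}_0$ of $\mathfrak{i}_0=\mathfrak{q}\cap\mathfrak{g}_0$.

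The bulk of the argument is the purely combinatorial verification, at the level of $\mathfrak{h}$-root spaces, that $\mathfrak{i}_0=\mathfrak{n}_0\oplus\mathfrak{l}_0$ and that the summands have the asserted properties. Writing $\mathcal{Q}$ and $\bar{\mathcal{Q}}=-\sigma^*(\mathcal{Q})$ for the root sets of $\mathfrak{q}$ and $\bar{\mathfrak{q}}$, the roots of $\mathfrak{q}\cap\bar{\mathfrak{q}}$ are $\mathcal{Q}\cap\bar{\mathcal{Q}}$, and one checks that $\mathcal{Q}\cap\bar{\mathcal{Q}}$ splits as $(\mathcal{Q}^n\cap\bar{\mathcal{Q}})\cup(\bar{\mathcal{Q}}^n\cap\mathcal{Q})\cup(\mathcal{Q}^r\cap\bar{\mathcal{Q}}^r)$, with the first two sets giving a nilpotent ideal and the last a reductive subalgebra; this is where one uses that a root cannot be simultaneously $\succ 0$ and $\prec 0$, that $\mathcal{Q}^n$ is closed under addition with $\mathcal{Q}$, and the analogous facts for $\bar{\mathcal{Q}}$. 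That $\mathfrak{n}_0$ is precisely the set of $\mathrm{ad}$-nilpotent elements of $\mathfrak{i}_0$ (item (1)) follows because $\mathfrak{l}_0$ is reductive in $\mathfrak{g}_0$ and any $\mathrm{ad}$-nilpotent element of a reductive algebra central-plus-semisimple is central-semisimple part zero, hence zero; the containment $\mathfrak{n}_0\subseteq\{\text{nilpotents}\}$ is immediate from the root description. Items (2), (3), (7) are then read off, and item (4) — the identities $\mathfrak{n}_0=[\mathfrak{z}_0,\mathfrak{n}_0]=[\mathfrak{z}_0,\mathfrak{i}_0]$ — follows by exhibiting, for each root $\alpha$ occurring in $\mathfrak{n}$, an element $H\in\mathfrak{z}=\mathfrak{z}(\mathfrak{l})$ with $\alpha(H)\neq 0$, which holds because the roots in $\mathcal{Q}^n\cap\bar{\mathcal{Q}}$ are exactly those on which $\mathfrak{z}$ acts nontrivially (this is the counterpart of \eqref{eq:bl} for the pair $(\mathfrak{l},\mathfrak{n})$).

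For the Cartan involution part I would invoke the standard fact that, given the $\sigma$-stable Cartan subalgebra $\mathfrak{h}_0$ of $\mathfrak{g}_0$, there is a Cartan involution $\vartheta$ of $\mathfrak{g}_0$ with $\vartheta(\mathfrak{h}_0)=\mathfrak{h}_0$ (any two Cartan subalgebras are conjugate to one adapted to a chosen maximal compact, and $\vartheta$ can be arranged to preserve a given one — see \cite{Kn:2002}). Such a $\vartheta$ acts on $\mathcal{R}(\mathfrak{g},\mathfrak{h})$ by $-\sigma^*$ up to the compact Cartan normalization, so $\vartheta(\mathfrak{g}^\alpha)=\mathfrak{g}^{-\bar\alpha}$; hence $\vartheta$ maps $\mathfrak{q}^r\cap\bar{\mathfrak{q}}^r$ to itself (its root set $\mathcal{Q}^r\cap\bar{\mathcal{Q}}^r$ is symmetric under $\alpha\mapsto-\bar\alpha$ because $\mathcal{Q}^r$ is already $-\mathrm{id}$-symmetric and $\sigma^*$ preserves it), giving $\mathfrak{l}=\mathfrak{q}^r\cap\bar{\mathfrak{q}}^r=\mathfrak{i}\cap\vartheta(\mathfrak{i})$ and $\vartheta$-invariance of $\mathfrak{l}_0$; then $\vartheta$-invariance of $\mathfrak{z}_0$ and $\mathfrak{s}_0$ is automatic since $\vartheta$ is a Lie algebra automorphism, and item (6), orthogonality of $\mathfrak{z}_0$ and $\mathfrak{s}_0$, is the general fact that the center and derived algebra of a reductive Lie algebra are Killing-orthogonal. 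I expect the main obstacle to be bookkeeping: one must check carefully that the four displayed root-set identities really do hold for an \emph{arbitrary} fit chamber $C$ and an arbitrary $\sigma$-stable $\mathfrak{h}$ (not merely a maximally noncompact one as in \cite{AMN06}), and in particular that $\mathfrak{n}$ as defined in \eqref{eq:ck} is an \emph{ideal} of $\mathfrak{i}$ and not just a subalgebra — this is the step where the interplay between the $\prec 0$ condition defining $\mathcal{Q}^n$ and the $\sigma$-twisted version for $\bar{\mathcal{Q}}^n$ has to be handled with care, using that $\mathrm{supp}$ is additive under root addition (Definition \ref{dfn:bb}).
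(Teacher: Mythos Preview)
The paper does not actually prove this proposition: it is stated with a \qed\ and prefaced by ``We summarize the results of \cite[p.491]{AMN06},'' so the reference \cite{AMN06} is the proof. Your root-space approach is the natural one and is almost certainly what is done there; the combinatorics you outline (the partition of $\mathcal{Q}\cap\bar{\mathcal{Q}}$, the ideal property of $\mathfrak{n}$ via closedness of $\mathcal{Q}^n$ under addition with $\mathcal{Q}$, the identification $\mathfrak{i}\cap\vartheta(\mathfrak{i})=\mathfrak{q}^r\cap\bar{\mathfrak{q}}^r$ from $\vartheta^*(\alpha)=-\bar\alpha$) are all correct.

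Two genuine gaps, though. First, you invoke ``Proposition~\ref{prop:ca}'s first assertion'' to get the Cartan subalgebra $\mathfrak{h}_0\subset\mathfrak{i}_0$, which is circular. You must prove this separately: one argues that $\mathfrak{q}\cap\bar{\mathfrak{q}}$ contains a $\sigma$-stable Cartan subalgebra of $\mathfrak{g}$ (for instance because any $\sigma$-stable solvable subalgebra is contained in a $\sigma$-stable Borel, or because the $\sigma$-stable reductive subalgebra $\mathfrak{q}^r\cap\bar{\mathfrak{q}}^r$ already contains one), and then $\mathfrak{h}_0=\mathfrak{h}\cap\mathfrak{g}_0$ works.

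Second, your argument for item~(1) is wrong as written. You claim that every $\mathrm{ad}_{\mathfrak{g}_0}$-nilpotent element of $\mathfrak{i}_0$ lies in $\mathfrak{n}_0$, because $\mathfrak{l}_0$ is reductive. But $\mathfrak{s}_0=[\mathfrak{l}_0,\mathfrak{l}_0]$ is semisimple and typically contains nonzero nilpotent elements, which are then $\mathrm{ad}_{\mathfrak{g}_0}$-nilpotent (nilpotents stay nilpotent in any representation). For example, with $\mathfrak{g}_0=\mathfrak{sl}(4,\mathbb{R})$ and $\mathfrak{q}$ the $(2,2)$ block-upper-triangular parabolic, any strictly upper-triangular matrix inside one of the diagonal $2\times 2$ blocks lies in $\mathfrak{l}_0\setminus\{0\}$ and is ad-nilpotent. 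The statement of item~(1) should be read as: $\mathfrak{n}_0$ is the nilradical (the maximal nilpotent ideal), and its elements happen to be $\mathrm{ad}_{\mathfrak{g}_0}$-nilpotent --- not that $\mathfrak{n}_0$ equals the \emph{set} of all ad-nilpotents in $\mathfrak{i}_0$. What you need to prove is that $\mathfrak{n}_0$ is the maximal nilpotent ideal, which follows once you know $\mathfrak{l}_0$ is a reductive complement.
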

\begin{dfn}\label{dfn:cc}
Let $(\mathfrak{g}_0,\mathfrak{q})$
be a parabolic $CR$ algebra. A pair $(\vartheta,\mathfrak{h}_0)$,
consisting of a Cartan involution $\vartheta$ of $\mathfrak{g}_0$
and of a $\vartheta$-invariant
Cartan subalgebra $\mathfrak{h}_0$ of $\mathfrak{i}_0$
will be said to be
a Cartan pair \emph{adapted} to $(\mathfrak{g}_0,\mathfrak{q})$.
\end{dfn}
We have the following\,:
\begin{prop}[Chevalley decomposition of the isotropy]\label{prop:cb}
Keep the notation introduced above.
The isotropy subgroup $\mathbf{I}_0$ is the closed real
semi-algebraic
subgroup of ${{\mathbf{G}_0}}$\,:
\begin{equation}\label{eq:cb}
\mathbf{I}_0=\mathbf{N}_{{{\mathbf{G}_0}}}(\mathfrak{q})=\{
g\in{{\mathbf{G}_0}}\mid
\mathrm{Ad}_{\mathfrak{g}}(g)(\mathfrak{q})=
\mathfrak{q}\}\,.\end{equation}
The isotropy subgroup $\mathbf{I}_0$ admits a Chevalley decomposition
\begin{equation}\label{eq:cc}
\mathbf{I}_0=\mathbf{L}_0\ltimes\mathbf{N}_0
\end{equation}
where\,:
\begin{enumerate}
\item
$\mathbf{N}_0$ is a unipotent, closed, connected, and simply connected
normal
subgroup of $\mathbf{I}_0$, with Lie algebra $\mathfrak{n}_0$;
\item
$\mathbf{L}_0$ is a reductive Lie subgroup,  
with Lie algebra $\mathfrak{l}_0$,
and is the centralizer of $\mathfrak{z}_0$ in ${{\mathbf{G}_0}}$\,:
\begin{equation}\label{eq:cd}
\mathbf{L}_0=\mathbf{Z}_{{{\mathbf{G}_0}}}(\mathfrak{z}_0)=
\{g\in{{\mathbf{G}_0}}\mid \mathrm{Ad}_{\mathfrak{g}_0}(g)(H)=
H\quad\forall{H}\in\mathfrak{z}_0\}\, .
\end{equation}
\end{enumerate}
\end{prop}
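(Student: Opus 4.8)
The plan is to derive the Chevalley decomposition of $\mathbf{I}_0$ from the decomposition \eqref{eq:ca} of its Lie algebra $\mathfrak{i}_0$ (Proposition \ref{prop:ca}), exploiting throughout the fact that $\mathbf{I}_0$ is a \emph{real semi-algebraic group}. First I would establish \eqref{eq:cb}: the inclusion $\mathbf{I}_0\subset\mathbf{N}_{{{\mathbf{G}_0}}}(\mathfrak{q})$ is clear since $\mathbf{I}_0=\mathbf{Q}\cap{{\mathbf{G}_0}}$ and $\mathbf{Q}$ normalizes $\mathfrak{q}$ by \eqref{eq:ba}; conversely, if $g\in{{\mathbf{G}_0}}$ normalizes $\mathfrak{q}$, then $g\in\mathbf{N}_{\mathbf{G}}(\mathfrak{q})=\mathbf{Q}$ by the self-normalizing property of parabolics recalled in \S\ref{sec:b}, hence $g\in\mathbf{Q}\cap{{\mathbf{G}_0}}=\mathbf{I}_0$. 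That $\mathbf{I}_0$ is closed and semi-algebraic follows since it is cut out inside the semi-algebraic group ${{\mathbf{G}_0}}$ by the polynomial conditions $\mathrm{Ad}_{\mathfrak{g}}(g)(\mathfrak{q})=\mathfrak{q}$.

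Next I would define $\mathbf{N}_0$ as the connected, simply connected unipotent subgroup of ${{\mathbf{G}_0}}$ with Lie algebra $\mathfrak{n}_0$ (this exists and is closed because $\mathfrak{n}_0$ is a nilpotent ideal of $\mathfrak{i}_0$ consisting of $\mathrm{ad}$-nilpotent elements, so $\exp$ is a diffeomorphism onto a closed subgroup), and define $\mathbf{L}_0=\mathbf{Z}_{{{\mathbf{G}_0}}}(\mathfrak{z}_0)$ as in \eqref{eq:cd}. The group $\mathbf{L}_0$ is semi-algebraic, hence has finitely many connected components, and its Lie algebra is the centralizer of $\mathfrak{z}_0$ in $\mathfrak{g}_0$; I would verify this centralizer is exactly $\mathfrak{l}_0$, using point (4) of Proposition \ref{prop:ca} (namely $\mathfrak{n}_0=[\mathfrak{z}_0,\mathfrak{i}_0]$, so no nonzero element of $\mathfrak{n}_0$ centralizes $\mathfrak{z}_0$) together with the fact that $\mathfrak{z}_0$ is central in $\mathfrak{l}_0$ and $\mathfrak{z}_0\subset\mathfrak{h}_0$ is the full intersection of $\mathrm{ker}(\mathrm{ad}(\cdot))|_{\mathfrak{i}_0}$-type conditions. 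One must check $\mathbf{L}_0\subset\mathbf{I}_0$: since $\mathfrak{z}_0\subset\mathfrak{h}_0\subset\mathfrak{i}_0$ and $\mathfrak{q}^r=\mathbf{Z}_{\mathfrak{g}}(\mathfrak{c})$ with $\mathfrak{z}_0\subset\mathfrak{c}$ (comparing \eqref{eq:bl} with the reductive part in \eqref{eq:ca}), centralizing $\mathfrak{z}_0$ forces one to preserve $\mathfrak{q}$, whence $\mathbf{L}_0\subset\mathbf{N}_{{{\mathbf{G}_0}}}(\mathfrak{q})=\mathbf{I}_0$.

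Then I would assemble the semidirect product. The Lie algebra identity $\mathfrak{i}_0=\mathfrak{n}_0\oplus\mathfrak{l}_0$ gives that the product map $\mathbf{L}_0\times\mathbf{N}_0\to\mathbf{I}_0$ is a submersion at the identity, hence its image contains the identity component $\mathbf{I}_0^0$; normality of $\mathbf{N}_0$ in $\mathbf{I}_0$ (it is the unique maximal connected unipotent normal subgroup, characterized intrinsically as the unipotent radical) makes $\mathbf{L}_0\ltimes\mathbf{N}_0$ a subgroup, and $\mathbf{L}_0\cap\mathbf{N}_0$ is a discrete subgroup of the simply connected unipotent group $\mathbf{N}_0$ with trivial Lie algebra, hence trivial. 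To finish surjectivity onto all of $\mathbf{I}_0$ — not just the identity component — I would argue that $\mathbf{L}_0$ already meets every connected component of $\mathbf{I}_0$: this is the Chevalley/Mostow splitting for semi-algebraic (or real algebraic) groups, i.e. $\mathbf{I}_0\to\mathbf{I}_0/\mathbf{N}_0$ admits a section whose image is a reductive (in the sense of the footnote to Proposition \ref{prop:bc}) subgroup, and any such section must coincide with the full centralizer $\mathbf{Z}_{{{\mathbf{G}_0}}}(\mathfrak{z}_0)$ by \eqref{eq:cd} since $\mathfrak{z}_0$ is the center of the reductive quotient and $\mathbf{N}_0$ acts on it by the exponential of $[\mathfrak{z}_0,\mathfrak{n}_0]$-translations, pinning down $\mathbf{L}_0$ uniquely. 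The main obstacle is precisely this component-counting step: one must rule out the possibility that some connected component of $\mathbf{I}_0$ fails to be hit by $\mathbf{L}_0\cdot\mathbf{N}_0$, which is where the semi-algebraicity of $\mathbf{I}_0$ (finiteness of components) and the rigidity of the unipotent radical are indispensable, whereas over a general real Lie group no such clean splitting need hold.
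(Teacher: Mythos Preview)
Your outline is reasonable but differs from the paper's route and has two concrete gaps.

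First, the inclusion you use to get $\mathbf{L}_0\subset\mathbf{I}_0$ is backwards: since $\mathfrak{l}=\mathfrak{q}^r\cap\bar{\mathfrak{q}}^r\subset\mathfrak{q}^r$ and both contain $\mathfrak{h}$, it is the center of the \emph{smaller} algebra that is larger, so $\mathfrak{c}\subset\mathfrak{z}$ (and likewise $\bar{\mathfrak{c}}\subset\mathfrak{z}$), not $\mathfrak{z}_0\subset\mathfrak{c}$. With the correct inclusion the argument still runs: centralizing $\mathfrak{z}$ forces one to centralize both $\mathfrak{c}$ and $\bar{\mathfrak{c}}$, hence to lie in $\mathbf{Q}^r\cap\bar{\mathbf{Q}}^r$, which gives both $\mathbf{L}_0\subset\mathbf{I}_0$ and $\mathrm{Lie}(\mathbf{L}_0)=\mathfrak{l}_0$. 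Note that your appeal to item~(4) of Proposition~\ref{prop:ca} only controls the centralizer of $\mathfrak{z}_0$ \emph{inside $\mathfrak{i}_0$}, not in all of $\mathfrak{g}_0$, so it does not by itself rule out $\mathfrak{z}_{\mathfrak{g}_0}(\mathfrak{z}_0)\supsetneq\mathfrak{l}_0$.

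Second, your surjectivity step (that $\mathbf{L}_0$ meets every component of $\mathbf{I}_0$) is left to an unspecified ``Chevalley/Mostow splitting'' and a heuristic about $\mathbf{N}_0$ acting by translations; this is the crux of the proposition and needs a genuine argument. The paper handles both issues simultaneously by a different device: it constructs an auxiliary complex parabolic $\mathfrak{q}'=\mathfrak{q}^n+(\mathfrak{q}\cap\bar{\mathfrak{q}})\subset\mathfrak{q}$ and checks that $\mathfrak{q}'{}^r=\mathfrak{q}^r\cap\bar{\mathfrak{q}}^r=\mathfrak{l}$, so that the center of $\mathfrak{q}'{}^r$ is exactly $\mathfrak{z}$. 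Then Proposition~\ref{prop:bc}, applied to $\mathbf{Q}'$, gives $\mathbf{Q}'{}^r=\mathbf{Z}_{\mathbf{G}}(\mathfrak{z})$, whence $\mathbf{L}_0=\mathbf{Q}'{}^r\cap\mathbf{G}_0$ and \eqref{eq:cd} follow for free. The decomposition $\mathbf{I}_0=\mathbf{N}_0\cdot\mathbf{L}_0$ is then proved pointwise: for $g\in\mathbf{I}_0$, the subalgebra $\mathrm{Ad}(g)(\mathfrak{l}_0)$ is another reductive complement of $\mathfrak{n}_0$ in $\mathfrak{i}_0$, hence is $\mathrm{Ad}(\mathbf{N}_0)$-conjugate to $\mathfrak{l}_0$; the resulting $g_r=g_n^{-1}g$ normalizes $\mathfrak{q}$, $\bar{\mathfrak{q}}$, $\mathfrak{q}^n$ and $\mathfrak{q}'{}^r$, so $g_r\in\mathbf{Q}'\cap\mathbf{N}_{\mathbf{G}}(\mathfrak{q}'{}^r)=\mathbf{Q}'{}^r$ by the last clause of Proposition~\ref{prop:bc}. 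This concrete argument replaces your black-box splitting and makes the characterization of $\mathbf{L}_0$ as a centralizer a consequence rather than a definition requiring separate verification.
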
 
\begin{proof} Formula \eqref{eq:cb} holds because $\mathbf{Q}$ is 
the normalizer of $\mathfrak{q}$ in $\mathbf{G}$.\par
Fix a Cartan pair $(\vartheta,\mathfrak{h}_0)$ adapted to
$(\mathfrak{g}_0,\mathfrak{q})$. We consider the complexification
$\mathfrak{h}$ of $\mathfrak{h}_0$ and use the notation
of \eqref{eq:bd}-\eqref{eq:bj}. Let us
construct a parabolic $\mathbf{Q}'\subset\mathbf{Q}$ 
whose reductive part ${\mathbf{Q}'}^r$ is  a complexification
of $\mathbf{L}_0$. To this aim we set
\begin{equation*}
\mathfrak{q}'=\mathfrak{q}^n
\oplus\left(\mathfrak{q}^r\cap\bar{\mathfrak{q}}\right).
\end{equation*}
We claim that
$\mathfrak{q}'$  is a parabolic Lie subalgebra
of $\mathfrak{g}$. 
It contains the Cartan subalgebra $\mathfrak{h}$, because 
$\mathfrak{h}\subset\mathfrak{q}^r
\cap\bar{\mathfrak{q}}^r$. Since $\mathfrak{q}'$
is clearly a complex Lie subalgebra of $\mathfrak{q}$, 
to show that $\mathfrak{q}'$
is parabolic it suffices to verify that, for each $\alpha\in\mathcal{R}$
(the root system of $\mathfrak{g}$ with respect to
$\mathfrak{h}$)
either $\mathfrak{g}^{\alpha}$ or
$\mathfrak{g}^{-\alpha}$ is contained in $\mathfrak{q}'$.
This is obvious if $\{\alpha,-\alpha\}\cap\mathcal{Q}^n\neq
\emptyset$. We need only to consider roots $\alpha$ with
$\pm\alpha\in\mathcal{Q}^r$, and then observe that
either  
$\mathfrak{g}^{\alpha}$ or
$\mathfrak{g}^{-\alpha}$ is contained in 
$\bar{\mathfrak{q}}$, because $\bar{\mathfrak{q}}$ is parabolic
and contains $\mathfrak{h}$. Moreover, one easily verifies
that
\begin{equation*}
  \mathfrak{q}'{}^{\,r}=\bar{\mathfrak{q}}'{}^{\,r}\quad\text{and}\quad
\mathfrak{q}'=\mathfrak{q}^n+\left(\mathfrak{q}\cap
\bar{\mathfrak{q}}\right).
\end{equation*}
Hence, the center $\mathfrak{c}'$ of ${\mathfrak{q}'}^{\,r}$
coincides with
the complexification $\mathfrak{z}$ of $\mathfrak{z}_0$.
\par\smallskip
If $g\in\mathbf{I}_0$, then $\mathrm{Ad}_{\mathfrak{g}_0}(g)(\mathfrak{l}_0)$
is a reductive complement of $\mathfrak{n}_0$ in $\mathfrak{i}_0$.
Since all reductive complements of $\mathfrak{n}_0$ 
in $\mathfrak{i}_0$ 
are conjugated
by an inner automorphism from 
$\mathrm{Ad}_{\mathfrak{i}_0}(\mathbf{N}_0)$, we can find a $g_n\in\mathbf{N}_0$
such that $\mathrm{Ad}_{\mathfrak{i}_0}(g_n^{-1}g)(\mathfrak{l}_0)=
\mathfrak{l}_0$. Consider the element $g_r=g_n^{-1}g$. 
We have\,:
\begin{align*} 
&\mathrm{Ad}_{\mathfrak{g}_0}(g_r)(\mathfrak{g}_0)=\mathfrak{g}_0,&
\mathrm{Ad}_{{\mathfrak{g}}}(g_r)(\mathfrak{q})&=
\mathfrak{q},\\
&\mathrm{Ad}_{\mathfrak{g}}(g_r)(\mathfrak{q}^n)=
\mathfrak{q}^n ,&
\mathrm{Ad}_{\mathfrak{g}}(g_r)(\bar{\mathfrak{q}})&=
\bar{\mathfrak{q}},
\end{align*}
because $g_r\in\mathbf{Q}\cap\bar{\mathbf{Q}}$.
Thus $\mathrm{Ad}_{\mathfrak{g}}(g_r)(\mathfrak{q}')=\mathfrak{q}'$.
Moreover, $\mathrm{Ad}_{\mathfrak{g}}(g_r)({\mathfrak{q}^r})=
\mathfrak{q}^r$
and 
$\mathrm{Ad}_{\mathfrak{g}}(g_r)(\bar{\mathfrak{q}}^r)=
\bar{\mathfrak{q}}^r$. Since ${\mathfrak{q}'}^{\,r}=\mathfrak{q}^r\cap
\bar{\mathfrak{q}}^r$, we obtain that
$\mathrm{Ad}_{\mathfrak{g}}(g_r)({\mathfrak{q}'}^{\,r})=
{\mathfrak{q}'}^{\,r}$. Hence, 
 by Proposition \ref{prop:bc}, 
$g_r$ belongs to ${\mathbf{Q}'}^r\cap{\mathbf{I}_0}$,
and the statement follows.
Note that in fact we obtain
$\mathbf{L}_0={\mathbf{Q}'}^r\cap\mathbf{G}_0$,
so that \eqref{eq:cd}
is a consequence of Proposition \ref{prop:bc}.
\end{proof}
\begin{cor} \label{cor:ce}
We keep the notation and the assumptions 
of Propositions \ref{prop:ca} and \ref{prop:cb}. 
Let $(\vartheta,\mathfrak{h}_0)$ be a Cartan pair adapted to
$(\mathfrak{g}_0,\mathfrak{q})$ and
\begin{equation}\label{eq:ce}
\mathfrak{g}_0=\mathfrak{k}_0\oplus\mathfrak{p}_0
\quad\text{and}\quad
{{\mathbf{G}_0}}=\mathbf{K}_0\times\exp({\mathfrak{p}}_0)
\end{equation} 
the Cartan decompositions of $\mathfrak{g}_0$ and $\mathbf{G}_0$,
respectively,
corresponding to $\vartheta$. Then\,:
\begin{align}\label{eq:cf}
&\mathfrak{l}_0=\mathfrak{k}_{00}\oplus\mathfrak{p}_{00}\,,
&&\text{with}\quad
\mathfrak{k}_{00}=\mathfrak{k}_0\cap\mathfrak{l}_0\,,\quad
\mathfrak{p}_{00}=\mathfrak{p}_0\cap\mathfrak{l}_0\,,\\
\label{eq:cg}
&{{\mathbf{L}_0}}=\mathbf{K}_{00}\times\exp(\mathfrak{p}_{00})\,,
&&\text{with}\quad
\mathbf{K}_{00}=\mathbf{K}_0\cap\mathbf{L}_0\,.
\end{align}
\par
Let $\mathbf{S}_0$ be the analytic Lie subgroup of ${{\mathbf{G}_0}}$
generated by $\mathfrak{s}_0=[\mathfrak{l}_0,\mathfrak{l}_0]$. 
Then $\mathbf{S}_0$ is a closed Lie subgroup of $\mathbf{L}_0$
and has a finite center.
\end{cor}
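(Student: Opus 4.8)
The plan is to deduce \eqref{eq:cf}--\eqref{eq:cg} from the $\vartheta$-invariance of the pieces in \eqref{eq:ca} together with the global Cartan decomposition \eqref{eq:ce}, and then to obtain the properties of $\mathbf{S}_0$ by realizing it inside a complex semisimple group. For \eqref{eq:cf} I would simply observe that, $\mathfrak{l}_0$ being $\vartheta$-invariant by Proposition \ref{prop:ca}(5), one has at once $\mathfrak{l}_0=(\mathfrak{l}_0\cap\mathfrak{k}_0)\oplus(\mathfrak{l}_0\cap\mathfrak{p}_0)=\mathfrak{k}_{00}\oplus\mathfrak{p}_{00}$. For \eqref{eq:cg} I would lift $\vartheta$ to the corresponding global Cartan involution $\Theta$ of $\mathbf{G}_0$ underlying \eqref{eq:ce}; since $\mathfrak{z}_0$ is $\vartheta$-invariant and $\mathbf{L}_0=\mathbf{Z}_{\mathbf{G}_0}(\mathfrak{z}_0)$ by \eqref{eq:cd}, the group $\mathbf{L}_0$ is $\Theta$-stable. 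Then, writing $g=k\exp(Y)\in\mathbf{L}_0$ with $k\in\mathbf{K}_0$ and $Y\in\mathfrak{p}_0$, from $\Theta(g)=k\exp(-Y)\in\mathbf{L}_0$ one gets $\exp(2Y)=\Theta(g)^{-1}g\in\mathbf{Z}_{\mathbf{G}_0}(\mathfrak{z}_0)$; as $\mathrm{ad}(Y)$ is self-adjoint for $-B_{\mathfrak{g}_0}(\,\cdot\,,\vartheta(\,\cdot\,))$, the relation $e^{2\,\mathrm{ad}(Y)}|_{\mathfrak{z}_0}=\mathrm{id}$ forces $[Y,\mathfrak{z}_0]=0$, i.e.\ $Y$ centralizes $\mathfrak{z}_0$ and hence $Y\in\mathfrak{p}_0\cap\mathfrak{l}_0=\mathfrak{p}_{00}$ and $k=g\exp(-Y)\in\mathbf{K}_0\cap\mathbf{L}_0=\mathbf{K}_{00}$. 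The reverse inclusion being clear, \eqref{eq:cg} follows, the product being direct because it restricts the global one in \eqref{eq:ce}.

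To handle $\mathbf{S}_0$ the idea is to complexify. I would take the complex parabolic subgroup $\mathbf{Q}'$ constructed in the proof of Proposition \ref{prop:cb}, for which $\mathbf{L}_0={\mathbf{Q}'}^r\cap\mathbf{G}_0$ and ${\mathbf{Q}'}^r$ has Lie algebra $\mathfrak{l}={\mathfrak{q}'}^{\,r}$, the complexification of $\mathfrak{l}_0$ by \eqref{eq:ck}; hence $\mathfrak{s}:=[\mathfrak{l},\mathfrak{l}]$ is the complexification of $\mathfrak{s}_0$. Since ${\mathbf{Q}'}^r$ is a reductive complex group (Proposition \ref{prop:bc}), its derived subgroup $\mathbf{S}:=[{\mathbf{Q}'}^r,{\mathbf{Q}'}^r]$ is a connected semisimple \emph{complex} Lie group, closed in $\mathbf{G}$, with Lie algebra $\mathfrak{s}$ and with finite center. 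As $\mathfrak{s}\cap\mathfrak{g}_0=\mathfrak{s}_0$, the intersection $\mathbf{S}\cap\mathbf{G}_0$ is a closed subgroup of $\mathbf{G}_0$ with Lie algebra $\mathfrak{s}_0$, so $\mathbf{S}_0$, its identity component, is closed in $\mathbf{G}_0$ and a fortiori in $\mathbf{L}_0$. Finally, any $z\in Z(\mathbf{S}_0)$ has $\mathrm{Ad}(z)=\mathrm{id}$ on $\mathfrak{s}_0$, hence on its complexification $\mathfrak{s}$, so that $z\in\mathbf{Z}_{\mathbf{S}}(\mathfrak{s})=Z(\mathbf{S})$; thus $Z(\mathbf{S}_0)\subseteq Z(\mathbf{S})$ is finite.

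The hard part is precisely the closedness and the finiteness of the center of $\mathbf{S}_0$. A direct attempt inside $\mathbf{G}_0$ runs in circles: one cannot pass from $\exp(2Y)\in\mathbf{S}_0$ to $\exp(Y)\in\mathbf{S}_0$ without already knowing $\mathbf{S}_0$ to be closed, and one cannot discard the split directions of the center $\mathfrak{z}_0$ without already knowing $Z(\mathbf{S}_0)$ to be finite. The way out is to use the algebraic realization $\mathbf{S}=[{\mathbf{Q}'}^r,{\mathbf{Q}'}^r]\subset\mathbf{G}$ furnished by the description of $\mathbf{L}_0$ in Proposition \ref{prop:cb}: there complex semisimplicity yields finiteness of the center for free, and derived groups of reductive algebraic groups are automatically closed.
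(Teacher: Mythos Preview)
Your argument is correct. For \eqref{eq:cf}--\eqref{eq:cg} you essentially unpack by hand the content of \cite[Proposition~7.25]{Kn:2002}, which is exactly what the paper invokes: since $\mathbf{L}_0=\mathbf{Z}_{\mathbf{G}_0}(\mathfrak{z}_0)$ is the centralizer of a $\vartheta$-stable subspace, that proposition immediately gives it a compatible Cartan decomposition. Your self-adjointness argument for $\mathrm{ad}(Y)$ is precisely the engine of that general result.

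For $\mathbf{S}_0$ the two routes genuinely differ. You pass to the complexification, realizing $\mathbf{S}_0$ as the identity component of $\mathbf{S}\cap\mathbf{G}_0$ where $\mathbf{S}=[{\mathbf{Q}'}^r,{\mathbf{Q}'}^r]$ is the derived subgroup of a complex reductive algebraic group, hence closed semisimple with finite center; this is clean and self-contained. The paper instead stays entirely inside $\mathbf{G}_0$ and simply cites \cite[Proposition~7.9]{Kn:2002}: an analytic subgroup of a \emph{linear} Lie group with semisimple Lie algebra is automatically closed and has finite center. So your final paragraph overstates the difficulty---the ``direct attempt inside $\mathbf{G}_0$'' is not circular once one remembers this general fact about linear semisimple groups; the complexification detour, while perfectly valid, is not actually needed. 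What your approach buys is independence from that black-box citation, at the cost of invoking the algebraic-group machinery behind Proposition~\ref{prop:bc} instead.
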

\begin{proof}
The fact that
the Lie subgroup $\mathbf{L}_0$ 
is reductive in the sense of \cite{Kn:2002} 
and the validity of the
decompositions \eqref{eq:cf} and \eqref{eq:cg}
are straightforward
consequences of \cite[Proposition 7.25]{Kn:2002},
because of the characterization of $\mathbf{L}_0$ 
given in \eqref{eq:cd} of Proposition \ref{prop:cb}.\par
The last statement follows because
$\mathbf{S}_0$ is an analytic subgroup of a linear group
and has a semisimple Lie algebra
(see e.g. \cite[Proposition 7.9]{Kn:2002}).
\end{proof}
We conclude this section by proving a theorem about the set
of connected components of the isotropy.
\begin{thm}\label{thm:cf}
Let $M$ be a $\mathbf{G}_0$-orbit, 
with corresponding parabolic
$CR$ algebra $(\mathfrak{g}_0,\mathfrak{q})$. Let
$(\vartheta,\mathfrak{h}_0)$ be an adapted Cartan pair. 
We keep the notation of Propositions \ref{prop:ca},
\ref{prop:cb} and Corollary \ref{cor:ce}.\par
Then the Cartan subgroup
\begin{equation}\label{eq:ch}
\mathbf{H}_0
=\mathbf{Z}_{{{\mathbf{G}_0}}}(\mathfrak{h}_0)=\{g\in{{\mathbf{G}_0}}\,|\,
\mathrm{Ad}_{\mathfrak{g}_0}(g)(H)=H,\quad\forall{H}\in\mathfrak{h}_0\}
\end{equation}
of ${{\mathbf{G}_0}}$
is contained in $\mathbf{L}_0$. 
\par
Let 
$\mathfrak{h}_0$ be maximally noncompact 
in $\mathfrak{i}_0=\mathfrak{q}\cap\mathfrak{g}_0$.
Then the intersection
$\mathfrak{t}_0=\mathfrak{h}_0\cap\mathfrak{s}_0$ is a maximally noncompact
Cartan subalgebra of $\mathfrak{s}_0$. The corresponding
Cartan subgroup $\mathbf{T}_0$ of $\mathbf{S}_0$ is given by\,:
\begin{equation}
\mathbf{T}_0=\mathbf{Z}_{\mathbf{S}_0}(\mathfrak{t}_0)=
\mathbf{Z}_{\mathbf{S}_0}(\mathfrak{h}_0).
\end{equation}
Denoting by $\pi_0$ the group of
connected components, we obtain\,:
\begin{equation}\label{eq:ci}
\pi_0(\mathbf{I}_0)\simeq\pi_0(\mathbf{K}_{00})\simeq
\pi_0(\mathbf{L}_0)
\simeq\pi_0\left(\frac{\;\mathbf{H}_0\;}{\;\mathbf{T}_0}\right)
\simeq \frac{\mathbf{H}_0\;\;}{\mathbf{T}_0\mathbf{H}^0_0}\,,
\end{equation}
where $\mathbf{H}^0_0$ is the connected component of the identity
of $\mathbf{H}_0$. In particular, 
$\pi_0(\mathbf{I}_0)$ is an Abelian group.
\end{thm}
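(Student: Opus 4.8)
The plan is to establish the chain of isomorphisms in \eqref{eq:ci} by moving from the isotropy $\mathbf{I}_0$ down to successively smaller groups, using at each step that the larger group deformation retracts onto (or fibers with connected fibers over) the smaller one. First I would dispose of the Cartan subgroup claim: since $\mathfrak{h}_0\subset\mathfrak{i}_0$ and $\mathfrak{z}_0\subset\mathfrak{h}_0$ by Proposition \ref{prop:ca}(3), any $g\in\mathbf{H}_0$ centralizes $\mathfrak{h}_0$ and hence a fortiori $\mathfrak{z}_0$, so $\mathbf{H}_0\subset\mathbf{Z}_{\mathbf{G}_0}(\mathfrak{z}_0)=\mathbf{L}_0$ by \eqref{eq:cd}. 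Next, the Chevalley decomposition \eqref{eq:cc} gives $\mathbf{I}_0=\mathbf{L}_0\ltimes\mathbf{N}_0$ with $\mathbf{N}_0$ connected, so $\pi_0(\mathbf{I}_0)\simeq\pi_0(\mathbf{L}_0)$; and the Cartan decomposition \eqref{eq:cg}, $\mathbf{L}_0=\mathbf{K}_{00}\times\exp(\mathfrak{p}_{00})$, with the Euclidean factor contractible, gives $\pi_0(\mathbf{L}_0)\simeq\pi_0(\mathbf{K}_{00})$. This yields the first three isomorphisms in \eqref{eq:ci} with little effort.

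The substantive part is the identification $\pi_0(\mathbf{L}_0)\simeq\pi_0(\mathbf{H}_0/\mathbf{T}_0)$ under the hypothesis that $\mathfrak{h}_0$ is maximally noncompact in $\mathfrak{i}_0$. I would first check that maximal noncompactness of $\mathfrak{h}_0$ in $\mathfrak{i}_0$ forces $\mathfrak{t}_0=\mathfrak{h}_0\cap\mathfrak{s}_0$ to be maximally noncompact in $\mathfrak{s}_0$: indeed $\mathfrak{h}_0=\mathfrak{z}_0\oplus\mathfrak{t}_0$ (using $\mathfrak{l}_0=\mathfrak{s}_0\oplus\mathfrak{z}_0$ and orthogonality \eqref{eq:ca}), $\mathfrak{z}_0$ is $\vartheta$-invariant with a fixed vector-part decomposition, and $\mathfrak{i}_0$ and $\mathfrak{l}_0$ share the same Cartan subalgebras (as $\mathfrak{n}_0$ is the nilradical and $\mathfrak{l}_0=\mathfrak{i}_0\cap\vartheta(\mathfrak{i}_0)$), so the split part of $\mathfrak{h}_0$ is as large as possible precisely when that of $\mathfrak{t}_0$ is. Then, working inside the reductive group $\mathbf{L}_0$ with $\mathbf{S}_0$ its (closed, by Corollary \ref{cor:ce}) semisimple analytic subgroup and $\mathbf{Z}_0^0=\exp\mathfrak{z}_0$ the connected center, I would use the standard structure theory of reductive groups in the sense of \cite{Kn:2002}: $\mathbf{L}_0=\mathbf{Z}_{\mathbf{L}_0}(\mathfrak{h}_0)\cdot\mathbf{L}_0^0$ when $\mathbf{H}_0$ is a maximally noncompact Cartan subgroup, i.e. the component group of $\mathbf{L}_0$ is already visible on $\mathbf{H}_0$ (this is \cite[Theorem 7.53 / Proposition 7.25 ff.]{Kn:2002}, the fact that a maximally split Cartan meets every component). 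Since $\mathbf{T}_0=\mathbf{Z}_{\mathbf{S}_0}(\mathfrak{h}_0)$ is connected-modulo... — more precisely, since $\mathbf{H}_0\cap\mathbf{L}_0^0$ is generated by $\mathbf{T}_0$, $\mathbf{Z}_0^0$ and the compact torus part, all of which lie in $\mathbf{T}_0\mathbf{H}_0^0$ — one gets $\pi_0(\mathbf{L}_0)\simeq\mathbf{H}_0/(\mathbf{H}_0\cap\mathbf{L}_0^0)$, and it remains to show $\mathbf{H}_0\cap\mathbf{L}_0^0=\mathbf{T}_0\mathbf{H}_0^0$, which identifies the last two terms of \eqref{eq:ci} and simultaneously gives the $\mathbf{H}_0/\mathbf{T}_0$ description.

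For the identification $\mathbf{T}_0=\mathbf{Z}_{\mathbf{S}_0}(\mathfrak{t}_0)=\mathbf{Z}_{\mathbf{S}_0}(\mathfrak{h}_0)$ I would note the inclusion $\mathbf{Z}_{\mathbf{S}_0}(\mathfrak{h}_0)\subset\mathbf{Z}_{\mathbf{S}_0}(\mathfrak{t}_0)$ is trivial, and the reverse follows because $\mathfrak{h}_0=\mathfrak{z}_0\oplus\mathfrak{t}_0$ and $\mathbf{S}_0$ commutes with $\mathfrak{z}_0$ (as $\mathfrak{z}_0$ is central in $\mathfrak{l}_0\supset\mathfrak{s}_0$ and $\mathbf{S}_0$ is connected). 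That $\mathbf{Z}_{\mathbf{S}_0}(\mathfrak{t}_0)$ is the Cartan subgroup of $\mathbf{S}_0$ attached to the maximally noncompact $\mathfrak{t}_0$ is again \cite{Kn:2002}. Finally, the abelianity of $\pi_0(\mathbf{I}_0)$ drops out from the isomorphism with the quotient of abelian groups $\mathbf{H}_0/(\mathbf{T}_0\mathbf{H}_0^0)$, since a Cartan subgroup $\mathbf{H}_0$ of $\mathbf{G}_0$ is abelian. The main obstacle I anticipate is the precise bookkeeping in the reductive group $\mathbf{L}_0$ showing that a maximally noncompact Cartan subgroup surjects onto $\pi_0(\mathbf{L}_0)$ with kernel exactly $\mathbf{T}_0\mathbf{H}_0^0$ — isolating which pieces of $\mathbf{H}_0\cap\mathbf{L}_0^0$ are $\mathbf{T}_0$ versus genuinely in $\mathbf{H}_0^0$ requires care with the compact-torus directions in $\mathfrak{h}_0$ that are not in $\mathfrak{t}_0$ but centralize $\mathfrak{s}_0$, and one must verify these contribute nothing new to the component group; here the maximal noncompactness of $\mathfrak{h}_0$ is exactly what is needed.
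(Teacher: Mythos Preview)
Your plan is essentially the same as the paper's: the first three isomorphisms via the Chevalley and Cartan decompositions, the inclusion $\mathbf{H}_0\subset\mathbf{L}_0$ from $\mathfrak{z}_0\subset\mathfrak{h}_0$, the surjection of $\mathbf{H}_0$ onto $\pi_0(\mathbf{L}_0)$ from the fact that a maximally noncompact Cartan subgroup meets every component (the paper cites \cite[Prop.~7.90(a)]{Kn:2002}), and then the identification of the kernel $\mathbf{H}_0\cap\mathbf{L}_0^0$ with $\mathbf{T}_0\mathbf{H}_0^0$. The obstacle you flag at the end is exactly the point where the paper supplies a concrete device: given $h_0\in\mathbf{H}_0\cap\mathbf{I}_0^0$, write its Cartan decomposition $h_0=k_0\exp(X_0)$ with $k_0\in\mathbf{K}_{00}$, $X_0\in\mathfrak{p}_{00}$; by \cite[Lemma~7.22]{Kn:2002} both factors already centralize $\mathfrak{h}_0$, so $\exp(X_0)\in\mathbf{H}_0^0$ and $k_0\in\mathbf{K}_{00}^0\cap\mathbf{H}_0$; then use surjectivity of $\exp$ on the compact connected group $\mathbf{K}_{00}^0$ together with the orthogonal splitting $\mathfrak{k}_{00}=(\mathfrak{k}_{00}\cap\mathfrak{s}_0)\oplus(\mathfrak{k}_{00}\cap\mathfrak{z}_0)$ to write $k_0=\exp(S)\exp(Z)$ with $\exp(Z)\in\mathbf{H}_0^0$ and $\exp(S)\in\mathbf{S}_0\cap\mathbf{H}_0=\mathbf{T}_0$. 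This two-step decomposition is precisely the ``bookkeeping'' you were worried about, and it cleanly separates the $\mathbf{T}_0$ contribution from the $\mathbf{H}_0^0$ contribution without needing any further analysis of torus directions.
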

\begin{proof} From
$\mathfrak{z}_0\subset\mathfrak{h}_0$, we have
$\mathbf{H}_0=\mathbf{Z}_{{{\mathbf{G}_0}}}(\mathfrak{h}_0)\subset
\mathbf{Z}_{{{\mathbf{G}_0}}}(\mathfrak{z}_0)=\mathbf{L}_0$. \par
Assume that $\mathfrak{h}_0$ is maximally noncompact in $\mathfrak{i}_0$.
We know, see e.g. \cite[Proposition 7.90 (a)]{Kn:2002},
that $\mathbf{H}_0$ intersects all connected components of $\mathbf{L}_0$
and hence of $\mathbf{I}_0$ and of $\mathbf{K}_{00}$.
Take $h_0\in\mathbf{H}_0$ in the connected component $\mathbf{I}_0^0$
of the identity of $\mathbf{I}_0$. We 
write $h_0=k_0\exp(X_0)$ with
$k_0\in\mathbf{K}_{00}$ and $X_0\in\mathfrak{p}_{00}$. 
By \cite[Lemma 7.22]{Kn:2002}, both $k_{0}$ and
$\exp(X_0)$ centralize $\mathfrak{h}_0$, and hence belong to
$\mathbf{H}_0$. In particular, $\exp(X_0)$ belongs to the connected
component $\mathbf{H}_0^0$ of $\mathbf{H}_0$. 
We note now that $k_0$ belongs to 
$\mathbf{K}_{0}\cap\mathbf{I}_0^0$, that is
the connected component 
$\mathbf{K}_{00}^0$ of the identity of $\mathbf{K}_{00}$.
Since the exponential $\exp:\mathfrak{k}_{00}\to\mathbf{K}_{00}^0$ 
is surjective,
$k_0=\exp(Y)$ for some
$Y\in\mathfrak{k}_{00}$. 
Since $\mathfrak{z}_0$ is $\vartheta$-invariant, 
the orthogonal 
of $\mathfrak{k}_{00}\cap\mathfrak{z}_0$ 
in $\mathfrak{k}_{00}$ 
for the Killing form of $\mathfrak{g}_0$ is contained
in the orthogonal $\mathfrak{s}_0$ 
of $\mathfrak{z}_0$ in $\mathfrak{l}_0$,
i.e. $\mathfrak{k}_{00}\cap\mathfrak{z}_0^\perp\subset\mathfrak{s}_0$,
so that 
we have the
decomposition\,:
\begin{equation*}
\mathfrak{k}_{00}=\left(\mathfrak{k}_{00}\cap\mathfrak{s}_0\right)\oplus
\left(\mathfrak{k}_{00}\cap\mathfrak{z}_0\right)\,.
\end{equation*}
Let $Y=S+Z$, with $S\in\mathfrak{k}_{00}\cap\mathfrak{s}_0$ and 
$Z\in\mathfrak{k}_{00}\cap\mathfrak{z}_0$. 
Then $k_{0}=\exp(S)\exp(Z)$. Since $\exp(Z)\in\mathbf{H}^0_0$,
we obtain that $\exp(S)\in\mathbf{S}_0\cap\mathbf{H}_0=\mathbf{T}_0$
and hence\,:
\begin{equation*}
h_0=\exp(S)\,\left(\exp(Z)\exp(X_0)\right) ,
\end{equation*}
with $\exp(S)\in\mathbf{T}_0$, and
$\left(\exp(Z)\exp(X_0)\right)\in\mathbf{H}_0^0$\,.\par
Finally, we note that $\pi_0(\mathbf{I}_0)$ 
is Abelian, being a quotient of the Abelian group
$\mathbf{H}_0$.
The proof is complete.
\end{proof}
\section{Adapted Weyl chambers}
\label{sec:d}
We keep the notation of \S \ref{sec:c}. 
Let $(\vartheta,\mathfrak{h}_0)$ be a Cartan pair adapted to
the parabolic $CR$ algebra
$(\mathfrak{g}_0,\mathfrak{q})$,
and $\mathfrak{g}_0=\mathfrak{k}_0\oplus\mathfrak{p}_0$
the Cartan decomposition corresponding
to $\vartheta$. We still denote by $\vartheta$
its $\mathbb{C}$-linear extension to $\mathfrak{g}$ and
by $\sigma$ and $\tau$ the conjugations of $\mathfrak{g}$
with respect to its real form $\mathfrak{g}_0$ and
its compact form
$\mathfrak{u}=\mathfrak{k}_0\oplus{i}\mathfrak{p}_0$, respectively.
The $\mathbb{C}$-linear map $\vartheta$
and the anti-$\mathbb{C}$-linear maps $\sigma$ and $\tau$
pairwise commute and:
\begin{equation}\label{eq:da}
\tau=\vartheta\circ\sigma=\sigma\circ\vartheta,\quad
\sigma=\vartheta\circ\tau=\tau\circ\vartheta,\quad
\vartheta=\sigma\circ\tau=\tau\circ\sigma.
\end{equation}
Let $\mathcal{R}$ be the root system of $\mathfrak{g}$
with respect to $\mathfrak{h}=\mathfrak{h}_0^{\mathbb{C}}$. Set
\begin{equation}\label{eq:db}
\mathfrak{h}_0^+=\mathfrak{h}_0\cap\mathfrak{k}_0,\quad
\mathfrak{h}_0^-=\mathfrak{h}_0\cap\mathfrak{p}_0,\quad
\mathfrak{h}_{\mathbb{R}}=\mathfrak{h}_0^-\oplus{i}\mathfrak{h}_0^+.
\end{equation}
The root system $\mathcal{R}$ is a subset of the dual
$\mathfrak{h}_{\mathbb{R}}^*$ of $\mathfrak{h}_{\mathbb{R}}$.
The subspace $\mathfrak{h}_{\mathbb{R}}$ of $\mathfrak{g}$
is $\vartheta$, $\sigma$ and $\tau$-invariant. Hence the corresponding
involutions of $\mathfrak{h}_{\mathbb{R}}^*$ define involutions
of $\mathcal{R}$, that are given by\,:
\begin{equation}
\sigma^*(\alpha)=\bar\alpha\,,\quad
\tau^*(\alpha)=-\alpha\,,\quad
\vartheta^*(\alpha)=-\bar\alpha.
\end{equation}
\begin{dfn}\label{dfn:db}
We denote by $\mathcal{R}_{\mathrm{re}}$ the set of \emph{real}
roots in $\mathcal{R}$, i.e. those for which $\bar\alpha=\alpha$,
by $\mathcal{R}_{\mathrm{im}}$ the set of \emph{imaginary} roots
in $\mathcal{R}$, i.e. those for which $\bar\alpha=-\alpha$,
and by $\mathcal{R}_{\mathrm{cpx}}$ the set of \emph{complex} roots
in $\mathcal{R}$, i.e. those for which~$\bar\alpha\neq\pm\alpha$.
\end{dfn}
\begin{lem} Let $(\mathfrak{g}_0,\mathfrak{q})$ be a parabolic
$CR$ algebra and
$(\vartheta,\mathfrak{h}_0)$ a Cartan pair
adapted to $(\mathfrak{g}_0,\mathfrak{q})$.
Then the reductive factor $\mathfrak{q}^r$ in the decomposition
\eqref{eq:bg} with respect to the Cartan subalgebra
$\mathfrak{h}=\mathbb{C}\otimes_{\mathbb{R}}\mathfrak{h}_0$ is
\begin{equation}
\mathfrak{q}^r=\mathfrak{q}\cap\tau(\mathfrak{q}).
\end{equation}
\end{lem}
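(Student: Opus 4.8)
The plan is to compare the two $\tau$-related parabolic subalgebras $\mathfrak{q}$ and $\tau(\mathfrak{q})$ at the level of root spaces, using that $\tau$ fixes $\mathfrak{h}$ pointwise after identifying $\mathfrak{h}_{\mathbb{R}}$ as in \eqref{eq:db}. First I would record that $\tau$ is a conjugation of $\mathfrak{g}$ normalizing $\mathfrak{h}$, so $\tau(\mathfrak{q})$ is again a parabolic subalgebra containing $\mathfrak{h}$; writing $\mathcal{Q}=\{\alpha\in\mathcal{R}\mid\mathfrak{g}^\alpha\subset\mathfrak{q}\}$, one has $\tau(\mathfrak{g}^\alpha)=\mathfrak{g}^{\tau^*(\alpha)}=\mathfrak{g}^{-\alpha}$, hence $\tau(\mathfrak{q})$ corresponds to the parabolic set $-\mathcal{Q}$. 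Therefore $\mathfrak{q}\cap\tau(\mathfrak{q})=\mathfrak{h}+\sum_{\alpha\in\mathcal{Q}\cap(-\mathcal{Q})}\mathfrak{g}^\alpha$, and it remains to identify $\mathcal{Q}\cap(-\mathcal{Q})$ with $\mathcal{Q}^r$ of \eqref{eq:be}.

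Next I would invoke the combinatorial description from \S\ref{sec:b}. Fixing a Weyl chamber $C$ fit for $\mathcal{Q}$ and the subset $\Phi\subset\mathcal{B}$ associated to $\mathfrak{q}$, formulas \eqref{eq:bd}--\eqref{eq:be} give $\mathcal{Q}=\mathcal{Q}^n_\Phi\cup\mathcal{Q}^r_\Phi$ with $\mathcal{Q}^n_\Phi\subset\{\alpha\succ0\}$ and $\mathcal{Q}^r_\Phi=\{\alpha\mid\operatorname{supp}(\alpha)\cap\Phi=\emptyset\}$. The set $\mathcal{Q}^r_\Phi$ is visibly symmetric, i.e. $\mathcal{Q}^r_\Phi=-\mathcal{Q}^r_\Phi$, so $\mathcal{Q}^r_\Phi\subset\mathcal{Q}\cap(-\mathcal{Q})$. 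Conversely, if $\alpha\in\mathcal{Q}\cap(-\mathcal{Q})$ but $\operatorname{supp}(\alpha)\cap\Phi\neq\emptyset$, then exactly one of $\alpha,-\alpha$ is $C$-positive, say $\alpha\succ0$, so $\alpha\in\mathcal{Q}^n_\Phi$ while $-\alpha\notin\mathcal{Q}$ (since a negative root lies in $\mathcal{Q}$ only when its support misses $\Phi$), contradicting $-\alpha\in\mathcal{Q}$. Hence $\mathcal{Q}\cap(-\mathcal{Q})=\mathcal{Q}^r_\Phi$, and plugging this back gives $\mathfrak{q}\cap\tau(\mathfrak{q})=\mathfrak{h}+\sum_{\alpha\in\mathcal{Q}^r_\Phi}\mathfrak{g}^\alpha=\mathfrak{q}^r$ by \eqref{eq:bj}.

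The one point that needs genuine care — and which I expect to be the main obstacle — is the very first reduction: that $\tau(\mathfrak{q})$ is a \emph{parabolic} subalgebra and, more importantly, that the same Cartan subalgebra $\mathfrak{h}$ works for both $\mathfrak{q}$ and $\tau(\mathfrak{q})$ so that the root-space bookkeeping above is legitimate. This is exactly where the hypothesis that $(\vartheta,\mathfrak{h}_0)$ is \emph{adapted} enters: $\mathfrak{h}_0$ is $\vartheta$-invariant and a real form of $\mathfrak{h}$, so $\mathfrak{h}$ is $\tau$-invariant and $\tau^*$ acts on $\mathcal{R}$ by $\alpha\mapsto-\alpha$ as used. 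I would also double-check that the decomposition $\mathfrak{q}^r=\mathfrak{h}+\sum_{\mathcal{Q}^r}\mathfrak{g}^\alpha$ is independent of the choice of fit $C$ (it is, since $\mathfrak{q}^r$ is intrinsically the reductive Levi factor of $\mathfrak{q}$ and $\mathcal{Q}^r=\mathcal{Q}\cap(-\mathcal{Q})$ is chamber-independent), so that the final identity $\mathfrak{q}^r=\mathfrak{q}\cap\tau(\mathfrak{q})$ is a statement about $\mathfrak{q}$ alone. Everything else is routine manipulation with root-space gradings.
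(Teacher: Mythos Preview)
Your proof is correct and follows exactly the same approach as the paper: the key observation is that $\tau$ maps $\mathfrak{g}^{\alpha}$ to $\mathfrak{g}^{-\alpha}$ (which the paper records just before the lemma as $\tau^*(\alpha)=-\alpha$), whence $\mathfrak{q}\cap\tau(\mathfrak{q})$ corresponds to $\mathcal{Q}\cap(-\mathcal{Q})=\mathcal{Q}^r$. The paper's proof is a one-line reference to this fact, while you have helpfully unpacked the combinatorics showing $\mathcal{Q}\cap(-\mathcal{Q})=\mathcal{Q}^r_\Phi$ explicitly.
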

\begin{proof}
Indeed, if $\mathcal{R}$ is the root system of $\mathfrak{g}$
corresponding to the Cartan subalgebra $\mathfrak{h}$, then
$\tau$ transforms the eigenspace $\mathfrak{g}^{\alpha}$
of $\alpha\in\mathcal{R}$ into $\mathfrak{g}^{-\alpha}$. 
\end{proof}
We have the following
\begin{lem} \label{lem:dd}
Let $(\mathfrak{g}_0,\mathfrak{q})$ be a parabolic
$CR$ algebra and
$(\vartheta,\mathfrak{h}_0)$ a Cartan pair
adapted to $(\mathfrak{g}_0,\mathfrak{q})$. Then:
\par\noindent
$(1)$\quad There exists a fit Weyl chamber 
$C'\in\mathfrak{C}(\mathcal{R},\mathcal{Q})$ such that,
if
$\,\prec\,$ is the partial order in $\mathfrak{h}_{\mathbb{R}}^*$
defined by $C'$, $\mathcal{B}'$ the basis of $C'$-positive
simple roots, and $\Phi'=\mathcal{Q}^n\cap\mathcal{B}'$, 
the two 
following equivalent
conditions are satisfied:
\begin{align}\label{eq:de}
&\text{if $\alpha\in\mathcal{R}_{\mathrm{cpx}}$,
$\alpha\,\succ \,{0}$ 
and
$\bar\alpha\,\prec\,{0}$, then $\alpha$ and $-\bar{\alpha}$
both belong to $\mathcal{Q}^n$,}\\ \label{eq:df}
&\text{$\bar\alpha\,{\succ}\,{0}$ for all
$\alpha
\in\mathcal{B}'
\setminus\left(\Phi'\cup \mathcal{R}_{\mathrm{im}}\right)$.}
\end{align}
\par\smallskip\noindent
$(2)$\quad 
 There exists a fit Weyl chamber $C''\in\mathfrak{C}(\mathcal{R},\mathcal{Q})$
such that,
if
$\,\prec\,$ is the partial order in $\mathfrak{h}_{\mathbb{R}}^*$
defined by $C''$, $\mathcal{B}''$ the basis of $C''$-positive
simple roots, and $\Phi''=\mathcal{Q}^n\cap\mathcal{B}''$, 
the two 
following equivalent
conditions are satisfied:
\begin{align}\label{eq:dg}
&\text{if $\alpha\in\mathcal{R}_{\mathrm{cpx}}$, 
$\alpha\,\succ\,{0}$ and
$\bar\alpha\,\succ\,{0}$, then $\alpha$ and $\bar{\alpha}$
both belong to $\mathcal{Q}^n$,}\\ \label{eq:dh}
&\text{$\bar\alpha\,{\prec}\,{0}$ for all
$\alpha\in\mathcal{B}''\setminus
\left(\Phi''\cup\mathcal{R}_{\mathrm{re}}\right)$.}
\end{align}
\end{lem}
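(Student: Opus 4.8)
The plan is to construct the Weyl chamber $C'$ (the case of $C''$ being entirely parallel, replacing the compact conjugation $\tau$ by $\sigma$ and interchanging the roles of $\mathcal{R}_{\mathrm{im}}$ and $\mathcal{R}_{\mathrm{re}}$) by a two-step procedure: first fix the sign of a maximal set of roots on which we have freedom, and then check that the resulting linear functional actually defines a fit chamber for $\mathcal{Q}$ with the stated conjugation properties. First I would set up a total order on $\mathfrak{h}_{\mathbb{R}}^*$ adapted to the decomposition $\mathfrak{h}_{\mathbb{R}}=\mathfrak{h}_0^-\oplus i\mathfrak{h}_0^+$ in \eqref{eq:db}: pick a linear functional $\lambda$ on $\mathfrak{h}_{\mathbb{R}}$ that is lexicographically dominated by its restriction to $i\mathfrak{h}_0^+$, i.e. first order roots by their value on a generic element of $i\mathfrak{h}_0^+$, then break ties using $\mathfrak{h}_0^-$. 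The point of ordering $i\mathfrak{h}_0^+$ first is that $\tau$ acts on $\mathfrak{h}_{\mathbb{R}}$ as $+1$ on $i\mathfrak{h}_0^+$ and $-1$ on $\mathfrak{h}_0^-$, so $\bar\alpha = \sigma^*(\alpha)$ agrees with $\alpha$ on $i\mathfrak{h}_0^+$ up to the sign coming from $\mathfrak{h}_0^-$; hence a complex root $\alpha\succ 0$ with $\bar\alpha\prec 0$ must have zero $i\mathfrak{h}_0^+$-component, i.e. it is "almost imaginary" — its value lies entirely in the tie-breaking part.

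The main subtlety, and what I expect to be the real work, is that this naive order need not be fit for $\mathcal{Q}$: the chamber $C$ we start from (any $C\in\mathfrak{C}(\mathcal{R},\mathcal{Q})$, which exists since $\mathfrak{q}$ is parabolic) and the $\tau$-adapted order above may disagree. The remedy is to choose the generic element of $i\mathfrak{h}_0^+$ used in the first stage of the lexicographic order to lie in (the closure of, and generically inside) the projection of the chamber $C$, but this requires knowing that $\mathcal{Q}^n$ interacts well with $\mathfrak{h}_0^+$. The correct move is: start from the reductive part, use the fact from the preceding lemma that $\mathfrak{q}^r=\mathfrak{q}\cap\tau(\mathfrak{q})$, so $\mathcal{Q}^r$ is $\tau^*$-stable (equivalently, stable under $\alpha\mapsto -\bar\alpha$ followed by $\alpha\mapsto-\alpha$, i.e. under $\alpha\mapsto\bar\alpha$). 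Thus the set of roots we are free to reorder without leaving $\mathfrak{C}(\mathcal{R},\mathcal{Q})$ is governed by $\mathcal{Q}^r$, and on the complement $\mathcal{Q}^n\cup(-\mathcal{Q}^n)$ the sign is forced. Concretely: a chamber $C'$ is fit for $\mathcal{Q}$ precisely when every $\alpha\in\mathcal{Q}^n$ is $C'$-positive; so I would construct $\lambda$ so that (a) $\lambda$ is strictly positive on $\mathcal{Q}^n$ — achieved by taking the $i\mathfrak{h}_0^+$-part of $\lambda$ to be a suitable element of the relative interior of the cone dual to $\mathcal{Q}^n$, which is nonempty because $\mathcal{Q}^n$ is the nilradical and its weights span a salient cone — and (b) within the "room" left over (on $\mathcal{Q}^r$, where $\lambda$ may a priori vanish) the tie-breaking functional on $\mathfrak{h}_0^-$ is chosen generic. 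Then \eqref{eq:de} holds: if $\alpha\in\mathcal{R}_{\mathrm{cpx}}$, $\alpha\succ 0$, $\bar\alpha\prec 0$, the vanishing of the $i\mathfrak{h}_0^+$-component forces $\alpha\notin\mathcal{Q}^r$ (else $\bar\alpha\in\mathcal{Q}^r$ would have an ordering determined only by the generic tie-breaker, consistent for $\alpha$ and $\bar\alpha$ — needs a short argument that $\alpha,\bar\alpha\in\mathcal{Q}^r$ implies $\alpha\succ 0\iff\bar\alpha\succ 0$), so $\alpha\in\mathcal{Q}^n$, and symmetrically $-\bar\alpha\succ 0$ with $-\bar\alpha\in\mathcal{Q}^n$.

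Finally I would verify the equivalence of \eqref{eq:de} and \eqref{eq:df}. One direction: assuming \eqref{eq:de}, take $\alpha\in\mathcal{B}'\setminus(\Phi'\cup\mathcal{R}_{\mathrm{im}})$; since $\alpha\succ 0$ and $\alpha\notin\Phi'=\mathcal{Q}^n\cap\mathcal{B}'$, simplicity of $\alpha$ together with \eqref{eq:bfa} gives $\alpha\in\mathcal{Q}^r$ (a positive simple root not in $\Phi'$ has support disjoint from $\Phi'$); if $\alpha$ were complex with $\bar\alpha\prec 0$, then \eqref{eq:de} would put $\alpha\in\mathcal{Q}^n$, a contradiction, so $\bar\alpha\succ 0$ (the case $\bar\alpha=\alpha$ is excluded since $\alpha\notin\mathcal{R}_{\mathrm{im}}$ forces $\alpha\notin\mathcal{R}_{\mathrm{re}}$ only if... — here one notes $\bar\alpha=-\alpha\prec 0$ is the imaginary case, already excluded, and $\bar\alpha=\alpha$ is fine). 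Conversely, from \eqref{eq:df} one recovers \eqref{eq:de} by expanding an arbitrary $\alpha\succ 0$ with $\bar\alpha\prec 0$ in the basis $\mathcal{B}'$ and running an induction on the height $\sum k_\alpha^\beta$: the support of such an $\alpha$ must meet $\Phi'$ (otherwise $\alpha\in\mathcal{Q}^r$, and a short argument as above shows $\mathcal{Q}^r$-roots cannot have $\alpha\succ 0\succ\bar\alpha$), giving $\alpha\in\mathcal{Q}^n$; then apply the same to $-\bar\alpha\succ 0$, whose conjugate $-\alpha\prec 0$, to conclude $-\bar\alpha\in\mathcal{Q}^n$. The hard part throughout is the bookkeeping ensuring that the single chamber $C'$ simultaneously is fit for $\mathcal{Q}$ and has the $\tau$-adapted sign behavior — i.e. that the cone of admissible $\lambda$'s (positive on $\mathcal{Q}^n$, generic tie-breaker on $\mathfrak{h}_0^-$, dominated by the $i\mathfrak{h}_0^+$-component) is nonempty; this rests on $\tau$-stability of $\mathcal{Q}^r$ from the previous lemma and on the salience of the nilradical cone.
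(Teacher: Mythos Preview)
Your construction has a genuine gap in the existence step, and the paper takes a completely different (and much shorter) route.

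First, the involutions are mixed up. On $\mathfrak{h}_{\mathbb{R}}=\mathfrak{h}_0^-\oplus i\mathfrak{h}_0^+$ the conjugation $\sigma$ (which induces $\alpha\mapsto\bar\alpha$) is $+1$ on $\mathfrak{h}_0^-$ and $-1$ on $i\mathfrak{h}_0^+$, while $\tau$ is $-1$ everywhere (consistently with $\tau^*(\alpha)=-\alpha$). So $\bar\alpha$ agrees with $\alpha$ on $\mathfrak{h}_0^-$, not on $i\mathfrak{h}_0^+$; your lexicographic order should lead with $\mathfrak{h}_0^-$, not with $i\mathfrak{h}_0^+$. More seriously, the claim that $\mathcal{Q}^r$ is stable under $\alpha\mapsto\bar\alpha$ is false in general: the preceding lemma gives $\mathfrak{q}^r=\mathfrak{q}\cap\tau(\mathfrak{q})$, hence $\mathcal{Q}^r=-\mathcal{Q}^r$, which is just the usual symmetry of the Levi root system and says nothing about $\bar{\ }$. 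A complex root in $\mathcal{Q}^r$ can perfectly well have $\bar\alpha\in\mathcal{Q}^n$. Once that claim fails, your mechanism for reconciling ``fit for $\mathcal{Q}$'' with ``$\sigma$-adapted lexicographic'' collapses: there is no reason the $\mathfrak{h}_0^-$-part (or $i\mathfrak{h}_0^+$-part) of a regular element of the facet of $\mathfrak{q}$ should be positive on all of $\mathcal{Q}^n$, and you have not produced a nonempty cone of admissible $\lambda$'s.

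The paper avoids all of this with a one-line extremal argument: set $\nu(C)=\#\{\alpha\succ_C 0:\bar\alpha\succ_C 0\}$ and pick $C'\in\mathfrak{C}(\mathcal{R},\mathcal{Q})$ maximizing $\nu$. If some simple $\alpha\in\mathcal{B}'\setminus(\Phi'\cup\mathcal{R}_{\mathrm{im}})$ had $\bar\alpha\prec 0$, then $\alpha\notin\Phi'$ means $-\alpha\in\mathcal{Q}$, so the reflected chamber $s_\alpha(C')$ is still fit and has $\nu$ increased by one, contradicting maximality. This directly gives \eqref{eq:df}; the $V$-fit case is obtained by minimizing $\nu$ instead. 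Your argument for the equivalence \eqref{eq:de}$\Leftrightarrow$\eqref{eq:df} is essentially the same as the paper's (expand $\alpha$ and $-\bar\alpha$ in simple roots; a complex positive root with $\bar\alpha\prec 0$ must have a complex simple root with negative conjugate in its support, else $\bar\alpha$ would be supported on imaginary simple roots and hence imaginary), so that part is fine.
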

\begin{proof} For $C\in\mathfrak{C}(\mathcal{R})$,
we denote by $\nu(C)$ the number of 
$C$-positive roots $\alpha$ 
for which also $\bar\alpha$ is $C$-positive.
Choose $C'\in\mathfrak{C}(\mathcal{R},\mathcal{Q})$ in such a way that:
\begin{equation}
\nu(C')
=\max_{C\in\mathfrak{C}(\mathcal{R},\mathcal{Q})}
\nu(C).
\end{equation}
If there was a complex $C'$-positive root
$\alpha\in\mathcal{B}'\setminus\Phi'$ with a $C'$-negative
$\bar\alpha$, 
the symmetry $s_{\alpha}$ would transform $C'$ into a 
$C^*$ that still belongs to $\mathfrak{C}(\mathcal{R},\mathcal{Q})$.
Since the set $\mathcal{R}^+(C^*)$ of $C^*$-positive roots
is $\left(\mathcal{R}^+(C')\setminus\{\alpha\}\right)\cup\{-\alpha\}$,
we should have $\nu(C^*)=\nu(C')+1$ and hence a contradiction.
This shows that \eqref{eq:df} is valid for $C'$.
\par
Next we observe that obviously \eqref{eq:de}
implies \eqref{eq:df}. Vice versa, 
assume that \eqref{eq:df} is true and let 
$\alpha=\sum_{\beta\in\mathcal{B}'}{k_\alpha^\beta\beta}$
be a complex $C'$-positive root with 
a $C'$-negative $\bar\alpha$. Then $\bar\beta$ should be
$C'$-negative for some complex
$\beta$ in $\mathrm{supp}(\alpha)$, and 
hence $\mathrm{supp}(\alpha)\cap\Phi'\neq\emptyset$
by \eqref{eq:df}, implying that $\alpha\in\mathcal{Q}^n$.
The same argument, applied to the complex $C'$-positive root
$(-\bar\alpha)$, shows then that also
$(-\bar\alpha)\in\mathcal{Q}^n$.\par
Similarly, by taking a $C''\in\mathfrak{C}(\mathcal{R},\mathcal{Q})$
that minimizes $\nu(C)$ in $\mathfrak{C}(\mathcal{R},\mathcal{Q})$
we obtain a fit Weyl chamber that satisfies the conditions
\eqref{eq:dg} and \eqref{eq:dh}.
\end{proof}
\begin{dfn}\label{dfn:de}
Let $(\vartheta,\mathfrak{h}_0)$ be a Cartan pair 
adapted to a parabolic
$CR$ algebra $(\mathfrak{g}_0,\mathfrak{q})$,
$\mathcal{R}$ the root system of $\mathfrak{g}$
with respect to 
the complexification $\mathfrak{h}$
of $\mathfrak{h}_0$, $\mathcal{Q}$ the parabolic
set of $\mathfrak{q}$ in $\mathcal{R}$. A
fit Weyl chamber $C$ for $\mathcal{Q}$
is:\begin{itemize}
\item[\emph{$S$-fit}] for $(\mathfrak{g}_0,\mathfrak{q})$ if 
$C$ satisfies
the equivalent conditions \eqref{eq:de}, \eqref{eq:df};
\item[\emph{$V$-fit}] for $(\mathfrak{g}_0,\mathfrak{q})$ if 
$C$ satisfies
the equivalent conditions \eqref{eq:dg}, \eqref{eq:dh}.
\end{itemize}
\end{dfn}
\section{${{\mathbf{G}_0}}$-equivariant fibrations}\label{sec:e}
In this section we shall discuss ${{\mathbf{G}_0}}$-equivariant
fibrations of 
${{\mathbf{G}_0}}$-orbits. We begin by proving 
that their structures of $\mathbf{G}_0$-homogeneous space is completely
determined by the \textit{Lie algebras} of their isotropy subgroups.
\begin{thm} \label{thm:ea}
Let $M$, $M'$ be ${{\mathbf{G}_0}}$-orbits, corresponding to
$CR$ algebras $(\mathfrak{g}_0,\mathfrak{q})$,
$(\mathfrak{g}_0,\mathfrak{q}')$, where
$\mathfrak{q}$, $\mathfrak{q}'$ are two complex parabolic
subalgebras of $\mathfrak{g}$.
Let $\mathfrak{i}_0=\mathfrak{q}\cap\mathfrak{g}_0$ and
$\mathfrak{i}_0'=\mathfrak{q}'\cap\mathfrak{g}_0$ be
the Lie subalgebras of the isotropy subgroups $\mathbf{I}_0$
and $\mathbf{I}_0'$ of $M$ and $M'$, respectively.
Then:
\begin{equation}\label{eq:ea}
\mathfrak{i}'_0\subset\mathfrak{i}_0\Longrightarrow
\mathbf{I}_0'\subset\mathbf{I}_0,
\end{equation}
and therefore, if $\mathfrak{i}'_0\subset\mathfrak{i}_0$, 
there is a canonical ${{\mathbf{G}_0}}$-equivariant fibration:
\begin{equation}\label{eq:eb}
\begin{CD} M'@>{\phi}>>M\,. \end{CD}
\end{equation}
Assume that $\mathfrak{i}'_0\subset\mathfrak{i}_0$. We have: 
\begin{enumerate}
\item If $\mathfrak{i}_0'$
contains a Cartan subalgebra $\mathfrak{h}_0$
that is maximally noncompact as a  Cartan subalgebra of 
$\mathfrak{i}_0$, then 
the fiber $F$ of \eqref{eq:eb} is connected.
\item The projection $\phi$  is
a $CR$ map if and only if
$\mathfrak{q}'\subset\mathfrak{q}$.
\item The projection $\phi$  is
a $CR$ submersion, and hence \eqref{eq:eb} is a $CR$ fibration,
if and only if \; 
$\mathfrak{q}=\mathfrak{q}'+(\mathfrak{q}\cap\bar{\mathfrak{q}}).$
\end{enumerate}
\end{thm}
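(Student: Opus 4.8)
The plan is to establish the key implication \eqref{eq:ea} first, since the existence of the fibration \eqref{eq:eb} is then immediate: once $\mathbf{I}_0'\subset\mathbf{I}_0$, the natural map $\mathbf{G}_0/\mathbf{I}_0'\to\mathbf{G}_0/\mathbf{I}_0$ is a $\mathbf{G}_0$-equivariant smooth bundle with fiber $F=\mathbf{I}_0/\mathbf{I}_0'$. To prove \eqref{eq:ea}, I would use the semi-algebraic description of the isotropy furnished by Proposition \ref{prop:cb}, namely $\mathbf{I}_0=\mathbf{N}_{\mathbf{G}_0}(\mathfrak{q})$ and $\mathbf{I}_0'=\mathbf{N}_{\mathbf{G}_0}(\mathfrak{q}')$. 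So the task reduces to showing: if $\mathfrak{q}'\cap\mathfrak{g}_0\subset\mathfrak{q}\cap\mathfrak{g}_0$, then every $g\in\mathbf{G}_0$ normalizing $\mathfrak{q}'$ also normalizes $\mathfrak{q}$. The natural route is to reconstruct $\mathfrak{q}$ from $\mathfrak{i}_0'=\mathfrak{q}'\cap\mathfrak{g}_0$ in an $\mathrm{Ad}$-equivariant way. Using the Chevalley decomposition $\mathfrak{i}_0'=\mathfrak{n}_0'\oplus\mathfrak{l}_0'=\mathfrak{n}_0'\oplus\mathfrak{s}_0'\oplus\mathfrak{z}_0'$ of Proposition \ref{prop:ca}, and the fact that $\mathfrak{n}_0'$ is the set of $\mathrm{ad}$-nilpotent elements of $\mathfrak{i}_0'$ while $\mathfrak{z}_0'$ is the center of $\mathfrak{l}_0'=\mathfrak{i}_0'\cap\vartheta(\mathfrak{i}_0')$, one sees that the subalgebras $\mathfrak{n}_0',\mathfrak{l}_0',\mathfrak{z}_0',\mathfrak{s}_0'$ are canonically attached to $\mathfrak{i}_0'$ up to the choice of $\vartheta$; any $g\in\mathbf{N}_{\mathbf{G}_0}(\mathfrak{q}')$ carries a given adapted Cartan pair to another adapted Cartan pair, hence (after adjusting by an element of the connected group $\mathbf{N}_0'$, exactly as in the proof of Proposition \ref{prop:cb}) fixes $\mathfrak{z}_0'$ up to $\mathbf{N}_0'$-conjugacy, and therefore $\mathbf{I}_0'$ is the normalizer of the pair $(\mathfrak{n}_0',\mathfrak{z}_0')$ together with $\mathbf{N}_0'$. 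The point is then that $\mathfrak{q}$, being parabolic, is recovered as $\mathfrak{q}=\mathfrak{q}_\Phi$ from the combinatorial data, and the inclusion $\mathfrak{i}_0'\subset\mathfrak{i}_0$ forces the corresponding inclusion $\mathfrak{q}'\subset\mathfrak{q}^n+(\mathfrak{q}\cap\bar{\mathfrak{q}})$-type relation at the Lie-algebra level; an element of $\mathbf{G}_0$ preserving the finer data preserves the coarser data.

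I expect the genuinely delicate point to be precisely this last step: passing from the abstract inclusion of isotropy \emph{algebras} to the inclusion of the \emph{groups}, because $\mathbf{I}_0$ is in general disconnected and not parabolic in $\mathbf{G}_0$, so one cannot argue by connectedness alone. The clean way around this is to observe, via \eqref{eq:cb}, that $\mathbf{I}_0=\mathbf{N}_{\mathbf{G}_0}(\mathfrak{q})$ and that $\mathfrak{q}$ is \emph{intrinsically determined} by $\mathfrak{i}_0$: indeed one should show $\mathfrak{q}=\mathfrak{i}_0^{\mathbb{C}}+\mathfrak{q}^n$ where $\mathfrak{q}^n$ is built from $\mathfrak{n}_0$ and the grading element in $\mathfrak{z}_0$, all of which are $\mathrm{Ad}_{\mathfrak{g}_0}(\mathbf{I}_0)$-stable; more to the point, $\mathrm{Ad}_{\mathfrak{g}}(g)(\mathfrak{q})$ is another parabolic with $\mathrm{Ad}_{\mathfrak{g}}(g)(\mathfrak{q})\cap\mathfrak{g}_0=\mathrm{Ad}_{\mathfrak{g}_0}(g)(\mathfrak{i}_0)$, so if $g\in\mathbf{G}_0$ fixes $\mathfrak{i}_0$ then $\mathrm{Ad}_{\mathfrak{g}}(g)(\mathfrak{q})$ and $\mathfrak{q}$ are two parabolic subalgebras with the same real part $\mathfrak{i}_0$, and a short argument (both contain $\mathfrak{i}_0^{\mathbb{C}}$, both have the same reductive part $\mathfrak{l}^{\mathbb{C}}=\mathfrak{l}$, and the nilradical is determined by the center $\mathfrak{z}$ of $\mathfrak{l}$ together with the parabolic condition) forces them to coincide. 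Applying this with $\mathfrak{q}'$ in place of $\mathfrak{q}$ and combining the two identifications gives $\mathbf{I}_0'=\mathbf{N}_{\mathbf{G}_0}(\mathfrak{q}')\subset\mathbf{N}_{\mathbf{G}_0}(\mathfrak{q})=\mathbf{I}_0$, which is \eqref{eq:ea}.

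For part (1), with the fibration \eqref{eq:eb} in hand, the fiber is $F=\mathbf{I}_0/\mathbf{I}_0'$, so $\pi_0(F)$ is the cokernel of $\pi_0(\mathbf{I}_0')\to\pi_0(\mathbf{I}_0)$. By Theorem \ref{thm:cf}, when $\mathfrak{h}_0\subset\mathfrak{i}_0'$ is maximally noncompact \emph{in} $\mathfrak{i}_0$, the Cartan subgroup $\mathbf{H}_0=\mathbf{Z}_{\mathbf{G}_0}(\mathfrak{h}_0)$ meets every connected component of $\mathbf{I}_0$; since $\mathbf{H}_0\subset\mathbf{I}_0'$ as well ($\mathfrak{h}_0\subset\mathfrak{i}_0'$ and $\mathbf{H}_0$ normalizes every subalgebra containing $\mathfrak{h}_0$ in its center-normalizer sense, in particular $\mathfrak{q}'$, so $\mathbf{H}_0\subset\mathbf{I}_0'$), the map $\pi_0(\mathbf{I}_0')\to\pi_0(\mathbf{I}_0)$ is surjective, whence $F$ is connected. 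For parts (2) and (3) I would argue at the level of $CR$ algebras using Remark \ref{rmk:ac}: the differential $d\phi^{\mathbb{C}}$ at the base point is the projection $\mathfrak{g}/\mathfrak{q}'\cap\bar{\mathfrak{q}}'\to\mathfrak{g}/\mathfrak{q}\cap\bar{\mathfrak{q}}$ restricted appropriately, and $\phi$ is $CR$ iff this carries the $(0,1)$-subspace $\bar{\mathfrak{q}}'/(\mathfrak{q}'\cap\bar{\mathfrak{q}}')$ into $\bar{\mathfrak{q}}/(\mathfrak{q}\cap\bar{\mathfrak{q}})$, i.e.\ iff $\bar{\mathfrak{q}}'\subset\bar{\mathfrak{q}}+\mathfrak{q}'$; but $\phi$ already induces $\mathfrak{g}_0/\mathfrak{i}_0'\to\mathfrak{g}_0/\mathfrak{i}_0$, and combining the real and complex constraints, together with $\mathfrak{q}'\cap\mathfrak{g}_0=\mathfrak{i}_0'\subset\mathfrak{i}_0=\mathfrak{q}\cap\mathfrak{g}_0$, forces $\mathfrak{q}'\subset\mathfrak{q}$ (one tests membership root-space by root-space with respect to a common Cartan subalgebra $\mathfrak{h}\subset\mathfrak{q}'\cap\mathfrak{q}$, using that both are parabolic). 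For (3), $CR$-submersivity means $d\phi^{\mathbb{C}}$ is \emph{onto} the $(0,1)$-space of $M$, i.e.\ $\bar{\mathfrak{q}}+\mathfrak{q}'=\bar{\mathfrak{q}}+\mathfrak{q}'\cap\bar{\mathfrak{q}}'$ projects onto $\mathfrak{g}/(\mathfrak{q}\cap\bar{\mathfrak{q}})$ with kernel contained in $\bar{\mathfrak{q}}$; unwinding and using (2) this is exactly $\mathfrak{q}=\mathfrak{q}'+(\mathfrak{q}\cap\bar{\mathfrak{q}})$. The main obstacle throughout is the group-versus-algebra passage in \eqref{eq:ea}; once that is done, (1)–(3) are essentially linear algebra over the root system plus the component-group computation of Theorem \ref{thm:cf}.
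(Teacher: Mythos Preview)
Your argument for \eqref{eq:ea} has a genuine gap at the crucial step. You reduce to showing $\mathbf{N}_{\mathbf{G}_0}(\mathfrak{q}')\subset\mathbf{N}_{\mathbf{G}_0}(\mathfrak{q})$, and your route is to identify $\mathbf{I}_0=\mathbf{N}_{\mathbf{G}_0}(\mathfrak{i}_0)$ and $\mathbf{I}_0'=\mathbf{N}_{\mathbf{G}_0}(\mathfrak{i}_0')$ and then ``combine the two identifications''. But the combination you need is $\mathbf{N}_{\mathbf{G}_0}(\mathfrak{i}_0')\subset\mathbf{N}_{\mathbf{G}_0}(\mathfrak{i}_0)$, and normalizers are \emph{not} monotone under inclusion of subalgebras: from $\mathfrak{i}_0'\subset\mathfrak{i}_0$ one cannot conclude that an element normalizing $\mathfrak{i}_0'$ normalizes $\mathfrak{i}_0$. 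So even granting the identifications, the inclusion you want does not follow. Moreover, your justification of $\mathbf{I}_0=\mathbf{N}_{\mathbf{G}_0}(\mathfrak{i}_0)$ is itself incomplete: you assert that ``the nilradical is determined by the center $\mathfrak{z}$ of $\mathfrak{l}$ together with the parabolic condition'', but $\mathfrak{z}_0$ is the center of $\mathfrak{l}_0=\mathfrak{q}^r\cap\bar{\mathfrak{q}}^r\cap\mathfrak{g}_0$, not of $\mathfrak{q}^r$; and the paper itself (Remark~\ref{rmk:eb}, Theorem~\ref{thm:ec}) exhibits distinct parabolics with the same $\mathfrak{q}\cap\bar{\mathfrak{q}}$, so $\mathfrak{q}$ is certainly not determined by $\mathfrak{i}_0$ alone among all parabolics.

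The paper's argument bypasses normalizers entirely and uses instead the \emph{centralizer} description $\mathbf{L}_0=\mathbf{Z}_{\mathbf{G}_0}(\mathfrak{z}_0)$ from Proposition~\ref{prop:cb}, which \emph{does} reverse inclusions. Choosing a Cartan pair $(\vartheta,\mathfrak{h}_0)$ adapted to both $CR$ algebras, one has $\mathfrak{l}_0'=\mathfrak{i}_0'\cap\vartheta(\mathfrak{i}_0')\subset\mathfrak{i}_0\cap\vartheta(\mathfrak{i}_0)=\mathfrak{l}_0$, and since $\mathfrak{z}_0\subset\mathfrak{h}_0\subset\mathfrak{l}_0'$ centralizes $\mathfrak{l}_0\supset\mathfrak{l}_0'$, it lies in the center $\mathfrak{z}_0'$ of $\mathfrak{l}_0'$. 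Thus $\mathfrak{z}_0\subset\mathfrak{z}_0'$, hence $\mathbf{L}_0'=\mathbf{Z}_{\mathbf{G}_0}(\mathfrak{z}_0')\subset\mathbf{Z}_{\mathbf{G}_0}(\mathfrak{z}_0)=\mathbf{L}_0\subset\mathbf{I}_0$. Since $\mathbf{L}_0'$ meets every connected component of $\mathbf{I}_0'$ (by the Chevalley decomposition \eqref{eq:cc}), and the identity component of $\mathbf{I}_0'$ is already in $\mathbf{I}_0$ by the Lie-algebra inclusion, one gets $\mathbf{I}_0'\subset\mathbf{I}_0$.

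Your treatment of (1) is correct and matches the paper's. For (2) your formulation ``$\bar{\mathfrak{q}}'\subset\bar{\mathfrak{q}}+\mathfrak{q}'$'' is off: the kernel of $d\pi^{\mathbb{C}}$ is $\mathfrak{q}\cap\bar{\mathfrak{q}}$, not $\mathfrak{q}'\cap\bar{\mathfrak{q}}'$, so the $CR$ condition reads directly $\mathfrak{q}'\subset\mathfrak{q}+(\mathfrak{q}\cap\bar{\mathfrak{q}})=\mathfrak{q}$, with no root-by-root test needed; the converse is the restriction of the holomorphic map $X'\to X$. For (3) the paper simply cites \cite[Lemma~4.5]{MN05}.
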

\begin{proof} Assume that $\mathfrak{i}'_0\subset\mathfrak{i}_0$.
 To prove that $\mathbf{I}_0'\subset\mathbf{I}_0$, it suffices
to prove that each connected component of $\mathbf{I}_0'$
intersects $\mathbf{I}_0$. \par
Fix a
Cartan pair $(\vartheta,\mathfrak{h}_0)$ adapted to both 
$(\mathfrak{g}_0,\mathfrak{q}')$, 
and $(\mathfrak{g}_0,\mathfrak{q})$. 
This can be obtained by first taking 
the Cartan involution
$\vartheta$ associated to a maximally compact subalgebra
$\mathfrak{k}_{0}$ of $\mathfrak{g}_0$, with
$\mathfrak{k}_{00}'=\mathfrak{k}_0\cap{\mathfrak{i}_0'}$
and 
$\mathfrak{k}_{00}=\mathfrak{k}_0\cap\mathfrak{i}_0$ 
maximally compact in ${\mathfrak{i}_0'}$ and $\mathfrak{i}_0$,
respectively, and then choosing any $\vartheta$-invariant
Cartan subalgebra $\mathfrak{h}_0$ of  $\mathfrak{i}_0'$.
\par
Consider 
the centers $\mathfrak{z}_0'$ of $\mathfrak{l}_0'=\mathfrak{i}_0'
\cap\vartheta(\mathfrak{i}_0')$ and
$\mathfrak{z}_0$ of $\mathfrak{l}_0=\mathfrak{i}_0
\cap\vartheta(\mathfrak{i}_0)$. Since $\mathfrak{z}_0\subset\mathfrak{l}_0'$,
we have
$\mathfrak{z}'_0\supset\mathfrak{z}_0$. Thus, 
by \eqref{eq:cd}, 
$\mathbf{L}_0'=\mathbf{Z}_{{{\mathbf{G}_0}}}(\mathfrak{z}'_0)\subset
\mathbf{Z}_{{{\mathbf{G}_0}}}(\mathfrak{z}_0)=\mathbf{L}_0$.
By \eqref{eq:cc}, ${{\mathbf{L}_0'}}$ is a deformation retract
of $\mathbf{I}_0'$. Then each
connected component of ${\mathbf{I}_0'}$ intersects ${{\mathbf{L}_0'}}$
and thus also $\mathbf{I}_0$.
This proves \eqref{eq:ea}.
\par\smallskip
$(1)$. Assume that $\mathfrak{h}_0$ is a Cartan subalgebra of
$\mathfrak{i}_0'$ that is maximally noncompact both in
$\mathfrak{i}_0'$ and in $\mathfrak{i}_0$, and
consider the corresponding Cartan subgroup 
$\mathbf{H}_0=\mathbf{Z}_{{{\mathbf{G}_0}}}(\mathfrak{h}_0)$.
By \cite[Proposition 7.90]{Kn:2002},
each connected component of $\mathbf{L}_0$ and of $\mathbf{L}_0'$,
and, because of \eqref{eq:cc}, of $\mathbf{I}_0'$ and of
$\mathbf{I}_0$, 
intersects $\mathbf{H}_0$. Being 
$\mathbf{H}_0\subset\mathbf{L}_0'\subset\mathbf{L}_0$,
this shows that each connected component of $\mathbf{I}_0$
contains points of $\mathbf{I}_0'$, and hence fore
$\mathbf{I}_0/\mathbf{I}_0'$
is connected.\par\smallskip
$(2)$.  
Let 
$\mathbf{I}_0'$ and $\mathbf{I}_0$ be the isotropy subgroups
of $M'$ and $M$ at the points $x_0'$ and $x_0=\phi(x_0')$, respectively.
Denote by $d\pi':\mathfrak{g}\to T^{\mathbb{C}}_{x'_0}M'$
and by $d\pi:\mathfrak{g}\to T^{\mathbb{C}}_{x_0}M$ the
complexification of the differential at the identity of the action
of ${{\mathbf{G}_0}}$ on $M'$ and $M$,
with base points $x'_0$ and $x_0$. By \eqref{eq:ab}
$\mathfrak{q}'={d\pi'}^{-1}(T^{0,1}_{x'_0}M')$ and
$\mathfrak{q}={d\pi}^{-1}(T^{0,1}_{x_0}M)$. 
Let ${d\phi}^{\mathbb{C}}_{x_0'}$ be the complexification of the differential
of $\phi$ at $x'_0$.  When $\phi$ is a $CR$ map, we have:
\begin{equation*}
d\pi(\mathfrak{q}')=
{d\phi}^{\mathbb{C}}_{x'_0}(d\pi'(\mathfrak{q}'))=
{d\phi}^{\mathbb{C}}_{x'_0}(T^{0,1}_{x'_0}M')\subset T^{0,1}_{x_0}M
=d\pi(\mathfrak{q})\,.
\end{equation*} 
Since $\mathfrak{q}=(d\pi)^{-1}(d\pi(\mathfrak{q}))$, this implies
that $\mathfrak{q}'\subset\mathfrak{q}$. Vice versa, when 
$\mathfrak{q}'\subset\mathfrak{q}$, the map $\phi:M'\to{M}$ is
$CR$, being the restriction of the $\mathbf{G}$-equivariant
holomorphic projection $X'=\mathbf{G}/\mathbf{Q}'
\to\mathbf{G}/\mathbf{Q}={X}$.
\par\smallskip
Finally, $(3)$ is a consequence of \cite[Lemma 4.5]{MN05}.
\end{proof}
\begin{rmk}\label{rmk:eb}
By Theorem \ref{thm:ea}, orbits
$M$ and $M'$, corresponding to 
parabolic
$CR$ algebras $(\mathfrak{g}_0,\mathfrak{q})$
and $(\mathfrak{g}_0,\mathfrak{q}')$ with
$\mathfrak{q}'\cap\bar{\mathfrak{q}}'=\mathfrak{q}\cap\bar{\mathfrak{q}}$,
are ${{\mathbf{G}_0}}$-equivariantly
diffeomorphic. Thus, having fixed the real subalgebra
$\mathfrak{i}_0$, we can regard the complex parabolic $\mathfrak{q}$'s
with $\mathfrak{q}\cap\mathfrak{g}_0=\mathfrak{i}_0$ as defining
different
${{\mathbf{G}_0}}$-invariant
$CR$ structures on a same ${{\mathbf{G}_0}}$-homogeneous smooth manifold $M$,
each
corresponding to a different $CR$-generic embedding of $M$
into a complex flag manifold. \end{rmk} 
In the next theorem we construct a somehow
\textit{minimal} $CR$ structure on a ${{\mathbf{G}_0}}$-orbit $M$.
Note that the subalgebra
$\mathfrak{q}_w$ of the theorem below is equal to
the $\mathfrak{q}'$ used in the proof of Proposition \ref{prop:cb}.
\begin{thm}\label{thm:ec} 
Let $(\mathfrak{g}_0,\mathfrak{q})$ be a parabolic $CR$ algebra. Then:
\begin{equation}\label{eq:ee}
\mathfrak{q}_w=\mathfrak{q}^n+\mathfrak{q}\cap\bar{\mathfrak{q}}
\end{equation}
is a complex parabolic subalgebra of $\mathfrak{g}$,
contained in $\mathfrak{q}$. It is the minimal complex parabolic
subalgebra of $\mathfrak{g}$ with the properties that:
\begin{equation}\label{eq:ef}
\mathfrak{q}_w\cap\bar{\mathfrak{q}}_w=\mathfrak{q}\cap\bar{\mathfrak{q}}
\quad\text{and}\quad \mathfrak{q}_w\subset\mathfrak{q}.
\end{equation}
Any Cartan pair $(\vartheta,\mathfrak{h}_0)$ 
adapted to $(\mathfrak{g}_0,\mathfrak{q})$
is also adapted to 
$(\mathfrak{g}_0,\mathfrak{q}_w)$. 
Fix a Cartan pair $(\vartheta,\mathfrak{h}_0)$
adapted to $(\mathfrak{g}_0,\mathfrak{q})$ and hence also to
$(\mathfrak{g}_0,\mathfrak{q}_w)$. Let $\tau$ 
be the conjugation with respect to the 
$\vartheta$-invariant compact form of $\mathfrak{g}$
(see \eqref{eq:ca}), and
set
\begin{equation}\label{eq:ep}
  \mathfrak{q}^r=\mathfrak{q}\cap\tau(\mathfrak{q}),\quad
\mathfrak{q}^r_w=\mathfrak{q}_w\cap\tau(\mathfrak{q}_w).
\end{equation}
Then we have:
\begin{align}
\label{eq:eg}
\mathfrak{q}_w&=\mathfrak{q}^n+\mathfrak{q}^r\cap\bar{\mathfrak{q}},
\\\label{eq:em}
\mathfrak{q}_w^n&=\mathfrak{q}^n+\mathfrak{q}^r\cap\bar{\mathfrak{q}}^n,\\
\label{eq:eh}
\mathfrak{q}_w^r=\bar{\mathfrak{q}}_w^r&=\mathfrak{q}_w^r\cap
\bar{\mathfrak{q}}_w^r=\mathfrak{q}^r\cap
\bar{\mathfrak{q}}^r\,.
\end{align}\par
Let $M_w$ be the ${{\mathbf{G}_0}}$-orbit corresponding
to $(\mathfrak{g}_0,\mathfrak{q}_w)$. 
Then the canonical
${{\mathbf{G}_0}}$-equivariant fibration $M_w\to{M}$ is a $CR$ map and
a smooth diffeomorphism.
\end{thm}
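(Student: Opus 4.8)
The plan is to verify in turn that $\mathfrak{q}_w$ is a complex parabolic subalgebra, that it is minimal with the two stated properties, that adapted Cartan pairs transfer, that formulas \eqref{eq:eg}--\eqref{eq:eh} hold, and finally that the canonical fibration $M_w\to M$ is simultaneously a $CR$ map and a diffeomorphism. The last assertion is really the payoff, and almost all of it will follow formally once the structural facts about $\mathfrak{q}_w$ are in place. Indeed, since $\mathfrak{q}_w\subset\mathfrak{q}$, part (2) of Theorem \ref{thm:ea} immediately gives that $\phi\colon M_w\to M$ is a $CR$ map. So the heart of the matter is to show $\phi$ is a diffeomorphism, equivalently (since $\phi$ is always a $\mathbf{G}_0$-equivariant fibration with fiber $\mathbf{I}_0/\mathbf{I}_{0,w}$) that the isotropy subalgebras coincide, $\mathfrak{q}_w\cap\mathfrak{g}_0=\mathfrak{q}\cap\mathfrak{g}_0$, together with connectedness of the fiber so that we can upgrade equality of Lie algebras to equality of groups via Theorem \ref{thm:ea}(1). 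Equality of the isotropy subalgebras is a direct consequence of the first condition in \eqref{eq:ef}: intersecting $\mathfrak{q}_w\cap\bar{\mathfrak{q}}_w=\mathfrak{q}\cap\bar{\mathfrak{q}}$ with $\mathfrak{g}_0$ and using that $\sigma$ fixes $\mathfrak{g}_0$ pointwise gives $\mathfrak{q}_w\cap\mathfrak{g}_0=(\mathfrak{q}_w\cap\bar{\mathfrak{q}}_w)\cap\mathfrak{g}_0=(\mathfrak{q}\cap\bar{\mathfrak{q}})\cap\mathfrak{g}_0=\mathfrak{q}\cap\mathfrak{g}_0$.

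First I would establish that $\mathfrak{q}_w=\mathfrak{q}^n+\mathfrak{q}\cap\bar{\mathfrak{q}}$ is parabolic. Fix an adapted Cartan pair $(\vartheta,\mathfrak{h}_0)$; such a pair exists by Proposition \ref{prop:ca}, and since both $\mathfrak{q}$ and $\bar{\mathfrak{q}}$ contain $\mathfrak{h}=\mathfrak{h}_0^{\mathbb{C}}$, so does $\mathfrak{q}_w$. Thus it suffices to check that $\mathfrak{q}_w$ is a subalgebra containing, for each root $\alpha\in\mathcal{R}$, at least one of $\mathfrak{g}^{\alpha},\mathfrak{g}^{-\alpha}$. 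The root-space description \eqref{eq:bg}--\eqref{eq:bi} makes this a combinatorial verification on the parabolic set $\mathcal{Q}$: for $\alpha\in\mathcal{R}$ with $\mathfrak{g}^{\pm\alpha}$ both inside $\mathfrak{q}\cap\bar{\mathfrak{q}}$ there is nothing to prove; otherwise one of $\pm\alpha$, say $\alpha$, lies in $\mathcal{Q}^n$ (the nilradical part), or lies outside $\bar{\mathcal{Q}}$, and in the remaining case one argues exactly as in the proof of Proposition \ref{prop:cb}, where precisely the subalgebra $\mathfrak{q}'=\mathfrak{q}^n\oplus(\mathfrak{q}^r\cap\bar{\mathfrak{q}})$ was shown to be parabolic — indeed that proof also recorded the identity $\mathfrak{q}'=\mathfrak{q}^n+(\mathfrak{q}\cap\bar{\mathfrak{q}})$, so $\mathfrak{q}_w=\mathfrak{q}'$ and parabolicity is already available. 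That closure under bracket holds follows because $\mathfrak{q}^n$ is an ideal of $\mathfrak{q}$ and $\mathfrak{q}\cap\bar{\mathfrak{q}}$ is a subalgebra normalizing $\mathfrak{q}^n$.

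Next, minimality: if $\mathfrak{q}''$ is any complex parabolic with $\mathfrak{q}''\subset\mathfrak{q}$ and $\mathfrak{q}''\cap\bar{\mathfrak{q}}''=\mathfrak{q}\cap\bar{\mathfrak{q}}$, then $\mathfrak{q}''\supset\mathfrak{q}''\cap\bar{\mathfrak{q}}''=\mathfrak{q}\cap\bar{\mathfrak{q}}$, and since $\mathfrak{q}''$ is parabolic inside $\mathfrak{q}$ it must contain the nilradical $\mathfrak{q}^n$ of $\mathfrak{q}$ (a parabolic subalgebra contained in $\mathfrak{q}$ and containing a Cartan subalgebra of $\mathfrak{g}$ contains $\mathfrak{q}^n$, as one sees by inspecting root spaces: no proper parabolic sub-object of $\mathcal{Q}$ can omit a root of $\mathcal{Q}^n$ without omitting its negative, which is impossible); hence $\mathfrak{q}''\supset\mathfrak{q}^n+(\mathfrak{q}\cap\bar{\mathfrak{q}})=\mathfrak{q}_w$. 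That any Cartan pair adapted to $(\mathfrak{g}_0,\mathfrak{q})$ is adapted to $(\mathfrak{g}_0,\mathfrak{q}_w)$ is immediate once we know (from the isotropy computation above) that $\mathfrak{i}_{0,w}=\mathfrak{i}_0$, since adaptedness of $(\vartheta,\mathfrak{h}_0)$ only involves $\mathfrak{h}_0\subset\mathfrak{i}_0$ and $\vartheta(\mathfrak{h}_0)=\mathfrak{h}_0$. The formulas \eqref{eq:eg}--\eqref{eq:eh} are then bookkeeping with the Chevalley decomposition relative to $\mathfrak{h}$: using $\mathfrak{q}^r=\mathfrak{q}\cap\tau(\mathfrak{q})$ (the preceding Lemma) one writes $\mathfrak{q}\cap\bar{\mathfrak{q}}=(\mathfrak{q}^n\oplus\mathfrak{q}^r)\cap\bar{\mathfrak{q}}$ and splits the $\mathfrak{q}^r$-part according to whether its roots are $\bar{\mathfrak{q}}$-nilpotent or $\bar{\mathfrak{q}}$-reductive, yielding \eqref{eq:eg}, then \eqref{eq:em}, and finally \eqref{eq:eh} from $\tau$-invariance of $\mathfrak{q}^r\cap\bar{\mathfrak{q}}^r$ together with $\mathfrak{q}_w^r\cap\bar{\mathfrak{q}}_w^r$ being the unique $\tau$-stable reductive complement.

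The main obstacle I anticipate is not any single computation but the connectedness step needed to pass from $\mathfrak{i}_{0,w}=\mathfrak{i}_0$ to $\mathbf{I}_{0,w}=\mathbf{I}_0$: equality of Lie algebras alone would only give that $\phi$ is a covering, not a diffeomorphism. Here one invokes Theorem \ref{thm:ea}(1): one must exhibit a Cartan subalgebra $\mathfrak{h}_0$ contained in $\mathfrak{i}_{0,w}$ that is maximally noncompact as a Cartan subalgebra of $\mathfrak{i}_0$. But since $\mathfrak{i}_{0,w}=\mathfrak{i}_0$, \emph{any} maximally noncompact Cartan subalgebra of $\mathfrak{i}_0$ does the job — it lies in $\mathfrak{i}_{0,w}$ trivially — so Theorem \ref{thm:ea}(1) applies and the fiber of $\phi$ is connected; being also zero-dimensional (equal Lie algebras), it is a point, and $\phi$ is a diffeomorphism. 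Thus the only genuine care required is to make sure the adapted Cartan pair is chosen maximally noncompact in $\mathfrak{i}_0$ from the start, which Proposition \ref{prop:ca} permits, and to confirm that the $CR$-structure statement is then exactly part (2) of Theorem \ref{thm:ea} applied to the inclusion $\mathfrak{q}_w\subset\mathfrak{q}$.
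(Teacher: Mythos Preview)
Your argument is correct, but it takes a more pedestrian route than the paper's and overcomplicates the final step.

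The paper's proof hinges on a single perturbation trick: choose $A\in\mathfrak{h}_{\mathbb{R}}$ in the open facet of $\mathfrak{q}$, so that $\mathcal{Q}=\{\alpha\mid\alpha(A)\ge 0\}$, and observe that for $\epsilon>0$ small enough (namely $\alpha(A)>\epsilon|\bar\alpha(A)|$ for every $\alpha\in\mathcal{Q}^n$) the element $A+\epsilon\bar{A}$ defines exactly the parabolic set $\mathcal{Q}_w=\{\alpha\mid\alpha(A+\epsilon\bar A)\ge 0\}$ of $\mathfrak{q}_w$. Parabolicity of $\mathfrak{q}_w$ and all of \eqref{eq:ef}--\eqref{eq:eh} drop out at once by comparing the signs of $\alpha(A)$, $\bar\alpha(A)$ and $\alpha(A+\epsilon\bar A)$. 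Your approach instead recycles the proof of Proposition~\ref{prop:cb} (where $\mathfrak{q}'=\mathfrak{q}_w$ was already shown parabolic) and then does root bookkeeping piece by piece; this is fine, and indeed the paper itself remarks just before the theorem that $\mathfrak{q}_w$ equals that $\mathfrak{q}'$. Your minimality argument (any parabolic contained in $\mathfrak{q}$ and containing $\mathfrak{h}$ must contain $\mathfrak{q}^n$, since $-\alpha\notin\mathcal{Q}$ for $\alpha\in\mathcal{Q}^n$) is also different from the paper's, which instead invokes \eqref{eq:eh} to say that any parabolic containing $\mathfrak{q}\cap\bar{\mathfrak{q}}$ must contain the reductive algebra $\mathfrak{q}^r\cap\bar{\mathfrak{q}}^r=\mathfrak{q}_w^r$; both arguments are valid.

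Where you work harder than necessary is the diffeomorphism step. You go through Theorem~\ref{thm:ea}(1), arranging a maximally noncompact Cartan so that the fiber is connected. But once you know $\mathfrak{i}_{0,w}=\mathfrak{i}_0$ (which, as you note, is immediate from \eqref{eq:ef} since $\mathfrak{q}_w\subset\mathfrak{q}$ forces $\mathfrak{q}_w\cap\bar{\mathfrak{q}}_w\subset\mathfrak{q}\cap\bar{\mathfrak{q}}$ and the reverse inclusion is by construction), Theorem~\ref{thm:ea} applied in \emph{both} directions gives $\mathbf{I}_{0,w}\subset\mathbf{I}_0$ and $\mathbf{I}_0\subset\mathbf{I}_{0,w}$, hence equality of isotropy groups outright. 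This is exactly the content of Remark~\ref{rmk:eb}, which the paper simply cites. No connectedness argument is needed.
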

\begin{proof} Let $(\vartheta,\mathfrak{h}_0)$ be any Cartan pair
adapted to $(\mathfrak{g},\mathfrak{q})$.
Let $\mathcal{R}$ be the root system of $\mathfrak{g}$, with respect
to the complexification $\mathfrak{h}$ of $\mathfrak{h}_0$.
We fix an $A\in\mathfrak{h}_{\mathbb{R}}$ that defines the parabolic
set $\mathcal{Q}$ of $\mathfrak{q}$, i.e. such that
(cf. e.g. \cite[VIII \S 4]{Bou75}: $A$ is any element of the interior
of the facet associated to $\mathfrak{q}$ and $\mathfrak{h}$):
\begin{equation*}
\mathcal{Q}=\{\alpha\in\mathcal{R}\,|\,\alpha(A)\geq{0}\}.
\end{equation*} 
If $\epsilon>0$ is so small that
$\alpha(A)>\epsilon |\bar\alpha(A)|$ for all $\alpha\in\mathcal{Q}^n$,
then $\mathfrak{q}_w$ is the complex parabolic Lie subalgebra of
$\mathfrak{g}$ that corresponds to the parabolic set:
\begin{equation*}
\mathcal{Q}_w=\{\alpha\in\mathcal{R}\,|\, \alpha(A+\epsilon\bar{A})\geq 0\}\,.
\end{equation*}
This observation easily yields \eqref{eq:ef}, \eqref{eq:eg},
\eqref{eq:em},
\eqref{eq:eh}. The last statement follows from
\eqref{eq:ef}, Theorem \ref{thm:ea}, and Remark \ref{rmk:eb}.
The fact that $\mathfrak{q}_w$ is minimal with the properties
of \eqref{eq:ef} follows from \eqref{eq:eh}:
in fact any parabolic subalgebra containing 
$\mathfrak{q}\cap\bar{\mathfrak{q}}$ must contain the reductive
subalgebra $\mathfrak{q}^r\cap\bar{\mathfrak{q}}^r$.
\end{proof}
\begin{dfn}\label{dfn:ed}
The parabolic $CR$ algebra $(\mathfrak{g}_0,\mathfrak{q}_w)$, and the 
corresponding ${{\mathbf{G}_0}}$-orbit $M_w$, are called
the $CR$-\emph{weakening} of $(\mathfrak{g}_0,\mathfrak{q})$, and of $M$,
respectively. 
\end{dfn}
\begin{rmk}\label{rmk:ee}
  In \cite[\S{9.1}]{Wolf69} the orbits $M$ corresponding to 
parabolic $CR$ algebras $(\mathfrak{g}_0,\mathfrak{q})$ with
$\mathfrak{q}^r=\bar{\mathfrak{q}}^r$ are called \emph{polarized}. 
Thus the $CR$ weakening of $M$ is polarized, and, vice versa,
if $M$ is polarized, $M$ coincides with its $CR$ weakening $M_w$.
\end{rmk}
We have
\begin{lem} \label{lem:ej}
Let $(\mathfrak{g}_0,\mathfrak{q})$ be a 
polarized parabolic $CR$ algebra, i.e. 
assume that $\mathfrak{q}$ contains a 
maximal reductive subalgebra $\mathfrak{q}^r$ with
$\mathfrak{q}^r=\bar{\mathfrak{q}}^r$.
Then $(\mathfrak{g}_0,\mathfrak{q})$ is either totally real, or
weakly degenerate.
\end{lem}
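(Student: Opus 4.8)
The plan is to produce, under the polarization hypothesis $\mathfrak{q}^r=\bar{\mathfrak{q}}^r$, a complex Lie subalgebra $\mathfrak{q}'$ with $\mathfrak{q}\subsetneqq\mathfrak{q}'\subset\mathfrak{q}+\bar{\mathfrak{q}}$ whenever $(\mathfrak{g}_0,\mathfrak{q})$ is not totally real, which is exactly condition \eqref{eq:ad} for weak degeneracy. First I would fix a Cartan pair $(\vartheta,\mathfrak{h}_0)$ adapted to $(\mathfrak{g}_0,\mathfrak{q})$, pass to the complexification $\mathfrak{h}$, and work with the root system $\mathcal{R}$ and the parabolic set $\mathcal{Q}$ together with its Chevalley pieces $\mathcal{Q}^n,\mathcal{Q}^r$ as in \eqref{eq:bd}--\eqref{eq:bj}. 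Writing the conjugation as $\sigma$ (so $\bar{\mathfrak{q}}=\sigma(\mathfrak{q})$ corresponds to $\bar{\mathcal{Q}}=\{\bar\alpha\mid\alpha\in\mathcal{Q}\}$), the polarization hypothesis says $\mathcal{Q}^r=\bar{\mathcal{Q}}^r$, i.e. the reductive part is stable under $\alpha\mapsto\bar\alpha$. The key combinatorial object is the set $\mathcal{Q}+\bar{\mathcal{Q}}$ of roots whose root spaces lie in $\mathfrak{q}+\bar{\mathfrak{q}}$; since $\mathfrak{q}+\bar{\mathfrak{q}}$ need not be a subalgebra, the target $\mathfrak{q}'$ must be chosen as a genuine intermediate parabolic.

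The natural candidate is to enlarge $\mathcal{Q}$ by adjoining $-\alpha$ for the simple roots $\alpha$ of a suitably chosen fit Weyl chamber that also lie in $\bar{\mathcal{Q}}$ but not in $\mathcal{Q}^r$. Concretely, I would invoke Lemma~\ref{lem:dd} to pick a $V$-fit Weyl chamber $C$, with basis $\mathcal{B}$ and $\Phi=\mathcal{Q}^n\cap\mathcal{B}$; condition \eqref{eq:dg} guarantees that for a complex simple root $\alpha\in\Phi$ with $\bar\alpha\succ 0$ one has $\bar\alpha\in\mathcal{Q}^n$ as well, and the polarization forces any $\alpha\in\Phi\cap\mathcal{R}_{\mathrm{im}}$ to have $-\alpha=\bar\alpha\notin\mathcal{Q}^n$ (since $\mathfrak{g}^{\bar\alpha}\subset\bar{\mathfrak{q}}^n\not\subset\mathfrak{q}^r=\mathfrak{q}^r\cap\bar{\mathfrak{q}}^r$ would fail). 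If no such simple root exists at all — i.e. every simple root of $\mathcal{B}$ outside $\Phi$ is already symmetric and $\Phi$ contributes nothing new — I would argue that $\mathfrak{q}+\bar{\mathfrak{q}}=\mathfrak{g}$ forces $\mathfrak{q}\cap\bar{\mathfrak{q}}=\mathfrak{q}^r$ and hence, via Remark~\ref{rmk:ac}, $CR$-$\dim M$ can only fail to vanish if there is a complex root $\alpha$ with $\alpha\in\mathcal{Q}^n$, $\bar\alpha\notin\mathcal{Q}$; that complex root then produces the desired enlargement directly.

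So the heart of the argument is: \emph{if $(\mathfrak{g}_0,\mathfrak{q})$ is not totally real, there is a root $\alpha$ with $\mathfrak{g}^\alpha\subset\mathfrak{q}^n$ and $\mathfrak{g}^{-\alpha}\subset\bar{\mathfrak{q}}$}, equivalently $\alpha\in\mathcal{Q}^n$ and $-\alpha\in\bar{\mathcal{Q}}$. Indeed, non-totally-real means $\mathfrak{q}+\bar{\mathfrak{q}}\supsetneqq\mathfrak{q}\cap\bar{\mathfrak{q}}$, so some $\mathfrak{g}^\alpha\subset\mathfrak{q}$ with $\mathfrak{g}^\alpha\not\subset\bar{\mathfrak{q}}$, hence $\mathfrak{g}^{-\alpha}\subset\bar{\mathfrak{q}}$, and using $\mathfrak{q}^r=\bar{\mathfrak{q}}^r$ one sees $\alpha\notin\mathcal{Q}^r$, so $\alpha\in\mathcal{Q}^n$; then $-\alpha\in\bar{\mathcal{Q}}^n\subset\bar{\mathcal{Q}}$. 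Now set
\begin{equation*}
\mathcal{Q}'=\mathcal{Q}\cup\{\beta\in\mathcal{R}\mid \beta\in\mathcal{Q}+(-\alpha)\ \text{iterated, staying in }\mathcal{Q}+\bar{\mathcal{Q}}\},
\end{equation*}
more precisely the smallest parabolic set containing $\mathcal{Q}\cup\{-\alpha\}$ and contained in $\mathcal{Q}\cup\bar{\mathcal{Q}}$; one checks it is parabolic (closed under addition, and $\mathcal{Q}'\cup(-\mathcal{Q}')=\mathcal{R}$ since it contains $\mathcal{Q}$), it strictly contains $\mathcal{Q}$ because $-\alpha\notin\mathcal{Q}$ (as $\alpha\in\mathcal{Q}^n$), and the corresponding $\mathfrak{q}'$ satisfies $\mathfrak{q}\subsetneqq\mathfrak{q}'\subset\mathfrak{q}+\bar{\mathfrak{q}}$, which is \eqref{eq:ad}.

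The main obstacle I anticipate is the verification that $\mathfrak{q}'$ is a genuine \emph{subalgebra} — i.e. that the set $\mathcal{Q}'$ just described is closed under root addition while still sitting inside $\mathcal{Q}\cup\bar{\mathcal{Q}}$. Closure inside $\mathcal{R}$ is automatic for the parabolic-closure construction, but staying inside $\mathcal{Q}\cup\bar{\mathcal{Q}}$ is where the polarization hypothesis $\mathfrak{q}^r=\bar{\mathfrak{q}}^r$ must be used decisively: the bad case would be a sum $\beta+\gamma$ of two roots in $\mathcal{Q}\cup\bar{\mathcal{Q}}$ landing outside it, which I would rule out by the facet/convexity description of Theorem~\ref{thm:ec} (choose $A\in\mathfrak{h}_{\mathbb{R}}$ with $\mathcal{Q}=\{\alpha\mid\alpha(A)\ge0\}$; then $\mathcal{Q}+\bar{\mathcal{Q}}=\{\alpha\mid\alpha(A)\ge0\ \text{or}\ \bar\alpha(A)\ge0\}=\{\alpha\mid\max(\alpha(A),\alpha(\bar A))\ge0\}$, and the enlargement along $-\alpha$ amounts to replacing $A$ by $A-tH_\alpha$ for small $t>0$, which stays inside this region precisely because the symmetric part $\mathfrak{q}^r\cap\bar{\mathfrak{q}}^r$ is not disturbed). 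With the right choice of adapted Cartan pair and fit chamber this becomes a short perturbation argument rather than a case analysis, and the totally real versus weakly degenerate dichotomy falls out cleanly.
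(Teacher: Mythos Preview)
Your instinct---locate a root $\alpha\in\mathcal{Q}^n$ with $-\alpha\in\bar{\mathcal{Q}}$ and enlarge $\mathfrak{q}$ accordingly---is right, and your derivation that such an $\alpha$ exists whenever $(\mathfrak{g}_0,\mathfrak{q})$ is not totally real is correct (polarization is exactly what forces $\alpha\notin\mathcal{Q}^r$). But the construction of $\mathfrak{q}'$ has a real gap. You take an \emph{arbitrary} such root $\alpha$ and then speak of ``the smallest parabolic set containing $\mathcal{Q}\cup\{-\alpha\}$ and contained in $\mathcal{Q}\cup\bar{\mathcal{Q}}$''; there is no reason such a set exists. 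The smallest parabolic set containing $\mathcal{Q}\cup\{-\alpha\}$ is well-defined---relative to a fit chamber it is $\mathcal{Q}_{\Phi\setminus(\mathrm{supp}(\alpha)\cap\Phi)}$, which may drop several simple roots from $\Phi$ at once---but nothing you wrote shows it lies inside $\mathcal{Q}\cup\bar{\mathcal{Q}}$. Your perturbation $A\mapsto A-tH_\alpha$ does not settle this: for small $t>0$ that element determines some parabolic set, but its containment in $\{\beta\mid\max(\beta(A),\beta(\bar A))\geq 0\}$ is precisely the point at issue, and the remark that ``the symmetric part $\mathfrak{q}^r\cap\bar{\mathfrak{q}}^r$ is not disturbed'' is not an argument (under polarization that symmetric part is all of $\mathfrak{q}^r$, so it carries no new information). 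The earlier paragraph invoking a $V$-fit chamber does not lead anywhere either; the observation about $\alpha\in\Phi\cap\mathcal{R}_{\mathrm{im}}$ is vacuous, since $-\alpha\notin\mathcal{Q}^n$ for any $\alpha\in\Phi$.

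The paper closes the gap by a sharper choice: work in an $S$-fit Weyl chamber (Lemma~\ref{lem:dd}(1)) and, in the non--totally-real case, pick a \emph{simple} root $\alpha\in\Phi$ with $\bar\alpha\prec 0$. Then $\mathfrak{q}'=\mathfrak{q}_{\Phi\setminus\{\alpha\}}$ is automatically a parabolic subalgebra, and the new roots it acquires are exactly the $\beta\prec 0$ with $\mathrm{supp}(\beta)\cap\Phi=\{\alpha\}$. For each such $\beta$ one checks $\bar\beta\succ 0$ by a short support computation: the $S$-fit condition \eqref{eq:df} controls the simple roots outside $\Phi$, and polarization (rewritten as $\mathrm{supp}(\bar\gamma)\cap\Phi\neq\emptyset$ for every $\gamma\in\mathcal{Q}^n$) handles the contribution of $\alpha$. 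Hence $\mathcal{Q}_{\Phi\setminus\{\alpha\}}\subset\mathcal{Q}\cup\bar{\mathcal{Q}}$, giving $\mathfrak{q}\subsetneq\mathfrak{q}'\subset\mathfrak{q}+\bar{\mathfrak{q}}$. The one-simple-root enlargement is what makes the containment verifiable; your non-simple $\alpha$ does not allow such control.
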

\begin{proof}
Let $(\vartheta,\mathfrak{h}_0)$ be a Cartan pair adapted to
$(\mathfrak{g}_0,\mathfrak{q})$ and $\mathcal{R}$ the root system
of $\mathfrak{g}$ with respect to the complexification $\mathfrak{h}$
of $\mathfrak{h}_0$. We fix an $S$-fit Weyl chamber for 
$(\mathfrak{g}_0,\mathfrak{q})$. By the assumption, the parabolic set
$\mathcal{Q}$ of $\mathfrak{q}$ has the property that
$\mathcal{Q}^r=\bar{\mathcal{Q}}^r$. 
This is equivalent to the fact
that 
\begin{equation*}
\mathrm{supp}(\bar\alpha)\cap\Phi\neq\emptyset \quad\forall\alpha\in
\mathcal{Q}^n,
\end{equation*}
where $\Phi$ is the set of $C$-positive simple roots in $\mathcal{Q}$.
If $\bar\alpha\succ{0}$ for all $\alpha\in\Phi$, 
then $\bar{\mathcal{Q}}^n=\mathcal{Q}^n$, and $\mathfrak{q}$ is totally real. 
Assume that there is 
$\alpha\in\Phi$ with $\bar\alpha\prec{0}$ and consider
the parabolic $\mathfrak{q}'=\mathfrak{q}_{\Psi}$, with 
$\Psi=\Phi\setminus\{\alpha\}$. Then 
\begin{equation*}\mathcal{Q}_{\Psi}
\supset\mathcal{Q}\quad\text{and}\quad
\mathcal{Q}_{\Psi}\setminus\mathcal{Q}=
\{\beta\in\mathcal{R}\mid \beta\prec{0},\;
\mathrm{supp}{(\beta)}\cap\Phi=\{\alpha\}\}.
\end{equation*}
If $\beta\in(\mathcal{Q}_{\Psi}\setminus\mathcal{Q})$, then 
$\bar\beta\succ{0}$. Indeed, 
$\mathrm{supp}(\bar\alpha)\cap\Phi\neq\emptyset$, and 
$\mathrm{supp}(\bar\gamma)\cap\Phi=\emptyset$ if 
$\gamma$ is a $C$-positive simple root not belonging to $\Phi$.
Thus,  
the decomposition of $\bar\beta$ into a linear combination
of $C$-positive simple roots contains some element of $\Phi$
with a positive coefficient and hence is positive.
This shows that $({\mathcal{Q}_{\Psi}\setminus\mathcal{Q}})\subset
\bar{\mathcal{Q}}$. Therefore $\mathcal{Q}_{\Psi}\subset\mathcal{Q}\cup
\bar{\mathcal{Q}}$. Thus we obtained 
$\mathfrak{q}\subsetneq\mathfrak{q}_{\Psi}\subset\mathfrak{q}+
\bar{\mathfrak{q}}$, showing that $(\mathfrak{g}_0,\mathfrak{q})$
is weakly degenerate.
\end{proof}
From Remark \ref{rmk:ee} and Lemma \ref{lem:ej}, and the characterization
of the $CR$-weakening in the proof of Theorem \ref{thm:ec}, we obtain:
\begin{prop} \label{prop:ee}
Let $(\mathfrak{g}_0,\mathfrak{q})$ be a parabolic $CR$ algebra, and
$(\mathfrak{g}_0,\mathfrak{q}_w)$ its $CR$-weakening.
Then $(\mathfrak{g}_0,\mathfrak{q}_w)$ is either totally real, or
weakly degenerate. \par
Let $(\vartheta,\mathfrak{h}_0)$ be an adapted 
Cartan pair for 
$(\mathfrak{g}_0,\mathfrak{q})$, and hence also for 
$(\mathfrak{g}_0,\mathfrak{q}_w)$. Denote by
$\mathcal{R}$ the root system
of $\mathfrak{g}$ with respect to the complexification 
$\mathfrak{h}$ of $\mathfrak{h}_0$, and 
by $\mathcal{Q},\;\mathcal{Q}_w$
the parabolic sets of $\mathfrak{q}$, $\mathfrak{q}_w$,
respectively.
Then:
\begin{enumerate}
\item
$C\in\mathfrak{C}(\mathcal{R},\mathcal{Q})$ 
is $S$-fit
for $(\mathfrak{g}_0,\mathfrak{q})$ if and only if it is $S$-fit
for $(\mathfrak{g}_0,\mathfrak{q}_w)$. 
\item Let $\mathcal{B}$ be the basis of $C$-positive simple roots
for an $S$-fit 
$C\in\mathfrak{C}(\mathcal{R},\mathcal{Q})$, and 
$\Phi=\mathcal{B}\cap\mathcal{Q}^n$, so that $\mathfrak{q}=\mathfrak{q}_{\Phi}$.
Then $\mathfrak{q}_w=\mathfrak{q}_{\Phi_w}$ with
\begin{align}\quad\qquad &&
\Phi_w=\Phi\cup\{\alpha\in\mathcal{B}\mid \bar\alpha\succ 0,\;\,
\mathrm{supp}(\bar\alpha)\cap\Phi\neq\emptyset\}. &&\quad\qquad\qed
\end{align}
\end{enumerate}
\end{prop}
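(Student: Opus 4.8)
The plan is to deduce Proposition~\ref{prop:ee} from the three results already at hand: Remark~\ref{rmk:ee}, Lemma~\ref{lem:ej}, and the explicit description of $\mathfrak{q}_w$ in terms of the parabolic set $\mathcal{Q}_w$ given in the proof of Theorem~\ref{thm:ec}. The first assertion is immediate: by Remark~\ref{rmk:ee} the $CR$-weakening $(\mathfrak{g}_0,\mathfrak{q}_w)$ is polarized, since $\mathfrak{q}_w^r=\bar{\mathfrak{q}}_w^r$ by \eqref{eq:eh}; then Lemma~\ref{lem:ej} applied to $(\mathfrak{g}_0,\mathfrak{q}_w)$ gives that it is either totally real or weakly degenerate. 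So the substance of the proposition is parts (1) and (2), which describe how an $S$-fit Weyl chamber and the subset $\Phi\subset\mathcal{B}$ behave under weakening.

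\emph{Part (1).} Here I would use the characterization from the proof of Theorem~\ref{thm:ec}: if $A\in\mathfrak{h}_{\mathbb{R}}$ is an element in the interior of the facet of $\mathfrak{q}$, then $\mathcal{Q}_w$ is the parabolic set defined by $A+\epsilon\bar A$ for small $\epsilon>0$. First I would observe that any Weyl chamber $C$ fit for $\mathcal{Q}$ is also fit for $\mathcal{Q}_w$, since $\mathcal{Q}\subset\mathcal{Q}_w$ and a $C$-positive root lies in $\mathcal{Q}\subset\mathcal{Q}_w$. For the $S$-fit condition, I would work with the form \eqref{eq:de}: a complex root $\alpha$ with $\alpha\succ0$, $\bar\alpha\prec0$ lies in $\mathcal{Q}^n$ iff it lies in $\mathcal{Q}_w^n$, and likewise for $-\bar\alpha$. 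The key point is that for such a complex root, $\alpha\in\mathcal{Q}^n$ forces $\bar\alpha\notin\mathcal{Q}$ (else $\mathcal{Q}^r$ would not be stable) — no, more carefully: I should check directly that the set of complex roots $\alpha$ with $\alpha\succ0$, $\bar\alpha\prec0$ is the same whether one asks membership of $\{\alpha,-\bar\alpha\}$ in $\mathcal{Q}^n$ or in $\mathcal{Q}_w^n$. Since $\mathcal{Q}^n\subset\mathcal{Q}_w^n$ one direction is free; for the converse, if $\alpha\in\mathcal{Q}_w^n\setminus\mathcal{Q}^n$ then $\alpha(A)<0$ but $\alpha(A)+\epsilon\bar\alpha(A)\ge0$, forcing $\bar\alpha(A)>0$, i.e.\ $\bar\alpha\succ0$ — contradicting $\bar\alpha\prec0$. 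Hence on the relevant set of complex roots, $\mathcal{Q}^n$ and $\mathcal{Q}_w^n$ agree, and \eqref{eq:de} holds for $\mathcal{Q}$ iff it holds for $\mathcal{Q}_w$.

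\emph{Part (2).} Given an $S$-fit $C$ with basis $\mathcal{B}$, $\Phi=\mathcal{B}\cap\mathcal{Q}^n$, I must compute $\Phi_w=\mathcal{B}\cap\mathcal{Q}_w^n$. By \eqref{eq:bfa} this equals $\{\alpha\in\mathcal{B}\mid \mathrm{supp}(-\alpha)\cap\text{(dropped part)}\ldots\}$ — concretely, $\alpha\in\mathcal{B}$ lies in $\mathcal{Q}_w^n$ iff $\mathfrak{g}^{-\alpha}\not\subset\mathfrak{q}_w$, i.e.\ $-\alpha\notin\mathcal{Q}_w$, i.e.\ $-\alpha(A)-\epsilon\,\overline{(-\alpha)}(A)<0$. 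For $\alpha\in\mathcal{B}$, $-\alpha(A)\le0$. If $\alpha\in\Phi$ then $-\alpha(A)<0$ already, so $\alpha\in\Phi_w$. If $\alpha\in\mathcal{B}\setminus\Phi$, then $-\alpha(A)=0$ and the sign of $-\alpha(A)-\epsilon\,\overline{(-\alpha)}(A)$ is that of $\bar\alpha(A)$; so $\alpha\in\Phi_w\iff\bar\alpha\succ0$. Now $\bar\alpha\succ0$ with $\alpha\in\mathcal{B}\setminus\Phi$ combined with $S$-fitness \eqref{eq:df}: the condition $\mathrm{supp}(\bar\alpha)\cap\Phi\neq\emptyset$ is what is needed, and indeed for $\alpha\in\mathcal{B}\setminus(\Phi\cup\mathcal{R}_{\mathrm{im}})$ one has $\bar\alpha\succ0$ automatically by \eqref{eq:df}, while if $\alpha\in\mathcal{B}\cap\mathcal{R}_{\mathrm{im}}$ then $\bar\alpha=-\alpha\prec0$ and $\mathrm{supp}(\bar\alpha)=\{\alpha\}$ is disjoint from $\Phi$. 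Tracking these cases gives exactly $\Phi_w=\Phi\cup\{\alpha\in\mathcal{B}\mid\bar\alpha\succ0,\ \mathrm{supp}(\bar\alpha)\cap\Phi\neq\emptyset\}$.

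\emph{Main obstacle.} I expect the delicate point to be part (2): making sure the case analysis for $\alpha\in\mathcal{B}$ is exhaustive and that the formula is stated so that it correctly \emph{excludes} simple roots $\alpha\notin\Phi$ with $\bar\alpha\prec0$ (which, by $S$-fitness, must be imaginary) while \emph{including} precisely those with $\bar\alpha\succ0$ whose conjugate's support meets $\Phi$. One must verify that the last condition is not vacuous, i.e.\ that there can genuinely be $\alpha\in\mathcal{B}\setminus\Phi$ with $\bar\alpha\succ0$ but $\mathrm{supp}(\bar\alpha)\cap\Phi=\emptyset$ — such an $\alpha$ satisfies $-\alpha\in\mathcal{Q}^r=\mathcal{Q}_w^r$ and so is correctly left out. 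Everything else is a routine unwinding of \eqref{eq:bd}, \eqref{eq:bfa}, and the $\epsilon$-perturbation description of $\mathcal{Q}_w$.
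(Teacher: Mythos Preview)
Your approach matches the paper's: the proposition is left unproved beyond the sentence ``From Remark~\ref{rmk:ee} and Lemma~\ref{lem:ej}, and the characterization of the $CR$-weakening in the proof of Theorem~\ref{thm:ec}, we obtain\ldots'', and you are supplying exactly the natural details.

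There is, however, one genuine slip in Part~(1). The inclusion goes the other way: since $\mathfrak{q}_w\subset\mathfrak{q}$, one has $\mathcal{Q}_w\subset\mathcal{Q}$, not $\mathcal{Q}\subset\mathcal{Q}_w$. Consequently your claim that ``any Weyl chamber $C$ fit for $\mathcal{Q}$ is also fit for $\mathcal{Q}_w$'' is false in general. What \emph{is} true---and what your own computation in Part~(2) in fact establishes---is that an $S$-fit chamber for $(\mathfrak{g}_0,\mathfrak{q})$ is fit for $\mathcal{Q}_w$: for $\alpha\in\mathcal{B}\setminus\Phi$ one has $\alpha(A+\epsilon\bar A)=\epsilon\,\bar\alpha(A)$, and this is $\geq 0$ because \eqref{eq:df} forces $\bar\alpha\succ0$ (so $\bar\alpha\in\mathcal{Q}$) whenever $\alpha$ is not imaginary, while for imaginary $\alpha$ one gets $\bar\alpha(A)=-\alpha(A)=0$. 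The converse fitness direction ($C$ fit for $\mathcal{Q}_w\Rightarrow C$ fit for $\mathcal{Q}$) \emph{does} follow from $\mathcal{Q}_w\subset\mathcal{Q}$. Once this is corrected, your argument for the $S$-fit equivalence via \eqref{eq:de} goes through: on the set of complex $\alpha$ with $\alpha\succ0$, $\bar\alpha\prec0$, membership in $\mathcal{Q}^n$ and in $\mathcal{Q}_w^n$ coincide (your reasoning here is fine, modulo replacing ``$\alpha(A)<0$'' by ``$\alpha(A)=0$'').

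Two smaller points in Part~(2): the intermediate claim ``$\alpha\in\Phi_w\iff\bar\alpha\succ0$'' for $\alpha\in\mathcal{B}\setminus\Phi$ should read ``$\alpha\in\Phi_w\iff\bar\alpha(A)>0\iff\bar\alpha\in\mathcal{Q}^n$'', which already carries the condition $\mathrm{supp}(\bar\alpha)\cap\Phi\neq\emptyset$; you do reach the correct formula, but the iff as stated is not right. And in your final remark, $\mathcal{Q}^r\neq\mathcal{Q}_w^r$ in general (rather $\mathcal{Q}_w^r=\mathcal{Q}^r\cap\bar{\mathcal{Q}}^r$ by \eqref{eq:eh}); for the particular $\alpha$ you discuss the conclusion $\alpha\in\mathcal{Q}_w^r$ still holds, because both $\alpha$ and $\bar\alpha$ lie in $\mathcal{Q}^r$.
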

We introduce the following: 
\begin{dfn}
Let $M$ be a ${{\mathbf{G}_0}}$-homogeneous $CR$ manifold with associated
$CR$ algebra $(\mathfrak{g}_0,\mathfrak{q})$. A 
\emph{strengthening} of the
$CR$ structure of $M$ is the datum of a complex Lie subalgebra
$\mathfrak{q}'$ with:
\begin{equation}\label{eq:ej}
\mathfrak{q}'\cap\bar{\mathfrak{q}}'=\mathfrak{q}\cap\bar{\mathfrak{q}}
\quad\text{and}\quad  \mathfrak{g}\supset\mathfrak{q}'\supset\mathfrak{q}. 
\end{equation}
We say that the ${{\mathbf{G}_0}}$-homogeneous $CR$ structure defined by
$(\mathfrak{g}_0,\mathfrak{q})$ is \emph{maximal} if
$\mathfrak{q}'=\mathfrak{q}$ for all complex Lie 
subalgebras $\mathfrak{q}'$ of $\mathfrak{g}$ satisfying
\eqref{eq:ej}.
\end{dfn}
If $(\mathfrak{g}_0,\mathfrak{q})$ is a parabolic $CR$ algebra and 
$M_s$ is the ${{\mathbf{G}_0}}$-orbit associated to a strengthening 
$(\mathfrak{g}_0,\mathfrak{q}_s)$ of
$(\mathfrak{g}_0,\mathfrak{q})$, then the ${{\mathbf{G}_0}}$-equivariant
map $M\to{M}_s$ is a diffeomorphism and a $CR$ map. We have:
\begin{prop} Let $M$ be the ${{\mathbf{G}_0}}$-orbit associated to
the parabolic $CR$ algebra $(\mathfrak{g}_0,\mathfrak{q})$. 
Fix an adapted Cartan pair $(\vartheta,\mathfrak{h}_0)$ and
let $\mathfrak{q}=\mathfrak{q}_{\Phi}$ for a system $\Phi$ 
of $C$-positive simple roots of an $S$-fit Weyl chamber $C$.
Then:
\begin{enumerate}
\item  The necessary and sufficient condition for the
$CR$ structure defined by $(\mathfrak{g}_0,\mathfrak{q}_{\Phi})$
to be maximal is that:
\begin{equation}\label{eq:el}
\alpha\in\Phi\;\text{and}\; \bar\alpha \succ 0
\;\Longrightarrow\; \mathrm{supp}(\bar\alpha)\cap\Phi
\subset\{\alpha\}.
\end{equation}
\item 
There are maximal ${{\mathbf{G}_0}}$-homogeneous 
$CR$ structures 
$(\mathfrak{g}_0,\mathfrak{q}')$ on $M$, and for each of them
$\mathfrak{q}'=\mathfrak{q}_{\Psi}$ for some system of simple roots
\mbox{$\Psi\subset\Phi$.}
\end{enumerate}
\end{prop}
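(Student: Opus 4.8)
The plan is to work with a fixed adapted Cartan pair $(\vartheta,\mathfrak{h}_0)$ and an $S$-fit Weyl chamber $C$, so that $\mathfrak{q}=\mathfrak{q}_\Phi$ with $\Phi=\mathcal{B}\cap\mathcal{Q}^n$, and to translate the notions of \emph{strengthening} and \emph{maximality} into combinatorial conditions on the parabolic set $\mathcal{Q}$ in the root system $\mathcal{R}$. The key observation is that a complex Lie subalgebra $\mathfrak{q}'$ with $\mathfrak{q}\subset\mathfrak{q}'\subset\mathfrak{g}$ is automatically parabolic (it contains the Borel subalgebra associated to $C$), hence $\mathfrak{q}'=\mathfrak{q}_\Psi$ for a unique $\Psi\subset\mathcal{B}$, and $\mathfrak{q}\subset\mathfrak{q}'$ forces $\Psi\subset\Phi$. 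This already gives the second half of statement (2): any strengthening is of the form $(\mathfrak{g}_0,\mathfrak{q}_\Psi)$ with $\Psi\subset\Phi$. For (1) I would compute when the extra condition $\mathfrak{q}'\cap\bar{\mathfrak{q}}'=\mathfrak{q}\cap\bar{\mathfrak{q}}$ holds: passing to $\mathfrak{q}_\Psi$ with $\Psi=\Phi\setminus\{\beta\}$ enlarges $\mathcal{Q}$ by exactly the set of roots $\alpha\prec 0$ with $\mathrm{supp}(\alpha)\cap\Phi=\{\beta\}$, and the requirement that none of these newly added roots lie in $\bar{\mathcal{Q}}$ — equivalently that $\bar\alpha\prec0$ for all such $\alpha$ — is what must fail, in the negation, in order for a proper strengthening to exist. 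Unwinding this at the level of single simple roots, using the $S$-fit conjugation rules \eqref{eq:de}–\eqref{eq:df} exactly as in the proof of Lemma \ref{lem:ej}, yields precisely condition \eqref{eq:el}: if $\alpha\in\Phi$ and $\bar\alpha\succ0$ but $\mathrm{supp}(\bar\alpha)\cap\Phi$ contains some $\gamma\neq\alpha$, then dropping $\gamma$ from $\Phi$ produces a genuine strengthening; conversely if \eqref{eq:el} holds one checks every $\Psi=\Phi\setminus\{\beta\}$ fails to satisfy $\mathfrak{q}_\Psi\cap\bar{\mathfrak{q}}_\Psi=\mathfrak{q}\cap\bar{\mathfrak{q}}$.

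For the first half of statement (2) — the \emph{existence} of maximal strengthenings — I would argue by a finite descent: among all $\Psi\subset\Phi$ with $(\mathfrak{g}_0,\mathfrak{q}_\Psi)$ a strengthening of $(\mathfrak{g}_0,\mathfrak{q})$ (a nonempty, finite poset ordered by inclusion of the $\mathcal{Q}_\Psi$), pick one maximal element; the content to verify is that "strengthening of a strengthening is a strengthening", i.e. that the relation $\mathfrak{q}'\cap\bar{\mathfrak{q}}'=\mathfrak{q}\cap\bar{\mathfrak{q}}$ is transitive along chains, which is immediate since it only depends on the (fixed) subalgebra $\mathfrak{q}\cap\bar{\mathfrak{q}}$. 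Then a maximal such $\Psi$ gives a maximal $CR$ structure in the sense of the definition, because any further $\mathfrak{q}''$ satisfying \eqref{eq:ej} relative to $\mathfrak{q}_\Psi$ would, by transitivity, also satisfy it relative to $\mathfrak{q}$, contradicting maximality of $\Psi$. The $S$-fitness of $C$ for $(\mathfrak{g}_0,\mathfrak{q}_\Psi)$ — needed so that the characterization \eqref{eq:el} is even applicable to $\mathfrak{q}_\Psi$ — follows along the lines of Proposition \ref{prop:ee}(1), since $S$-fitness is governed by the conjugation behavior of the simple roots, which is unchanged.

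The step I expect to be the main obstacle is the precise bookkeeping in the "only if" direction of (1): one must show that if \eqref{eq:el} fails for some $\alpha\in\Phi$ with witness $\gamma\in(\mathrm{supp}(\bar\alpha)\cap\Phi)\setminus\{\alpha\}$, then the parabolic $\mathfrak{q}_{\Phi\setminus\{\gamma\}}$ really does satisfy the intersection condition $\mathfrak{q}_{\Phi\setminus\{\gamma\}}\cap\overline{\mathfrak{q}_{\Phi\setminus\{\gamma\}}}=\mathfrak{q}\cap\bar{\mathfrak{q}}$, i.e. that every root $\beta\prec0$ with $\mathrm{supp}(\beta)\cap\Phi=\{\gamma\}$ has $\bar\beta\prec0$. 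This is the same type of support-tracking argument as in Lemma \ref{lem:ej}: since $\gamma\in\mathrm{supp}(\bar\alpha)$ with $\bar\alpha\succ0$, the $S$-fit rules force $\bar\gamma\succ0$, and then one has to rule out that $\bar\beta$ could become positive — which uses that the only simple root of $\Phi$ in $\mathrm{supp}(\beta)$ is $\gamma$, together with \eqref{eq:df} applied to the simple roots outside $\Phi$. Getting the sign analysis airtight here, and making sure the roles of $\alpha$ and $\gamma$ (and of $\beta$ versus $\bar\beta$) are not conflated, is where the care is needed; everything else is formal manipulation of the $\Phi\leftrightarrow\mathfrak{q}_\Phi$ dictionary.
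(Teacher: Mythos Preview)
Your overall strategy matches the paper's: reduce to parabolics $\mathfrak{q}_\Psi$ with $\Psi\subset\Phi$, test the strengthening condition by removing one simple root at a time, and for (2) pick a maximal element of the finite poset of strengthenings. The ``if'' direction of (1) and all of (2) go through exactly as you describe, and this is what the paper does.

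The gap is in the ``only if'' direction of (1), precisely at the step you flagged as delicate. When \eqref{eq:el} fails at $\alpha$, with witness $\gamma\in(\mathrm{supp}(\bar\alpha)\cap\Phi)\setminus\{\alpha\}$, you propose to drop $\gamma$ and justify this by claiming that the $S$-fit rules force $\bar\gamma\succ 0$. They do not: condition \eqref{eq:df} only controls simple roots \emph{outside} $\Phi$, and here $\gamma\in\Phi$. If $\bar\gamma\prec 0$ (which \eqref{eq:de} allows, provided $-\bar\gamma\in\mathcal{Q}^n_\Phi$), then already $\beta=-\gamma$ has $\bar\beta=-\bar\gamma\succ 0\in\mathcal{Q}_{\Phi\setminus\{\gamma\}}$, so $-\gamma$ lands in the enlarged intersection and $\mathfrak{q}_{\Phi\setminus\{\gamma\}}$ fails \eqref{eq:ej}. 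Even when $\bar\gamma\succ 0$ you would further need $\mathrm{supp}(\bar\gamma)\cap(\Phi\setminus\{\gamma\})\neq\emptyset$, which is not given.

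The paper instead drops $\alpha$, setting $\Psi=\Phi\setminus\{\alpha\}$, so that $\gamma\in\Psi$. For any $\beta\prec 0$ with $\mathrm{supp}(\beta)\cap\Phi=\{\alpha\}$, write $-\beta=n_\alpha\alpha+\sum_\delta n_\delta\delta$ with each $\delta\in\mathcal{B}\setminus\Phi$. In $-\bar\beta=n_\alpha\bar\alpha+\sum_\delta n_\delta\bar\delta$ the coefficient of $\gamma$ is at least $n_\alpha\,k_{\bar\alpha}^{\gamma}>0$: imaginary $\delta$'s contribute $0$ to the $\gamma$-coefficient since $\gamma\in\Phi$, and non-imaginary $\delta$'s contribute $\geq 0$ by \eqref{eq:df}. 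Hence $\bar\beta\prec 0$ with $\gamma\in\mathrm{supp}(\bar\beta)\cap\Psi$, so $\bar\beta\notin\mathcal{Q}_\Psi$ and $\mathfrak{q}_\Psi$ is a genuine strengthening. The point is that the hypothesis hands you control of $\bar\alpha$, not of $\bar\gamma$; so it is $\alpha$, not $\gamma$, that should be removed.
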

\begin{proof} Let $\alpha\in\Phi$
and set $\Psi=\Phi\setminus\{\alpha\}$. 
If $\bar\alpha \prec 0$,
then $-\alpha\in(\mathcal{Q}_{\Psi}\cap\bar{\mathcal{Q}}_{\Psi})
\setminus(\mathcal{Q}_{\Phi}\cap\bar{\mathcal{Q}}_{\Phi})$,
and hence $\mathfrak{q}_{\Psi}$ does not satisfy \eqref{eq:ej}.
If $\bar\alpha \succ 0$ and 
$\mathrm{supp}(\bar\alpha)\cap\Phi\subset\{\alpha\}$,
again $-\alpha\in(\mathcal{Q}_{\Psi}\cap\bar{\mathcal{Q}}_{\Psi})
\setminus(\mathcal{Q}_{\Phi}\cap\bar{\mathcal{Q}}_{\Phi})$,
showing that also in this case
$\mathfrak{q}_{\Psi}$ does not satisfy \eqref{eq:ej}.
Since all complex subalgebras $\mathfrak{q}'$ that satisfy
\eqref{eq:ej} are of the form $\mathfrak{q}'=\mathfrak{q}_{\Psi}$
for some $\Psi\subset\Phi$, this proves
that \eqref{eq:el} is a necessary condition.\par
Vice versa, if there is $\alpha\in\Phi$ with
$\bar\alpha\succ 0$ and $\mathrm{supp}(\bar\alpha)
\cap\Phi\not\subset\{\alpha\}$, then, for $\Psi=\Phi
\setminus\{\alpha\}$,
the complex subalgebra $\mathfrak{q}_{\Psi}$ satisfies 
\eqref{eq:ej}, and hence $(\mathfrak{g}_0,\mathfrak{q}_{\Phi})$
is not maximal. This proves~$(1)$.\par
To prove (2), it suffices to take any maximal 
element of the set
of all complex Lie subalgebras $\mathfrak{q}'$ with 
$\mathfrak{q}\subset\mathfrak{q}'\subset\mathfrak{g}$ and
$\mathfrak{q}'\cap\bar{\mathfrak{q}}'=
\mathfrak{q}\cap\bar{\mathfrak{q}}$.
\end{proof} 
We conclude this section by proving a
theorem that describes the structure of the fiber $F$ of
a ${{\mathbf{G}_0}}$-equivariant $CR$ fibration.
\begin{thm}\label{thm:eh} 
Let $M$, $M'$ be $\mathbf{G}_0$-orbits, corresponding to the
parabolic $CR$ algebras $(\mathfrak{g}_0,\mathfrak{q})$,
$(\mathfrak{g}_0,\mathfrak{q}')$.
Assume that
$\mathfrak{q}'\subset\mathfrak{q}$,
so that $\mathbf{I}_0\supset\mathbf{I}_0'$ for the isotropy
subgroups with Lie algebras $\mathfrak{i}_0=\mathfrak{q}\cap\mathfrak{g}_0$
and $\mathfrak{i}_0'=\mathfrak{q}'\cap\mathfrak{g}_0$.\par
Fix a Cartan pair $(\vartheta,\mathfrak{h}_0)$, adapted to
$(\mathfrak{g}_0,\mathfrak{q}')$, and hence also to
$(\mathfrak{g}_0,\mathfrak{q})$.
\par
Consider the
canonical ${{\mathbf{G}_0}}$-equivariant fibration
$M'\to{M}$, with typical fiber $F=\mathbf{I}_0/{\mathbf{I}_0'}$. 
We use the notation of Propositions \ref{prop:ca} and \ref{prop:cb}.
\par
The fiber
 $F$ is
an $\mathbf{I}_0$-homogeneous $CR$ manifold. Its associated
$CR$ algebra $(\mathfrak{i}_0,\bar{\mathfrak{q}}\cap
\mathfrak{q}')$ is the semidirect product of the
$CR$ algebras $(\mathfrak{l}_0,\mathfrak{q}'\cap\mathfrak{l})$
and 
\mbox{$(\mathfrak{n}_0,\mathfrak{q}'\cap\mathfrak{n})$}, where:
\begin{enumerate}
\item
$(\mathfrak{l}_0,\mathfrak{q}'\cap\mathfrak{l})$
is a parabolic $CR$ algebra;
\item
$(\mathfrak{n}_0,\mathfrak{q}'\cap{\mathfrak{n}})$
is nilpotent and totally complex, i.e. 
${\mathfrak{n}}
=\mathfrak{n}_0+(\mathfrak{q}'\cap{\mathfrak{n}})$.
\end{enumerate}
The fiber
$F$ 
is $CR$ diffeomorphic to a Cartesian product:
\begin{equation}\label{eq:en}
F=F'\times{F}'',\vspace{-10pt}\end{equation} 
\qquad\qquad where: 
\begin{enumerate}
\item[($1'$)]
$F'$ has finitely many connected components, 
each 
isomorphic to the
$\mathbf{L}_0^0$-orbit in the flag manifold 
${{X}'}={\mathbf{L}}/\left(
\mathbf{Q}'\cap\mathbf{L}\right)$
corresponding to the parabolic $CR$ algebra 
$(\mathfrak{l}_0,\mathfrak{q}'\cap\mathfrak{l})$.
Here we denoted by
$\mathbf{L}_0^0$ the connected component
of the identity of $\mathbf{L}_0$.
\item[($2'$)]
$F''$ is a Euclidean complex 
$\mathbf{N}_0$-nilmanifold,
with associated $CR$ algebra 
$(\mathfrak{n}_0,\mathfrak{q}'\cap{\mathfrak{n}})$.
\end{enumerate}
\par
If $\mathfrak{q}\subset\mathfrak{q}'+\bar{\mathfrak{q}}'$,
then the fibers of the 
$\mathbf{G}_0$-equivariant fibration $M'\to{M}$
are complex and simply connected (but not necessarily connected).
\end{thm}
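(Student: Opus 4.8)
The plan is to read off both properties of the fiber $F$ from the product decomposition $F=F'\times F''$ of \eqref{eq:en} and from the identification, already obtained in the theorem, of the $CR$ algebra of $F$ with $(\mathfrak{i}_0,\bar{\mathfrak{q}}\cap\mathfrak{q}')$. First I would dispose of the word \emph{complex}. Since $\mathfrak{i}_0=\mathfrak{q}\cap\bar{\mathfrak{q}}\cap\mathfrak{g}_0$ and $\mathfrak{q}\cap\bar{\mathfrak{q}}$ is stable under the conjugation of $\mathfrak{g}$ with respect to $\mathfrak{g}_0$, the complexification of $\mathfrak{i}_0$ is $\mathfrak{q}\cap\bar{\mathfrak{q}}$, and the conjugate of $\bar{\mathfrak{q}}\cap\mathfrak{q}'$ inside it is $\mathfrak{q}\cap\bar{\mathfrak{q}}'$. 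By Remark \ref{rmk:ac} applied to $(\mathfrak{i}_0,\bar{\mathfrak{q}}\cap\mathfrak{q}')$, the fiber $F$ is a complex manifold precisely when $(\bar{\mathfrak{q}}\cap\mathfrak{q}')+(\mathfrak{q}\cap\bar{\mathfrak{q}}')=\mathfrak{q}\cap\bar{\mathfrak{q}}$. The inclusion ``$\subseteq$'' holds because $\mathfrak{q}'\subseteq\mathfrak{q}$ (hence also $\bar{\mathfrak{q}}'\subseteq\bar{\mathfrak{q}}$); for ``$\supseteq$'', given $Z\in\mathfrak{q}\cap\bar{\mathfrak{q}}$ the hypothesis $\mathfrak{q}\subseteq\mathfrak{q}'+\bar{\mathfrak{q}}'$ lets me write $Z=A+B$ with $A\in\mathfrak{q}'$, $B\in\bar{\mathfrak{q}}'$; then $B=Z-A\in\mathfrak{q}$, so $B\in\mathfrak{q}\cap\bar{\mathfrak{q}}'$, while $A=Z-B\in\bar{\mathfrak{q}}$, so $A\in\bar{\mathfrak{q}}\cap\mathfrak{q}'$. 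Thus $CR\text{-}\mathrm{codim}\,F=0$ and $F$ is complex by the Newlander-Nirenberg theorem.

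Next I would prove simple connectedness componentwise, using \eqref{eq:en}. The factor $F''$ is, by $(2')$, a Euclidean complex $\mathbf{N}_0$-nilmanifold, i.e. the quotient of the simply connected unipotent group $\mathbf{N}_0$ (diffeomorphic to a Euclidean space via $\exp$) by a closed connected subgroup; it is therefore itself diffeomorphic to a Euclidean space, in particular contractible. By $(1')$, every connected component of $F'$ is $CR$ diffeomorphic to one fixed orbit $D=\mathbf{L}_0^0\cdot x'$ in the complex flag manifold $X'=\mathbf{L}/(\mathbf{Q}'\cap\mathbf{L})$, with parabolic $CR$ algebra $(\mathfrak{l}_0,\mathfrak{q}'\cap\mathfrak{l})$. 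Since a Cartesian product of $CR$ manifolds is a complex manifold if and only if both factors are, and $F$ and $F''$ are complex, the factor $F'$ is complex as well; equivalently $(\mathfrak{q}'\cap\mathfrak{l})+\overline{(\mathfrak{q}'\cap\mathfrak{l})}=\mathfrak{l}$, so $D$ is an \emph{open} $\mathbf{L}_0^0$-orbit in $X'$.

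The remaining point, and the one I expect to carry the real content, is that such an open orbit is simply connected. For this I would appeal to the structure theory of open orbits of real forms in complex flag manifolds (Wolf \cite{Wolf69}; see also \cite{FHW06}): the open orbit $D$ has the homotopy type of its base cycle $\mathbf{K}'_0\cdot x'$, where $\mathbf{K}'_0$ is a maximal compact subgroup of the connected group $\mathbf{L}_0^0$ (hence connected); this base cycle is a compact connected complex submanifold of $X'$, homogeneous under $\mathbf{K}'_0$, hence a complex flag manifold and so simply connected. Therefore $D$ is simply connected, and each connected component of $F$, being $CR$ diffeomorphic to $D\times F''$ — a simply connected manifold times a contractible one — is simply connected; the number of such components equals the finite number of connected components of $F'$, which may be $>1$, and this is precisely what the clause ``but not necessarily connected'' records. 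Everything except this last paragraph is formal manipulation with the data already supplied by the theorem; the genuinely external ingredient is Wolf's base-cycle description, which one could alternatively replace by the retraction of $D$ onto its compact $\mathbf{K}'_0$-orbit underlying the Mostow fibration studied in the later sections.
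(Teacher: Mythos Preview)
Your argument for the final paragraph is correct and follows the same route as the paper: once $F=F'\times F''$ is in hand, $F''$ is Euclidean and hence contractible, while the extra hypothesis $\mathfrak{q}\subset\mathfrak{q}'+\bar{\mathfrak{q}}'$ forces $(\mathfrak{l}_0,\mathfrak{q}'\cap\mathfrak{l})$ to be totally complex, so each component of $F'$ is an \emph{open} $\mathbf{L}_0^0$-orbit in $X'$, and the paper, exactly as you do, invokes Wolf \cite[Theorem~5.4]{Wolf69} for simple connectedness of open orbits. Your linear-algebra verification that $(\bar{\mathfrak{q}}\cap\mathfrak{q}')+(\mathfrak{q}\cap\bar{\mathfrak{q}}')=\mathfrak{q}\cap\bar{\mathfrak{q}}$ under the hypothesis is a nice explicit check that the paper leaves implicit.

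However, you treat the product decomposition \eqref{eq:en} and the identifications $(1)$, $(2)$, $(1')$, $(2')$ as ``already obtained in the theorem'', i.e.\ as part of the hypothesis. They are not: they constitute the bulk of what the theorem asserts, and the paper's proof is mostly devoted to establishing them. The key steps you are skipping are: the Chevalley-type semidirect decomposition $\mathbf{Q}\cap\bar{\mathbf{Q}}=\mathbf{L}\ltimes\mathbf{N}$ of the connected algebraic group $\mathbf{Q}\cap\bar{\mathbf{Q}}$ (with $\mathbf{L}=\mathbf{Q}^r\cap\bar{\mathbf{Q}}^r$); the matching decomposition $\mathbf{Q}'\cap\bar{\mathbf{Q}}=(\mathbf{Q}'\cap\mathbf{L})\ltimes(\mathbf{Q}'\cap\mathbf{N})$, which is what yields the Cartesian splitting of $F=\mathbf{I}_0/\mathbf{I}_0'$; the identification of $\mathbf{Q}'\cap\mathbf{L}$ as a parabolic subgroup of $\mathbf{L}$, giving $(1)$ and $(1')$; and the inclusion $\mathfrak{q}^n\cap\bar{\mathfrak{q}}\subset\mathfrak{q}'\cap\mathfrak{n}$, from which $\mathfrak{n}=\mathfrak{n}_0+(\mathfrak{q}'\cap\mathfrak{n})$ and hence $(2)$, $(2')$, together with the fact that $\mathbf{N}_0$ acts transitively on the Euclidean nilmanifold $\mathbf{N}/(\mathbf{Q}'\cap\mathbf{N})$. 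Without these, the final paragraph has nothing to stand on.
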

\begin{proof} The $CR$ structure of the fiber $F$ is defined by
the embedding into the complex flag manifold 
${{X}''}=\mathbf{Q}/\mathbf{Q}'$. However, the embedding 
$F\hookrightarrow{{X}''}$ is, in general, not $CR$-generic.
Thus we begin by considering a natural generic embedding of $F$.
\par
The algebraic subgroup $\mathbf{Q}\cap\bar{\mathbf{Q}}$ 
decomposes into the semidirect product
\begin{equation}
\mathbf{Q}\cap\bar{\mathbf{Q}}=\mathbf{L}\ltimes
\mathbf{N},
\end{equation}
where ${\mathbf{N}}$ is its unipotent radical and 
$\mathbf{L}=\mathbf{Q}^r\cap\bar{\mathbf{Q}}^r$
is reductive.\par
The intersection
$\mathbf{Q}\cap\bar{\mathbf{Q}}$
contains a Cartan subgroup of $\mathbf{G}$, and therefore
is connected. Thus 
also the groups $ \mathbf{L} $ and
$ \mathbf{N} $ are connected. Their Lie algebras are
$\mathfrak{l}$ and
${\mathfrak{n}}$, respectively. Moreover, 
$ \mathbf{N} $ is also simply connected, being conjugate,
in the linear group $\mathbf{G}$, to a group of 
unipotent upper triangular
matrices (see e.g. \cite[\S{17.5}]{Humph}).\par
The connected component 
$\mathbf{L}^0_0$
of the identity
of $\mathbf{L}_0$ is a real form of 
$ \mathbf{L} $.
The parabolic subalgebra $\mathfrak{q}'$ of
$\mathfrak{g}$, containing a
Cartan subalgebra of $\mathfrak{l}$, intersects
$\mathfrak{l}$ into the
complex parabolic subalgebra 
$\mathfrak{q}'\cap\mathfrak{l}$ of $\mathfrak{l}$.
The intersection $\mathbf{Q}'\cap \mathbf{L} $
is the parabolic subgroup of $ \mathbf{L} $
corresponding to $\mathfrak{q}'\cap\mathfrak{l}$.
The quotient $F'=\mathbf{L}_0/\left(\mathbf{Q}'\cap\mathbf{L}_0\right)$
is therefore the union of finitely many copies of an
$\mathbf{L}_0^0$-orbit in the flag manifold 
${{X}'}= \mathbf{L} /\left({\mathbf{Q}'}\cap{\mathbf{L}}\right)$.
\par
Next we note that the intersection 
of $\mathbf{Q}'$ with
the unipotent subgroup $ \mathbf{N} $ is a subgroup
of its unipotent radical ${\mathbf{Q}'}^{\,n}$. Thus it is
connected and simply connected and the quotient 
$Y=
 \mathbf{N} /(\mathbf{Q}'\cap \mathbf{N} )$
is a connected and simply connected complex nilmanifold. 
Since both $\mathbf{Q}'\cap\mathbf{G}_0$ and
$\mathbf{Q}'\cap{\mathbf{N}}$ are closed and connected,
and we have a Lie algebras semidirect sum decomposition:
\begin{equation}
\mathfrak{q}'\cap\bar{\mathfrak{q}}=(\mathfrak{q}'
\cap\mathfrak{l})
\ltimes (\mathfrak{q}'\cap{\mathfrak{n}}),
\end{equation}
we also obtain a semidirect product decomposition\,:
\begin{equation}
\mathbf{Q}'\cap\bar{\mathbf{Q}}=(\mathbf{Q}'\cap
 \mathbf{L} )\ltimes
(\mathbf{Q}'\cap
 \mathbf{N} ).
\end{equation}
Hence the fiber $F=\mathbf{I}_0/{\mathbf{I}_0'}$
is $CR$ diffeomorphic to the Cartesian product $F=F'\times{F}''$,
with the $F'$ described above,
and where $F''$ is the orbit of $\mathbf{N}_0$ in $Y$.
Since
$
\mathfrak{q}'\cap{\mathfrak{n}}\supset
{\mathfrak{q}'}^{\,n}\cap\mathfrak{q}^n\cap\bar{\mathfrak{q}}=
\mathfrak{q}^n\cap\bar{\mathfrak{q}}$,
we have
\begin{gather*}
\mathfrak{n}_0+(\mathfrak{q}'\cap{\mathfrak{n}})\supset
(\mathfrak{q}'\cap{\mathfrak{n}})+
(\overline{\mathfrak{q}'\cap{\mathfrak{n}}})\supset
(\mathfrak{q}^n\cap\bar{\mathfrak{q}})+(\bar{\mathfrak{q}}^n\cap\mathfrak{q})
={\mathfrak{n}}.
\end{gather*}
Thus $(\mathfrak{n}_0,\mathfrak{q}'\cap{\mathfrak{n}})$ 
is
totally complex.
The orbit $F''$ of
$\mathbf{N}_0$ in $Y$, being an open
Euclidean complex submanifold of $Y$,
coincides with it, because $\mathbf{N}_0$ is nilpotent.
\par
Finally, if 
$\mathfrak{q}\subset\mathfrak{q}'\subset\mathfrak{q}+\bar{\mathfrak{q}}$,
the factor $F'$ in the decomposition
\eqref{eq:en} is an open orbit in $X'$
and hence simply connected by \cite[Theorem 5.4]{Wolf69}.
 \end{proof}
\section{${{\mathbf{G}_0}}$-equivariant 
$CR$ fibrations and fit Weyl chambers}\label{sec:f}
In this section we describe the fundamental and the weakly nondegenerate
reductions (see \cite{MN05}) of a parabolic $CR$ manifold $M$,
with associated $CR$ algebra $(\mathfrak{g}_0,\mathfrak{q})$. 
This description will be obtained in terms of  representations
$\mathfrak{q}=\mathfrak{q}_{\Phi}$ of the parabolic subalgebra
$\mathfrak{q}$ with respect to 
systems
$\Phi$ of $C$-positive simple roots
for $S$-fit and $V$-fit Weyl chambers $C$.
Since its fundamental and weakly nondegenerate reductions
share with $(\mathfrak{g}_0,\mathfrak{q})$ the same
adapted Cartan pairs, we can fix throughout this section
a Cartan pair $(\vartheta,\mathfrak{h}_0)$, adapted to the parabolic
$CR$ algebra $(\mathfrak{g}_0,\mathfrak{q})$.
\begin{lem}\label{lem:fa}
Let $(\mathfrak{g}_0,\mathfrak{q}_{\Phi})$
be a parabolic $CR$ algebra, with $\Phi\subset\mathcal{B}$,
where $\mathcal{B}$ is the set
of $C$-positive simple roots
for
an $S$-fit Weyl chamber $C$. 
Set
\begin{equation}\label{eq:fn}
  \Phi^-=\{\alpha\in\Phi\mid \bar\alpha\prec{0}\}.
\end{equation}
\par
Let 
$\alpha_0\in\mathcal{B}$. A necessary
and sufficient condition in order that:
\begin{gather}\label{eq:fa}
\mathfrak{q}_{\Phi}+\bar{\mathfrak{q}}_{\Phi}
\subset\mathfrak{q}_{\{\alpha_0\}}
\intertext{
is that}
\label{eq:fb} 
\alpha_0\in\Phi\cap\bar{\mathcal{Q}}^n_{\Phi}
\quad\text{and}\quad
\alpha_0\not\in
{\bigcup}_{\beta\in(\mathcal{B}\setminus\Phi)\cup\Phi^-}\mathrm{supp}(\bar\beta).
\end{gather}
\end{lem}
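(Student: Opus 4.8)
The plan is to translate the condition $\mathfrak{q}_\Phi + \bar{\mathfrak{q}}_\Phi \subset \mathfrak{q}_{\{\alpha_0\}}$ entirely into the combinatorics of root systems, using the decompositions \eqref{eq:bd}--\eqref{eq:bj} together with the conjugation rule $\sigma^*(\alpha) = \bar\alpha$. First I would observe that, since $\mathfrak{q}_\Phi$ and $\bar{\mathfrak{q}}_\Phi$ both contain $\mathfrak{h}$, the sum $\mathfrak{q}_\Phi + \bar{\mathfrak{q}}_\Phi$ is the span of $\mathfrak{h}$ and the root spaces $\mathfrak{g}^\gamma$ with $\gamma \in \mathcal{Q}_\Phi \cup \bar{\mathcal{Q}}_\Phi$; so \eqref{eq:fa} is equivalent to the purely set-theoretic inclusion $\mathcal{Q}_\Phi \cup \bar{\mathcal{Q}}_\Phi \subset \mathcal{Q}_{\{\alpha_0\}}$. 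Recalling \eqref{eq:bd}, $\mathcal{Q}_{\{\alpha_0\}} = \{\alpha \succ 0\} \cup \{\alpha \prec 0 \mid \alpha_0 \notin \mathrm{supp}(\alpha)\}$, i.e. the only negative roots it omits are those whose support contains $\alpha_0$. Since $\mathcal{Q}_\Phi$ already contains all positive roots, the inclusion $\mathcal{Q}_\Phi \subset \mathcal{Q}_{\{\alpha_0\}}$ holds iff $\alpha_0 \in \Phi$ (otherwise $-\alpha_0 \in \mathcal{Q}_\Phi \setminus \mathcal{Q}_{\{\alpha_0\}}$); one also needs $\{\alpha_0\} \subset \Phi$ to even make $\mathfrak{q}_{\{\alpha_0\}}$ meaningful here, so $\alpha_0 \in \Phi$ is forced.

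The substance is then the inclusion $\bar{\mathcal{Q}}_\Phi \subset \mathcal{Q}_{\{\alpha_0\}}$. I would split $\bar{\mathcal{Q}}_\Phi = \overline{\mathcal{Q}^n_\Phi} \cup \overline{\mathcal{Q}^r_\Phi}$. For the reductive part: $\overline{\mathcal{Q}^r_\Phi}$ consists of the conjugates $\bar\gamma$ of roots $\gamma$ with $\mathrm{supp}(\gamma) \cap \Phi = \emptyset$; I would use the $S$-fit property \eqref{eq:df}, which says $\bar\alpha \succ 0$ for $\alpha \in \mathcal{B} \setminus (\Phi \cup \mathcal{R}_{\mathrm{im}})$, to control the sign and support of such $\bar\gamma$ — the point being that if $\gamma$ is a sum of simple roots none of which lies in $\Phi$, then (using the $S$-fit rule applied simple-root-by-simple-root, plus the trivial behaviour of imaginary simple roots) $\bar\gamma$ is again a combination of simple roots avoiding $\Phi$, in particular avoiding $\alpha_0$; such a root lies in $\mathcal{Q}_{\{\alpha_0\}}$ regardless of its sign. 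So $\overline{\mathcal{Q}^r_\Phi} \subset \mathcal{Q}_{\{\alpha_0\}}$ automatically, for any $\alpha_0 \in \Phi$, and this part imposes no condition.

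For the nilradical part: $\bar\delta$ with $\delta \in \mathcal{Q}^n_\Phi$, i.e. $\delta \succ 0$ and $\mathrm{supp}(\delta) \cap \Phi \neq \emptyset$. If $\bar\delta \succ 0$ it lies in $\mathcal{Q}_{\{\alpha_0\}}$ trivially. If $\bar\delta \prec 0$, then $\bar\delta \in \mathcal{Q}_{\{\alpha_0\}}$ iff $\alpha_0 \notin \mathrm{supp}(\bar\delta)$, i.e. iff $\alpha_0 \notin \mathrm{supp}(\bar\delta)$ for every negative-conjugate $\delta \in \mathcal{Q}^n_\Phi$. Here I would reduce to simple roots: writing $\delta = \sum_\beta k^\beta_\delta \beta$, the support of $\bar\delta$ is controlled by the supports of the $\bar\beta$ for $\beta \in \mathrm{supp}(\delta)$, and — using the $S$-fit conjugation rules and the fact that a negative $\bar\delta$ can only arise when some $\beta \in \mathrm{supp}(\delta)$ has $\bar\beta \prec 0$, which forces $\beta \in \Phi^-$ by \eqref{eq:df} — one shows that the offending supports $\mathrm{supp}(\bar\delta)$ are exactly covered by $\bigcup_{\beta \in (\mathcal{B}\setminus\Phi) \cup \Phi^-} \mathrm{supp}(\bar\beta)$. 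Conversely, if $\alpha_0$ did lie in one such $\mathrm{supp}(\bar\beta)$, one produces an explicit $\delta$ (essentially $\beta$ itself, or a root having $\beta$ in its support lying in $\mathcal{Q}^n_\Phi$) whose conjugate is negative and contains $\alpha_0$, violating the inclusion; combined with the observation that $\alpha_0 \in \Phi \cap \bar{\mathcal{Q}}^n_\Phi$ is exactly the condition that $-\alpha_0$, which equals $-\overline{\bar\alpha_0}$ with $\bar\alpha_0 \prec 0$ when $\alpha_0 \in \Phi^-$... actually more cleanly: $\alpha_0 \in \bar{\mathcal{Q}}^n_\Phi$ is needed so that $\mathfrak{g}^{\alpha_0}$-side constraints, the root $\bar\alpha_0$ itself, land correctly. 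I would record \eqref{eq:fb} as the conjunction of "$\alpha_0 \in \Phi$" (from the $\mathcal{Q}_\Phi$ half), "$\alpha_0 \in \bar{\mathcal{Q}}^n_\Phi$" (ensuring $\overline{\mathcal{Q}^n_\Phi}$ doesn't create a problem via $\alpha_0$'s own conjugate being the relevant witness), and "$\alpha_0 \notin \bigcup \mathrm{supp}(\bar\beta)$".

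The main obstacle I anticipate is the bookkeeping in the nilradical step: precisely matching the set of simple roots $\beta$ whose conjugates $\bar\beta$ can drag $\alpha_0$ into $\mathrm{supp}(\bar\delta)$ for some $\delta \in \mathcal{Q}^n_\Phi$, against the indexing set $(\mathcal{B}\setminus\Phi)\cup\Phi^-$ in \eqref{eq:fb}, and doing so in both directions. The forward direction (necessity of \eqref{eq:fb}) requires exhibiting, for each $\beta$ in that union with $\alpha_0 \in \mathrm{supp}(\bar\beta)$, an actual root $\delta \in \mathcal{Q}^n_\Phi$ with $\bar\delta \prec 0$ and $\alpha_0 \in \mathrm{supp}(\bar\delta)$ — for $\beta \in \Phi^-$ one takes $\delta = \beta$ directly; for $\beta \in \mathcal{B}\setminus\Phi$ one must find a positive root $\delta$ with $\beta \in \mathrm{supp}(\delta)$, $\mathrm{supp}(\delta)\cap\Phi \neq \emptyset$, and $\bar\delta \prec 0$, which needs a small argument about connectedness of root supports in the Dynkin diagram and the interplay with the $S$-fit rule. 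The sufficiency direction is the sign-and-support propagation argument sketched above. Everything else is the formal reduction to root sets, which is routine given Proposition \ref{prop:bc} and the decompositions in Section \ref{sec:b}.
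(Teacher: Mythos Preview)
Your reduction to the root-set inclusion $\mathcal{Q}_\Phi\cup\bar{\mathcal{Q}}_\Phi\subset\mathcal{Q}_{\{\alpha_0\}}$ is correct, but the claim that $\overline{\mathcal{Q}^r_\Phi}\subset\mathcal{Q}_{\{\alpha_0\}}$ holds automatically for every $\alpha_0\in\Phi$ is false, and this is exactly where the $(\mathcal{B}\setminus\Phi)$-part of condition \eqref{eq:fb} comes from. The $S$-fit property only tells you that $\bar\beta\succ 0$ for $\beta\in\mathcal{B}\setminus(\Phi\cup\mathcal{R}_{\mathrm{im}})$; it says nothing about $\mathrm{supp}(\bar\beta)$. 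If $\beta\in\mathcal{B}\setminus\Phi$ has $\alpha_0\in\mathrm{supp}(\bar\beta)$, then $-\beta\in\mathcal{Q}^r_\Phi$ but $\overline{-\beta}=-\bar\beta$ is negative with $\alpha_0$ in its support, hence $\overline{-\beta}\notin\mathcal{Q}_{\{\alpha_0\}}$. Concretely, in type $A_3$ with $\bar\alpha_1=\alpha_3$, $\bar\alpha_2=\alpha_2$, $\bar\alpha_3=\alpha_1$ and $\Phi=\{\alpha_1\}$, every $\delta\in\mathcal{Q}^n_\Phi$ has $\bar\delta\succ 0$, so your nilradical analysis imposes no constraint at all, yet \eqref{eq:fa} fails: the witness $-\alpha_3\in\mathcal{Q}^r_\Phi$ has conjugate $-\alpha_1\notin\mathcal{Q}_{\{\alpha_1\}}$. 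So the ``small connectedness argument'' you anticipate for extracting the $\mathcal{B}\setminus\Phi$ condition from the nilradical part cannot succeed; that condition lives in the reductive part.

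Your approach can be repaired by correctly splitting the work---$\overline{\mathcal{Q}^r_\Phi}$ yields the $\mathcal{B}\setminus\Phi$ clause, while simple roots $\beta\in\Phi^-$ (taking $\delta=\beta$) give the $\Phi^-$ clause---but the paper avoids this bookkeeping entirely by dualizing: \eqref{eq:fa} is equivalent to $\mathcal{Q}^n_{\{\alpha_0\}}\subset\mathcal{Q}^n_\Phi\cap\bar{\mathcal{Q}}^n_\Phi$. In that formulation one tests membership of $\pm\bar\beta$ (whichever is positive) in $\mathcal{Q}^n_{\{\alpha_0\}}$ directly for each simple $\beta$, and both implications fall out in a few lines without constructing composite witnesses. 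This also makes the role of $\alpha_0\in\bar{\mathcal{Q}}^n_\Phi$ transparent: it is simply $\alpha_0\in\mathcal{Q}^n_{\{\alpha_0\}}\subset\bar{\mathcal{Q}}^n_\Phi$.
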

\begin{proof}
First we show that \eqref{eq:fb} implies \eqref{eq:fa}.
If $\alpha_0\in\Phi\cap\bar{\mathcal{Q}}^n_{\Phi}$, then 
$\bar\alpha_0\succ{0}$, because all roots
in $\mathcal{Q}^n_{\Phi}$ are $C$-positive, and hence, in particular,
$\alpha_0\notin
\mathcal{R}_{\mathrm{im}}$.\par
We have $\mathfrak{q}_{\Phi}\subset\mathfrak{q}_{\{\alpha_0\}}$,
because $\{\alpha_0\}\subset\Phi$. To prove that also
$\bar{\mathfrak{q}}_{\Phi}\subset\mathfrak{q}_{\{\alpha_0\}}$,
it suffices to show that $\mathfrak{q}_{\{\alpha_0\}}^n\subset
\bar{\mathfrak{q}}_{\Phi}^n$. Assume by contradiction that
this inclusion is false. Then there is a root $\alpha$ with
$\alpha\succ \alpha_0$ and
$\alpha\notin\bar{\mathcal{Q}}^n_{\Phi}$, 
i.e. with $\bar\alpha\notin\mathcal{Q}^n_{\Phi}$.
If $\bar\alpha \succ 0$, then
$\mathrm{supp}(\bar\alpha)\cap\Phi=\emptyset$.
Being: \begin{equation}\label{eq:fd}
\alpha_0\in\mathrm{supp}(\alpha)\subset
{\bigcup}_{\beta\in\mathrm{supp}(\bar\alpha)}\mathrm{supp}(\bar\beta),
\end{equation} this would
imply that $\alpha_0\in\mathrm{supp}(\bar\beta)$ for
some $\beta\in\mathcal{B}\setminus\Phi$. Hence, by \eqref{eq:fb},
$\bar\alpha \prec 0$, and, from 
\eqref{eq:fd}, we obtain that $\alpha_0$ belongs to 
$\mathrm{supp}(\bar\beta)$ for some 
$\beta\in\left(\mathcal{B}\setminus\mathcal{R}_{\mathrm{im}}\right)$
with $\bar\beta\prec{0}$.
But then, because
$C$ is $S$-fit, $\beta\in\Phi^-$, yielding, by \eqref{eq:fb},  
a contradiction.
This completes the proof  that \eqref{eq:fb} implies \eqref{eq:fa}.
\par
Let us prove the opposite implication.
From
$\mathfrak{q}_{\Phi}\subset\mathfrak{q}_{\{\alpha_0\}}$,
we have that
$\alpha_0\in\Phi$. Condition \eqref{eq:fa} 
is equivalent to the inclusion 
$\mathfrak{q}^n_{\{\alpha\}}\subset\mathfrak{q}^n_{\Phi}\cap
\bar{\mathfrak{q}}^n_{\Phi}$. In particular, 
$\alpha_0\in\bar{\mathcal{Q}}^n_{\Phi}$, and, 
as we already have $\alpha_0\in\Phi$,
this brings the first half
of \eqref{eq:fb}. Let $\beta\in\mathcal{B}$
and assume that $\alpha_0\in\mathrm{supp}(\bar\beta)$.
If $\bar\beta\succ 0$, then $\bar\beta\in\mathcal{Q}_{\{\alpha_0}^n
\subset\mathcal{Q}^n\cap\bar{\mathcal{Q}}^n$ yields
$\beta\in\mathcal{Q}^n$ and hence, being simple, 
$\beta\in(\Phi\setminus\Phi^-)$.
Otherwise, $\bar\beta\prec 0$, and 
$-\bar\beta\in\mathcal{Q}_{\{\alpha_0\}}^n
\subset\mathcal{Q}^n_{\Phi}\cap\bar{\mathcal{Q}}^n_{\Phi}$. 
This would
imply that $-\beta\in\mathcal{Q}^n_{\Phi}$, 
but this is impossible
because $\mathcal{Q}^n_{\Phi}$
consists of $C$-positive roots.
\end{proof}
We recall that the \emph{basis of the fundamental reduction} of a $CR$ algebra
$(\mathfrak{g}_0,\mathfrak{q})$ is the $CR$ algebra 
$(\mathfrak{g}_0,\mathfrak{q}')$, where $\mathfrak{q}'$ is the smallest
complex Lie subalgebra of $\mathfrak{g}$ with
$\mathfrak{q}+\bar{\mathfrak{q}}\subset\mathfrak{q}'$
(see \cite[\S 5B]{MN05}).
\begin{thm}[Fundamental reduction]\label{thm:fb}
Let $(\mathfrak{g}_0,\mathfrak{q}_{\Phi})$
be a parabolic $CR$ algebra, with $\Phi\subset\mathcal{B}$,
where $\mathcal{B}$ is the set of $C$-positive simple roots for
an $S$-fit Weyl chamber $C$. Let:
\begin{gather}
\label{eq:fc}
\Phi^-=\{\alpha\in\Phi\,|\,\bar\alpha \prec 0\},\\
\label{eq:ff}
\Psi=\big(\Phi\cap\bar{\mathcal{Q}}_{\Phi}^n\big)\setminus
\big(\;{\bigcup}_{\alpha\in
(\mathcal{B}\setminus\Phi)
\cup{\Phi^-}}\mathrm{supp}(\bar\alpha)
\big).
\end{gather}
Then: 
\begin{enumerate}
\item
$(\mathfrak{g}_0,\mathfrak{q}_{\Psi})$ is the 
basis of the fundamental reduction
of  $(\mathfrak{g}_0,\mathfrak{q}_{\Phi})$.
\item
A necessary and sufficient condition
for $(\mathfrak{g}_0,\mathfrak{q}_{\Phi})$ to be fundamental is that
$\Psi=\emptyset$, i.e. that:
\begin{equation}\label{eq:fg}
\Phi\cap\bar{\mathcal{Q}}_{\Phi}^n\subset
{\bigcup}_{\alpha\in
(\mathcal{B}\setminus\Phi)\cup{\Phi^-}}
\mathrm{supp}(\bar\alpha).
\end{equation}
\end{enumerate}
Let $M_{\Phi}$ and $M_{\Psi}$ be the 
${{\mathbf{G}_0}}$-orbits associated to
$(\mathfrak{g}_0,\mathfrak{q}_{\Phi})$,
$(\mathfrak{g}_0,\mathfrak{q}_{\Psi})$, respectively. 
Then the ${{\mathbf{G}_0}}$-equivariant fibration 
$M_{\Phi}\xrightarrow{\pi}{M}_{\Psi}$ 
is $CR$ and all connected components of 
its fibers are parabolic $CR$ manifolds 
 (cf. Definition \ref{dfn:ca})
of finite type. In particular, $M_{\Phi}$ is of finite type
if and only if \eqref{eq:fg} holds true.
\end{thm}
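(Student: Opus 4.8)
The plan is to obtain everything from the combinatorial criterion of Lemma~\ref{lem:fa}, together with the general facts on $\mathbf{G}_0$-equivariant fibrations (Theorems~\ref{thm:ea} and~\ref{thm:eh}) and the dictionary of Proposition~\ref{prop:ac}. First I would pin down the basis of the fundamental reduction. Since $\mathfrak{q}_\Phi+\bar{\mathfrak{q}}_\Phi\supset\mathfrak{q}_\Phi$ contains the Borel subalgebra $\mathfrak{b}$ determined by $C$, every complex Lie subalgebra $\mathfrak{q}'$ of $\mathfrak{g}$ with $\mathfrak{q}_\Phi+\bar{\mathfrak{q}}_\Phi\subset\mathfrak{q}'$ is parabolic, hence of the form $\mathfrak{q}'=\mathfrak{q}_\Theta$ for a unique $\Theta\subset\mathcal{B}$. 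From~\eqref{eq:bd} one reads off $\mathcal{Q}_\Theta=\bigcap_{\alpha_0\in\Theta}\mathcal{Q}_{\{\alpha_0\}}$, so $\mathfrak{q}_\Theta=\bigcap_{\alpha_0\in\Theta}\mathfrak{q}_{\{\alpha_0\}}$; therefore $\mathfrak{q}_\Theta\supset\mathfrak{q}_\Phi+\bar{\mathfrak{q}}_\Phi$ if and only if $\mathfrak{q}_{\{\alpha_0\}}\supset\mathfrak{q}_\Phi+\bar{\mathfrak{q}}_\Phi$ for every $\alpha_0\in\Theta$. Thus the family of admissible $\mathfrak{q}_\Theta$ is closed under intersection, its smallest member corresponds to the largest admissible $\Theta$, namely $\Theta^*=\{\alpha_0\in\mathcal{B}\mid\mathfrak{q}_{\{\alpha_0\}}\supset\mathfrak{q}_\Phi+\bar{\mathfrak{q}}_\Phi\}$, and by Lemma~\ref{lem:fa} (condition~\eqref{eq:fb}) one has $\Theta^*=\Psi$ with $\Psi$ as in~\eqref{eq:ff}. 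This proves~(1); and~(2) follows at once, since $(\mathfrak{g}_0,\mathfrak{q}_\Phi)$ is fundamental exactly when the basis of its fundamental reduction is $(\mathfrak{g}_0,\mathfrak{g})$, i.e.\ when $\mathfrak{q}_\Psi=\mathfrak{g}$, i.e.\ when $\Psi=\emptyset$, which is~\eqref{eq:fg}.

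Next I would treat the fibration. Since $\Psi\subset\Phi$ gives $\mathfrak{q}_\Phi\subset\mathfrak{q}_\Psi$, Theorem~\ref{thm:ea} provides the $\mathbf{G}_0$-equivariant fibration $\pi\colon M_\Phi\to M_\Psi$, and, once more because $\mathfrak{q}_\Phi\subset\mathfrak{q}_\Psi$, $\pi$ is a $CR$ map. Two observations make Theorem~\ref{thm:eh} apply cleanly to its fiber $F$. First, $\mathfrak{q}_\Psi$, being the minimal complex subalgebra containing the $\sigma$-stable set $\mathfrak{q}_\Phi+\bar{\mathfrak{q}}_\Phi$, is itself $\sigma$-stable: $\bar{\mathfrak{q}}_\Psi=\mathfrak{q}_\Psi$; hence $\mathbf{Q}_\Psi\cap\bar{\mathbf{Q}}_\Psi=\mathbf{Q}_\Psi$, its reductive part is $\mathbf{Q}^r_\Psi$, and the fiber $CR$ algebra furnished by Theorem~\ref{thm:eh} is $(\mathfrak{i}_0,\bar{\mathfrak{q}}_\Psi\cap\mathfrak{q}_\Phi)=(\mathfrak{i}_0,\mathfrak{q}_\Phi)$. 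Second, $\Psi\subset\Phi$ yields $\mathcal{Q}^n_\Psi\subset\mathcal{Q}^n_\Phi\subset\mathcal{Q}_\Phi$, so $\mathfrak{q}^n_\Psi\subset\mathfrak{q}_\Phi$ and $\mathbf{Q}^n_\Psi\subset\mathbf{Q}_\Phi$; consequently, in the product decomposition $F=F'\times F''$ of Theorem~\ref{thm:eh}, the nilpotent factor $F''$ — the orbit of $\mathbf{N}_0$ in $\mathbf{N}/(\mathbf{Q}_\Phi\cap\mathbf{N})$ with $\mathbf{N}=\mathbf{Q}^n_\Psi$ — reduces to a point. Thus $F=F'$, a finite union of copies of an $\mathbf{L}^0_0$-orbit in a complex flag manifold of $\mathbf{Q}^r_\Psi$; since the centre of $\mathbf{Q}^r_\Psi$ acts trivially on such a flag manifold, every connected component of $F$ is the orbit of a real form of the semisimple group $[\mathbf{Q}^r_\Psi,\mathbf{Q}^r_\Psi]$ in one of its flag manifolds, i.e.\ a parabolic $CR$ manifold in the sense of Definition~\ref{dfn:ca}.

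To see that this fiber is of finite type, by Proposition~\ref{prop:ac}(1) it suffices to show that $(\mathfrak{i}_0,\mathfrak{q}_\Phi)$ is fundamental. The conjugation of $\mathfrak{i}:=\mathfrak{i}_0^{\mathbb{C}}=\mathfrak{q}_\Psi$ with respect to $\mathfrak{i}_0$ is the restriction of $\sigma$, and $\bar{\mathfrak{q}}_\Phi\subset\bar{\mathfrak{q}}_\Psi=\mathfrak{q}_\Psi=\mathfrak{i}$, so the ``$\mathfrak{q}+\bar{\mathfrak{q}}$'' of this $CR$ algebra is again $\mathfrak{q}_\Phi+\bar{\mathfrak{q}}_\Phi$; but $\mathfrak{q}_\Psi$ is, by construction, the smallest complex Lie subalgebra of $\mathfrak{g}$ — a fortiori of $\mathfrak{q}_\Psi$ — containing $\mathfrak{q}_\Phi+\bar{\mathfrak{q}}_\Phi$, so no proper intermediate subalgebra exists and $(\mathfrak{i}_0,\mathfrak{q}_\Phi)$ is fundamental, hence of finite type. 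Finally $M_\Phi$ itself is of finite type if and only if the fibration is trivial, i.e.\ $M_\Psi$ is a point, i.e.\ $\mathfrak{q}_\Psi=\mathfrak{g}$, i.e.\ $\Psi=\emptyset$, i.e.\ \eqref{eq:fg} holds.

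The step I expect to require the most care is the reduction of part~(1) to Lemma~\ref{lem:fa}: one has to justify that every Lie subalgebra containing $\mathfrak{q}_\Phi+\bar{\mathfrak{q}}_\Phi$ is a standard parabolic $\mathfrak{q}_\Theta$, and that the minimum of the family of such $\mathfrak{q}_\Theta$ is genuinely attained at $\Theta^*$ — equivalently that $\mathfrak{q}_{\Theta^*}$ still contains $\mathfrak{q}_\Phi+\bar{\mathfrak{q}}_\Phi$ — which is exactly the role of the identity $\mathfrak{q}_\Theta=\bigcap_{\alpha_0\in\Theta}\mathfrak{q}_{\{\alpha_0\}}$; the $S$-fitness of $C$ has already been used inside Lemma~\ref{lem:fa}, so no extra combinatorics is needed here, only careful bookkeeping.
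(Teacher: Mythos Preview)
Your argument for parts (1) and (2) is correct and is essentially the paper's own proof: reduce to Lemma~\ref{lem:fa} by observing that any complex subalgebra containing $\mathfrak{q}_\Phi+\bar{\mathfrak{q}}_\Phi$ contains $\mathfrak{b}$, is therefore parabolic of the form $\mathfrak{q}_\Theta$, and then the smallest one corresponds to $\Theta=\Psi$.

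For the statements about the fibration $M_\Phi\to M_\Psi$ and its fibers, you take a different route. The paper simply invokes \cite[Theorem~5.3]{MN05}, which handles the fundamental reduction of a general $CR$ algebra and asserts that the fibers are of finite type. You instead work entirely inside the present paper: you use $\bar{\mathfrak{q}}_\Psi=\mathfrak{q}_\Psi$ to identify $\mathfrak{i}=\mathfrak{q}_\Psi$ and $\mathfrak{n}=\mathfrak{q}_\Psi^n\subset\mathfrak{q}_\Phi$, apply Theorem~\ref{thm:eh} to see that the nilpotent factor $F''$ of the fiber collapses, and then argue fundamentality of $(\mathfrak{i}_0,\mathfrak{q}_\Phi)$ directly from the minimality of $\mathfrak{q}_\Psi$. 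This is correct and has the merit of being self-contained; what it costs is the extra unpacking of Theorem~\ref{thm:eh} in the special case $\mathfrak{q}=\bar{\mathfrak{q}}$, which the external citation hides. One cosmetic point: ``the fibration is trivial'' in your last sentence should read ``the base is a point''; the logical chain $\Psi=\emptyset\Leftrightarrow\mathfrak{q}_\Psi=\mathfrak{g}$ is what you actually use, and it already follows from~(2) together with Proposition~\ref{prop:ac}(1).
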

\begin{proof}
The complex subalgebra $\mathfrak{q}'$ yielding the
fundamental reduction $(\mathfrak{g}_0,\mathfrak{q}')$ of
$(\mathfrak{g}_0,\mathfrak{q}_{\Phi})$ is a complex 
subalgebra of $\mathfrak{g}$ that contains $\mathfrak{q}_{\Phi}$,
and hence is parabolic and
of the form $\mathfrak{q}'=\mathfrak{q}_{\Phi'}$ for
some $\Phi'\subset\Phi$.
Thus $\mathfrak{q}'$ is the intersection
of all $\mathfrak{q}_{\{\alpha_0\}}$,
with $\alpha_0\in\Phi$, for which
$\mathfrak{q}_{\Phi}+\bar{\mathfrak{q}}_{\Phi}\subset
\mathfrak{q}_{\{\alpha_0\}}$.  
By Lemma \ref{lem:fa}, we have $\Phi'=\Psi$.
This proves (1) and (2). \par
The last statement is a consequence of
\cite[Theorem 5.3]{MN05}.
\end{proof}
Next we turn our consideration to weak nondegeneracy. First we prove:
\begin{prop}\label{prop:fc}
Let $(\mathfrak{g}_0,\mathfrak{q}_{\Phi})$
and $(\mathfrak{g}_0,\mathfrak{q}_{\Psi})$
be parabolic $CR$ algebras, with $\Psi\subset\Phi\subset\mathcal{B}$,
where $\mathcal{B}$ is the set of $C$-positive simple roots for
a $V$-fit Weyl chamber $C$.
Let $M_{\Phi}$, $M_{\Psi}$ be the 
corresponding ${{\mathbf{G}_0}}$-orbits. Then the
${{\mathbf{G}_0}}$-equivariant fibration:
\begin{equation}\label{eq:fh}
\begin{CD}
M_{\Phi}@>\pi>>M_{\Psi}
\end{CD}
\end{equation}
is a $CR$ fibration with complex fibers if and only if:
\begin{equation}\label{eq:fi}
\bar\alpha \prec 0\quad\forall\alpha\in\Phi
\setminus\Psi.
\end{equation}
\end{prop}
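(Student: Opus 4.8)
The plan is to strip away the $CR$‑geometric language in two moves — first translating ``$CR$ fibration with complex fibers'' into an inclusion of complex subalgebras, then into an inclusion of parabolic root sets — and to settle the resulting combinatorial statement by a sign count on root coefficients in which the $V$‑fit hypothesis is used.

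First note that $\Psi\subset\Phi$ gives $\mathcal{Q}_{\Phi}\subset\mathcal{Q}_{\Psi}$ by \eqref{eq:bd}, hence $\mathfrak{q}_{\Phi}\subset\mathfrak{q}_{\Psi}$ and, intersecting with $\mathfrak{g}_0$, an inclusion of isotropy subalgebras; by Theorem \ref{thm:ea} and \eqref{eq:ea} the fibration \eqref{eq:fh} exists, and by part $(2)$ of that theorem $\pi$ is a $CR$ map. The core claim is then
\[
\text{$\pi$ is a $CR$ fibration with complex fibers}\iff
\mathfrak{q}_{\Psi}\subset\mathfrak{q}_{\Phi}+\bar{\mathfrak{q}}_{\Phi}\iff
\mathcal{Q}_{\Psi}\subset\mathcal{Q}_{\Phi}\cup\bar{\mathcal{Q}}_{\Phi}.
\]
For ``$\Leftarrow$'' of the first equivalence: since $\mathfrak{q}_{\Phi}\subset\mathfrak{q}_{\Psi}$, every $x\in\mathfrak{q}_{\Psi}$ written as $x=a+b$ with $a\in\mathfrak{q}_{\Phi}$, $b\in\bar{\mathfrak{q}}_{\Phi}$ has $b=x-a\in\mathfrak{q}_{\Psi}\cap\bar{\mathfrak{q}}_{\Psi}$, so $\mathfrak{q}_{\Psi}=\mathfrak{q}_{\Phi}+(\mathfrak{q}_{\Psi}\cap\bar{\mathfrak{q}}_{\Psi})$; Theorem \ref{thm:ea}$(3)$ then makes $\pi$ a $CR$ submersion, hence a $CR$ fibration, with complex fibers by the last assertion of Theorem \ref{thm:eh}. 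For ``$\Rightarrow$'': a $CR$ submersion satisfies $\mathfrak{q}_{\Psi}=\mathfrak{q}_{\Phi}+(\mathfrak{q}_{\Psi}\cap\bar{\mathfrak{q}}_{\Psi})$ by Theorem \ref{thm:ea}$(3)$; by Theorem \ref{thm:eh} the fiber has $CR$ algebra $(\mathfrak{i}_0,\bar{\mathfrak{q}}_{\Psi}\cap\mathfrak{q}_{\Phi})$ with $\mathfrak{i}_0=\mathfrak{q}_{\Psi}\cap\mathfrak{g}_0$, and its being totally complex means, by Remark \ref{rmk:ac}, that $\mathfrak{q}_{\Psi}\cap\bar{\mathfrak{q}}_{\Psi}=(\bar{\mathfrak{q}}_{\Psi}\cap\mathfrak{q}_{\Phi})+(\mathfrak{q}_{\Psi}\cap\bar{\mathfrak{q}}_{\Phi})$ (the complexification of $\mathfrak{i}_0$ being the $\sigma$‑stable subalgebra $\mathfrak{q}_{\Psi}\cap\bar{\mathfrak{q}}_{\Psi}$); substituting into the previous identity yields $\mathfrak{q}_{\Psi}\subset\mathfrak{q}_{\Phi}+\bar{\mathfrak{q}}_{\Phi}$. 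The second equivalence is immediate since all these subalgebras are direct sums of $\mathfrak{h}$ with root spaces.

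It remains to show that $\mathcal{Q}_{\Psi}\subset\mathcal{Q}_{\Phi}\cup\bar{\mathcal{Q}}_{\Phi}$ is equivalent to $\bar\alpha\prec 0$ for all $\alpha\in\Phi\setminus\Psi$. If the inclusion holds and $\alpha\in\Phi\setminus\Psi$, then $-\alpha\in\mathcal{Q}^r_{\Psi}\subset\mathcal{Q}_{\Psi}$ while $-\alpha\notin\mathcal{Q}_{\Phi}$ (it is $\prec0$ with support $\{\alpha\}\subset\Phi$), so $-\bar\alpha=\overline{-\alpha}\in\mathcal{Q}_{\Phi}$; were $\bar\alpha\succ0$, then necessarily $\alpha\in\mathcal{R}_{\mathrm{cpx}}$ and \eqref{eq:dg} would force $\bar\alpha\in\mathcal{Q}^n_{\Phi}$, i.e.\ $\mathrm{supp}(\bar\alpha)\cap\Phi\neq\emptyset$, contradicting $-\bar\alpha\in\mathcal{Q}_{\Phi}$; hence $\bar\alpha\prec0$. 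Conversely, assume $\bar\alpha\prec0$ for all $\alpha\in\Phi\setminus\Psi$ and take $\eta\in\mathcal{Q}_{\Psi}\setminus\mathcal{Q}_{\Phi}$: then $\eta\prec0$, $\mathrm{supp}(\eta)\cap\Psi=\emptyset$, and $\mathrm{supp}(\eta)\cap(\Phi\setminus\Psi)\neq\emptyset$. Writing $\eta=\sum_{\beta}k^{\beta}\beta$ with $k^{\beta}\le0$, each $\beta\in\mathrm{supp}(\eta)$ is either a real root, and then lies in $\mathcal{B}\setminus\Phi$ so that $\bar\beta=\beta$ has no component on $\Phi$, or satisfies $\bar\beta\prec0$ — because $\beta\in\Phi\setminus\Psi$ by hypothesis, or because $\beta\in\mathcal{B}\setminus(\Phi\cup\mathcal{R}_{\mathrm{re}})$ and $C$ is $V$‑fit \eqref{eq:dh}. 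Consequently, for each $\delta\in\Phi$ the coefficient of $\delta$ in $\bar\eta=\sum_{\beta}k^{\beta}\bar\beta$ is a sum of products of two nonpositive numbers, hence nonnegative; since $\bar\eta$ is a root this rules out the case $\bar\eta\prec0$ with $\mathrm{supp}(\bar\eta)\cap\Phi\neq\emptyset$, so either $\bar\eta\succ0$ or $\mathrm{supp}(\bar\eta)\cap\Phi=\emptyset$, and in both cases $\bar\eta\in\mathcal{Q}_{\Phi}$, i.e.\ $\eta\in\bar{\mathcal{Q}}_{\Phi}$. This proves $\mathcal{Q}_{\Psi}\subset\mathcal{Q}_{\Phi}\cup\bar{\mathcal{Q}}_{\Phi}$.

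The step I expect to cost the most care is the first equivalence: seeing that ``$CR$ fibration with complex fibers'' collapses exactly to the single inclusion $\mathfrak{q}_{\Psi}\subset\mathfrak{q}_{\Phi}+\bar{\mathfrak{q}}_{\Phi}$ requires combining the submersion criterion of Theorem \ref{thm:ea}$(3)$ with the explicit description of the fiber's $CR$ algebra in Theorem \ref{thm:eh}. Once that is in place, the passage to root sets and the final sign count are elementary bookkeeping, and the $V$‑fit hypothesis enters only through \eqref{eq:dg} and \eqref{eq:dh}.
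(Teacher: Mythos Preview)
Your proof is correct and follows essentially the same approach as the paper's: both reduce the statement to the inclusion $\mathfrak{q}_{\Psi}\subset\mathfrak{q}_{\Phi}+\bar{\mathfrak{q}}_{\Phi}$ and then verify the combinatorial equivalence using the $V$-fit conditions \eqref{eq:dg}, \eqref{eq:dh}. The only notable differences are that the paper obtains the first equivalence by a direct citation of \cite[Corollary~5.6]{MN05} (whereas you re-derive it from Theorems~\ref{thm:ea} and~\ref{thm:eh}), and that the paper reduces to the case $|\Phi\setminus\Psi|=1$ and argues the backward direction by contradiction via $\mathcal{Q}^n_{\Psi}\supset\mathcal{Q}^n_{\Phi}\cap\bar{\mathcal{Q}}^n_{\Phi}$, while you treat general $\Psi$ with a direct sign count---both are straightforward variants of the same idea.
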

\begin{proof}
The necessary and sufficient condition for \eqref{eq:fh} being
a $CR$ fibration with complex fiber is that:
\begin{equation}\label{eq:fm}
\mathfrak{q}_{{\Phi}}\subset\mathfrak{q}_{\Psi}\subset
\mathfrak{q}_{{\Phi}}+\bar{\mathfrak{q}}_{{\Phi}}\end{equation}
(see \cite[Corollary 5.6]{MN05}). We need to show that this condition
is equivalent to \eqref{eq:fi}. It suffices to discuss
the situation where ${\Phi}\setminus{\Psi}$ consists of
a single simple root. So we assume that $\alpha_0\in{\Phi}$ and
${\Psi}={\Phi}\setminus\{\alpha_0\}$.
\par
Assume that \eqref{eq:fm} holds true. Then 
$-\alpha_0\in\bar{\mathcal{Q}}_{{\Phi}}$,
i.e. $-\bar\alpha_0\in\mathcal{Q}_{{\Phi}}$. In particular,
since $-\alpha_0\notin\mathcal{Q}_{{\Phi}}$, $\alpha_0$ is not real.
Thus, if $\bar\alpha_0\in\mathcal{Q}^r_{{\Phi}}$, then 
$\bar\alpha_0\prec{0}$, 
because $C$ is $V$-fit for $(\mathfrak{g}_0,\mathfrak{q}_{{\Phi}})$.
Otherwise,
$-\bar\alpha_0\in\mathcal{Q}^n_{{\Phi}}$
implies that $\bar\alpha_0 \prec 0$,
because $\mathcal{Q}^n_{{\Phi}}$
consists of $C$-positive roots.
\par
Vice versa, let us show that, if $\bar\alpha_0 \prec 0$,
then the parabolic set $\mathcal{Q}_{{\Psi}}$ of $\mathfrak{q}_{{\Psi}}$
is contained in $\mathcal{Q}_{{\Phi}}\cup\bar{\mathcal{Q}}_{{\Phi}}$.
This is
equivalent to 
$\mathcal{Q}^n_{{\Psi}}\supset\mathcal{Q}^n_{{\Phi}}
\cap\bar{\mathcal{Q}}^n_{{\Phi}}$.
Assume by contradiction that there is some
$\alpha\in(\mathcal{Q}^n_{{\Phi}}\cap\bar{\mathcal{Q}}^n_{{\Phi}})
\setminus\mathcal{Q}^n_{{\Psi}}$. Then 
$\mathrm{supp}(\alpha)\cap{\Phi}=\{\alpha_0\}$, but this
yields $\bar\alpha \prec 0$.
Indeed, $\bar\beta\prec{0}$ for 
all non real roots $\beta$ in the support of $\alpha$,
because $C$ is $V$-fit
for $(\mathfrak{g}_0,\mathfrak{q}_{{\Phi}})$, and we assumed that
$\bar\alpha_0\prec{0}$.
This gives a 
contradiction, since 
$\bar\alpha\in\mathcal{Q}^n_{{\Phi}}$,
that consists of $C$-positive roots.
\end{proof}
We recall (see \cite[Lemma 5.7]{MN05}) that, given any $CR$ algebra
$(\mathfrak{g}_0,\mathfrak{q})$, there is a unique maximal complex
subalgebra $\mathfrak{q}'$ of $\mathfrak{g}$ with
$\mathfrak{q}\subset\mathfrak{q}'\subset(\mathfrak{q}+\bar{\mathfrak{q}})$.
The $CR$ algebra $(\mathfrak{g}_0,\mathfrak{q}')$ is 
(see \cite[\S 5C]{MN05}) the 
\emph{basis of the weakly nondegenerate reduction}
of $(\mathfrak{g}_0,\mathfrak{q})$.
\par
We obtain:
\begin{thm}[Weakly nondegenerate reduction]\label{thm:fd} 
Let $(\mathfrak{g}_0,\mathfrak{q}_{\Phi})$
be a parabolic $CR$ algebra, with $\Phi\subset\mathcal{B}$,
where $\mathcal{B}$ is the set of $C$-positive simple roots for
a $V$-fit Weyl chamber $C$.
 Set:
\begin{equation}\label{eq:fj}
\Psi=\{\alpha\in{\Phi}\,|\, \bar\alpha \succ 0\}.
\end{equation}
\par
Then the parabolic $CR$ algebra
$(\mathfrak{g}_0,\mathfrak{q}_{\Psi})$ is the basis of
the weakly nondegenerate reduction of 
$(\mathfrak{g}_0,\mathfrak{q}_{{\Phi}})$. \par
Let $M_{{\Phi}}$, $M_{\Psi}$ be the ${{\mathbf{G}_0}}$-orbits
corresponding to $(\mathfrak{g}_0,\mathfrak{q}_{{\Phi}})$,
$(\mathfrak{g}_0,\mathfrak{q}_{\Psi})$, respectively. Then the
${{\mathbf{G}_0}}$-orbit 
$M_{\Psi}$ is holomorphically nondegenerate and the
${{\mathbf{G}_0}}$-equivariant fibration
$M_{\Phi}\xrightarrow{\pi}M_{\Psi}$
is a $CR$ fibration with 
simply connected (but not necessarily connected)
complex fibers.\par
In particular, $M_{\Phi}$ is holomorphically nondegenerate if and only
if:
\begin{equation}\label{eq:fk}
\bar\alpha \succ 0\quad\forall\alpha\in\Phi.
\end{equation}
\end{thm}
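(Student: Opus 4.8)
The strategy is to identify $\mathfrak{q}_\Psi$ with the unique maximal complex subalgebra $\mathfrak{q}'$ sitting between $\mathfrak{q}_\Phi$ and $\mathfrak{q}_\Phi+\bar{\mathfrak{q}}_\Phi$ (whose existence and uniqueness are recalled from \cite[Lemma 5.7]{MN05}), and then to feed the resulting $CR$ fibration into Proposition \ref{prop:fc} and Theorem \ref{thm:eh} to extract its geometric properties. First I would observe that, since any complex subalgebra containing the parabolic $\mathfrak{q}_\Phi$ is again parabolic and of the form $\mathfrak{q}_{\Phi'}$ for some $\Phi'\subset\Phi$, the maximal $\mathfrak{q}'$ is $\mathfrak{q}_{\Psi'}$ where $\Psi'$ is the smallest subset of $\Phi$ with $\mathfrak{q}_{\Psi'}\subset\mathfrak{q}_\Phi+\bar{\mathfrak{q}}_\Phi$. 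So the task reduces to showing $\Psi'=\Psi=\{\alpha\in\Phi\mid\bar\alpha\succ0\}$.

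**The core inclusion.** The heart of the argument is exactly the direction already isolated in the proof of Proposition \ref{prop:fc}: dropping a single root $\alpha_0\in\Phi$ from $\Phi$ produces a $\mathfrak{q}_{\Psi}$ with $\mathfrak{q}_\Phi\subset\mathfrak{q}_{\Phi\setminus\{\alpha_0\}}\subset\mathfrak{q}_\Phi+\bar{\mathfrak{q}}_\Phi$ if and only if $\bar\alpha_0\prec0$. Applying this iteratively, I would remove from $\Phi$ precisely the roots $\alpha$ with $\bar\alpha\prec0$ one at a time; at each stage the Weyl chamber $C$ stays $V$-fit for the smaller parabolic (the $V$-fit conditions \eqref{eq:dg}, \eqref{eq:dh} only reference $\mathcal{R}_{\mathrm{cpx}}$, $\mathcal{R}_{\mathrm{re}}$ and the order $\prec$, not $\Phi$), so Proposition \ref{prop:fc} applies at every step, and transitivity of the inclusion gives $\mathfrak{q}_\Phi\subset\mathfrak{q}_\Psi\subset\mathfrak{q}_\Phi+\bar{\mathfrak{q}}_\Phi$ with $\Psi$ as in \eqref{eq:fj}. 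For maximality, I would argue that if $\Psi'\subsetneq\Psi$, i.e. some $\alpha_0\in\Psi$ (so $\bar\alpha_0\succ0$) were also dropped, then $-\alpha_0\in\mathcal{Q}_{\Psi'}$ while $-\bar\alpha_0\notin\mathcal{Q}_\Phi$ (as $\bar\alpha_0\succ0$ and $\bar\alpha_0$ simple means $-\bar\alpha_0\prec0$ with support meeting $\Phi$ only if $\bar\alpha_0=\alpha_0$, i.e. $\alpha_0$ imaginary — but imaginary simple roots with $\bar\alpha_0=-\alpha_0$ contradict $\bar\alpha_0\succ0$, so $\alpha_0$ is complex and $-\bar\alpha_0\prec0$), and $-\bar\alpha_0\notin\bar{\mathcal{Q}}_\Phi$ either, violating \eqref{eq:fm}; hence no proper subset works and $\mathfrak{q}_\Psi$ is the maximal intermediate subalgebra, which is the definition of the basis of the weakly nondegenerate reduction.

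**Geometric consequences and the obstacle.** Once $(\mathfrak{g}_0,\mathfrak{q}_\Psi)$ is identified as the weakly nondegenerate reduction, Proposition \ref{prop:ac}(3) and the general \cite[\S 5C]{MN05} give that $M_\Psi$ is holomorphically nondegenerate and $M_\Phi\to M_\Psi$ is a $CR$ fibration with complex fibers; Proposition \ref{prop:fc} confirms the $CR$-fibration claim directly from \eqref{eq:fi}, which is \eqref{eq:fj} restated. That the fibers are complex and \emph{simply connected but not necessarily connected} is precisely the last assertion of Theorem \ref{thm:eh}, applied with $\mathfrak{q}'=\mathfrak{q}_\Psi$ (so $\mathfrak{q}\subset\mathfrak{q}'\subset\mathfrak{q}+\bar{\mathfrak{q}}$ holds): there $F'$ is an open orbit in a flag manifold, simply connected by \cite[Theorem 5.4]{Wolf69}, and $F''$ is Euclidean, so $F$ is simply connected, while $F'$ may have several components. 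Finally, $M_\Phi$ is holomorphically nondegenerate iff the reduction is trivial, i.e. $\Psi=\Phi$, i.e. \eqref{eq:fk}. The main obstacle I anticipate is the bookkeeping in the maximality step: one must be careful that removing roots from $\Phi$ in sequence does not create new roots of the ``wrong sign'' in the support calculation, and that the $V$-fit property genuinely persists — the cleanest route is probably not the iterative one but a direct verification that $\mathcal{Q}^n_\Psi\supset\mathcal{Q}^n_\Phi\cap\bar{\mathcal{Q}}^n_\Phi$ using the $V$-fit condition \eqref{eq:dh} to control $\bar\beta$ for every non-real $\beta$ in the support of an offending root, mirroring the contradiction argument at the end of the proof of Proposition \ref{prop:fc}.
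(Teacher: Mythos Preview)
Your overall strategy matches the paper's: both rely on Proposition~\ref{prop:fc} to produce the $CR$ fibration with complex fibers, and on the last statement of Theorem~\ref{thm:eh} for simple connectedness of the fibers. The difference lies in how $(\mathfrak{g}_0,\mathfrak{q}_\Psi)$ is identified as the basis of the weakly nondegenerate reduction. You attempt a direct maximality argument; the paper instead shows that $C$ remains $V$-fit for $(\mathfrak{g}_0,\mathfrak{q}_\Psi)$, observes that every $\alpha\in\Psi$ has $\bar\alpha\succ 0$ by construction, and hence $M_\Psi$ is holomorphically nondegenerate by the very criterion~\eqref{eq:fk}. Since $M_\Phi\to M_\Psi$ is a $CR$ fibration with complex fibers onto a weakly nondegenerate base, uniqueness of the weakly nondegenerate reduction \cite[Lemma~5.7]{MN05} finishes the identification without any maximality bookkeeping.

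Two concrete issues with your route. First, your parenthetical justification that the $V$-fit conditions \eqref{eq:dg}, \eqref{eq:dh} ``only reference $\mathcal{R}_{\mathrm{cpx}}$, $\mathcal{R}_{\mathrm{re}}$ and the order~$\prec$, not $\Phi$'' is false: \eqref{eq:dg} references $\mathcal{Q}^n=\mathcal{Q}^n_\Phi$ and \eqref{eq:dh} references $\Phi''$ explicitly. Your conclusion that $C$ stays $V$-fit after removing a root with $\bar\alpha\prec 0$ is nonetheless correct, but you must check \eqref{eq:dh} honestly: the newly exposed simple root $\alpha_0$ satisfies $\bar\alpha_0\prec 0$ by hypothesis. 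Second, your maximality argument asserts that $\bar\alpha_0$ is simple, which it generally is not. The correct argument for $-\bar\alpha_0\notin\mathcal{Q}_\Phi$ when $\alpha_0\in\Psi$ is: if $\alpha_0$ is real then $\bar\alpha_0=\alpha_0\in\Phi$; if $\alpha_0$ is complex with $\bar\alpha_0\succ 0$, then the $V$-fit condition~\eqref{eq:dg} forces $\bar\alpha_0\in\mathcal{Q}^n_\Phi$, so $\mathrm{supp}(\bar\alpha_0)\cap\Phi\neq\emptyset$ either way. With these repairs your direct approach goes through, but the paper's route via nondegeneracy of the base plus uniqueness is cleaner and sidesteps both pitfalls entirely.
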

\begin{proof}
All complex subalgebras $\mathfrak{q}'$ containing
$\mathfrak{q}_{\Phi}$ are parabolic and of the form 
$\mathfrak{q}'=\mathfrak{q}_{\Psi}$
for some $\Psi\subset\Phi$. Thus, by Proposition \ref{prop:fc}, condition
\eqref{eq:fk} is necessary and sufficient for
$M_{\Phi}$ to be holomorphically nondegenerate. \par
In general, we observe that, 
if $C$ is $V$-fit for $(\mathfrak{g}_0,\mathfrak{q})$,
with $\mathfrak{q}=\mathfrak{q}_{\Phi}$, and $\Psi$ is defined by
\eqref{eq:fj}, then the chamber $C$ is $V$-fit also for
$(\mathfrak{g}_0,\mathfrak{q}_{\Psi})$, and 
therefore $M_{\Psi}$ is holomorphically nondegenerate by the
argument above.
Proposition \ref{prop:fc}
tells us that $M_{\Phi}\xrightarrow{\pi}M_{\Psi}$ 
is a $CR$ fibration with a
holomorphically nondegenerate basis and a complex fiber.
Then the statement follows by the uniqueness of the
weakly nondegenerate reduction (see \cite[Lemma 5.7]{MN05})
and from the last statement in Theorem \ref{thm:eh}. 
\end{proof}
\section{A structure theorem for 
$\mathbf{G}_0$ -orbits}\label{sec:h}
Let $M$ be any $\mathbf{G}_0$-orbit, with associated parabolic $CR$
algebra $(\mathfrak{g}_0,\mathfrak{q})$.  It is easy to construct a
smooth $\mathbf{G}_0$-equivariant fibration, with complex fibers, of
$M$ onto a real flag manifold $M'$ of $\mathbf{G}_0$.  Fix indeed a
Cartan pair $(\vartheta,\mathfrak{h}_0)$ adapted to
$(\mathfrak{g}_0,\mathfrak{q})$, and let $\mathcal{R}$ be the roots of
$\mathfrak{g}$ with respect to the complexification $\mathfrak{h}$ of
$\mathfrak{h}_0$. Choose any element $A$ of the facet in
$\mathfrak{h}_{\mathbb{R}}$ corresponding to $\mathfrak{q}$
(cf. \cite[VIII \S 4]{Bou75}). Then the facet of $B=A+\bar{A}$ defines
a parabolic $\mathfrak{q}'$ with
$\mathfrak{q}\cap\bar{\mathfrak{q}}\subset
\mathfrak{q}'=\bar{\mathfrak{q}}'\subset(\mathfrak{q}+\bar{\mathfrak{q}})$.
The $\mathbf{G}_0$-equivariant fibration $M\to{M}'$ of $M$ onto the
parabolic $CR$ manifold corresponding to
$(\mathfrak{g}_0,\mathfrak{q}')$ has all the required properties.  A
different 
canonical construction is given by Wolf (cf. \S\ref{sec:k}). 
The construction below is also canonical, and  
allows a better control of the structure of
the complex fibers.
\par\smallskip 
Starting from a parabolic $CR$ algebra
$(\mathfrak{g}_0,\mathfrak{q})$, we construct recursively a sequence
of parabolic $CR$ algebras $(\mathfrak{g}_0,\mathfrak{q}^{(h)})$ by
setting, for all $h\geq 0$\,:
\begin{equation}\label{eq:hb}
\left\{\begin{aligned}
\big(\mathfrak{g}_0,\mathfrak{q}^{(0)}\big)\;\; &=\text{
the weakly nondegenerate reduction of $(\mathfrak{g}_0,\mathfrak{q})$}\\
\big(\mathfrak{g}_0,\mathfrak{q}^{(h+1)}\big)&=\text{
the weakly nondegenerate reduction of $(\mathfrak{g}_0,\mathfrak{q}^{(h)}_w)$}.
\end{aligned}\right.
\end{equation}
We recall that $(\mathfrak{g}_0,\mathfrak{q}^{(h)}_w)$ is
the $CR$-weakening of $(\mathfrak{g}_0,\mathfrak{q}^{(h)})$,
defined in Theorem~\ref{thm:ec}. The 
weakly nondegenerate reduction
was described in Theorem~\ref{thm:fd}. Denote by 
$M^{(h)}$ and by $M^{(h)}_w$ the ${{\mathbf{G}_0}}$-orbits
associated to the $CR$ algebras $(\mathfrak{g}_0,\mathfrak{q}^{(h)})$ and
$(\mathfrak{g}_0,\mathfrak{q}^{(h)}_w)$, respectively.
We know that there is a $\mathbf{G}_0$-equivariant diffeomorphism
$f_h:M_w^{(h)}\to{M}^{(h)}$, 
which is also a $CR$ map, but in general not
a $CR$ diffeomorphism. Let $\psi_h:M^{(h)}\to{M}^{(h+1)}$ the
composition of the weakly nondegenerate reduction
$\pi_h:M_w^{(h)}\to{M}^{(h+1)}$ and of the inverse of $f_h$.
We have a commutative diagram:
\vspace{-3pt}
\begin{equation*}
  \begin{CD}
    M^{(h)}_w@>{f_h}>>M^{(h)}\\
@V{\pi_h}VV  @VV{\psi_h}V\\
M^{(h+1)}@=M^{(h+1)}. 
  \end{CD}\vspace{6pt}
\end{equation*}
Denoting by $\pi:M\to{M}^{(0)}$ the weakly nondegenerate reduction,
we obtain a sequence of smooth ${{\mathbf{G}_0}}$-equivariant maps:
\begin{equation}\label{eq:hc}\begin{CD}
M@>{\pi}>>M^{(0)}@>{\psi_0}>>M^{(1)}@>{\psi_1}>>
M^{(2)}@>>>\cdots
\end{CD}
\end{equation}
In general, the maps $\psi_h$ \textit{are not} $CR$.
Since
\begin{equation}\label{eq:hd}
\mathrm{dim}\,M^{(h)}\leq
\mathrm{dim}\,M^{(h+1)}\quad  \forall h\geq 0,
\end{equation}
and, by Proposition~\ref{prop:ee}, equality 
in \eqref{eq:hd} holds if and only if
$(\mathfrak{g}_0,{\mathfrak{q}}^{(h)}_w)$
is totally real, 
the sequence $\{\mathfrak{q}^{(h)}\}$
stabilizes and we have,
for some \mbox{integer $m\geq 0$},
\begin{equation}\label{eq:he}  
\mathfrak{q}^{(h)}=\mathfrak{q}^{(h)}_w=\mathfrak{q}^{(m)}=\mathfrak{e}, 
\quad\forall h\geq{m}.
\end{equation}
\begin{dfn}\label{dfn:ha}
The limit manifold $M_{\mathfrak{e}}$ of \eqref{eq:hc}, 
which is the $\mathbf{G}_0$-orbit associated to the $CR$ algebra
$(\mathfrak{g}_0,\mathfrak{e})$, is called the \emph{real core} of
$M$.
\end{dfn}
\begin{thm}[Structure theorem]\label{thm:ha}
Let $(\mathfrak{g}_0,\mathfrak{q})$ be a parabolic $CR$ algebra,
$M$ the corresponding ${{\mathbf{G}_0}}$-orbit, and $M_{\mathfrak{e}}$,
with $CR$ algebra $(\mathfrak{g}_0,\mathfrak{e})$, its real core.  
Then:
\begin{enumerate}
\item $(\mathfrak{g}_0,\mathfrak{e})$ is totally real,
i.e. $\bar{\mathfrak{e}}=\mathfrak{e}$;
\item $M_{\mathfrak{e}}$ is a real flag
manifold and the ${{\mathbf{G}_0}}$-equivariant projection:
\begin{equation}\label{eq:ha}
\begin{CD}
M@>\psi>> M_{\mathfrak{e}}
\end{CD} 
\end{equation}
has complex fibers,
with finitely many connected components, that are
simply connected;
\item each connected component $F_0$ of a fiber $F$
of \eqref{eq:ha} is a tower of 
holomorphic fibrations,
in which each fiber is the Cartesian product of
a Euclidean complex space and an open orbit of
a real form in a complex flag manifold.
\end{enumerate}
\end{thm}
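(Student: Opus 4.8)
The plan is to read everything off from the recursive construction \eqref{eq:hb}--\eqref{eq:he} of $\mathfrak{e}$, combined with Lemma~\ref{lem:ej}, Remark~\ref{rmk:ee}, Theorem~\ref{thm:fd} and Theorem~\ref{thm:eh}. For~$(1)$, by \eqref{eq:he} we have $\mathfrak{e}=\mathfrak{q}^{(m)}=\mathfrak{q}^{(m)}_w$, so $\mathfrak{e}$ equals its own $CR$-weakening and hence $(\mathfrak{g}_0,\mathfrak{e})$ is polarized by Remark~\ref{rmk:ee}; moreover, again by \eqref{eq:he}, $\mathfrak{e}=\mathfrak{q}^{(m+1)}$ is the weakly nondegenerate reduction of $\mathfrak{q}^{(m)}_w=\mathfrak{e}$, so $(\mathfrak{g}_0,\mathfrak{e})$ coincides with its own weakly nondegenerate reduction and is therefore \emph{not} weakly degenerate. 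Lemma~\ref{lem:ej} then leaves only the possibility that $(\mathfrak{g}_0,\mathfrak{e})$ be totally real, i.e. $\bar{\mathfrak{e}}=\mathfrak{e}$.

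For~$(2)$: since $\bar{\mathfrak{e}}=\mathfrak{e}$, the algebra $\mathfrak{e}$ is the complexification of the real parabolic subalgebra $\mathfrak{e}_0=\mathfrak{e}\cap\mathfrak{g}_0$ of $\mathfrak{g}_0$, so $\mathbf{I}_{\mathfrak{e}}=\mathbf{N}_{\mathbf{G}_0}(\mathfrak{e})=\mathbf{N}_{\mathbf{G}_0}(\mathfrak{e}_0)$ is a real parabolic subgroup of $\mathbf{G}_0$ and $M_{\mathfrak{e}}=\mathbf{G}_0/\mathbf{I}_{\mathfrak{e}}$ is a real flag manifold. Writing $\mathfrak{i}_0^{(h)}=\mathfrak{q}^{(h)}\cap\mathfrak{g}_0$, the relation $\mathfrak{q}^{(h)}_w\cap\overline{\mathfrak{q}^{(h)}_w}=\mathfrak{q}^{(h)}\cap\overline{\mathfrak{q}^{(h)}}$ from \eqref{eq:ef} gives $\mathfrak{i}_0^{(h)}=\mathfrak{q}^{(h)}_w\cap\mathfrak{g}_0$, and then $\mathfrak{q}^{(h)}_w\subset\mathfrak{q}^{(h+1)}$ produces the chain $\mathfrak{i}_0\subset\mathfrak{i}_0^{(0)}\subset\cdots\subset\mathfrak{e}_0$; Theorem~\ref{thm:ea} turns it into the tower of $\mathbf{G}_0$-equivariant fibrations $M\xrightarrow{\pi}M^{(0)}\xrightarrow{\psi_0}M^{(1)}\to\cdots\to M_{\mathfrak{e}}$, in particular into \eqref{eq:ha}. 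A short induction on \eqref{eq:hb}, composing inclusions and applying the conjugation, gives $\mathfrak{q}^{(h)}\subset\mathfrak{q}+\bar{\mathfrak{q}}$ for every $h$, hence $\mathfrak{e}\subset\mathfrak{q}+\bar{\mathfrak{q}}$; fixing a common Cartan subalgebra $\mathfrak{h}$ of $\mathfrak{g}$ inside $\mathfrak{q}\cap\bar{\mathfrak{q}}$ (which exists by Proposition~\ref{prop:ca}) and comparing root-space decompositions, this sharpens to $(\mathfrak{q}\cap\mathfrak{e})+(\bar{\mathfrak{q}}\cap\mathfrak{e})=\mathfrak{e}$, so the $CR$ algebra $(\mathfrak{e}_0,\mathfrak{q}\cap\mathfrak{e})$ of the fiber $F$ of \eqref{eq:ha} is totally complex and $F$ is a complex manifold; the analogous estimate $\mathfrak{e}=\mathfrak{q}^{(m)}\subset\mathfrak{q}^{(h)}_w+\overline{\mathfrak{q}^{(h)}_w}\subset\mathfrak{q}^{(h)}+\overline{\mathfrak{q}^{(h)}}$ shows that each fiber $F^{(h)}$ of $M^{(h)}\to M_{\mathfrak{e}}$ is complex as well. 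Finiteness of $\pi_0(F)$ and simple connectedness of its components are then deduced at the end, by induction over the tower and the homotopy exact sequence of a fibration, once the single steps are described in~$(3)$.

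For~$(3)$: the fiber $F$ is the total space of the tower $F\twoheadrightarrow F^{(0)}\twoheadrightarrow\cdots\twoheadrightarrow F^{(m)}=\{\mathrm{pt}\}$ obtained by restricting $\pi,\psi_0,\psi_1,\dots$ to the fibers lying over $M_{\mathfrak{e}}$, and I would check that each restriction is holomorphic: $\pi$ is already $CR$ by Theorem~\ref{thm:fd}, hence holomorphic between the complex manifolds $F$ and $F^{(0)}$; and for $h\geq 0$, $\psi_h=\pi_h\circ f_h^{-1}$, where $f_h$ restricts to a $CR$ bijection between the (complex) fibers over $M_{\mathfrak{e}}$, hence a biholomorphism, while $\pi_h$ restricts to a $CR$, hence holomorphic, map. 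Finally, via $f_h$ the fiber of each single step $F^{(h)}\to F^{(h+1)}$ is the fiber of the $CR$ fibration $M^{(h)}_w\to M^{(h+1)}$ (resp. of $M\to M^{(0)}$), and since a weakly nondegenerate reduction always satisfies the hypothesis $\mathfrak{q}_{\mathrm{base}}\subset\mathfrak{q}_{\mathrm{tot}}+\overline{\mathfrak{q}_{\mathrm{tot}}}$ of the last statement of Theorem~\ref{thm:eh}, that fiber is, on each connected component, a Cartesian product $F'\times F''$ with $F'$ an open orbit of a real form in a complex flag manifold and $F''$ a Euclidean complex nilmanifold; restricting the whole tower to one connected component $F_0$ of $F$ removes the disjoint unions and yields~$(3)$. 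The step I expect to be the main obstacle is this last one: forcing the purely set-theoretic fibration $M\to M_{\mathfrak{e}}$ to restrict, fiberwise, to a \emph{holomorphic} tower with the stated fibers requires verifying $\mathfrak{e}\subset\mathfrak{q}^{(h)}_w+\overline{\mathfrak{q}^{(h)}_w}$ at every level, so that the intermediate fibers and the restrictions of the non-$CR$ maps $f_h,\psi_h$ to them are genuinely complex and holomorphic, together with the $\pi_0$/$\pi_1$ bookkeeping for an iterated bundle whose fibers are finite disjoint unions of simply connected complex manifolds.
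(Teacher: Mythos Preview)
Your proposal is correct and follows essentially the same route as the paper: the paper's proof is even terser, noting that (1) and the first part of (2) were established in the construction preceding the theorem, and then invoking Theorems~\ref{thm:eh} and~\ref{thm:fd} for the structure of the fibers of each weakly nondegenerate reduction $M^{(h)}_w\to M^{(h+1)}$, from which (2) and (3) follow. The inclusion $\mathfrak{e}\subset\mathfrak{q}^{(h)}_w+\overline{\mathfrak{q}^{(h)}_w}$ you flag as the main obstacle is immediate by descending induction, since $\mathfrak{q}^{(k+1)}+\overline{\mathfrak{q}^{(k+1)}}\subset\mathfrak{q}^{(k)}_w+\overline{\mathfrak{q}^{(k)}_w}$ at each step; your extra care in equipping each $F^{(h)}$ with the complex structure coming from $(\mathfrak{e}_0,\mathfrak{q}^{(h)}\cap\mathfrak{e})$ and checking holomorphicity of the restricted maps in fact fills in details the paper leaves implicit.
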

\begin{proof}
We already established (1) and the first statement of (2).
The weakly nondegenerate reduction
${M}^{(h)}_w\to{M}^{(h+1)}$ is a $CR$ fibration for which we know,
by Theorem~\ref{thm:eh} and Theorem~\ref{thm:fd}, that the 
fiber has finitely many components and each connected component is
a Cartesian product of a Euclidean complex space
and an open orbit of a real form in a
complex flag manifold, hence we have also (2) and (3).
\end{proof} 
\section{Connected components of the fibers}\label{sec:g}
We keep the notation of \S\ref{sec:c}.
Let $(\vartheta,\mathfrak{h}_0)$
be a Cartan pair, adapted to the parabolic $CR$ algebra 
$(\mathfrak{g}_0,\mathfrak{q})$ of the
$\mathbf{G}_0$-orbit $M$. Denote by
$\mathbf{H}_0=\mathbf{Z}_{{{\mathbf{G}_0}}}(\mathfrak{h}_0)$ 
the Cartan subgroup
of ${{\mathbf{G}_0}}$ corresponding to the Cartan subalgebra $\mathfrak{h}_0$.
\begin{dfn}\label{def:ga}
The \emph{analytic, or real, Weyl group}
$\mathbf{W}({{\mathbf{G}_0}},\mathfrak{h}_0)$ is the quotient 
by $\mathbf{H}_0$ of
the normalizer of $\mathfrak{h}_0$ in~${{\mathbf{G}_0}}$:
\begin{equation}
\mathbf{W}({{\mathbf{G}_0}},\mathfrak{h}_0)=
\mathbf{N}_{{{\mathbf{G}_0}}}(\mathfrak{h}_0)/
\mathbf{Z}_{{{\mathbf{G}_0}}}(\mathfrak{h}_0).
\end{equation}
\end{dfn}
Since we took ${{\mathbf{G}_0}}$ connected, the analytic Weyl group
$\mathbf{W}({{\mathbf{G}_0}},\mathfrak{h}_0)$
only depends, modulo isomorphisms, 
upon the pair $(\mathfrak{g}_0,\mathfrak{h}_0)$, and not
on the particular choice of ${{\mathbf{G}_0}}$. We also have (see e.g.
\cite[p.489]{Kn:2002})
\begin{equation}
\mathbf{W}({{\mathbf{G}_0}},\mathfrak{h}_0)
=\mathbf{N}_{\mathbf{K}_0}(\mathfrak{h}_0)/
\mathbf{Z}_{\mathbf{K}_0}(\mathfrak{h}_0),
\end{equation}
where $\mathbf{K}_0$ is the compact analytic subgroup of ${{\mathbf{G}_0}}$
with Lie algebra $\mathfrak{k}_0=\{X\in\mathfrak{g}_0\mid
\vartheta(X)=X\}$. Since, by complexification,
$\mathbf{N}_{{{\mathbf{G}_0}}}(\mathfrak{h}_0)$ 
acts on $\mathfrak{h}_{\mathbb{R}}=
\mathfrak{h}_0^-\oplus{i}\mathfrak{h}_0^+$, and then, by duality, on
$\mathfrak{h}_{\mathbb{R}}^*$ and, by restriction, 
on the root system $\mathcal{R}=\mathcal{R}(\mathfrak{g},\mathfrak{h})$, 
we obtain a homomorphism, 
that in fact is an
inclusion:
\begin{equation}
\mathbf{W}({{\mathbf{G}_0}},\mathfrak{h}_0)
\hookrightarrow \mathbf{W}(\mathcal{R}),
\end{equation}
of the analytic Weyl group 
$\mathbf{W}({{\mathbf{G}_0}},\mathfrak{h}_0)$ into the Weyl group 
 $\mathbf{W}(\mathcal{R})$
of the root system
$\mathcal{R}$.
\par
We shall also consider the Weyl group:
\begin{equation}
\mathbf{W}(\mathbf{L}_0,\mathfrak{h}_0)=
\mathbf{N}_{\mathbf{L}_0}(\mathfrak{h}_0)/
\mathbf{Z}_{\mathbf{L}_0}(\mathfrak{h}_0)
\end{equation}
of the reductive subgroup
$\mathbf{L}_0$. Since 
$\mathbf{Z}_{\mathbf{L}_0}(\mathfrak{h}_0)=\mathbf{H}_0$, this 
can be viewed as a subgroup 
of the analytic Weyl group $\mathbf{W}({{\mathbf{G}_0}},\mathfrak{h}_0)$.
\par
Let $\mathbf{S}_0$ 
be the analytic semisimple subgroup 
of $\mathbf{L}_0$ with Lie
algebra $\mathfrak{s}_0=[\mathfrak{l}_0,\mathfrak{l}_0]$
(see Corollary \ref{cor:ce}). 
The elements of $\mathbf{S}_0$ 
centralize $\mathfrak{z}_0$. Thus,
for the Cartan subalgebra
$\mathfrak{t}_0=\mathfrak{h}_0\cap\mathfrak{s}_0$ of $\mathfrak{s}_0$,
since $\mathfrak{h}_0=\mathfrak{t}_0\oplus\mathfrak{z}_0$,
we get:
\begin{equation}\label{eq:ge}
\mathbf{N}_{\mathbf{S}_0}(\mathfrak{t}_0)=
\mathbf{N}_{\mathbf{S}_0}(\mathfrak{h}_0)\subset
\mathbf{N}_{\mathbf{L}_0}(\mathfrak{h}_0).
\end{equation}
The Cartan subgroup 
$\mathbf{T}_0$ of $\mathbf{S}_0$ corresponding to $\mathfrak{t}_0$
is:
\begin{equation}\label{eq:gf}
\mathbf{T}_0=\mathbf{Z}_{\mathbf{S}_0}(\mathfrak{t}_0)=
\mathbf{Z}_{\mathbf{S}_0}(\mathfrak{h}_0)=\mathbf{H}_0\cap\mathbf{S}_0.
\end{equation}
Thus the inclusion \eqref{eq:ge} yields, by passing to the quotients,
an inclusion of the Weyl groups:
\begin{equation}\label{eq:gg}
\mathbf{W}(\mathbf{S}_0,\mathfrak{t}_0)=\frac{\mathbf{N}_{\mathbf{S}_0}(
\mathfrak{t}_0)}{\mathbf{Z}_{\mathbf{S}_0}(
\mathfrak{t}_0)}\simeq\frac{\mathbf{N}_{\mathbf{S}_0}(
\mathfrak{t}_0)\,\mathbf{Z}_{\mathbf{L}_0}(\mathfrak{h}_0)}{
\mathbf{Z}_{\mathbf{L}_0}(
\mathfrak{h}_0)}
\subset
\mathbf{W}(\mathbf{L}_0,\mathfrak{h}_0).
\end{equation}
\begin{prop}\label{prop:gb} Let $M$ be a $\mathbf{G}_0$-orbit,
with associated parabolic $CR$ algebra
$(\mathfrak{g}_0,\mathfrak{q})$. 
Let $(\vartheta,\mathfrak{h}_0)$ be a Cartan pair adapted to
$(\mathfrak{g}_0,\mathfrak{q})$, $\mathbf{L}_0$ the $\vartheta$-invariant
reductive Levi factor of the isotropy $\mathbf{I}_0$ of $M$,
and $\mathbf{S}_0$ the maximal analytic semisimple Lie subgroup of
$\mathbf{L}_0$.
Then:
\begin{enumerate}
\item $\mathbf{N}_{\mathbf{S}_0}(\mathfrak{h}_0)$ is a 
closed normal
subgroup of $\mathbf{N}_{\mathbf{L}_0}(\mathfrak{h}_0)$ and
the natural inclusion $\mathbf{N}_{\mathbf{L}_0}(\mathfrak{h}_0)
\hookrightarrow\mathbf{I}_0$ yields a group isomorphism:
\begin{equation}\label{eq:gh}
\pi_0(\mathbf{N}_{\mathbf{L}_0}(\mathfrak{h}_0)/
\mathbf{N}_{\mathbf{S}_0}(\mathfrak{h}_0))
\xrightarrow{\;\sim\;}\pi_0(\mathbf{I}_0).
\end{equation}
\item We have the exact sequences:
\begin{gather}\label{eq:gi}\begin{CD}
\pmb{1}@>>>\frac{\mathbf{H}_0\;}{\mathbf{T}_0}@>>>
\frac{\mathbf{N}_{\mathbf{L}_0}(\mathfrak{h}_0)}{\mathbf{N}_{\mathbf{S}_0}
(\mathfrak{t}_0)}@>>>\frac{\mathbf{W}(\mathbf{L}_0,
\mathfrak{h}_0)}{
\mathbf{W}(\mathbf{S}_0,\mathfrak{t}_0)}@>>>\pmb{1},
\end{CD}
\\[10pt]
\label{eq:gj} \begin{CD}
\pmb{1}@>>>\pi_0\big(\!\frac{\mathbf{H}_0\;}{\mathbf{T}_0}\!\big)
@>>>
\pi_0(\mathbf{I}_0)
@>>>
\frac{\mathbf{W}(\mathbf{L}_0,\mathfrak{h}_0)}{
\mathbf{W}(\mathbf{S}_0,\mathfrak{t}_0)}@>>>\pmb{1}.
\end{CD}
\end{gather}
\end{enumerate}
\end{prop}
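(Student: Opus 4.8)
The plan is to establish (1) first, since (2) then follows by a short, purely formal argument, and within (1) to isolate one structural fact about $\mathbf{L}_0$ as the real content. For the preliminaries of (1): because $\mathbf{N}_0$ is connected, the Chevalley decomposition \eqref{eq:cc} of Proposition~\ref{prop:cb} gives $\pi_0(\mathbf{I}_0)\simeq\pi_0(\mathbf{L}_0)$. Since $\mathfrak{s}_0=[\mathfrak{l}_0,\mathfrak{l}_0]$ is a characteristic ideal of $\mathfrak{l}_0$, the analytic subgroup $\mathbf{S}_0$ is $\mathrm{Ad}(\mathbf{L}_0)$-stable, hence normal in $\mathbf{L}_0$; therefore $\mathbf{N}_{\mathbf{S}_0}(\mathfrak{h}_0)=\mathbf{S}_0\cap\mathbf{N}_{\mathbf{L}_0}(\mathfrak{h}_0)$ is normal in $\mathbf{N}_{\mathbf{L}_0}(\mathfrak{h}_0)$ and, $\mathbf{S}_0$ being closed by Corollary~\ref{cor:ce}, also closed; by \eqref{eq:ge} it coincides with $\mathbf{N}_{\mathbf{S}_0}(\mathfrak{t}_0)$. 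Thus $\mathbf{N}_{\mathbf{L}_0}(\mathfrak{h}_0)/\mathbf{N}_{\mathbf{S}_0}(\mathfrak{h}_0)$ is a Lie group whose identity component is the image of $\mathbf{N}_{\mathbf{L}_0}(\mathfrak{h}_0)^{0}$, so that $\pi_0\bigl(\mathbf{N}_{\mathbf{L}_0}(\mathfrak{h}_0)/\mathbf{N}_{\mathbf{S}_0}(\mathfrak{h}_0)\bigr)=\mathbf{N}_{\mathbf{L}_0}(\mathfrak{h}_0)\big/\bigl(\mathbf{N}_{\mathbf{L}_0}(\mathfrak{h}_0)^{0}\,\mathbf{N}_{\mathbf{S}_0}(\mathfrak{h}_0)\bigr)$.

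Two facts then complete (1). The first is the identity $\mathbf{N}_{\mathbf{L}_0}(\mathfrak{h}_0)^{0}\,\mathbf{N}_{\mathbf{S}_0}(\mathfrak{h}_0)=\mathbf{N}_{\mathbf{L}_0^{0}}(\mathfrak{h}_0)$: the inclusion $\subseteq$ is clear since $\mathbf{S}_0\subset\mathbf{L}_0^{0}$; for $\supseteq$, write $\mathbf{L}_0^{0}=\mathbf{S}_0\,\mathbf{Z}_0$ with $\mathbf{Z}_0=\exp(\mathfrak{z}_0)$ central in $\mathbf{L}_0^{0}$ and, as $\mathfrak{z}_0\subset\mathfrak{h}_0$, contained in $\mathbf{H}_0^{0}\subset\mathbf{N}_{\mathbf{L}_0}(\mathfrak{h}_0)^{0}$; any $g\in\mathbf{N}_{\mathbf{L}_0^{0}}(\mathfrak{h}_0)$ then factors as $g=zs$ with $z\in\mathbf{Z}_0$, $s\in\mathbf{S}_0$, and since $z$ centralizes $\mathfrak{h}_0$ and commutes with $s$ one has $\mathrm{Ad}(s)\mathfrak{h}_0=\mathrm{Ad}(g)\mathfrak{h}_0=\mathfrak{h}_0$, i.e.\ $s\in\mathbf{N}_{\mathbf{S}_0}(\mathfrak{h}_0)$. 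The second fact --- and this is the main point --- is that $\mathbf{N}_{\mathbf{L}_0}(\mathfrak{h}_0)$ meets every connected component of $\mathbf{L}_0$, i.e.\ $\mathbf{N}_{\mathbf{L}_0}(\mathfrak{h}_0)\,\mathbf{L}_0^{0}=\mathbf{L}_0$. Granting both, $\pi_0\bigl(\mathbf{N}_{\mathbf{L}_0}(\mathfrak{h}_0)/\mathbf{N}_{\mathbf{S}_0}(\mathfrak{h}_0)\bigr)=\mathbf{N}_{\mathbf{L}_0}(\mathfrak{h}_0)/\mathbf{N}_{\mathbf{L}_0^{0}}(\mathfrak{h}_0)\cong\mathbf{L}_0/\mathbf{L}_0^{0}=\pi_0(\mathbf{L}_0)\simeq\pi_0(\mathbf{I}_0)$, the isomorphism being that induced by the inclusion, which is (1). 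For the second fact I would use the Cartan decomposition $\mathbf{L}_0=\mathbf{K}_{00}\exp(\mathfrak{p}_{00})$ of Corollary~\ref{cor:ce} to reduce to the compact group $\mathbf{K}_{00}$, and then exploit that $\mathbf{L}_0=\mathbf{Z}_{\mathbf{G}_0}(\mathfrak{z}_0)$ is a centralizer in the \emph{connected} group $\mathbf{G}_0$: combining \cite[Proposition~7.90]{Kn:2002} applied to a maximally noncompact $\vartheta$-stable Cartan subalgebra of $\mathfrak{l}_0$ (whose Cartan subgroup then meets every component of $\mathbf{L}_0$, as in the proof of Theorem~\ref{thm:cf}) with the $\mathbf{K}_{00}^{0}$-conjugacy of maximal abelian subspaces of $\mathfrak{p}_{00}$, one checks that every component of $\mathbf{L}_0$ contains an element normalizing $\mathfrak{h}_0$. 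I expect this step to be the main obstacle: for a general adapted $\mathfrak{h}_0$, which need not be maximally noncompact in $\mathfrak{l}_0$, the Cartan subgroup $\mathbf{H}_0$ itself typically fails to meet every component of $\mathbf{L}_0$, and one has to absorb the discrepancy into the Weyl-group action of the extra components on $\mathfrak{h}_0$.

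Finally (2). The exact sequence \eqref{eq:gi} is formal: by \eqref{eq:gf} and \eqref{eq:ge} one has $\mathbf{T}_0=\mathbf{H}_0\cap\mathbf{N}_{\mathbf{S}_0}(\mathfrak{t}_0)$, so the image of $\mathbf{H}_0$ in $E:=\mathbf{N}_{\mathbf{L}_0}(\mathfrak{h}_0)/\mathbf{N}_{\mathbf{S}_0}(\mathfrak{t}_0)$ is isomorphic to $\mathbf{H}_0/\mathbf{T}_0$; it is normal in $E$ because $\mathbf{H}_0=\mathbf{Z}_{\mathbf{L}_0}(\mathfrak{h}_0)$ is normalized by $\mathbf{N}_{\mathbf{L}_0}(\mathfrak{h}_0)$ while $\mathbf{N}_{\mathbf{S}_0}(\mathfrak{t}_0)$ is normal there, and the quotient $\mathbf{N}_{\mathbf{L}_0}(\mathfrak{h}_0)/\bigl(\mathbf{H}_0\,\mathbf{N}_{\mathbf{S}_0}(\mathfrak{t}_0)\bigr)$ equals $\mathbf{W}(\mathbf{L}_0,\mathfrak{h}_0)/\mathbf{W}(\mathbf{S}_0,\mathfrak{t}_0)$ by \eqref{eq:gg} and the second isomorphism theorem. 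Since this last group is finite, the normal subgroup $\mathbf{H}_0/\mathbf{T}_0$ of $E$ is the kernel of a homomorphism onto a discrete group, hence is open --- and therefore clopen --- in $E$; consequently it contains the identity component $E^{0}$, so $\pi_0(\mathbf{H}_0/\mathbf{T}_0)\hookrightarrow\pi_0(E)$ with quotient $\mathbf{W}(\mathbf{L}_0,\mathfrak{h}_0)/\mathbf{W}(\mathbf{S}_0,\mathfrak{t}_0)$, i.e.\ the sequence $1\to\pi_0(\mathbf{H}_0/\mathbf{T}_0)\to\pi_0(E)\to\mathbf{W}(\mathbf{L}_0,\mathfrak{h}_0)/\mathbf{W}(\mathbf{S}_0,\mathfrak{t}_0)\to 1$ is exact; substituting $\pi_0(E)\simeq\pi_0(\mathbf{I}_0)$ from part (1) yields \eqref{eq:gj}.
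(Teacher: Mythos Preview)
Your approach is essentially the same as the paper's: reduce $\pi_0(\mathbf{I}_0)$ to $\pi_0(\mathbf{L}_0)$ via the Chevalley decomposition, show that $\mathbf{N}_{\mathbf{L}_0}(\mathfrak{h}_0)$ meets every component of $\mathbf{L}_0$ and that $\mathbf{N}_{\mathbf{L}_0}(\mathfrak{h}_0)\cap\mathbf{L}_0^{0}=\mathbf{N}_{\mathbf{S}_0}(\mathfrak{h}_0)\,\mathbf{Z}_0^{0}$, and then read off \eqref{eq:gi} and \eqref{eq:gj} from the definition of the Weyl groups by elementary isomorphism theorems.

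The one place where your sketch is thinner than the paper's argument is precisely the step you flag. The paper does not pass through Proposition~7.90 and a maximally noncompact Cartan of $\mathfrak{l}_0$; instead it invokes \cite[Proposition~7.33]{Kn:2002} directly: for a maximal abelian $\mathfrak{a}_0\subset\mathfrak{p}_0$ containing $\mathfrak{h}_0^{-}$, the centralizer $\mathbf{Z}_{\mathbf{K}_{00}}(\mathfrak{a}_0)$ already meets every component of $\mathbf{L}_0$, hence so does the larger group $\mathbf{Z}_{\mathbf{K}_{00}}(\mathfrak{h}_0^{-})$. Given $g_0\in\mathbf{Z}_{\mathbf{K}_{00}}(\mathfrak{h}_0^{-})$, one then observes that $\mathfrak{h}_0^{+}$ and $\mathrm{Ad}(g_0)(\mathfrak{h}_0^{+})$ are both Cartan subalgebras of the \emph{compact} Lie algebra $\mathfrak{z}_{\mathfrak{k}_{00}}(\mathfrak{h}_0^{-})$, hence are $\mathbf{K}_{00}^{0}$-conjugate; correcting by such a $g_1\in\mathbf{K}_{00}^{0}$ gives $g_1g_0\in\mathbf{Z}_{\mathbf{L}_0}(\mathfrak{h}_0^{-})\cap\mathbf{N}_{\mathbf{L}_0}(\mathfrak{h}_0^{+})\subset\mathbf{N}_{\mathbf{L}_0}(\mathfrak{h}_0)$ in the same component as $g_0$. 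This is exactly the ``absorb the discrepancy into a Weyl-type move on $\mathfrak{h}_0^{+}$'' that you anticipated, but carried out with Cartan subalgebras of a compact centralizer rather than with maximal abelian subspaces of $\mathfrak{p}_{00}$; it avoids having to relate two different Cartan subalgebras of $\mathfrak{l}_0$ and makes the argument one line shorter.
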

\begin{proof}
By \eqref{eq:cc}, we have 
$\pi_0(\mathbf{L}_0)\simeq\pi_0(\mathbf{I}_0)$.
\par
Let $\mathfrak{g}_0=\mathfrak{k}_0\oplus\mathfrak{p}_0$ be the
$\vartheta$-invariant
Cartan decomposition of $\mathfrak{g}_0$,
and
let $\mathfrak{a}_0$ be any maximal Abelian subalgebra of
$\mathfrak{p}_0$ that contains 
$\mathfrak{h}_0^-=\mathfrak{h}_0\cap\mathfrak{p}_0$.
Its centralizer $\mathbf{Z}_{\mathbf{K}_{00}}(\mathfrak{a}_0)$
in the maximal compact subgroup $\mathbf{K}_{00}$
of $\mathbf{L}_0$
intersects all connected components of $\mathbf{L}_0$ (see e.g.
\cite[Proposition 7.33]{Kn:2002}). Thus a fortiori
$\mathbf{Z}_{\mathbf{K}_{00}}(\mathfrak{h}_0^-)$,
containing $\mathbf{Z}_{\mathbf{K}_{00}}(\mathfrak{a}_0)$, intersects all
connected components of $\mathbf{L}_0$. The toroidal part
$\mathfrak{h}_0^+=\mathfrak{h}_0\cap\mathfrak{k}_0$ 
of $\mathfrak{h}_0$ is a Cartan subalgebra of the compact
Lie algebra
$\mathbf{Z}_{\mathfrak{k}_{00}}(\mathfrak{h}_0^-)$. Let
$g_0\in\mathbf{Z}_{\mathbf{K}_{00}}(\mathfrak{h}_0^-)$. Then
$\mathrm{Ad}(g_0)(\mathfrak{h}_0^+)$ is also a Cartan
subalgebra of $\mathbf{Z}_{\mathfrak{k}_{00}}(\mathfrak{h}_0^-)$.
Since all Cartan subalgebras of a compact Lie algebra are
conjugate by an inner automorphism, we can find
an element $g_1$, in the connected component $\mathbf{K}_{00}^0$
of the identity of $\mathbf{K}_{00}$, such that
$\mathrm{Ad}(g_1g_0)(\mathfrak{h}_0^+)=\mathfrak{h}_0^+$.
The element
$g_1g_0\in\mathbf{Z}_{\mathbf{L}_0}(\mathfrak{h}_0^-)
\cap\mathbf{N}_{\mathbf{L}_0}(\mathfrak{h}_0^+)$
is in the same connected component of $g_0$ in $\mathbf{K}_{00}$.
This shows that 
$\mathbf{Z}_{\mathbf{L}_0}(\mathfrak{h}_0^-)
\cap\mathbf{N}_{\mathbf{L}_0}(\mathfrak{h}_0^+)$, and therefore
also $\mathbf{N}_{\mathbf{L}_0}(\mathfrak{h}_0)$,
intersect all connected components of $\mathbf{L}_0$.
\par
Let $\mathbf{Z}_0^0$ be the analytic subgroup 
of $\mathbf{L}_0$ with Lie algebra
$\mathfrak{z}_0$. Then 
the connected component $\mathbf{L}_0^0$ of
 the identity in $\mathbf{L}_0$ is the direct product
\mbox{$\mathbf{S}_0\bowtie\mathbf{Z}_0^0$.}
It follows that the 
intersection $\mathbf{N}_{\mathbf{L}_0}(\mathfrak{h}_0)\cap
\mathbf{L}_0^0$ is 
\mbox{$\mathbf{N}_{\mathbf{S}_0}(\mathfrak{h}_0)\bowtie\mathbf{Z}_0^0$,} 
and this yields~\eqref{eq:gh}. 
\par
The exactness of \eqref{eq:gj} is a consequence of that 
of  \eqref{eq:gi} and of the isomorphism \eqref{eq:gh}. It
will suffice then to prove the exactness of \eqref{eq:gi}. \par
By the definition 
of the Weyl group $\mathbf{W}(\mathbf{L}_0,\mathfrak{h}_0)$,
we have an exact sequence\,:
\begin{equation}\begin{CD}\label{eq:gk}
\pmb{1}@>>>\mathbf{H}_0@>>>\mathbf{N}_{\mathbf{L}_0}(\mathfrak{h}_0)
@>>>\mathbf{W}(\mathbf{L}_0,\mathfrak{h}_0)@>>>\pmb{1}.
\end{CD}
\end{equation}
The image in $\mathbf{W}(\mathbf{L}_0,\mathfrak{h}_0)$ 
of the subgroup
$\mathbf{N}_{\mathbf{S}_0}(\mathfrak{t}_0)$
of $\mathbf{N}_{\mathbf{L}_0}(\mathfrak{h}_0)$, under the
projection into the quotient, 
using the identification
\eqref{eq:gg}, is 
$\mathbf{W}(\mathbf{S}_0,\mathfrak{t}_0)$.
Finally, the Cartan subgroup $\mathbf{T}_0$ of
$\mathbf{S}_0$ is the intersection 
$\mathbf{H}_0\cap\mathbf{N}_{\mathbf{S}_0}(\mathfrak{h}_0)$. Then
\eqref{eq:gi} follows from \eqref{eq:gk} by the 
elementary group homomorphism
theorems. \end{proof}
\begin{rmk}
Note that the number of connected components of the isotropy
subgroup $\mathbf{I}_0$ depends on the choice of ${{\mathbf{G}_0}}$.
The exact sequence  \eqref{eq:gj} exhibits this number 
as a product of a term, $|\pi_0(\mathbf{H}_0/\mathbf{T}_0)|$,
that genuinely depends on ${{\mathbf{G}_0}}$, and another
term, $\left|\left(\mathbf{W}(\mathbf{L}_0,\mathfrak{h}_0)/
\mathbf{W}(\mathbf{S}_0,\mathfrak{t}_0)\right)\right|$,
which is the same for all possible choices of 
 the connected linear Lie group ${{\mathbf{G}_0}}$
with Lie algebra~$\mathfrak{g}_0$.
Indeed, 
modulo isomorphisms, the groups
$\mathbf{W}(\mathbf{L}_0,\mathfrak{h}_0)$ and
$\mathbf{W}(\mathbf{S}_0,\mathfrak{t}_0)$ only depend on the
Lie algebras $\mathfrak{g}_0$, $\mathfrak{l}_0$ and $\mathfrak{h}_0$
(see e.g. \cite[p.489]{Kn:2002}).
\end{rmk}
Our main application of Proposition \ref{prop:gb} is
counting the
number of connected components of the fibers of a
${{\mathbf{G}_0}}$-equivariant fibration \eqref{eq:eb}. We have:
\begin{thm} \label{thm:gc}
Let $M$, $M'$ be $\mathbf{G}_0$-orbits, with associated
parabolic $CR$ algebras
$(\mathfrak{g}_0,\mathfrak{q})$,
$(\mathfrak{g}_0,\mathfrak{q}')$, and isotropy
subalgebras $\mathfrak{i}_0\supset\mathfrak{i}_0'$,
respectively.
Let $(\vartheta,\mathfrak{h}_0)$ be a Cartan pair adapted to
$(\mathfrak{g}_0,\mathfrak{q}')$, with
$\mathfrak{h}_0$ a maximally noncompact Cartan subalgebra
of~$\mathfrak{i}_0'$.
Then the group of connected
components of the typical fiber $F$ of the $\mathbf{G}_0$-equivariant
fibration $\phi:M'\rightarrow{M}$~is
\begin{equation}\label{eq:gl}
\pi_0(F)\simeq \mathbf{W}(\mathbf{L}_0,\mathfrak{h}_0)/
\mathbf{W}(\mathbf{S}_0,\mathfrak{t}_0),
\end{equation}  
where $\mathbf{L}_0$ is the reductive 
$\vartheta$-invariant Levi factor of $\mathbf{I}_0$, having
maximal semisimple analytic subgroup
$\mathbf{S}_0$, with
Lie algebra $\mathfrak{s}_0$, and $\mathfrak{t}_0=\mathfrak{h}_0\cap
\mathfrak{s}_0$.
\end{thm}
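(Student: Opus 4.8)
The plan is to reduce the statement to a computation of $\pi_0$ of the isotropy groups and then quote Proposition~\ref{prop:gb}. The typical fiber of $\phi$ is $F=\mathbf{I}_0/\mathbf{I}_0'$, whose connected components are precisely the orbits of the identity component $\mathbf{I}_0^0$; hence $\pi_0(F)=\mathbf{I}_0^0\backslash\mathbf{I}_0/\mathbf{I}_0'$, which I would rewrite as $\pi_0(\mathbf{I}_0)\big/\mathrm{im}\big(\pi_0(\mathbf{I}_0')\to\pi_0(\mathbf{I}_0)\big)$. This quotient is a genuine group because $\pi_0(\mathbf{I}_0)$ is Abelian by Theorem~\ref{thm:cf}, so the image of $\pi_0(\mathbf{I}_0')$ is automatically a normal subgroup. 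Everything then comes down to identifying that image inside $\pi_0(\mathbf{I}_0)$.

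First I would note that $(\vartheta,\mathfrak{h}_0)$, although assumed maximally noncompact only in $\mathfrak{i}_0'$, is still an adapted Cartan pair for $(\mathfrak{g}_0,\mathfrak{q})$, since $\mathfrak{h}_0\subset\mathfrak{i}_0'\subset\mathfrak{i}_0$ and $\vartheta(\mathfrak{h}_0)=\mathfrak{h}_0$. Thus Propositions~\ref{prop:ca}, \ref{prop:cb}, Corollary~\ref{cor:ce} and, crucially, Proposition~\ref{prop:gb} (which imposes no maximal-noncompactness) all apply to $M$ with this pair; in particular $\mathbf{H}_0=\mathbf{Z}_{{{\mathbf{G}_0}}}(\mathfrak{h}_0)$ lies in $\mathbf{L}_0'\subset\mathbf{I}_0'\subset\mathbf{I}_0$ (because $\mathfrak{z}_0,\mathfrak{z}_0'\subset\mathfrak{h}_0$), $\mathbf{T}_0=\mathbf{H}_0\cap\mathbf{S}_0$ by \eqref{eq:gf}, and we have the exact sequence \eqref{eq:gj} for $M$.

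Next, using that $\mathfrak{h}_0$ is maximally noncompact in $\mathfrak{i}_0'$, I would invoke (as in the proof of Theorem~\ref{thm:cf}, via \cite[Proposition~7.90]{Kn:2002}) that $\mathbf{H}_0$ meets every connected component of $\mathbf{L}_0'$, hence of $\mathbf{I}_0'$; therefore $\pi_0(\mathbf{H}_0)\to\pi_0(\mathbf{I}_0')$ is onto and $\mathrm{im}\big(\pi_0(\mathbf{I}_0')\to\pi_0(\mathbf{I}_0)\big)=\mathrm{im}\big(\pi_0(\mathbf{H}_0)\to\pi_0(\mathbf{I}_0)\big)$. Since $\mathbf{T}_0\subset\mathbf{S}_0\subset\mathbf{I}_0^0$, this last homomorphism factors through $\pi_0(\mathbf{H}_0/\mathbf{T}_0)=\mathbf{H}_0/(\mathbf{T}_0\mathbf{H}_0^0)$, and the exactness of \eqref{eq:gj} says exactly that the induced map $\pi_0(\mathbf{H}_0/\mathbf{T}_0)\to\pi_0(\mathbf{I}_0)$ is injective; as $\pi_0(\mathbf{H}_0)\twoheadrightarrow\pi_0(\mathbf{H}_0/\mathbf{T}_0)$, the image of $\pi_0(\mathbf{I}_0')$ in $\pi_0(\mathbf{I}_0)$ is precisely the subgroup $\pi_0(\mathbf{H}_0/\mathbf{T}_0)$ occurring in \eqref{eq:gj}. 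Substituting into $\pi_0(F)=\pi_0(\mathbf{I}_0)/\mathrm{im}(\cdots)$ and reading off the cokernel from \eqref{eq:gj} yields $\pi_0(F)\simeq\pi_0(\mathbf{I}_0)/\pi_0(\mathbf{H}_0/\mathbf{T}_0)\simeq\mathbf{W}(\mathbf{L}_0,\mathfrak{h}_0)/\mathbf{W}(\mathbf{S}_0,\mathfrak{t}_0)$, which is \eqref{eq:gl}.

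The hard part, as I see it, is the bookkeeping of the Cartan data: one must be careful that $M$ and $M'$ are treated with the \emph{same} pair $(\vartheta,\mathfrak{h}_0)$, that this pair --- maximally noncompact only in $\mathfrak{i}_0'$ --- is nonetheless legitimate for $M$ so that Proposition~\ref{prop:gb} applies there, and that the image of $\pi_0(\mathbf{I}_0')$ in $\pi_0(\mathbf{I}_0)$ fills out exactly the kernel subgroup $\pi_0(\mathbf{H}_0/\mathbf{T}_0)$ of \eqref{eq:gj} --- no more, no less. The maximal-noncompactness of $\mathfrak{h}_0$ in $\mathfrak{i}_0'$ is used precisely, and only, to guarantee the surjectivity of $\pi_0(\mathbf{H}_0)\to\pi_0(\mathbf{I}_0')$.
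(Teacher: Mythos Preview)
Your proof is correct and follows essentially the same route as the paper's: both identify $\pi_0(F)$ with the cokernel of $\pi_0(\mathbf{I}_0')\to\pi_0(\mathbf{I}_0)$, use the maximal noncompactness of $\mathfrak{h}_0$ in $\mathfrak{i}_0'$ to show this image equals $\pi_0(\mathbf{H}_0/\mathbf{T}_0)$, and then read off the answer from the exact sequence \eqref{eq:gj} of Proposition~\ref{prop:gb}. The paper phrases the surjectivity step via Theorem~\ref{thm:cf} and a commutative square involving $\mathbf{T}_0'=\mathbf{H}_0\cap\mathbf{S}_0'$, whereas you bypass $\mathbf{T}_0'$ and factor $\pi_0(\mathbf{H}_0)\to\pi_0(\mathbf{I}_0)$ directly through $\pi_0(\mathbf{H}_0/\mathbf{T}_0)$; this is a cosmetic difference, not a substantive one.
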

\begin{proof}
Since $\mathfrak{h}_0$ is a maximally noncompact 
Cartan subalgebra of $\mathfrak{i}_0'$, by
Theorem \ref{thm:cf} we have the isomorphism
$\pi_0(\mathbf{H}_0/\mathbf{T}_0')\simeq\pi_0({\mathbf{I}_0'})$,
where $\mathbf{T}'_0$ is the centralizer of $\mathbf{H}_0$ in
$\mathbf{S}'_0$, yielding a commutative diagram
\begin{equation*}
  \begin{CD}
@.    \pi_0(\mathbf{H}_0/\mathbf{T}'_0)@>{\simeq}>>\pi_0(\mathbf{I}'_0)\;\\
@. @VVV  @VVV\\
1 @>>>\pi_0(\mathbf{H}_0/\mathbf{T}_0)@>>>\pi(\mathbf{I}_0).
  \end{CD}
\end{equation*}
Since the map $\pi_0(\mathbf{H}_0/\mathbf{T}'_0)\to\pi_0(\mathbf{H}_0/
\mathbf{T}_0)$ is surjective,
from \eqref{eq:gj} we obtain the exact sequence:
\begin{equation}
\begin{CD}
\pi_0({\mathbf{I}_0'})
@>>>
\pi_0(\mathbf{I}_0)@>>>
\displaystyle{\frac{\mathbf{W}(\mathbf{L}_0,\mathfrak{h}_0)}{\mathbf{W}
(\mathbf{S}_0,\mathfrak{t}_0)}}@>>>\pmb{1},
\end{CD}
\end{equation}
yielding
\eqref{eq:gl}.
\end{proof}
\section{The fundamental group} \label{sec:i}
We use
Theorem \ref{thm:ha} and the results of \S\ref{sec:g}
to compute the fundamental group of the $\mathbf{G}_0$-orbits,
extending the results of \cite[\S 8]{AMN06}.
We have:
\begin{thm}\label{thm:ia}
Let $M$ be a ${{\mathbf{G}_0}}$-orbit,
$(\mathfrak{g}_0,\mathfrak{q})$ 
its associated parabolic $CR$ algebra, 
and
${M}_{\mathfrak{e}}$ its real core (see Definition \ref{dfn:ha}), 
with $CR$ algebra
$(\mathfrak{g}_0,\mathfrak{e})$.\par
Let
$(\vartheta,\mathfrak{h}_0)$ be a Cartan pair adapted to
$(\mathfrak{g}_0,\mathfrak{q})$, with
$\mathfrak{h}_0$ maximally noncompact in
$\mathfrak{i}_0$.
Set
$\mathbf{L}_0^{\mathfrak{e}}$, $\mathbf{S}^{\mathfrak{e}}_0$ 
for the reductive Levi factor of the isotropy $\mathbf{I}_0^{\mathfrak{e}}$
of ${M}_{\mathfrak{e}}$ and for its maximal analytic semisimple subgroup,
respectively. \par
Then we have an exact sequence:
\begin{equation}\label{eq:ia}
\begin{CD}
\pmb{1}@>>>\pi_1(M)@>>>\pi_1(M_{\mathfrak{e}})@>>>\frac{\mathbf{W}(
\mathbf{L}^{\mathfrak{e}}_0,\mathfrak{h}_0)}{\mathbf{W}(
\mathbf{S}^{\mathfrak{e}}_0,\mathfrak{h}_0\cap\mathfrak{s}^{\mathfrak{e}}_0)}
@>>>\pmb{1}.
\end{CD}
\end{equation}
In particular, the image of the map $\pi_1(M)\to\pi_1(M_{\mathfrak{e}})$
in \eqref{eq:ia}
is a normal subgroup
of $\pi_1(M_{\mathfrak{e}})$, with a finite index.\par
The map $\mathrm{H}_1(M,\mathbb{Q})\to\mathrm{H}_1(M_{\mathfrak{e}}
,\mathbb{Q})$
of the rational homologies, 
induced by the 
$\mathbf{G}_0$-equivariant projection $M\to{M}_e$,
is an isomorphism.
\end{thm}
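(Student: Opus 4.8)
The plan is to read off both assertions from the homotopy and homology long exact sequences of the canonical fibration $\phi\colon M\to M_{\mathfrak e}$ of \eqref{eq:ha}, whose qualitative shape is dictated by Theorem \ref{thm:ha} and Theorem \ref{thm:gc}. First I would fix the geometry: writing $M=\mathbf G_0/\mathbf I_0$, $M_{\mathfrak e}=\mathbf G_0/\mathbf I_0^{\mathfrak e}$, the chain of $CR$-weakenings and weakly nondegenerate reductions in \eqref{eq:hc} only enlarges the real isotropy subalgebra (by Remark \ref{rmk:eb} the weakenings leave it unchanged), so $\mathfrak i_0\subseteq\mathfrak e\cap\mathfrak g_0=\mathfrak i_0^{\mathfrak e}$, hence $\mathbf I_0\subseteq\mathbf I_0^{\mathfrak e}$ by Theorem \ref{thm:ea}, and the composite in \eqref{eq:hc} is the associated $\mathbf G_0$-equivariant locally trivial bundle with fibre $F=\mathbf I_0^{\mathfrak e}/\mathbf I_0$. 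By Theorem \ref{thm:ha}(2)--(3) the fibre $F$ has finitely many connected components, each simply connected, so the homotopy sequence of $\phi$ shows that $\phi_*\colon\pi_1(M)\to\pi_1(M_{\mathfrak e})$ is injective. To turn this into the short exact sequence \eqref{eq:ia} of groups I would compare the principal bundles $\mathbf I_0\hookrightarrow\mathbf G_0\to M$ and $\mathbf I_0^{\mathfrak e}\hookrightarrow\mathbf G_0\to M_{\mathfrak e}$, whose homotopy sequences (with connecting maps $\pi_1(M)\to\pi_0(\mathbf I_0)$, $\pi_1(M_{\mathfrak e})\to\pi_0(\mathbf I_0^{\mathfrak e})$, which are genuine group homomorphisms) fit into a commutative ladder with common term $\pi_1(\mathbf G_0)$. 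A short chase identifies $\phi_*\pi_1(M)$ with the preimage in $\pi_1(M_{\mathfrak e})$ of the image of $\pi_0(\mathbf I_0)$ in $\pi_0(\mathbf I_0^{\mathfrak e})$; since $\pi_0(\mathbf I_0^{\mathfrak e})$ is abelian (Theorem \ref{thm:cf}) this is a normal subgroup with quotient $\pi_0(\mathbf I_0^{\mathfrak e})/\mathrm{im}\,\pi_0(\mathbf I_0)\cong\pi_0(F)$, a finite group. Finally, because $\mathfrak h_0$ is maximally noncompact in $\mathfrak i_0$, Theorem \ref{thm:gc} applied to $\phi$ (total space $M$, base $M_{\mathfrak e}$) identifies $\pi_0(F)$ with $\mathbf W(\mathbf L_0^{\mathfrak e},\mathfrak h_0)/\mathbf W(\mathbf S_0^{\mathfrak e},\mathfrak h_0\cap\mathfrak s_0^{\mathfrak e})$, which is \eqref{eq:ia}; this also gives the normal finite-index statement.

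For the rational homology assertion I would not argue directly from ``$\pi_1(M)$ is normal of finite index in $\pi_1(M_{\mathfrak e})$'' — that alone does not force an isomorphism on $\mathrm H_1(-;\mathbb Q)$ — but instead use the homogeneous structure to pin down $\mathrm H_1$. Passing to the universal covering $\widetilde{\mathbf G}_0\to\mathbf G_0$, with central kernel $Z\cong\pi_1(\mathbf G_0)$, one has $M=\widetilde{\mathbf G}_0/\widetilde{\mathbf I}_0$ and $\pi_1(M)\cong\pi_0(\widetilde{\mathbf I}_0)$, which exhibits a central extension $1\to A\to\pi_1(M)\to\pi_0(\mathbf I_0)\to 1$ in which $A=\mathrm{im}\bigl(\pi_1(\mathbf G_0)\to\pi_1(M)\bigr)=\ker\bigl(\pi_1(M)\to\pi_0(\mathbf I_0)\bigr)$ is central (it is the image of the central $Z$, in particular abelian) and $\pi_0(\mathbf I_0)$ is finite (it is $\cong\pi_0(\mathbf K_{00})$ with $\mathbf K_{00}$ compact, by Theorem \ref{thm:cf} and Corollary \ref{cor:ce}). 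Since $\mathrm H_1(X;\mathbb Q)=\pi_1(X)^{\mathrm{ab}}\otimes\mathbb Q$ and a finite group has vanishing rational homology in positive degrees, the five-term exact sequence of this central extension collapses to an isomorphism $A\otimes\mathbb Q\xrightarrow{\sim}\mathrm H_1(M;\mathbb Q)$ induced by $A\hookrightarrow\pi_1(M)$; likewise $A_{\mathfrak e}\otimes\mathbb Q\xrightarrow{\sim}\mathrm H_1(M_{\mathfrak e};\mathbb Q)$ with $A_{\mathfrak e}=\mathrm{im}\bigl(\pi_1(\mathbf G_0)\to\pi_1(M_{\mathfrak e})\bigr)$. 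Because the covering projection $\mathbf G_0\to M_{\mathfrak e}$ factors through $\mathbf G_0\to M\xrightarrow{\phi}M_{\mathfrak e}$, one gets $\phi_*(A)=A_{\mathfrak e}$, so $\phi_*$ restricts to a surjection $A\to A_{\mathfrak e}$; it is injective there, being the restriction of the injective $\phi_*$ of the first step, hence an isomorphism, and by functoriality of $\mathrm H_1(-;\mathbb Q)$ the map $\mathrm H_1(M;\mathbb Q)\to\mathrm H_1(M_{\mathfrak e};\mathbb Q)$ is exactly $\phi_*|_A\otimes\mathbb Q$, an isomorphism.

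The step I expect to require the most care is the first one: the homotopy long exact sequence only yields exactness of pointed sets, so one must genuinely use the principal-bundle ladder (together with the abelianness of $\pi_0(\mathbf I_0^{\mathfrak e})$ from Theorem \ref{thm:cf}) to obtain a short exact sequence of \emph{groups} and to recognise its cokernel as the Weyl-group quotient furnished by Theorem \ref{thm:gc}. Once that is in place, the homology assertion is comparatively soft: since $\mathbf G_0$ is a fixed connected group, $\pi_1(M)$ is always a central extension of a finite group, which makes $\mathrm H_1(M;\mathbb Q)$ visible on the image of $\pi_1(\mathbf G_0)$, where the comparison with $M_{\mathfrak e}$ is transparent.
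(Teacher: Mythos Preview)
Your proof of the exact sequence \eqref{eq:ia} is essentially the paper's own argument: the paper also invokes simply-connectedness of the fibre components from Theorem~\ref{thm:ha}, obtains $1\to\pi_1(M)\to\pi_1(M_{\mathfrak e})\to\pi_0(F)\to 1$ from the long exact homotopy sequence, and then identifies $\pi_0(F)$ via Theorem~\ref{thm:gc}. Your extra bookkeeping with the principal-bundle ladder makes explicit why the boundary map is a group homomorphism; the paper handles this more tersely by noting that $\pi_0(F)$ inherits an abelian group structure from $\pi_0(\mathbf I_0)\to\pi_0(\mathbf I_0^{\mathfrak e})\to\pi_0(F)\to 1$ together with the abelianness of $\pi_0(\mathbf I_0^{\mathfrak e})$ (Theorem~\ref{thm:cf}).

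For the rational homology statement you take a genuinely different and more careful route. The paper's entire argument is one sentence: ``The last assertion follows from the fact that the kernel of the homomorphism $\pi_1(M)\to\pi_1(M_{\mathfrak e})$ is a torsion subgroup.'' This is trivially true (the kernel is zero), but as you rightly note, a normal finite-index inclusion of groups need not induce an isomorphism on $H_1(-;\mathbb Q)$ in general. Your argument via the central extension $1\to A\to\pi_1(M)\to\pi_0(\mathbf I_0)\to 1$, with $A=\operatorname{im}\bigl(\pi_1(\mathbf G_0)\to\pi_1(M)\bigr)$ central and $\pi_0(\mathbf I_0)$ finite, supplies the missing substance: the five-term sequence collapses to $A\otimes\mathbb Q\cong H_1(M;\mathbb Q)$, and $\phi_*|_A\colon A\to A_{\mathfrak e}$ is a bijection. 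One can also recast your conclusion in the paper's framework by observing that $A$, identified with the central subgroup $A_{\mathfrak e}\subset\pi_1(M_{\mathfrak e})$, is fixed pointwise under conjugation by $\pi_1(M_{\mathfrak e})$; hence $\pi_0(F)$ acts trivially on $H_1(M;\mathbb Q)$, and the coinvariant map in the five-term sequence for $1\to\pi_1(M)\to\pi_1(M_{\mathfrak e})\to\pi_0(F)\to 1$ is already an isomorphism. Either way, your version is the rigorous one.
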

\begin{proof} By (3) of Theorem \ref{thm:ha}, the fundamental group of
the typical fiber $F$ of \eqref{eq:ha} is trivial. Thus,
since $M$ is connected, the exact homotopy
sequence of a locally trivial fiber bundle yields the short 
exact sequence:
\begin{equation}\label{eq:ic}
\begin{CD}
\pmb{1}@>>>\pi_1(M)@>>>\pi_1(M_{\mathfrak{e}})@>>>\pi_0(F)@>>>\pmb{1}.
\end{CD}
\end{equation}
The exactness of \eqref{eq:ia}
follows then from the isomorphism \eqref{eq:gl}
established in
Theorem \ref{thm:gc}.\par
Since $F=\mathbf{I}_0^{\mathfrak{e}}/\mathbf{I}_0$, we have
the exact sequence:
\begin{equation}\label{eq:id}
\begin{CD}
\pi_0(\mathbf{I}_0)
@>>>\pi_0(\mathbf{I}_0^{\mathfrak{e}})@>>>\pi_0(F)@>>>\pmb{1}.
\end{CD}
\end{equation}
By Theorem \ref{thm:cf}, the groups $\pi_0(\mathbf{I}_0)$
and $\pi_0(\mathbf{I}_0^{\mathfrak{e}})$ are Abelian. Hence
$\pi_0(F)$, being in a one-to-one correspondence with a quotient
of finite Abelian groups, may be given
the structure of a finite Abelian group, for which 
the maps $\pi_0(\mathbf{I}_0^{\mathfrak{e}})\to\pi_0(F)$
and 
$\pi_1(M_{\mathfrak{e}})\to\pi_0(F)$ are group epimorphisms.
Thus the image of 
$\pi_1(M)\to\pi_1(M_{\mathfrak{e}})$ in \eqref{eq:ia}
is a normal subgroup of $\pi_1(M_{\mathfrak{e}})$,
being the kernel of a group homomorphism. 
The last assertion follows from the fact that the kernel of the
homomorphism $\pi_1(M)\to\pi_1(M_{\mathfrak{e}})$ is a torsion subgroup.
\end{proof} 
\begin{rmk}
We see from \eqref{eq:ia} that the fundamental group $\pi_1(M)$
only depends on the totally real parabolic 
$CR$ algebra $(\mathfrak{g}_0,\mathfrak{e})$ and from the
maximally noncompact Cartan subalgebra $\mathfrak{h}_0$ of
$\mathfrak{i}_0$.
\end{rmk}
We show by the following proposition 
that, vice versa, for each real flag manifold
$M_{\mathfrak{e}}$ of $\mathbf{G}_0$,
and every Cartan pair $(\vartheta,\mathfrak{h}_0)$
adapted to $(\mathfrak{g}_0,\mathfrak{e})$, we can find a
$\mathbf{G}_0$-orbit whose fundamental group is given by 
\eqref{eq:ia}. We have indeed
\begin{prop}\label{prop:ib}
Let $(\vartheta,\mathfrak{h}_0)$ be an adapted Cartan pair for
a totally real
parabolic $CR$ algebra $(\mathfrak{g}_0,\mathfrak{y})$. Then we can
find a parabolic subalgebra $\mathfrak{q}$ such that:
\begin{align}\label{eq:ie}
&\mathfrak{q}\subset\mathfrak{y}=\mathfrak{q}+
\bar{\mathfrak{q}},\\\label{eq:if}
&\mathfrak{h}_0\;\text{is a maximally noncompact Cartan subalgebra of
$\mathfrak{i}_0=\mathfrak{q}\cap\mathfrak{g}_0$.}
\end{align}
Denote by $M$ the $\mathbf{G}_0$-orbit with $CR$ algebra
$(\mathfrak{g}_0,\mathfrak{q})$. Then we have the exact sequence
\begin{equation}
  \label{eq:ih}
  \begin{CD}
    \pmb{1}@>>>\pi_1(M)@>>>\pi_1(M_{\mathfrak{y}})@>>>
\frac{\mathbf{W}(\mathbf{L}_{0}^{\mathfrak{y}},\mathfrak{h}_0)}{
\mathbf{W}(\mathbf{S}_0^{\mathfrak{y}},\mathfrak{h}_0
\cap\mathfrak{s}_0^{\mathfrak{y}})}@>>>\pmb{1},
  \end{CD}
\end{equation}
where $M_{\mathfrak{y}}$, the $\mathbf{G}_0$-orbit
associated to $(\mathfrak{g}_0,\mathfrak{y})$,
 is a 
real flag manifold, with isotropy subgroup $\mathbf{I}_0^{\mathfrak{y}}$,
$\mathbf{L}^{\mathfrak{y}}_0$ is the reductive part of 
$\mathbf{I}_0^{\mathfrak{y}}$, and $\mathbf{S}^{\mathfrak{y}}_0$
the maximal analytic
semisimple subgroup of $\mathbf{L}^{\mathfrak{y}}_0$,
with Lie algebra $\mathfrak{s}_0^{\mathfrak{y}}$.
\end{prop}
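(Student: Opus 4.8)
The plan is to produce the parabolic $\mathfrak{q}$ by a perturbation-of-facet argument, exactly as in the proof of Theorem~\ref{thm:ec}, and then to read off \eqref{eq:ih} from the general structure theory already developed. First I would fix the adapted Cartan pair $(\vartheta,\mathfrak{h}_0)$ for $(\mathfrak{g}_0,\mathfrak{y})$ and pass to the root system $\mathcal{R}$ of $\mathfrak{g}$ with respect to $\mathfrak{h}=\mathfrak{h}_0^{\mathbb{C}}$. Since $(\mathfrak{g}_0,\mathfrak{y})$ is totally real, $\mathfrak{y}=\bar{\mathfrak{y}}$, so the parabolic set $\mathcal{Y}$ of $\mathfrak{y}$ is $\sigma^*$-stable. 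Pick an element $B\in\mathfrak{h}_{\mathbb{R}}$ in the interior of the facet associated to $\mathfrak{y}$; because $\mathcal{Y}$ is conjugation-stable, $B$ can be chosen with $\bar B=B$, i.e.\ $B\in\mathfrak{h}_0^-$. Now choose a generic $A\in\mathfrak{h}_{\mathbb{R}}$ and a small $\epsilon>0$ so that $A_\epsilon=B+\epsilon A$ still lies in the closure of the facet of $B$ but in a chamber; the parabolic set $\mathcal{Q}=\{\alpha\in\mathcal{R}\mid\alpha(A_\epsilon)\ge 0\}$ then defines a parabolic $\mathfrak{q}\subset\mathfrak{y}$ with $\mathfrak{q}+\bar{\mathfrak{q}}=\mathfrak{y}$ (the roots vanishing on $B$ split according to the sign of $\alpha(A)$ versus $-\alpha(A)=\bar\alpha(A)$, and each such pair $\{\alpha,\bar\alpha\}$ contributes to exactly one of $\mathfrak{q}$, $\bar{\mathfrak{q}}$, while $\mathfrak{q}\cap\bar{\mathfrak{q}}$ recovers the reductive part sitting over the facet of $B$). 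This gives \eqref{eq:ie}.

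Next I would verify \eqref{eq:if}, i.e.\ that $\mathfrak{h}_0$ is maximally noncompact in $\mathfrak{i}_0=\mathfrak{q}\cap\mathfrak{g}_0$. The key point is that, by construction, the center $\mathfrak{z}_0$ of the reductive Levi factor of $\mathfrak{i}_0$ and the center $\mathfrak{y}_0^{\mathrm{red}}$-center of $\mathfrak{i}_0^{\mathfrak{y}}=\mathfrak{y}\cap\mathfrak{g}_0$ have the property that $\mathfrak{l}_0=\mathfrak{i}_0\cap\vartheta(\mathfrak{i}_0)$ for $\mathfrak{q}$ coincides with $\mathfrak{i}_0^{\mathfrak{y}}=\mathfrak{l}_0^{\mathfrak{y}}$ for $\mathfrak{y}$ (since $\mathfrak{y}$ is polarized, $\mathfrak{i}_0^{\mathfrak{y}}$ is already reductive, equal to $\mathfrak{q}^r\cap\bar{\mathfrak{q}}^r\cap\mathfrak{g}_0$ by \eqref{eq:ck}). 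Because $\mathfrak{h}_0$ is adapted to $(\mathfrak{g}_0,\mathfrak{y})$ it is a $\vartheta$-stable Cartan subalgebra of $\mathfrak{l}_0^{\mathfrak{y}}=\mathfrak{l}_0$; one may additionally arrange, when first choosing the adapted pair, that $\mathfrak{h}_0$ is maximally noncompact in $\mathfrak{l}_0^{\mathfrak{y}}$, and since maximally noncompact Cartan subalgebras of $\mathfrak{i}_0$ lie (up to conjugacy) inside $\mathfrak{l}_0$ — this is the content of the retraction $\mathbf{L}_0\hookrightarrow\mathbf{I}_0$ being a deformation retract together with Proposition~\ref{prop:ca}(4)--(5), which forces the noncompact part of any Cartan of $\mathfrak{i}_0$ to be detected already in $\mathfrak{l}_0$ — we conclude $\mathfrak{h}_0$ is maximally noncompact in $\mathfrak{i}_0$ as well. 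Here I would either invoke a remark to the effect that an adapted pair can always be normalized so that $\mathfrak{h}_0$ is maximally noncompact in $\mathfrak{l}_0$, or strengthen the statement to say ``we may choose the adapted pair so that'' — this normalization is the step I expect to require the most care.

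Finally, once \eqref{eq:ie}, \eqref{eq:if} are in hand, $M_{\mathfrak{y}}$ is a real flag manifold (its $CR$ algebra is totally real with $\mathfrak{q}^r=\bar{\mathfrak{q}}^r$, the polarized case), and the $\mathbf{G}_0$-equivariant map $M\to M_{\mathfrak{y}}$ is the weakly nondegenerate reduction of $M$ up to a $CR$-weakening: indeed $(\mathfrak{g}_0,\mathfrak{q})$ has $\mathfrak{q}+\bar{\mathfrak{q}}=\mathfrak{y}$ with $\mathfrak{y}$ parabolic, so $\mathfrak{y}$ is itself the basis of the fundamental reduction, hence also of the weakly nondegenerate reduction of $(\mathfrak{g}_0,\mathfrak{q}_w)$ at the appropriate stage, and the real core of $M$ is $M_{\mathfrak{y}}$. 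Therefore Theorem~\ref{thm:ia} applies verbatim with $\mathfrak{e}=\mathfrak{y}$, yielding the exact sequence \eqref{eq:ih} with $\mathbf{L}_0^{\mathfrak{y}}$, $\mathbf{S}_0^{\mathfrak{y}}$ the reductive Levi factor and its maximal semisimple analytic subgroup. Strictly, one should check that this particular $\mathfrak{q}$ has real core exactly $\mathfrak{y}$ (and not something smaller); this follows because $\mathfrak{q}+\bar{\mathfrak{q}}=\mathfrak{y}$ is already parabolic and totally real, so the recursion \eqref{eq:hb} terminates at the first step with $\mathfrak{e}=\mathfrak{y}$ — this is the routine verification I would include but not belabor.
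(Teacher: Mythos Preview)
There are two genuine gaps in your construction of $\mathfrak{q}$. First, your perturbation argument for \eqref{eq:ie} breaks: the identity $\bar\alpha(A)=-\alpha(A)$ you invoke holds only when $A\in i\mathfrak{h}_0^+$ (since $\bar\alpha(A)=\alpha(\bar A)$ on $\mathfrak{h}_{\mathbb{R}}$), not for a generic $A\in\mathfrak{h}_{\mathbb{R}}$. With an arbitrary generic $A$, a complex root $\alpha\in\mathcal{Y}^r$ can have both $\alpha(A)<0$ and $\bar\alpha(A)<0$, so neither $\alpha$ nor $\bar\alpha$ lies in $\mathcal{Q}$ and $\mathfrak{q}+\bar{\mathfrak{q}}\subsetneq\mathfrak{y}$. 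Second, your argument for \eqref{eq:if} rests on a false claim: the reductive Levi factor $\mathfrak{l}_0$ of $\mathfrak{i}_0$ is $\mathfrak{q}^r\cap\bar{\mathfrak{q}}^r\cap\mathfrak{g}_0$, which is \emph{strictly smaller} than $\mathfrak{l}_0^{\mathfrak{y}}=\mathfrak{y}^r\cap\mathfrak{g}_0$ whenever $\mathfrak{q}\subsetneq\mathfrak{y}$. More importantly, $\mathfrak{h}_0$ is \emph{given} in the hypothesis --- you are not free to normalize it to be maximally noncompact in $\mathfrak{l}_0^{\mathfrak{y}}$; the whole point is that $\mathfrak{q}$ must be tailored to the prescribed $\mathfrak{h}_0$.

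The paper's construction handles both issues at once and is cleaner. It fixes a $V$-fit Weyl chamber $C$ for $(\mathfrak{g}_0,\mathfrak{y})$, writes $\mathfrak{y}=\mathfrak{q}_\Phi$, and sets $\mathfrak{q}=\mathfrak{q}_\Psi$ with $\Psi=\Phi\cup\{\alpha\in\mathcal{B}\mid\bar\alpha\prec 0\}$. Then \eqref{eq:fi} of Proposition~\ref{prop:fc} gives $\mathfrak{y}=\mathfrak{q}+\bar{\mathfrak{q}}$ directly, and a short support argument shows that every positive imaginary root has a simple root with $\bar\beta\prec 0$ in its support, hence lies in $\mathcal{Q}^n_\Psi$; thus $\mathcal{Q}^r_\Psi$ contains no imaginary root, which forces $\mathfrak{h}_0$ to be maximally noncompact in $\mathfrak{i}_0$ (no noncompact imaginary root is available for a Cayley transform). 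Your facet approach can be salvaged by taking $A\in i\mathfrak{h}_0^+$ generic: then $\bar\alpha(A)=-\alpha(A)$ does hold, giving \eqref{eq:ie}, and $\mathcal{Q}^r$ consists only of real roots in $\mathcal{Y}^r$, giving \eqref{eq:if} by the same no-imaginary-root argument; this is then a coordinate-free restatement of the paper's $\Psi$. Your Step~3 (identifying $M\to M_{\mathfrak{y}}$ as the weakly nondegenerate reduction, with simply connected fibers, and reading off the exact sequence from the homotopy sequence and Theorem~\ref{thm:gc}) matches the paper's reasoning.
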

Note that $M_{\mathfrak{y}}$ does not necessarily coincide with
the real core $M_{\mathfrak{e}}$ of $M$.
\begin{proof}
Let $\mathcal{R}$ be the root system of $\mathfrak{g}$
with respect to the complexification
$\mathfrak{h}$ of $\mathfrak{h}_0$ and let $C$ be a $V$-fit
Weyl chamber of $\mathcal{R}$ for $(\mathfrak{g}_0,\mathfrak{y})$.
Let $\mathfrak{y}=\mathfrak{q}_{\Phi}$ for a subset $\Phi$
of the set
$\mathcal{B}$ of the $C$-positive simple roots.
Set
\begin{equation}
  \label{eq:ig}
  \Psi=\Phi\cup\{\alpha\in\mathcal{B}\,|\,\bar\alpha
 \prec 0\}
\end{equation}
and take $\mathfrak{q}=\mathfrak{q}_{\Psi}$. 
If $\alpha \succ 0$ is imaginary, then 
$\mathrm{supp}(\alpha)$ contains some $\beta_0$ with
$\bar\beta_0 \prec 0$.
Thus, by \eqref{eq:ig}, 
$\beta_0\in\Psi$, and therefore $\alpha\in\mathcal{Q}_{\Psi}^n$,
showing that
$\mathcal{Q}^r_{\Psi}$ does not contain any imaginary root.
This implies that $\mathfrak{h}_0$ is maximally noncompact in
$\mathfrak{i}_0$. 
Moreover, the equality in \eqref{eq:ie} is valid because
of \eqref{eq:fi} of Proposition \ref{prop:fc}.
Hence
both 
\eqref{eq:ie} and \eqref{eq:if} are satisfied.\par
The exactness of \eqref{eq:ih} follows because 
the $\mathbf{G}_0$-equivariant $CR$ map $M\to{M}_{\mathfrak{y}}$
is the weakly nondegenerate reduction, and hence the fibers 
are
simply connected by Theorem \ref{thm:fb}, so that the argument
in the proof of Theorem \ref{thm:ia} applies.
\end{proof}
In the last part of this section, we shall give an explicit  
description
of the fundamental group $\pi_1(M)$ of the $\mathbf{G}_0$ 
orbit $M$.\par
Fix a Cartan pair
$(\vartheta,\mathfrak{h}_0)$ adapted to
the parabolic $CR$ algebra $(\mathfrak{g}_0,\mathfrak{q})$
of $M$. Let $M_{\mathfrak{e}}$, with
$CR$ algebra $(\mathfrak{g}_0,\mathfrak{e})$, be the real core
of $M$. 
Keeping the Cartan involution $\vartheta$
and the corresponding Cartan decomposition
$\mathfrak{g}_0=\mathfrak{k}_0\oplus\mathfrak{p}_0$ fixed, we denote by 
$\mathfrak{h}_0^{\mathfrak{e}}$ a
$\vartheta$-invariant 
maximally noncompact Cartan subalgebra of $\mathfrak{i}_0^{\mathfrak{e}}$.
We choose, as we can, $\mathfrak{h}_0^{\mathfrak{e}}$ in such a way that
$\mathfrak{h}_0^{\mathfrak{e}}
\cap\mathfrak{k}_0\subset
\mathfrak{h}_0^+$ and 
$\mathfrak{h}_0^{\mathfrak{e}}
\cap\mathfrak{p}_0\supset
\mathfrak{h}_0^-$. 
\par
Let $\mathcal{R}^{\mathfrak{e}}$ be the root system of 
$\mathfrak{g}$ with respect to the complexification
$\mathfrak{h}^\mathfrak{e}$ of $\mathfrak{h}_0^{\mathfrak{e}}$,
and 
$\mathcal{E}$ the parabolic set
of $\mathfrak{e}$  in $\mathcal{R}^{\mathfrak{e}}$. 
We recall that, for a real root $\alpha\in\mathcal{R}^{\mathfrak{e}}$,
the real eigenspace:
\begin{equation}
\mathfrak{g}_0^{\alpha}=\{X\in\mathfrak{g}_0\,|\, [H,X]=\alpha(H)\,X,
\quad\forall H\in\mathfrak{h}_0^{\mathfrak{e}}\cap\mathfrak{p}_0\}
\end{equation}
is not trivial. Its real dimension is called the \emph{multiplicity}
of~$\alpha$.\par
Let $C\in\mathfrak{C}(\mathcal{R}^{\mathfrak{e}},\mathcal{E})$ 
be an $S$-fit Weyl chamber for 
$(\mathfrak{g}_0,\mathfrak{e})$,
denote by $\mathcal{B}$ the basis of $C$-positive simple roots in
$\mathcal{R}^{\mathfrak{e}}$,
 and $\Phi^{\mathfrak{e}}=\mathcal{B}\cap\mathcal{E}^{n}$.
The roots of $\mathcal{B}$ correspond to the nodes of a
Satake diagram and, in particular,
$\bar\alpha \succ 0$ for all non imaginary
$\alpha\succ 0$.\par
\smallskip
We utilize 
\cite{Wig98} to describe 
the fundamental group $\pi_1(M_{\mathfrak{e}})$ in terms of a  
set $\Gamma$ of 
generators, given by \eqref{eq:ik},
and by the relations 
\eqref{eq:il} 
below:
\begin{align}\label{eq:ik} 
&\Gamma=\{\xi_{\alpha}\,|\,\alpha\in\mathcal{B}
\cap\mathcal{R}^{\mathfrak{e}}_{\mathrm{re}}
\;\text{has multiplicity $1$}\} \\\label{eq:il}
&\xi_{\alpha}=1 \quad\text{if $\alpha\notin
\Phi^{\mathfrak{e}}$},\quad
\xi_{\alpha}\xi_{\beta}=\xi_{\beta}\xi_{\alpha}^{(-1)^{(\alpha|\beta^{\lor})}}
\quad\forall\xi_{\alpha},\xi_{\beta}\in\Gamma.
\end{align}
In \eqref{eq:il} we use 
the standard notation $\beta^{\lor}=2\beta/\|\beta\|^2$.\par
From Theorem \ref{thm:ia} and this description
of $\pi_1(M_{\mathfrak{e}})$, we get:
\begin{cor}\label{cor:id}
Let $M$ be a $\mathbf{G}_0$-orbit, with associated parabolic
$CR$ algebra $(\mathfrak{g}_0,\mathfrak{q})$, and $M_{\mathfrak{e}}$
its real core.
If $\mathfrak{g}_0$ is a real semisimple Lie algebra such that all its
simple ideals are\footnote{Here we follow,  
for  labeling the simple real Lie algebras,
\cite[Table $\mathrm{V\! I}$, Chapter X]{Hel78}.} 
\begin{equation}\tag{a}\label{eq:it}
{\begin{aligned}
 &\text{either of the complex
type, or compact, or of one of the real types}\\ 
&\text{$\mathrm{AI\! I}$, $\mathrm{AI\! I\! I a}$,
$\mathrm{AI\! V}$, $\mathrm{BI\! I}$, $\mathrm{CI\! I}$,
$\mathrm{DI\! I}$, $\mathrm{DI\! I\! I b}$,
$\mathrm{EI\! I\! I }$, $\mathrm{EI\! V}$,
$\mathrm{FI\! I}$,}   
  \end{aligned}}
\end{equation}
then all ${{\mathbf{G}_0}}$-orbits are simply connected.\par
If we allow the simple ideals to be either of the types listed
in \eqref{eq:it} or
of the real types:
\begin{equation}
\tag{b}
  \label{eq:iu}
 \text{
$\mathrm{AI\! I\! I b}$ and $\mathrm{DI\! I\! I a}$},\end{equation}
then the map
$\pi_1(M)\to\pi_1(M_{\mathfrak{e}})$ of \eqref{eq:ic} is an isomorphism.
\end{cor}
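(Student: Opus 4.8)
The plan is to deduce both statements from the exact sequence \eqref{eq:ia} of Theorem \ref{thm:ia}, which already gives the injection $\pi_1(M)\hookrightarrow\pi_1(M_{\mathfrak{e}})$ with cokernel $Q:=\mathbf{W}(\mathbf{L}^{\mathfrak{e}}_0,\mathfrak{h}_0)/\mathbf{W}(\mathbf{S}^{\mathfrak{e}}_0,\mathfrak{h}_0\cap\mathfrak{s}^{\mathfrak{e}}_0)$, combined with the presentation \eqref{eq:ik}--\eqref{eq:il} of $\pi_1(M_{\mathfrak{e}})$. I would begin with a preliminary observation. By Theorem \ref{thm:ha}(2) the real core $M_{\mathfrak{e}}$ is a real flag manifold, so $\mathfrak{i}^{\mathfrak{e}}_0$ is a real parabolic subalgebra of $\mathfrak{g}_0$ and hence contains a maximal $\mathbb{R}$-split torus; thus a maximally noncompact Cartan subalgebra $\mathfrak{h}^{\mathfrak{e}}_0$ of $\mathfrak{i}^{\mathfrak{e}}_0$ is maximally noncompact in $\mathfrak{g}_0$, and restriction along $\mathfrak{h}^{\mathfrak{e}}_0\cap\mathfrak{p}_0$ carries the \emph{real} roots of $\mathcal{R}^{\mathfrak{e}}$ bijectively, and multiplicity-preservingly, onto the restricted root system of $\mathfrak{g}_0$. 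In particular the generating set $\Gamma$ of \eqref{eq:ik} is indexed by the simple \emph{restricted} roots of $\mathfrak{g}_0$ of multiplicity $1$ --- a set depending only on $\mathfrak{g}_0$, not on the parabolic.

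For part (a), if every simple ideal of $\mathfrak{g}_0$ is of one of the types in \eqref{eq:it}, then $\mathfrak{g}_0$ has no simple restricted root of multiplicity $1$: for complex-type and compact ideals there are no real roots at all, and for each remaining type in \eqref{eq:it} a glance at \cite[Table VI, Chapter X]{Hel78} shows that every simple restricted root has multiplicity $\ge 2$ (the multiplicity-one restricted roots occurring in the $\mathrm{BC}$-type cases are the non-simple long roots). Hence $\Gamma=\varnothing$, so $\pi_1(M_{\mathfrak{e}})=\{1\}$ by \eqref{eq:ik}--\eqref{eq:il}, and therefore $\pi_1(M)=\{1\}$. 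Since every $\mathbf{G}_0$-orbit has a real core of this form, all $\mathbf{G}_0$-orbits are simply connected.

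For part (b) it remains to prove $Q=\{1\}$; the injection of Theorem \ref{thm:ia} then becomes an isomorphism. By \eqref{eq:gj}, applied to $M_{\mathfrak{e}}$ with the Cartan pair $(\vartheta,\mathfrak{h}_0)$ (which is adapted to $(\mathfrak{g}_0,\mathfrak{e})$ as well), $Q$ is a quotient of $\pi_0(\mathbf{I}^{\mathfrak{e}}_0)$, a finite abelian $2$-group whose nontrivial classes, via the standard elements $\gamma_\alpha=\exp(\pi\sqrt{-1}\,H_\alpha)$, are attached to the multiplicity-one simple restricted roots of $\mathfrak{g}_0$ --- precisely the data appearing in \eqref{eq:il}. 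Adding ideals of type $\mathrm{AIIIb}$ ($\mathfrak{su}(n,n)$) and $\mathrm{DIIIa}$ ($\mathfrak{so}^*(4m)$) to the list \eqref{eq:it} introduces, in each such ideal, a single multiplicity-one simple restricted root, namely the long simple root of a restricted system of type $C_\ell$; a direct inspection of the reductive Levi factors $\mathbf{L}^{\mathfrak{e}}_0$ of the real parabolics of these groups shows that its contribution to $Q$ is trivial. Hence $Q=\{1\}$ and $\pi_1(M)\xrightarrow{\ \sim\ }\pi_1(M_{\mathfrak{e}})$.

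The heart of the matter is this last verification: one must check, case by case over \cite[Table VI, Chapter X]{Hel78}, that for real forms whose simple ideals lie in \eqref{eq:it} together with $\mathrm{AIIIb}$ and $\mathrm{DIIIa}$ --- but, e.g., \emph{not} for $\mathrm{EVII}$, whose restricted system is also $C_3$ with a multiplicity-one long root --- the reductive factor $\mathbf{L}^{\mathfrak{e}}_0$ of an arbitrary real parabolic contributes no Weyl class to $Q$ outside $\mathbf{W}(\mathbf{S}^{\mathfrak{e}}_0,\mathfrak{h}_0\cap\mathfrak{s}^{\mathfrak{e}}_0)$; equivalently, that the $2$-group $\pi_0(\mathbf{I}^{\mathfrak{e}}_0)$ is already carried by a single maximally noncompact Cartan subgroup. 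Everything else is bookkeeping with the presentation \eqref{eq:ik}--\eqref{eq:il} and the exact sequences of \S\ref{sec:g}.
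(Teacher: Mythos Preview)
Your argument for part~(a) is correct and coincides with the paper's: for each type in \eqref{eq:it} there is no simple real root of multiplicity one in the Satake diagram, so $\Gamma=\varnothing$ in \eqref{eq:ik} and $\pi_1(M_{\mathfrak{e}})=\{1\}$.

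For part~(b), however, you have not found the key idea. You correctly reduce to showing $Q=\{1\}$ (equivalently, by Theorem~\ref{thm:gc}, that the fiber $F$ of $M\to M_{\mathfrak{e}}$ is connected), but your proposed ``direct inspection of the reductive Levi factors~$\mathbf{L}^{\mathfrak{e}}_0$'' is neither carried out nor well posed: the Cartan subalgebra $\mathfrak{h}_0$ entering $Q$ is maximally noncompact in $\mathfrak{i}_0$, not in $\mathfrak{i}^{\mathfrak{e}}_0$, so a case-by-case over real parabolics $\mathfrak{e}$ would in addition have to range over all conjugacy classes of Cartan subalgebras of each $\mathfrak{l}^{\mathfrak{e}}_0$, and your final reformulation (``carried by a single maximally noncompact Cartan subgroup'') does not make this precise. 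The paper bypasses all of this with one structural fact: for real forms of type $\mathrm{AIIIb}$ and $\mathrm{DIIIa}$, \emph{every} Cartan subgroup $\mathbf{H}_0$ of $\mathbf{G}_0$ is connected modulo the center $\mathbf{Z}(\mathbf{G}_0)$. Since $\mathbf{Z}(\mathbf{G}_0)\subset\mathbf{I}_0\subset\mathbf{I}^{\mathfrak{e}}_0$, Theorem~\ref{thm:cf} (applied with a Cartan subalgebra maximally noncompact in $\mathfrak{i}^{\mathfrak{e}}_0$) shows that $\pi_0(\mathbf{I}^{\mathfrak{e}}_0)$ is generated by the image of $\mathbf{Z}(\mathbf{G}_0)$; the map $\pi_0(\mathbf{I}_0)\to\pi_0(\mathbf{I}^{\mathfrak{e}}_0)$ in \eqref{eq:id} is therefore surjective, $\pi_0(F)=\{1\}$, and \eqref{eq:ic} yields the isomorphism. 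The connectedness of $\mathbf{H}_0/\mathbf{Z}(\mathbf{G}_0)$ for these two types is precisely what singles them out from, say, $\mathrm{EVII}$, and is read off from the classification of Cartan subalgebras.
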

\begin{proof}
Every ${{\mathbf{G}_0}}$-orbit splits into the Cartesian product of
${{\mathbf{G}_0}}_i$-orbits, each corresponding to a simple ideal
${\mathfrak{g}_0}_i$ of $\mathfrak{g}_0$ (see e.g. \cite[p.490]{AMN06}).
Thus we can reduce to the case where $\mathfrak{g}_0$ is simple.
Consider a maximally noncompact Cartan subalgebra 
$\mathfrak{h}_0^{\mathfrak{e}}$
of $\mathfrak{g}_0$. Let $\mathcal{R}^{\mathfrak{e}}$ be the root system of
$\mathfrak{g}$ with respect to the complexification  
$\mathfrak{h}^{\mathfrak{e}}$ of $\mathfrak{h}^{\mathfrak{e}}_0$.
The first assertion follows from the fact that,
in the cases listed in $(a)$, 
if $C$ is any
$S$-fit chamber for $(\mathfrak{g}_0,\mathfrak{e})$,
then $\mathcal{B}$ does not contain any simple real root
with multiplicity one. Hence
we have
$\pi_1(M_{\mathfrak{e}})=\pmb{1}$ 
and thus also $M$ 
is simply connected.\par
The last statement is a consequence of the fact that
in the cases listed in \eqref{eq:iu}, the quotient 
$\mathbf{H}_0/\mathbf{Z}({{\mathbf{G}_0}})$ of a Cartan subgroup
$\mathbf{H}_0$ 
of ${{\mathbf{G}_0}}$ by its center $\mathbf{Z}({{\mathbf{G}_0}})$ is always
connected.
Then, by the exact sequence \eqref{eq:id}, the fiber $F$ is connected,
and therefore \eqref{eq:ic} yields 
an isomorphism of
the fundamental groups of $M$ and $M_{\mathfrak{e}}$.
\end{proof}
\begin{rmk}
We note that Proposition \ref{prop:ee} and Theorem \ref{thm:fd} provide
an effective construction of $\mathfrak{e}$, and hence of
$M_{\mathfrak{e}}$, starting from a representation 
$\mathfrak{q}=\mathfrak{q}_{\Phi}$ in terms of the root system
associated to a Cartan pair $(\vartheta,\mathfrak{h}_0)$ adapted
to $(\mathfrak{g}_0,\mathfrak{q})$.
\end{rmk}
Fix a Chevalley basis $\{X_{\alpha}\}_{\alpha\in\mathcal{R}^{\mathfrak{e}}}
\cup\{H_{\alpha}\}_{\alpha\in\mathcal{B}}$
of $\mathfrak{g}$ (see \cite{Bou75}), with:
\begin{equation*}
X_{\alpha}\in\mathfrak{g}^{\alpha}, \quad
\tau(X_{\alpha})=X_{-\alpha}, \quad
[H_{\alpha},X_{\pm\alpha}]=\pm 2X_{\alpha},\quad
[X_{\alpha},X_{-\alpha}]=-H_{\alpha}.
\end{equation*}
When $\alpha\in\mathcal{R}^{\mathfrak{e}}_{\mathrm{re}}$, we choose, as
we can,
$X_{\alpha}\in\mathfrak{g}_0$.\par
Let $\xi_\alpha\in\Gamma$.
The element
$\exp(i\pi{H}_{\alpha})$ belongs 
to the normalizer of $\mathfrak{h}_0$ in $\mathbf{L}_0^{\mathfrak{e}}$, 
and
the map in \eqref{eq:ia}
transforms
$\xi_{\alpha}$ into the equivalence class of  $\exp(i\pi{H}_{\alpha})$
in
${\mathbf{W}(\mathbf{L}_0^{\mathfrak{e}},\mathfrak{h}_0)}/{
\mathbf{W}(\mathbf{S}^{\mathfrak{e}}_0,\mathfrak{h}_0
\cap\mathfrak{s}^{\mathfrak{e}}_0)}$.
\par
The Cartan subalgebra $\mathfrak{h}_0$ is obtained from 
$\mathfrak{h}_0^{\mathfrak{e}}$ by the Cayley transform with respect to a set
$\alpha_1,\hdots,\alpha_m$ of pairwise strongly orthogonal real roots
in $\mathcal{E}^r$. With 
$\mathfrak{h}^{\mathfrak{e}}_{\mathbb{R}}=(\mathfrak{h}^{\mathfrak{e}}_0\cap
\mathfrak{p}_0)+i(\mathfrak{h}^{\mathfrak{e}}_0\cap
\mathfrak{k}_0)$
and 
$\mathfrak{h}_{\mathbb{R}}=\mathfrak{h}_0^-
\oplus\, i\,\mathfrak{h}_0^+$, the Cayley transform maps
$\mathfrak{h}^{\mathfrak{e}}_{\mathbb{R}}$ onto 
$\mathfrak{h}_{\mathbb{R}}$~by\,:
\begin{equation}\label{eq:in}
\lambda
:\mathfrak{h}^{\mathfrak{e}}_{\mathbb{R}}
\ni H \longrightarrow
H+\tfrac{i}{2}{\sum}_{j=1}^m{\alpha_j(H)\,(iH_{\alpha_j}+X_{\alpha_j}+
X_{-\alpha_j})}\in\mathfrak{h}_{\mathbb{R}}.
\end{equation}
Since 
$\alpha(H)=0$ for all $H\in\mathfrak{z}_0^{\mathfrak{e}}$ and
$\alpha\in\mathcal{E}^r=\bar{\mathcal{E}}^r$,
the Cayley transform is the identity on
$\mathfrak{z}_0^{\mathfrak{e}}\subset{\mathfrak{h}_0^{\mathfrak{e}}}
\cap\mathfrak{h}_0$.
\par
For a real $\beta\in\Phi^{\mathfrak{e}}$,
the action of $\exp(i\pi{H}_{\beta})$ on 
$\mathfrak{h}_{\mathbb{R}}$ is described by
\begin{equation} \begin{aligned}
&\mathrm{Ad}(\exp(i\pi{H}_{\beta}))\big(
H+
\tfrac{i}{2}
{\sum}_{j=1}^m{\alpha_j(H)\,(iH_{\alpha_j}+X_{\alpha_j}+
X_{-\alpha_j})}\big)\\
&\qquad=
H+\tfrac{i}{2}{\sum}_{j=1}^m{\alpha_j(H)\,\left(iH_{\alpha_j}+
e^{i\pi(\alpha_j|\beta^{\lor})}(X_{\alpha_j}+
X_{-\alpha_j})\right)}.\end{aligned}
\end{equation}
By duality, the Cayley transform defines a map 
$\lambda^*:\mathfrak{h}_{\mathbb{R}}^*
\to{\mathfrak{h}^{\mathfrak{e}}_{\mathbb{R}}}^*$,
that gives by restriction a bijection 
$\mathcal{R}\to\mathcal{R}^{\mathfrak{e}}$
of the two root systems. 
Set:
\begin{equation}
[\lambda^*]^{-1}:\mathcal{R}^{\mathfrak{e}}\ni\alpha
\longrightarrow \alpha^{\lambda}\in\mathcal{R}.
\end{equation}
Then we obtain:
\begin{lem}\label{lem:if}
Let $\beta$ be any real root in $\Phi^{\mathfrak{e}}$. Then
$\mathrm{Ad}(\exp(i\pi{H}_{\beta}))$ defines 
in the Weyl group 
of 
$\mathcal{R}$ the element:
\begin{equation}\label{eq:io}
s_{\alpha_1^{\lambda}}^{(\alpha_1|\beta^{\lor})}\circ\cdots\circ
s_{\alpha_m^{\lambda}}^{(\alpha_m|\beta^{\lor})} \in\mathbf{W}(\mathcal{R}),
\end{equation}
where $s_{\alpha^\lambda_j}$ is the symmetry with respect to
$\alpha^\lambda_j\in\mathcal{R}$. \qed
\end{lem}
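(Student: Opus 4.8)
The plan is to deduce everything from the explicit description of the action of $\mathrm{Ad}(\exp(i\pi H_{\beta}))$ on $\lambda(\mathfrak{h}^{\mathfrak{e}}_{\mathbb{R}})$ displayed just before the lemma, and to match it term by term against a product of the reflections $s_{\alpha_j^{\lambda}}$. First I would note that, since $\beta$ and the $\alpha_j$ are \emph{real} roots, $(\alpha_j\mid\beta^{\lor})=\alpha_j(H_\beta)$ is a Cartan integer, so $e^{i\pi(\alpha_j\mid\beta^{\lor})}=(-1)^{(\alpha_j\mid\beta^{\lor})}$. Thus the displayed formula says that $\mathrm{Ad}(\exp(i\pi H_{\beta}))$ leaves $\mathfrak{h}_{\mathbb{R}}=\lambda(\mathfrak{h}^{\mathfrak{e}}_{\mathbb{R}})$ invariant and acts on it by changing the sign of the $(X_{\alpha_j}+X_{-\alpha_j})$-summand of the $j$-th Cayley component precisely for those $j$ with $(\alpha_j\mid\beta^{\lor})$ odd, while fixing $H$ and every $iH_{\alpha_j}$ (the latter because $H_{\alpha_j}$ and $H_\beta$ commute). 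In particular this map will be seen to lie in $\mathbf{W}(\mathcal{R})$ once it is identified with a product of reflections.

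Next I would pin down the coroot $(\alpha_j^{\lambda})^{\lor}\in\mathfrak{h}_{\mathbb{R}}$. The Cayley transform $\lambda$ is the restriction to $\mathfrak{h}^{\mathfrak{e}}_{\mathbb{R}}$ of an inner automorphism of $\mathfrak{g}$ carrying $\mathfrak{h}^{\mathfrak{e}}$ onto $\mathfrak{h}$; such an automorphism sends the $\mathfrak{sl}_2$-triple attached to $\alpha_j\in\mathcal{R}^{\mathfrak{e}}$ to one attached to $\alpha_j^{\lambda}=[\lambda^*]^{-1}(\alpha_j)\in\mathcal{R}$, hence sends the coroot $H_{\alpha_j}=\alpha_j^{\lor}$ to $(\alpha_j^{\lambda})^{\lor}$. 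On the other hand, substituting $H=H_{\alpha_j}$ in \eqref{eq:in} and using strong orthogonality of the $\alpha_k$ (so that $\alpha_k(H_{\alpha_j})=2\delta_{jk}$) gives at once $\lambda(H_{\alpha_j})=i(X_{\alpha_j}+X_{-\alpha_j})$; therefore $(\alpha_j^{\lambda})^{\lor}=i(X_{\alpha_j}+X_{-\alpha_j})$. Consequently, for $H\in\mathfrak{h}^{\mathfrak{e}}_{\mathbb{R}}$, using $\alpha_j^{\lambda}\circ\lambda=\alpha_j$ (definition of $\lambda^*$),
\[
s_{\alpha_j^{\lambda}}\bigl(\lambda(H)\bigr)=\lambda(H)-\alpha_j^{\lambda}\bigl(\lambda(H)\bigr)\,(\alpha_j^{\lambda})^{\lor}=\lambda(H)-\alpha_j(H)\,i\,(X_{\alpha_j}+X_{-\alpha_j}),
\]
and a one-line expansion shows that the right-hand side differs from $\lambda(H)$ only by flipping the sign of the $(X_{\alpha_j}+X_{-\alpha_j})$-summand of the $j$-th Cayley component. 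Here strong orthogonality again guarantees that $s_{\alpha_j^{\lambda}}$ does not touch the summands with index $k\ne j$, and it also makes the $\alpha_j^{\lambda}$ pairwise orthogonal, so the $s_{\alpha_j^{\lambda}}$ commute.

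Comparing the two computations, $\mathrm{Ad}(\exp(i\pi H_{\beta}))$ and $\prod_{j\,:\,(\alpha_j\mid\beta^{\lor})\ \mathrm{odd}}s_{\alpha_j^{\lambda}}$ induce the same linear map of $\mathfrak{h}_{\mathbb{R}}$; since $\mathbf{W}(\mathcal{R})$ acts faithfully on $\mathfrak{h}_{\mathbb{R}}$ and $s_{\alpha^{\lambda}}^{2}=1$, this is exactly the asserted identity $s_{\alpha_1^{\lambda}}^{(\alpha_1\mid\beta^{\lor})}\circ\cdots\circ s_{\alpha_m^{\lambda}}^{(\alpha_m\mid\beta^{\lor})}$ in $\mathbf{W}(\mathcal{R})$, which would complete the proof. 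The step I expect to require the most care is the coroot identification $(\alpha_j^{\lambda})^{\lor}=i(X_{\alpha_j}+X_{-\alpha_j})$, i.e. making precise that the explicit map \eqref{eq:in} is dual-compatible with $\lambda^*$ and carries coroots to coroots; once that is in hand, the remainder is routine bookkeeping with the strongly orthogonal $\mathfrak{sl}_2$-triples.
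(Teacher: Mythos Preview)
Your argument is correct and is exactly the elaboration the paper intends: in the paper the lemma is stated with a terminal \qed, the ``proof'' being the displayed computation of $\mathrm{Ad}(\exp(i\pi H_\beta))$ on $\lambda(H)$ immediately preceding it. Your identification $(\alpha_j^{\lambda})^{\lor}=\lambda(H_{\alpha_j})=i(X_{\alpha_j}+X_{-\alpha_j})$ via \eqref{eq:in} and strong orthogonality, followed by the comparison of the two actions on $\mathfrak{h}_{\mathbb{R}}$, is precisely the routine verification the authors leave to the reader.
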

Let $\mathfrak{s}^{\mathfrak{e}}_{0\,(j)}$, for $j=1,\hdots,p$, 
be the simple ideals of $\mathfrak{s}_0^{\mathfrak{e}}$.
For each $j=1,\hdots,p$, 
denote by $\mathcal{R}_{(j)}$
the set of roots 
$\gamma\in\mathcal{R}=\mathcal{R}(\mathfrak{g},\mathfrak{h})$
that are of the form $\gamma=\alpha^{\lambda}$ for some $\alpha\in
\mathcal{R}^{\mathfrak{e}}$ 
for which the eigenspace $\mathfrak{g}^{\alpha}$ 
is contained
in the complexification $\mathfrak{s}_{(j)}^{\mathfrak{e}}$ of 
$\mathfrak{s}_{0\,(j)}^{\mathfrak{e}}$. The
$\mathcal{R}_{(j)}$'s are disjoint. Let $\mathbf{S}_{0\,(j)}^{\mathfrak{e}}$ 
be the analytic Lie subgroup of
$\mathbf{S}_0^{\mathfrak{e}}$ with Lie algebra 
${\mathfrak{s}}_{0\,(j)}^{\mathfrak{e}}$.
\par
For each $j=1,\hdots,p$, let $\mathcal{A}_{(j)}$ be the subset
of the set $\{\alpha_1,\hdots,\alpha_m\}$ of 
roots of $\mathcal{R}^{\mathfrak{e}}$, used to define
the Cayley transform \eqref{eq:in}, 
 consisting of those 
for which $\alpha_j^{\lambda}\in\mathcal{R}_{(j)}$. Since:
\begin{equation}\label{eq:ip}\begin{aligned}
\mathbf{S}_0^{\mathfrak{e}}&={\mathbf{S}}_{0\,(1)}^{\mathfrak{e}}\bowtie
\cdots\bowtie
{\mathbf{S}}_{0\,(p)}^{\mathfrak{e}}  
\qquad\text{and}\qquad\\
\mathbf{W}(\mathbf{S}_0^{\mathfrak{e}},\mathfrak{h}_0\cap
\mathfrak{s}_0^{\mathfrak{e}})
&=\mathbf{W}(
{\mathbf{S}}_{0\,(1)}^{\mathfrak{e}},\mathfrak{h}_0
\cap\mathfrak{s}_{0\,(1)}^{\mathfrak{e}})
\bowtie
\cdots\bowtie
\mathbf{W}(
{\mathbf{S}}_{0\,(p)}^{\mathfrak{e}},\mathfrak{h}_0
\cap\mathfrak{s}_{0\,(p)}^{\mathfrak{e}})
\end{aligned}\end{equation}
we obtain:  
\begin{thm}\label{thm:ig}
With the notation above: $\pi_1(M)$ is the subgroup
of $\pi_1(M_{\mathfrak{e}})$ consisting of the elements of the
form:
   \begin{equation}\label{eq:iq}
\xi=\xi_{\beta_1}^{k_1}\cdots\xi_{\beta_{\ell}}^{k_{\ell}},
\end{equation}    
 where
$\ell$ is a positive integer, and $k_1,\hdots,k_{\ell}\in\mathbb{Z}$,
$\xi_{\beta_1},\hdots,\xi_{\beta_{\ell}}\in\Gamma$ satisfy
one of the two equivalent conditions:
\begin{equation}
  \label{eq:ir}
 s_{\alpha_1}^{\sum_{j=1}^{\ell}{k_j(\alpha_1|\beta_j^{\lor})}}
\circ\cdots\circ
s_{\alpha_{m}}^{\sum_{j=1}^{\ell}{k_j(\alpha_m|\beta_j^{\lor})}}\in
\mathbf{W}(
\mathbf{S}_0^{\mathfrak{e}},\mathfrak{h}_0\cap\mathfrak{s}_0^{\mathfrak{e}}),
\qquad\text{or}
\end{equation}
\begin{equation}\label{eq:is} \left\{{\begin{aligned}
&s_{\alpha_{i_1}}^{\sum_{j=1}^{\ell}{k_j(\alpha_{i_1}^h|\beta_j^{\lor})}}
\circ\cdots\circ
s_{\alpha_{i_r}}^{\sum_{j=1}^{\ell}{k_j(\alpha_{i_{r_h}}^h|\beta_j^{\lor})}}\in
\mathbf{W}(
{\mathbf{S}}_{0\,(h)}^{\mathfrak{e}},\mathfrak{h}_0\cap
\mathfrak{s}_{0\,(h)}^{\mathfrak{e}})
\\
&\text{where}\quad
\mathcal{A}_{(h)}=\{\alpha_{i_1}^h,\hdots,\alpha_{i_{r_h}}^h\}
,\;\text{for}\quad h=1,\hdots,p.\qed
\end{aligned}}\right.
\end{equation}
\end{thm}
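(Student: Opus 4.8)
The plan is to read off $\pi_1(M)$ from the exact sequence of Theorem~\ref{thm:ia} by combining it with the presentation \eqref{eq:ik}--\eqref{eq:il} of $\pi_1(M_{\mathfrak{e}})$ and with the explicit description of the connecting map obtained in Lemma~\ref{lem:if}. By Theorem~\ref{thm:ia} the homomorphism induced by the canonical projection $M\to M_{\mathfrak{e}}$ identifies $\pi_1(M)$ with the kernel of the map $\rho$ of \eqref{eq:ia},
\[
\rho:\pi_1(M_{\mathfrak{e}})\longrightarrow
\mathbf{W}(\mathbf{L}^{\mathfrak{e}}_0,\mathfrak{h}_0)/
\mathbf{W}(\mathbf{S}^{\mathfrak{e}}_0,\mathfrak{h}_0\cap\mathfrak{s}^{\mathfrak{e}}_0),
\]
so it suffices to describe $\ker\rho$ through the generators $\Gamma$. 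First I would note that, since $\xi_\alpha=1$ for $\alpha\notin\Phi^{\mathfrak{e}}$ and since the remaining relations $\xi_\alpha\xi_\beta=\xi_\beta\xi_\alpha^{(-1)^{(\alpha|\beta^{\lor})}}$ allow one to move the generators past each other, every element of $\pi_1(M_{\mathfrak{e}})$ can be written in the ordered form $\xi=\xi_{\beta_1}^{k_1}\cdots\xi_{\beta_\ell}^{k_\ell}$ of \eqref{eq:iq}, with $\beta_1,\dots,\beta_\ell$ real roots of $\Phi^{\mathfrak{e}}$ of multiplicity one and $k_1,\dots,k_\ell\in\mathbb{Z}$.

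The core of the proof is the evaluation of $\rho(\xi)$. Since $\rho$ is a homomorphism and $\rho(\xi_{\beta_j})$ is the class of $\exp(i\pi H_{\beta_j})$, Lemma~\ref{lem:if} identifies $\rho(\xi_{\beta_j})$ with the class of $s_{\alpha_1^{\lambda}}^{(\alpha_1|\beta_j^{\lor})}\circ\cdots\circ s_{\alpha_m^{\lambda}}^{(\alpha_m|\beta_j^{\lor})}$ in $\mathbf{W}(\mathcal{R})$. The roots $\alpha_1,\dots,\alpha_m$ defining the Cayley transform \eqref{eq:in} are pairwise strongly orthogonal and the Cayley transform is an inner automorphism of $\mathfrak{g}$, so $\alpha\mapsto\alpha^{\lambda}$ is a root system isomorphism $\mathcal{R}^{\mathfrak{e}}\to\mathcal{R}$; hence $\alpha_1^{\lambda},\dots,\alpha_m^{\lambda}$ are pairwise orthogonal roots of $\mathcal{R}$, the reflections $s_{\alpha_i^{\lambda}}$ pairwise commute, and each has order two. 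Collecting the exponents over the $\ell$ factors of the word, $\rho(\xi)$ is therefore the class of
\[
\prod_{i=1}^{m}s_{\alpha_i^{\lambda}}^{\,\sum_{j=1}^{\ell}k_j(\alpha_i|\beta_j^{\lor})},
\]
and $\xi\in\pi_1(M)$ exactly when this element belongs to $\mathbf{W}(\mathbf{S}^{\mathfrak{e}}_0,\mathfrak{h}_0\cap\mathfrak{s}^{\mathfrak{e}}_0)$, which is condition \eqref{eq:ir}. As the vanishing of $\rho(\xi)$ depends only on $\xi$, this condition is independent of the chosen ordered expression.

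To obtain the equivalent form \eqref{eq:is}, I would use that each $\alpha_j$ lies in $\mathcal{E}^r$, so its root space is contained in $\mathfrak{l}^{\mathfrak{e}}$ and, being nonzero, in exactly one of the simple ideals $\mathfrak{s}^{\mathfrak{e}}_{(h)}$ of $\mathfrak{s}^{\mathfrak{e}}_0$; consequently $\mathcal{A}_{(1)},\dots,\mathcal{A}_{(p)}$ form a partition of $\{\alpha_1,\dots,\alpha_m\}$, and the reflection $s_{\alpha_i^{\lambda}}$ with $\alpha_i\in\mathcal{A}_{(h)}$ fixes $\mathfrak{h}_0\cap\mathfrak{s}^{\mathfrak{e}}_{(h')}$ for $h'\ne h$ and fixes $\mathfrak{z}^{\mathfrak{e}}_0$. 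By the almost-direct-product decomposition \eqref{eq:ip} of $\mathbf{S}^{\mathfrak{e}}_0$ and of its analytic Weyl group, the element above lies in $\mathbf{W}(\mathbf{S}^{\mathfrak{e}}_0,\mathfrak{h}_0\cap\mathfrak{s}^{\mathfrak{e}}_0)$ if and only if, for each $h$, its partial product over the $\alpha_i\in\mathcal{A}_{(h)}$ lies in the factor $\mathbf{W}(\mathbf{S}^{\mathfrak{e}}_{0\,(h)},\mathfrak{h}_0\cap\mathfrak{s}^{\mathfrak{e}}_{0\,(h)})$, which is precisely \eqref{eq:is}. I expect the main difficulty to be organizational rather than conceptual: one must carefully keep track of the chain of identifications --- Theorem~\ref{thm:ia}, the ones in Lemma~\ref{lem:if}, and the inclusions $\mathbf{W}(\mathbf{S}^{\mathfrak{e}}_0,\mathfrak{h}_0\cap\mathfrak{s}^{\mathfrak{e}}_0)\subset\mathbf{W}(\mathbf{L}^{\mathfrak{e}}_0,\mathfrak{h}_0)\subset\mathbf{W}(\mathcal{R})$ --- and verify that raising to $k_j$-th powers and commuting the reflections can be carried out in $\mathbf{W}(\mathcal{R})$ before projecting to the quotient. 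The genuinely structural ingredients, namely the strong orthogonality of the Cayley roots and the simplicity of the ideals $\mathfrak{s}^{\mathfrak{e}}_{(h)}$, are exactly what make both the collection of exponents and the splitting \eqref{eq:ir}$\Leftrightarrow$\eqref{eq:is} possible.
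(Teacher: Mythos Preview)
Your proposal is correct and follows exactly the route the paper intends: the paper provides no separate proof of this theorem (note the \qed embedded in the statement), treating it as an immediate consequence of Theorem~\ref{thm:ia}, the presentation \eqref{eq:ik}--\eqref{eq:il}, Lemma~\ref{lem:if}, and the direct-product decomposition \eqref{eq:ip}, which is precisely the chain of identifications you assemble. Your write-up simply makes explicit the bookkeeping (commuting the strongly orthogonal reflections, collecting exponents, and splitting along the simple ideals) that the paper leaves to the reader.
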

\begin{rmk}\label{rmk:ih}
When
$\mathbf{L}_0$ is a reductive real linear group, and $\mathfrak{h}_0$
a Cartan subalgebra of its Lie algebra $\mathfrak{l}_0$, 
the \emph{analytic} 
Weyl group $\mathbf{W}(\mathbf{L}_0,\mathfrak{h}_0)$
has been
explicitly computed (see e.g. \cite{Vog82, adams-2007}).
Thus Theorem \ref{thm:ig} yields an effective way to compute the
fundamental group of a ${{\mathbf{G}_0}}$-orbit $M$.\par
For the convenience of the reader, we give below a description
of that part of the Weyl group 
$\mathbf{W}(\mathbf{S}_0^{\mathfrak{e}},\mathfrak{h}_0
\cap\mathfrak{s}_0^{\mathfrak{e}})$,
that is needed to understand \eqref{eq:ir}, \eqref{eq:is}. 
We take,
as we can, $\mathbf{G}$ simply connected. Then also
the complexification ${\mathbf{S}^{\mathfrak{e}}}$ of
$\mathbf{S}^{\mathfrak{e}}_0$ is simply connected,
because $\mathbf{S}^{\mathfrak{e}}$ is the semisimple Levi
factor of a parabolic subgroup of $\mathbf{G}$. 
Indeed, each integral character of $\mathfrak{s}^{\mathfrak{e}}$
lifts to an integral character of $\mathfrak{g}$ and therefore
defines a linear representation of $\mathbf{G}$,
giving, by restriction,
a linear representation of $\mathbf{S}^{\mathfrak{e}}$.
It is well known that this property characterizes simple connectivity.
This is the situation where we can apply the results of
\cite{adams-2007, Vog82}.
\par
Let $\mathfrak{h}$ be the complexification of $\mathfrak{h}_0$, and
$\mathcal{S}$ the root system of 
${\mathfrak{s}}^{\mathfrak{e}}$ with respect to the
Cartan subalgebra 
$\mathfrak{t}=\mathfrak{h}\cap{\mathfrak{s}}^{\mathfrak{e}}$.
Denote by
$\mathcal{S}_{\mathrm{im,c}}$ and by $\mathcal{S}_{\mathrm{im,n}}$
the sets of compact and noncompact imaginary roots
of $\mathcal{S}$, respectively.
For a Weyl chamber $C\in\mathfrak{C}(\mathcal{S})$, set
$\rho_0=\frac{1}{2}\sum\{\beta\in\mathcal{S}_{\mathrm{im,c}}|
\beta\succ0\}$. Let $E$ be the subset of
$\mathcal{S}_{\mathrm{im,n}}$ of the 
$C$-positive noncompact imaginary roots that
are orthogonal to $\rho_0$. 
The elements of $E$ are pairwise 
strongly orthogonal, thus the subgroup $\mathbf{W}(E)$ generated
by the symmetries $s_{\alpha}$, for $\alpha\in{E}$, is isomorphic
to $\mathbb{Z}_2^p$, where $p\geq{0}$ is the number of elements
of $E$.
Let $\Lambda$ be the co-root lattice,
generated by the elements $\alpha^{\lor}=2\alpha/\|\alpha\|^2$ for
$\alpha\in\mathcal{S}$, 
and $\Lambda^*$ 
the sublattice generated by the $\alpha^{\lor}$ for
$\alpha\in\mathcal{S}_{\mathrm{im}}$. 
Let $\mathfrak{t}^*_{\mathbb{R}}$ be the linear span of $\mathcal{S}$ and
$\varpi$ the orthogonal projection of $\mathfrak{t}^*_{\mathbb{R}}$
onto the linear span of $\mathcal{S}_{\mathrm{im}}$. 
\par
Let $\mathbf{W}^*$ be
the subgroup of the Weyl group $\mathbf{W}(\mathcal{S})$
generated by the symmetries $s_{\alpha}$ for
$\alpha\in\mathcal{R}_{\mathrm{im,n}}$.
Then we have:
\begin{equation}
\mathbf{W}^*\cap
\mathbf{W}(\mathbf{S}_0^{\mathfrak{e}},\mathfrak{h}_0
\cap\mathfrak{s}_0^{\mathfrak{e}})
=\ker\left(f:\mathbf{W}(E)\to\Lambda^*/2\varpi(\Lambda)\right)
\end{equation}
where $f$ is the homomorphism that maps $s_{\alpha}$ to 
$\alpha^{\lor}$.
\par
We note that the elements \eqref{eq:iq}, having been constructed
from real roots of the root system $\mathcal{R}^{\mathfrak{e}}$,  
belong to the subgroup of $\mathbf{W}(\mathcal{R})$ generated by
the symmetries with respect to noncompact imaginary roots that
are orthogonal to $\rho_0$, and therefore 
to $\mathbf{W}^*$.
\end{rmk}
\section{The Mostow fibration}\label{sec:j}
Let $M={{\mathbf{G}_0}}/\mathbf{I}_0$ be the homogeneous space of a
Lie group ${{\mathbf{G}_0}}$, with $\mathbf{I}_0$ 
a closed subgroup of ${{\mathbf{G}_0}}$.
We assume that both
${{\mathbf{G}_0}}$ and
$\mathbf{I}_0$ have finitely many connected components.
We fix maximal compact subgroups $\mathbf{K}_0$ and
$\mathbf{K}_{00}$ of ${{\mathbf{G}_0}}$ and 
of $\mathbf{I}_0$, respectively, with $\mathbf{K}_{00}\subset
\mathbf{K}_{0}$.
In \cite{Most55} and
\cite{Most62} 
G.D. Mostow proved that there exist closed Euclidean
subspaces $F$ and $E$ in ${{\mathbf{G}_0}}$ such that:
\begin{gather*}
\left.\begin{gathered}
\mathbf{K}_0\times{F}\times{E}\ni(k,f,e)\to k\cdot{f}\cdot{e}\in{{\mathbf{G}_0}}
\\
\quad \text{and}\quad
\mathbf{K}_{00}\times{E}\ni(k,e)\to k\cdot{e}\in\mathbf{I}_0
\end{gathered}\right\}
\text{are diffeomorphisms onto},\\[3pt]
\mathrm{ad}(k)(F)=F\quad\forall{k}\in\mathbf{K}_{00}.
\end{gather*}
In particular, $M$ is isomorphic, as a $\mathbf{K}_0$-space,
to the manifold $\mathbf{K}_0\times_{\mathbf{K}_{00}}F$, i.e.
to the quotient of the Cartesian product $\mathbf{K}_0\times{F}$
with respect to the equivalence relation that identifies
$(k,f)$ and $(k\cdot{k}_0^{-1},\mathrm{ad}(k_0)(f))$ if
$k\in\mathbf{K}_0$, $f\in{F}$, and $k_0\in\mathbf{K}_{00}$.
Let $N$ be the homogeneous space $\mathbf{K}_0/\mathbf{K}_{00}$. 
\begin{dfn}
The $\mathbf{K}_0$-equivariant fibration
$M\to{N}$ defined by the commutative diagram
\begin{equation}
  \label{eq:ja}\begin{CD}
 @. \mathbf{K}_0\times{F}@>>> \mathbf{K}_0\\
@. @VVV   @VVV\\
M@.\simeq\mathbf{K}_0\times_{\mathbf{K}_{00}}{F}@>>> N
\end{CD}
\end{equation}
is called \emph{the Mostow fibration} of $M$.
\end{dfn}
We come back now to the situation where $M$ is a
${{\mathbf{G}_0}}$-orbit 
in a complex flag manifold $X$, with
associated parabolic $CR$ algebra $(\mathfrak{g}_0,\mathfrak{q})$. 
We keep the notation of the previous sections. In particular,
we denote by  
$\mathbf{K}$ the
complexification of $\mathbf{K}_0$, and by
$\mathfrak{k}_0$, $\mathfrak{k}$ the Lie algebras of $\mathbf{K}_0$,
$\mathbf{K}$, respectively. Then
the basis $N$ of the Mostow
fibration is the intersection of $M$ with the complex
${\mathbf{K}}$-orbit $M^*$ in 
${X}=\mathbf{G}/\mathbf{Q}$,
that corresponds to $M$ in the 
Matsuki duality
(see \cite{Mats88}, \cite{BrLo02}). 
Note that $N$ is a $\mathbf{K}_0$-homogeneous generic $CR$-submanifold
of $M^*$, with associated $CR$ algebra
$CR$ algebra $(\mathfrak{k}_0,\mathfrak{q}\cap\mathfrak{k})$.
\begin{prop} \label{prop:jb}
The $CR$ manifold $M$ and the
basis $N$ of its Mostow fibration
$M\to{N}$ have the same $CR$-codimension. In particular, the
fibers of the Mostow fibration $M\to{N}$ have an even dimension.
\end{prop}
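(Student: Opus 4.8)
The plan is to read off both $CR$‑codimensions from the associated $CR$ algebras via Remark \ref{rmk:ac} and to verify that they coincide; the even‑dimensionality of the fiber is then a formal consequence.

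Applying Remark \ref{rmk:ac} to the parabolic $CR$ algebra $(\mathfrak{g}_0,\mathfrak{q})$ of $M$ gives $CR\text{-}\mathrm{codim}\,M=\dim_{\mathbb{C}}\mathfrak{g}-\dim_{\mathbb{C}}(\mathfrak{q}+\bar{\mathfrak{q}})$, the bar being the conjugation $\sigma$ of $\mathfrak{g}$ with respect to $\mathfrak{g}_0$. The $CR$ algebra associated with $N$ is $(\mathfrak{k}_0,\mathfrak{q}\cap\mathfrak{k})$, and since $\mathfrak{k}=\mathfrak{k}_0^{\mathbb{C}}$ is $\sigma$‑stable with $\sigma|_{\mathfrak{k}}$ the conjugation of $\mathfrak{k}$ with respect to $\mathfrak{k}_0$, Remark \ref{rmk:ac} also yields $CR\text{-}\mathrm{codim}\,N=\dim_{\mathbb{C}}\mathfrak{k}-\dim_{\mathbb{C}}\big((\mathfrak{q}\cap\mathfrak{k})+(\bar{\mathfrak{q}}\cap\mathfrak{k})\big)$. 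Fix a Cartan pair $(\vartheta,\mathfrak{h}_0)$ adapted to $(\mathfrak{g}_0,\mathfrak{q})$ (Proposition \ref{prop:ca}) and let $\mathfrak{g}=\mathfrak{k}\oplus\mathfrak{p}$ be the $\mathbb{C}$‑linear extension of the Cartan decomposition defined by $\vartheta$. Then the desired equality $CR\text{-}\mathrm{codim}\,M=CR\text{-}\mathrm{codim}\,N$ is exactly the linear‑algebra identity
\[
\dim_{\mathbb{C}}(\mathfrak{q}+\bar{\mathfrak{q}})-\dim_{\mathbb{C}}\big((\mathfrak{q}\cap\mathfrak{k})+(\bar{\mathfrak{q}}\cap\mathfrak{k})\big)=\dim_{\mathbb{C}}\mathfrak{p}.
\]

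To prove this identity I would use the root space decomposition $\mathfrak{g}=\mathfrak{h}\oplus\bigoplus_{\alpha\in\mathcal{R}}\mathfrak{g}^{\alpha}$ with respect to $\mathfrak{h}=\mathfrak{h}_0^{\mathbb{C}}$, so that $\mathfrak{q}=\mathfrak{h}\oplus\bigoplus_{\alpha\in\mathcal{Q}}\mathfrak{g}^{\alpha}$ and $\bar{\mathfrak{q}}=\mathfrak{h}\oplus\bigoplus_{\alpha\in\bar{\mathcal{Q}}}\mathfrak{g}^{\alpha}$. The conjugations $\sigma,\tau$ of $\mathfrak{g}$ (with respect to $\mathfrak{g}_0$ and to the $\vartheta$‑invariant compact form) and $\vartheta=\sigma\tau$ act on $\mathcal{R}$ by $\alpha\mapsto\bar\alpha$, $\alpha\mapsto-\alpha$, $\alpha\mapsto-\bar\alpha$, and $\vartheta(\mathfrak{g}^{\alpha})=\mathfrak{g}^{-\bar\alpha}$; hence $\mathfrak{h}$ and each orbit of the Klein four‑group $\{1,\sigma^{*},\tau^{*},\vartheta^{*}\}$ on $\mathcal{R}$ span $\vartheta$‑, $\sigma$‑ and $\tau$‑stable summands of $\mathfrak{g}$. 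On $\mathfrak{h}$ both members of the identity contribute $\dim_{\mathbb{R}}(\mathfrak{h}_0\cap\mathfrak{p}_0)$. For the root summands, an element $H+\sum_{\alpha}X_{\alpha}$ of $\mathfrak{q}$ lies in $\mathfrak{k}$ iff $\vartheta(H)=H$ and $X_{\alpha}=\vartheta(X_{-\bar\alpha})$ for all $\alpha$; this makes $\dim_{\mathbb{C}}(\mathfrak{q}\cap\mathfrak{k})$, $\dim_{\mathbb{C}}(\bar{\mathfrak{q}}\cap\mathfrak{k})$ and $\dim_{\mathbb{C}}(\mathfrak{q}\cap\bar{\mathfrak{q}}\cap\mathfrak{k})$ computable orbit by orbit, after which $\dim_{\mathbb{C}}\big((\mathfrak{q}\cap\mathfrak{k})+(\bar{\mathfrak{q}}\cap\mathfrak{k})\big)$ follows from $\dim(U+U')=\dim U+\dim U'-\dim(U\cap U')$ together with $(\mathfrak{q}\cap\mathfrak{k})\cap(\bar{\mathfrak{q}}\cap\mathfrak{k})=\mathfrak{q}\cap\bar{\mathfrak{q}}\cap\mathfrak{k}$. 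Using $\dim_{\mathbb{C}}(\mathfrak{q}+\bar{\mathfrak{q}})=\dim_{\mathbb{C}}\mathfrak{h}+|\mathcal{Q}\cup\bar{\mathcal{Q}}|$ and $\mathcal{Q}\cup(-\mathcal{Q})=\mathcal{R}$, everything reduces to a check, for each root orbit, that the root‑space counts balance: imaginary roots (compact versus noncompact) and real roots are handled directly, while on a size‑four orbit through a complex root the check reduces to the elementary identity $s+t-st+\mathbf{1}[s=t=2]=1$ in the two integers $s,t\in\{1,2\}$ counting how many of $\{\pm\alpha\}$, resp. $\{\pm\bar\alpha\}$, belong to $\mathcal{Q}$.

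The main obstacle is precisely this last bookkeeping: unlike $\mathfrak{q}+\bar{\mathfrak{q}}$, the subspace $(\mathfrak{q}\cap\mathfrak{k})+(\bar{\mathfrak{q}}\cap\mathfrak{k})$ is in general strictly smaller than $(\mathfrak{q}+\bar{\mathfrak{q}})\cap\mathfrak{k}$, so one cannot simply intersect with $\mathfrak{k}$, and the identity fails orbit by orbit for the $\vartheta^{*}$‑orbits — it holds only after pairing each $\vartheta^{*}$‑orbit with its $\sigma^{*}$‑conjugate, i.e. over a full Klein‑four orbit, where the contributions of the two halves always sum correctly. Granting the identity, the final assertion is immediate: $\dim_{\mathbb{R}}M-\dim_{\mathbb{R}}N=\big(2\,CR\text{-}\mathrm{dim}\,M+CR\text{-}\mathrm{codim}\,M\big)-\big(2\,CR\text{-}\mathrm{dim}\,N+CR\text{-}\mathrm{codim}\,N\big)=2\big(CR\text{-}\mathrm{dim}\,M-CR\text{-}\mathrm{dim}\,N\big)$ is even.
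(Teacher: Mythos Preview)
Your setup coincides with the paper's: both reduce the statement to the dimension identity
\[
\dim_{\mathbb{C}}\mathfrak{g}=\dim_{\mathbb{C}}(\mathfrak{q}+\bar{\mathfrak{q}})+\dim_{\mathbb{C}}\mathfrak{k}-\dim_{\mathbb{C}}\big((\mathfrak{q}\cap\mathfrak{k})+(\bar{\mathfrak{q}}\cap\mathfrak{k})\big).
\]
But your central claim that ``$(\mathfrak{q}\cap\mathfrak{k})+(\bar{\mathfrak{q}}\cap\mathfrak{k})$ is in general strictly smaller than $(\mathfrak{q}+\bar{\mathfrak{q}})\cap\mathfrak{k}$'' is \emph{false}, and this is exactly what the paper exploits. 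The paper proves equality: if $X_\alpha+\vartheta(X_\alpha)\in\mathfrak{q}+\bar{\mathfrak{q}}$ with $\alpha\neq-\bar\alpha$, then $\{\alpha,-\bar\alpha\}\subset\mathcal{Q}\cup\bar{\mathcal{Q}}$, and the case $\alpha\in\mathcal{Q}\setminus\bar{\mathcal{Q}}$, $-\bar\alpha\in\bar{\mathcal{Q}}\setminus\mathcal{Q}$ is ruled out because it forces both $\bar\alpha\notin\mathcal{Q}$ and $-\bar\alpha\notin\mathcal{Q}$, contradicting the parabolicity $\mathcal{Q}\cup(-\mathcal{Q})=\mathcal{R}$. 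Hence $\{\alpha,-\bar\alpha\}$ lies entirely in $\mathcal{Q}$ or entirely in $\bar{\mathcal{Q}}$, and the element is already in $(\mathfrak{q}\cap\mathfrak{k})+(\bar{\mathfrak{q}}\cap\mathfrak{k})$.

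With that equality in hand, the paper finishes by the second structural fact $\mathfrak{q}+\bar{\mathfrak{q}}+\mathfrak{k}=\mathfrak{g}$ (again a short root argument: if $\mathfrak{g}^{\alpha}\not\subset\mathfrak{q}+\bar{\mathfrak{q}}$ then $\mathfrak{g}^{-\bar\alpha}\subset\mathfrak{q}\cap\bar{\mathfrak{q}}$, so $X_\alpha=(X_\alpha+\vartheta(X_\alpha))-\vartheta(X_\alpha)\in\mathfrak{k}+\mathfrak{q}$), and the dimension identity drops out of inclusion--exclusion. Your Klein-four orbit bookkeeping may well balance in the end, but it is a detour around an obstacle that does not exist; the ``pairing of $\vartheta^{*}$-orbits with their $\sigma^{*}$-conjugates'' you invoke is unnecessary precisely because parabolicity already forces the clean intersection-with-$\mathfrak{k}$ statement.
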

\begin{proof}
The $CR$ codimensions of $M$ and $N$ are (cf. Remark \ref{rmk:ac})
\begin{align*}
  CR\text{-}\mathrm{codim}\,{M}&=\mathrm{dim}_{\mathbb{C}}\mathfrak{g}-
\mathrm{dim}_{\mathbb{C}}(\mathfrak{q}+\bar{\mathfrak{q}})
\qquad\text{and}
\\
CR\text{-}\mathrm{codim}\,{N}&=\mathrm{dim}_{\mathbb{C}}\mathfrak{k}-
\mathrm{dim}_{\mathbb{C}}([\mathfrak{q}\cap\mathfrak{k}]
+[\bar{\mathfrak{q}}\cap\mathfrak{k}]).
\end{align*}
Thus, we need to prove that
\begin{equation}\label{eq:jg}
\mathrm{dim}_{\mathbb{C}}\mathfrak{g}=
\mathrm{dim}_{\mathbb{C}}(\mathfrak{q}+\bar{\mathfrak{q}})  
+\mathrm{dim}_{\mathbb{C}}\mathfrak{k}-
\mathrm{dim}_{\mathbb{C}}([\mathfrak{q}\cap\mathfrak{k}]
+[\bar{\mathfrak{q}}\cap\mathfrak{k}]).
\end{equation}
First we show that
\begin{equation}
  \label{eq:ji}
  (\mathfrak{q}+\bar{\mathfrak{q}})\cap\mathfrak{k}=
(\mathfrak{q}\cap\mathfrak{k})+(\bar{\mathfrak{q}}\cap
\mathfrak{k}).
\end{equation}
Fix any $\vartheta$-invariant Cartan subalgebra 
$\mathfrak{h}_0$ of $\mathfrak{i}_0$. Let
$\mathfrak{h}$ be the complexification of $\mathfrak{h}_0$ and
$\mathcal{R}=\mathcal{R}(\mathfrak{g},\mathfrak{h})$ 
be the root system of $\mathfrak{g}$ with respect to its
Cartan subalgebra $\mathfrak{h}$. 
Recall that
$\mathfrak{k}=\{X+\vartheta(X)\mid
X\in\mathfrak{g}\}$ and that, if $X_{\alpha}\in\mathfrak{g}^{\alpha}$, then
$\vartheta(X_{\alpha})\in\mathfrak{g}^{-\bar\alpha}$.
To prove \eqref{eq:ji},
it suffices to show that, if $X_{\alpha}\neq{0}$ and
$(X_{\alpha}+\vartheta(X_{\alpha}))\in(\mathfrak{q}+\bar{\mathfrak{q}})$,
then 
$\alpha$ and $-\bar\alpha$ both together belong either to
$\mathcal{Q}$, or to $\bar{\mathcal{Q}}$.
The assertion is trivially true when $\bar\alpha=-\alpha$.
When $\bar\alpha\neq -\alpha$, the fact that
$(X_{\alpha}+\vartheta(X_{\alpha}))\in(\mathfrak{q}+\bar{\mathfrak{q}})$
implies that $\{\alpha,-\bar\alpha\}\subset\mathcal{Q}\cup
\bar{\mathcal{Q}}$. If $\alpha\in\mathcal{Q}\setminus\bar{\mathcal{Q}}$
and $-\bar\alpha\in\bar{\mathcal{Q}}\setminus\mathcal{Q}$,
then neither $\bar\alpha$, nor $-\bar\alpha$ would belong to
$\mathcal{Q}$, contradicting the fact that $\mathcal{Q}$ is
a parabolic set. Analogously, we rule out the case where
$\alpha\in\bar{\mathcal{Q}}\setminus\mathcal{Q}$ and
$-\bar\alpha\in\mathcal{Q}\setminus\bar{\mathcal{Q}}$.
This proves \eqref{eq:ji}.
Since
\begin{equation*}
\mathrm{dim}_{\mathbb{C}}(\mathfrak{q}+\bar{\mathfrak{q}})  
+\mathrm{dim}_{\mathbb{C}}\mathfrak{k}-
\mathrm{dim}_{\mathbb{C}}([\mathfrak{q}
+\bar{\mathfrak{q}}]\cap\mathfrak{k})=
\mathrm{dim}_{\mathbb{C}}(\mathfrak{q}
+\bar{\mathfrak{q}}+\mathfrak{k}),
\end{equation*}
to complete the proof
that $M$ and $N$ have the same $CR$ codimension it suffices to
verify that:
\begin{equation}\label{eq:jb}
 \mathfrak{q}+\bar{\mathfrak{q}}+{\mathfrak{k}}=\mathfrak{g},
\end{equation}
and, to this aim, that all root spaces $\mathfrak{g}^{\alpha}$
are contained in
the left hand side of
\eqref{eq:jb}. 
If $\mathfrak{g}^{\alpha}\not\subset\mathfrak{q}+\bar{\mathfrak{q}}$,
then $\alpha$ is either real or complex, and 
$\mathfrak{g}^{-\bar\alpha}\subset\mathfrak{q}\cap\bar{\mathfrak{q}}$.
With $X_{\alpha}\in\mathfrak{g}^{\alpha}$, we have
$\vartheta(X_{\alpha})\in\mathfrak{g}^{-\bar\alpha}\subset
\mathfrak{q}\cap\bar{\mathfrak{q}}$ and
$(X_{\alpha}+
\vartheta({X}_{\alpha}))\in{\mathfrak{k}}$. Thus
$X_{\alpha}$ belongs to the left hand side of \eqref{eq:jb}.
This completes the proof.
\end{proof}
\begin{thm}\label{thm:jc}
Let $M$ be a ${{\mathbf{G}_0}}$-orbit,
with associated parabolic $CR$ algebra
$(\mathfrak{g}_0,\mathfrak{q})$, and $M\to{N}$ its Mostow fibration.
Let $M_{\mathfrak{e}}$ be the real core of $M$ (see Definition
\ref{dfn:ha}). Then there is a sequence of $\mathbf{K}_0$-equivariant
fibrations:
\begin{equation}
  \label{eq:jc}
 N={N}^{(0)}\to{N}^{(1)}\to\cdots\to{N}^{(m-1)}\to{N}^{(m)}=M_{\mathfrak{e}},
\end{equation}
in which, for each $h\geq 1$, fiber $L^{(h)}$ 
of the $\mathbf{K}_0$-equivariant
fibration ${N}^{(h-1)}\to{N}^{(h)}$ is diffeomorphic to a
disjoint union of a finite number of copies of a 
complex flag manifold. 
\end{thm}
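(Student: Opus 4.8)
The plan is to build the tower \eqref{eq:jc} by running the Mostow construction \emph{fiberwise} along the structure tower \eqref{eq:hc} of Theorem \ref{thm:ha}. Recall that Theorem \ref{thm:ha} gives a sequence of $\mathbf{G}_0$-equivariant maps $M=M^{(-1)}\to M^{(0)}\to M^{(1)}\to\cdots\to M^{(m)}=M_{\mathfrak{e}}$, where the steps are alternately $CR$-weakenings (diffeomorphisms) and weakly nondegenerate reductions, the latter being honest $CR$ fibrations with complex (hence even-dimensional), simply connected fibers whose connected components are products of Euclidean complex spaces and open orbits in complex flag manifolds (Theorems \ref{thm:eh}, \ref{thm:fd}). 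For each intermediate orbit $M^{(h)}$ I would choose the isotropy data compatibly: fix once and for all a maximal compact $\mathbf{K}_0\subset\mathbf{G}_0$ and, at each stage, a maximal compact subgroup of the isotropy contained in $\mathbf{K}_0$, using that the isotropy groups are nested (Theorem \ref{thm:ea}) so that one may take an increasing chain of maximal compact subgroups $\mathbf{K}_{00}^{(-1)}\subset\mathbf{K}_{00}^{(0)}\subset\cdots\subset\mathbf{K}_{00}^{(m)}\subset\mathbf{K}_0$. Setting $N^{(h)}=\mathbf{K}_0/\mathbf{K}_{00}^{(h)}$ then produces the $\mathbf{K}_0$-equivariant tower \eqref{eq:jc} with $N^{(0)}=N$ (by the description of the Mostow basis as $\mathbf{K}_0/\mathbf{K}_{00}$ given just above) and $N^{(m)}=M_{\mathfrak{e}}$ (note $M_{\mathfrak{e}}$ is compact, being a real flag manifold, so it equals its own Mostow basis, i.e. $\mathbf{I}_0^{\mathfrak{e}}$ is compact and $N^{(m)}=\mathbf{K}_0/\mathbf{I}_0^{\mathfrak{e}}=M_{\mathfrak{e}}$).

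Next I would identify the fiber $L^{(h)}=\mathbf{K}_{00}^{(h)}/\mathbf{K}_{00}^{(h-1)}$ of the step $N^{(h-1)}\to N^{(h)}$. The key point is that this fiber is precisely the Mostow basis of the fiber $F^{(h)}=\mathbf{I}_0^{(h)}/\mathbf{I}_0^{(h-1)}$ of the corresponding step $M^{(h-1)}\to M^{(h)}$ in the structure tower (after composing with the relevant weakening diffeomorphism). Indeed the Mostow decomposition of $\mathbf{I}_0^{(h)}$ restricts to one of $\mathbf{I}_0^{(h-1)}$ (compatible choices of $\mathbf{K}$'s and Euclidean factors $E$), so $F^{(h)}$ fibers $\mathbf{K}_0$-equivariantly over $\mathbf{K}_{00}^{(h)}/\mathbf{K}_{00}^{(h-1)}=L^{(h)}$ with Euclidean fibers. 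Now by Theorem \ref{thm:eh} each connected component of $F^{(h)}$ is $CR$-diffeomorphic to $F'\times F''$ with $F''$ a Euclidean complex nilmanifold and $F'$ an open $\mathbf{L}_0^0$-orbit in a complex flag manifold $X'$ (openness because the step is a weakly nondegenerate reduction, so $\mathfrak{q}\subset\mathfrak{q}'\subset\mathfrak{q}+\bar{\mathfrak{q}}$, whence $F'$ is totally complex). The Mostow basis of the Euclidean factor $F''$ is a point; the Mostow basis of the open orbit $F'$ in $X'=\mathbf{L}/(\mathbf{Q}'\cap\mathbf{L})$ is, by Matsuki duality applied inside $\mathbf{L}$, the dual compact orbit, which for an \emph{open} orbit is a compact complex submanifold of $X'$ — in fact a complex flag manifold of the appropriate Levi subgroup (this is the classical fact that the Matsuki dual of an open $\mathbf{G}_0$-orbit is a single closed $\mathbf{K}$-orbit, which is a complex flag manifold). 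Hence $L^{(h)}$ is diffeomorphic to a finite disjoint union of copies of a complex flag manifold, the finiteness of components coming from $\pi_0(F^{(h)})$, which is finite by Theorem \ref{thm:gc} (equivalently Theorem \ref{thm:cf}).

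The step I expect to be the main obstacle is making precise the claim that the Mostow decompositions can be chosen \emph{compatibly} along the whole tower, i.e. that there is a single chain of nested maximal compact subgroups $\mathbf{K}_{00}^{(-1)}\subset\cdots\subset\mathbf{K}_{00}^{(m)}\subset\mathbf{K}_0$ realizing all the Mostow fibrations simultaneously, together with nested Euclidean factors. This requires knowing that, given $\mathbf{I}_0^{(h-1)}\subset\mathbf{I}_0^{(h)}$ both with finitely many components, one can pick maximal compacts with $\mathbf{K}_{00}^{(h-1)}\subset\mathbf{K}_{00}^{(h)}$ and Mostow Euclidean subspaces $E^{(h-1)}\supset E^{(h)}$ in a coherent way — this is a mild strengthening of Mostow's theorem, available because all groups involved are real-algebraic (the isotropy groups are semialgebraic by Proposition \ref{prop:cb}) and because the reductive Levi factors $\mathbf{L}_0^{(h)}$ are nested (this follows from $\mathfrak{z}_0^{(h)}\supset\mathfrak{z}_0^{(h-1)}$ as in the proof of Theorem \ref{thm:ea}). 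Once this compatibility is in hand, the rest is the identification of fibers described above, together with the verification that each map $N^{(h-1)}\to N^{(h)}$ is $\mathbf{K}_0$-equivariant, which is immediate from the quotient description. I would also remark that the very last step lands on $M_{\mathfrak{e}}$ itself (not merely its Mostow basis) because $M_{\mathfrak{e}}$ is already compact, so no further reduction occurs there.
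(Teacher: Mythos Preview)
Your approach is essentially the same as the paper's: both take $N^{(h)}$ to be the Mostow basis of the $\mathbf{G}_0$-orbit $M^{(h)}$ from the structure tower \eqref{eq:hc}, identify the fiber $L^{(h)}$ of $N^{(h-1)}\to N^{(h)}$ with the Mostow basis of the fiber $F^{(h)}=F'_h\times F''_h$ of the weakly nondegenerate reduction $M^{(h-1)}_w\to M^{(h)}$, and conclude by noting that the Mostow basis of $F''_h$ is a point while that of the open orbit $F'_h$ is its Matsuki dual, hence a complex flag manifold. The paper handles the compatibility of the Mostow data by fixing one Cartan involution $\vartheta$ throughout and taking $\mathbf{K}_{00}^{(h)}$ to be the maximal $\vartheta$-invariant compact subgroup of $\mathbf{I}_0^{(h)}$; this gives the nested chain you want directly, so the ``main obstacle'' you flag is in fact already dealt with by the $\vartheta$-invariant setup of \S\ref{sec:c}.

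One small slip: your claim that $\mathbf{I}_0^{\mathfrak{e}}$ is compact is false --- it is a real parabolic subgroup of $\mathbf{G}_0$, which is noncompact in general. The correct reason that $N^{(m)}=M_{\mathfrak{e}}$ is simply that $M_{\mathfrak{e}}$ is compact, so $\mathbf{K}_0$ already acts transitively on it and the Mostow fibration $M_{\mathfrak{e}}\to N^{(m)}$ is the identity; equivalently $N^{(m)}=\mathbf{K}_0/\mathbf{K}_{00}^{(m)}$ with $\mathbf{K}_{00}^{(m)}=\mathbf{K}_0\cap\mathbf{I}_0^{\mathfrak{e}}$, not $\mathbf{K}_0/\mathbf{I}_0^{\mathfrak{e}}$. (Also, the inclusion of centers goes the other way: from $\mathfrak{i}_0^{(h-1)}\subset\mathfrak{i}_0^{(h)}$ one gets $\mathfrak{z}_0^{(h-1)}\supset\mathfrak{z}_0^{(h)}$, hence $\mathbf{L}_0^{(h-1)}\subset\mathbf{L}_0^{(h)}$, as in the proof of Theorem~\ref{thm:ea}.) These are cosmetic; your argument is otherwise the paper's.
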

\begin{proof} We consider the sequence \eqref{eq:hc}. For each
$h=0,1,\hdots,m$ we take $N^{(h)}$ to be the basis of the Mostow
fibration of the ${{\mathbf{G}_0}}$-orbit $M^{(h)}$. 
Let $\mathbf{I}_0^{(h)}$ be the isotropy subgroup of $M^{(h)}$
and $\mathbf{K}^{(h)}_{00}$ its maximal $\vartheta$-invariant compact 
subgroup. Being complex, by Theorem \ref{thm:eh},
each connected component of the fiber 
$F_h=F'_h\times{F}''_h$ of the
${{\mathbf{G}_0}}$-equivariant fibration
of $M^{(h-1)}_{w}\to{M}^{(h)}$ 
is $CR$-diffeomorphic to
the product of a complex Euclidean space 
$F''_h$ and of the disjoint union $F'_h$ of
finitely many copies of an open orbit
$\Omega^{(h)}$ 
in a 
complex flag manifold $X_h$. Then $L^{(h)}$ is
the basis of the Mostow fibration
$F'_h\to{L}^{(h)} $. Thus $L^{(h)}$ has finitely many connected components,
each diffeomorphic to  
a complex flag manifold, being equal to the Matsuki-dual
of an open orbit.
\end{proof}
Thus we have:
\begin{cor}\label{cor:jd} With the notation of Theorem \ref{thm:jc}:
  \begin{equation}
    \label{eq:je}
    \chi(N)=0\Longleftrightarrow \chi(M_{\mathfrak{e}})=0,
  \end{equation}
where $\chi$ is the Euler-Poincar\'e characteristic.\par
Let $\{M_{j}|j\in{J}\}$ be the set of all 
${{\mathbf{G}_0}}$-orbits in the complex flag manifold 
${X}=\mathbf{G}/\mathbf{Q}$. Then:
\begin{equation}
  \label{eq:jf}
\chi({X})=\sum_{j\in{J}}{\chi(N_{j})},  
\end{equation}
where $N_{j}$ is the basis of the Mostow fibration
$M_{j}\to{N}_{j}$.
\end{cor}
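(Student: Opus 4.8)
The plan is to derive both \eqref{eq:je} and \eqref{eq:jf} from the standard multiplicativity and additivity properties of the Euler--Poincar\'e characteristic, feeding in the towers of fibrations already at our disposal.

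For \eqref{eq:je} I would argue along the $\mathbf{K}_0$-equivariant tower \eqref{eq:jc} of Theorem~\ref{thm:jc}. Each $N^{(h)}$ is an orbit of the connected compact group $\mathbf{K}_0$, hence is compact and connected, and the fibre $L^{(h)}$ of $N^{(h-1)}\to N^{(h)}$ is a disjoint union of $c_h\ge 1$ copies of a complex flag manifold $Y_h$. A complex flag manifold carries a cell decomposition (the Bruhat decomposition) with cells only in even real dimensions, so $\chi(Y_h)>0$ and therefore $\chi(L^{(h)})=c_h\,\chi(Y_h)>0$. Since the base $N^{(h)}$ is connected, the fibre's Euler characteristic is locally constant on it and the Leray--Serre spectral sequence of the bundle gives $\chi(N^{(h-1)})=\chi(L^{(h)})\,\chi(N^{(h)})$. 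Iterating over $h=1,\dots,m$ yields
\begin{equation*}
\chi(N)=\Bigl(\,\prod_{h=1}^{m}\chi(L^{(h)})\Bigr)\,\chi(M_{\mathfrak e}),
\end{equation*}
and, the product being a strictly positive integer, \eqref{eq:je} follows at once.

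For \eqref{eq:jf} the key is to pass to the Euler characteristic with compact supports $\chi_c$. Since $X$ is compact, $\chi(X)=\chi_c(X)$; and, by Wolf's theorem, $X$ is partitioned into the finitely many locally closed submanifolds $M_j$, $j\in J$ (one peels them off in order of dimension, each step fitting into an open/closed pair), so additivity of $\chi_c$ gives $\chi_c(X)=\sum_{j\in J}\chi_c(M_j)$. Now for each $j$ the Mostow fibration $M_j\to N_j$ is a fibre bundle over the compact manifold $N_j$ with fibre a Euclidean space $\mathbb{R}^{f_j}$; trivializing over a finite cover of $N_j$ and applying Mayer--Vietoris for $\chi_c$ reduces to the pieces $U\times\mathbb{R}^{f_j}$ and yields $\chi_c(M_j)=\chi_c(N_j)\cdot\chi_c(\mathbb{R}^{f_j})=\chi(N_j)\,(-1)^{f_j}$. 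By Proposition~\ref{prop:jb} the integer $f_j$ is even, whence $\chi_c(M_j)=\chi(N_j)$, and summing over $j$ gives \eqref{eq:jf}.

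The routine but slightly delicate points to spell out are the two transfer statements for the Euler characteristic: multiplicativity of the ordinary $\chi$ along the bundles of \eqref{eq:jc}, which uses the connectedness of each $N^{(h)}$; and multiplicativity of $\chi_c$ along the Mostow bundle with its non-compact fibre $\mathbb{R}^{f_j}$, which is exactly where the even-dimensionality of the Mostow fibres (Proposition~\ref{prop:jb}) enters and without which \eqref{eq:jf} would fail. The only real ``obstacle'' is thus bookkeeping: checking compactness and connectedness of the intermediate bases $N^{(h)}$, recording positivity of $\chi$ of a complex flag manifold, and noting that the $\mathbf{G}_0$-orbit decomposition of $X$ is a finite decomposition into locally closed pieces so that $\chi_c$ is additive over it.
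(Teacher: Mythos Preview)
Your argument is correct and, for \eqref{eq:je}, matches the paper's proof exactly: both compute $\chi(N)$ as the product of $\chi(M_{\mathfrak e})$ with the $\chi(L^{(h)})>0$ along the tower \eqref{eq:jc}.

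For \eqref{eq:jf} you take a slightly different but equivalent route. The paper argues directly with cell decompositions: one chooses a cell decomposition of $X$ subordinate to the orbit stratification, lifts a cell decomposition of each $N_j$ through the Euclidean Mostow fibre to a cell decomposition of $M_j$, and observes that the even fibre dimension leaves the alternating cell count unchanged. Your version packages the same idea through the compactly supported Euler characteristic, using additivity of $\chi_c$ over the locally closed partition $X=\bigsqcup_j M_j$ and multiplicativity $\chi_c(M_j)=\chi(N_j)\cdot(-1)^{f_j}$ for the Mostow bundle. Both proofs hinge on the same input from Proposition~\ref{prop:jb} (even $f_j$); yours avoids constructing an explicit CW structure on $X$ compatible with the orbits, at the cost of invoking the $\chi_c$ formalism, while the paper's is more elementary but presupposes the existence of such a compatible cell decomposition.
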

\begin{proof}
We keep the notation of Theorem \ref{thm:jc}. 
The Euler-Poincar\'e characteristic $\chi(N)$ is the product
$\chi(N)=\chi(M_{\mathfrak{e}})\cdot\chi(L^{(1)})\cdots
\chi(L^{(m)})$ and $\chi(L^{(h)})>0$ for $h=1,\hdots,m$, because each
$L^{(h)}$ is a complex flag manifold.\par
The last assertion follows by considering a cell decomposition
$\mathcal{C}$
of ${X}$ in which every cell is contained in some ${{\mathbf{G}_0}}$-orbit
$M_{j}$. Since the fibers of the Mostow fibration are Euclidean,
we may obtain a cell decomposition of $M_{j}$ from a cell
decomposition 
of the basis $N_{j}$ of its Mostow fibration.
Since the dimension of the fibers is even, the contribution of the
cells of $M_{j}$ to the alternated sum 
$\chi({X})=\sum_{c\in\mathcal{C}}(-1)^{\mathrm{dim}(c)}$
of the cells contained 
in $\chi({X})$ is exactly $\chi(N_{j})$.
\end{proof}
\section{Real core and
algebraic arc components}
\label{sec:k}
In this section we compare our construction of \S\ref{sec:h} with
the \emph{algebraic arc components} of \cite[\S\,8]{Wolf69}.
They were defined in the following way.
\begin{dfn}
  \label{dfn:ka} Let $M$ be a $\mathbf{G}_0$-orbit, with associated 
parabolic $CR$ algebra $(\mathfrak{g}_0,\mathfrak{q})$. Let
  $(\vartheta,\mathfrak{h}_0)$ be an adapted Cartan pair and
  $\mathcal{R}=\mathcal{R}(\mathfrak{g},\mathfrak{h})$.
  Define
  \begin{align}
    \label{eq:ka}
    \delta&=
{\sum}_{\alpha\in\mathcal{Q}^n\cap\bar{\mathcal{Q}}^n}
    {\alpha},\\\label{eq:kb}
    \mathcal{Q}_a&=\big\{\alpha\in\mathcal{R}\mid (\delta|\alpha)\geq 0
\big\},\\
\label{eq:kc}
    \mathfrak{q}_a&= \mathfrak{h}\oplus {\sum}_{\alpha\in\mathcal{Q}_a} 
{\mathfrak{g}^{\alpha}}.
  \end{align}
The $\mathbf{G}_0$-orbit $M_a$, corresponding to the 
parabolic $CR$ algebra
$(\mathfrak{g}_0,\mathfrak{q}_a)$, is called the \emph{space of algebraic
arc components} of $M$.
\end{dfn}
\begin{lem}
  \label{lem:kb}
  If $(\mathfrak{g}_0,\mathfrak{q})$ and
  $(\mathfrak{g}_0,\mathfrak{q}')$ are parabolic $CR$ algebras with
  $\mathfrak{q}+\bar{\mathfrak{q}}= \mathfrak{q}'+\bar{\mathfrak{q}}'$,
  then $\mathfrak{q}_a=\mathfrak{q}'_a$. This holds in particular when
  $(\mathfrak{g}_0,\mathfrak{q}')$ is the weakly nondegenerate
  reduction of $(\mathfrak{g}_0,\mathfrak{q})$.
\end{lem}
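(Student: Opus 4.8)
The plan is to show that $\mathfrak{q}_a$ depends only on the quantity $\delta$ of \eqref{eq:ka}, and that $\delta$ in turn depends only on the set $\mathcal{Q}^n\cap\bar{\mathcal{Q}}^n$, which can be recovered from $\mathfrak{q}+\bar{\mathfrak{q}}$ alone. First I would fix a Cartan pair $(\vartheta,\mathfrak{h}_0)$ adapted simultaneously to $(\mathfrak{g}_0,\mathfrak{q})$ and $(\mathfrak{g}_0,\mathfrak{q}')$; this is legitimate because the hypothesis $\mathfrak{q}+\bar{\mathfrak{q}}=\mathfrak{q}'+\bar{\mathfrak{q}}'$ forces $\mathfrak{q}\cap\bar{\mathfrak{q}}=\mathfrak{q}'\cap\bar{\mathfrak{q}}'$ (both have complementary dimension in the common sum inside $\mathfrak{g}$), hence the two $CR$ algebras define ${{\mathbf{G}_0}}$-equivariantly diffeomorphic orbits with the same isotropy subalgebra $\mathfrak{i}_0=\mathfrak{i}_0'$, and Proposition \ref{prop:ca} produces a common adapted Cartan pair. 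With $\mathcal{R}=\mathcal{R}(\mathfrak{g},\mathfrak{h})$ fixed, $\mathcal{Q}$ and $\mathcal{Q}'$ are the parabolic sets of $\mathfrak{q}$ and $\mathfrak{q}'$, and the assumption reads $\mathcal{Q}\cup\bar{\mathcal{Q}}=\mathcal{Q}'\cup\bar{\mathcal{Q}}'$, equivalently $\mathcal{Q}\cap\bar{\mathcal{Q}}=\mathcal{Q}'\cap\bar{\mathcal{Q}}'$ (the root spaces outside the sum are precisely those $\alpha$ with $-\bar\alpha\in\mathcal{Q}\cap\bar{\mathcal{Q}}$).

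Next I would identify $\mathcal{Q}^n\cap\bar{\mathcal{Q}}^n$ in invariant terms. Writing $\mathcal{Q}=\mathcal{Q}^r\sqcup\mathcal{Q}^n$ with $\mathcal{Q}^r=\mathcal{Q}\cap(-\mathcal{Q})$ and $\mathcal{Q}^n=\mathcal{Q}\setminus(-\mathcal{Q})$, and similarly for $\bar{\mathcal{Q}}$, one checks the set-theoretic identity
\begin{equation*}
\mathcal{Q}^n\cap\bar{\mathcal{Q}}^n=(\mathcal{Q}\cap\bar{\mathcal{Q}})\setminus\bigl(-(\mathcal{Q}\cap\bar{\mathcal{Q}})\bigr),
\end{equation*}
i.e. $\mathcal{Q}^n\cap\bar{\mathcal{Q}}^n$ is the ``strictly positive'' part of the parabolic set $\mathcal{Q}\cap\bar{\mathcal{Q}}$, which by Proposition \ref{prop:ca}, item (7), corresponds to the nilradical $\mathfrak{n}=\mathfrak{q}^n\cap\bar{\mathfrak{q}}+\bar{\mathfrak{q}}^n\cap\mathfrak{q}$ of $\mathfrak{i}$. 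Since $\mathcal{Q}\cap\bar{\mathcal{Q}}=\mathcal{Q}'\cap\bar{\mathcal{Q}}'$, the displayed formula gives $\mathcal{Q}^n\cap\bar{\mathcal{Q}}^n=\mathcal{Q}'^n\cap\bar{\mathcal{Q}}'^n$, hence $\delta=\delta'$ by \eqref{eq:ka}. Then \eqref{eq:kb} yields $\mathcal{Q}_a=\mathcal{Q}'_a$ and \eqref{eq:kc} yields $\mathfrak{q}_a=\mathfrak{q}'_a$, proving the first assertion.

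For the final sentence, I would invoke the characterization of the weakly nondegenerate reduction recalled just before Theorem \ref{thm:fd}: if $(\mathfrak{g}_0,\mathfrak{q}')$ is the weakly nondegenerate reduction of $(\mathfrak{g}_0,\mathfrak{q})$, then $\mathfrak{q}\subset\mathfrak{q}'\subset\mathfrak{q}+\bar{\mathfrak{q}}$, whence $\mathfrak{q}+\bar{\mathfrak{q}}\subset\mathfrak{q}'+\bar{\mathfrak{q}}'\subset(\mathfrak{q}+\bar{\mathfrak{q}})+\overline{(\mathfrak{q}+\bar{\mathfrak{q}})}=\mathfrak{q}+\bar{\mathfrak{q}}$, so $\mathfrak{q}+\bar{\mathfrak{q}}=\mathfrak{q}'+\bar{\mathfrak{q}}'$ and the first part applies. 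The one delicate point — which I expect to be the main obstacle — is verifying the set-theoretic identity for $\mathcal{Q}^n\cap\bar{\mathcal{Q}}^n$ cleanly, because it requires unwinding that $\alpha\in\mathcal{Q}^n\cap\bar{\mathcal{Q}}^n$ means $\alpha\in\mathcal{Q}$, $\alpha\in\bar{\mathcal{Q}}$, $-\alpha\notin\mathcal{Q}$ and $-\alpha\notin\bar{\mathcal{Q}}$ simultaneously; here one must use that $\mathcal{Q}$ and $\bar{\mathcal{Q}}$ are individually parabolic (so $-\alpha\notin\mathcal{Q}$ together with $\alpha\in\mathcal{Q}$ is exactly $\alpha\in\mathcal{Q}^n$) rather than only their intersection, and check that no ``mixed'' root — in $\mathcal{Q}^r$ but not $\bar{\mathcal{Q}}^r$, or vice versa — can sneak into $\mathcal{Q}\cap\bar{\mathcal{Q}}$ without its negative also lying there; this last step reuses the parabolicity argument already exploited in the proof of Proposition \ref{prop:jb} for \eqref{eq:ji}.
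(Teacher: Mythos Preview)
Your approach has a genuine gap. The dimension argument that $\mathfrak{q}+\bar{\mathfrak{q}}=\mathfrak{q}'+\bar{\mathfrak{q}}'$ forces $\mathfrak{q}\cap\bar{\mathfrak{q}}=\mathfrak{q}'\cap\bar{\mathfrak{q}}'$ is incorrect: the identity $\dim(\mathfrak{q}+\bar{\mathfrak{q}})+\dim(\mathfrak{q}\cap\bar{\mathfrak{q}})=2\dim\mathfrak{q}$ shows that equal sums give equal intersections only when $\dim\mathfrak{q}=\dim\mathfrak{q}'$, which is not assumed. Indeed, in the very case you single out at the end---the weakly nondegenerate reduction, where $\mathfrak{q}\subsetneq\mathfrak{q}'$---one has $\mathfrak{q}\cap\bar{\mathfrak{q}}\subsetneq\mathfrak{q}'\cap\bar{\mathfrak{q}}'$, so the isotropy subalgebras differ and your justification for a common adapted Cartan pair collapses.

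More seriously, the displayed set-theoretic identity
\[
\mathcal{Q}^n\cap\bar{\mathcal{Q}}^n=(\mathcal{Q}\cap\bar{\mathcal{Q}})\setminus\bigl(-(\mathcal{Q}\cap\bar{\mathcal{Q}})\bigr)
\]
is false. A root $\alpha\in\mathcal{Q}^r\cap\bar{\mathcal{Q}}^n$ lies in $\mathcal{Q}\cap\bar{\mathcal{Q}}$, and since $-\alpha\in\mathcal{Q}$ but $-\alpha\notin\bar{\mathcal{Q}}$ we have $-\alpha\notin\mathcal{Q}\cap\bar{\mathcal{Q}}$; so $\alpha$ belongs to the right-hand side while $\alpha\notin\mathcal{Q}^n$. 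This is exactly the ``mixed'' case you flag at the end, and it does occur. The fix is to work with the \emph{union} rather than the intersection: since $\mathcal{Q}$ is parabolic, $\alpha\in\mathcal{Q}^n\iff -\alpha\notin\mathcal{Q}$, and likewise for $\bar{\mathcal{Q}}$, whence
\[
\mathcal{Q}^n\cap\bar{\mathcal{Q}}^n=\mathcal{R}\setminus\bigl(-\mathcal{Q}\cup-\bar{\mathcal{Q}}\bigr)=\mathcal{R}\setminus\bigl(-(\mathcal{Q}\cup\bar{\mathcal{Q}})\bigr),
\]
which depends only on $\mathcal{Q}\cup\bar{\mathcal{Q}}$ and hence only on $\mathfrak{q}+\bar{\mathfrak{q}}$. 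This is the paper's one-line argument. Your treatment of the ``in particular'' clause is fine.
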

\begin{proof}
  Indeed $\mathcal{Q}^n\cap\bar{\mathcal{Q}}^n=\mathcal{R}
\setminus({-\mathcal{Q}}\cup{-\bar{\mathcal{Q}}})=
\mathcal{R}
\setminus({-\mathcal{Q}}'\cup{-\bar{\mathcal{Q}}'})
=\mathcal{Q}'{}^n
\cap\bar{\mathcal{Q}}'{}^n$.
\end{proof}
\begin{lem}\label{lem:kc}
Let $(\mathfrak{g}_0,\mathfrak{q})$ be a 
parabolic $CR$ algebra. Fix an
adapted Cartan pair $(\vartheta,\mathfrak{h}_0)$.
Let $\mathcal{R}$ be the root system of $\mathfrak{g}$ with respect to
the complexification $\mathfrak{h}$ of $\mathfrak{h}_0$ and
$\mathcal{Q}\subset\mathcal{R}$ the parabolic set of $\mathfrak{q}$.
Consider the complement of $\mathfrak{q}\cap\bar{\mathfrak{q}}$
in $\mathfrak{q}$ given by
\begin{equation}
  \label{eq:kd}
  \mathfrak{x}={\sum}_{\alpha\in\mathcal{Q}\setminus\bar{\mathcal{Q}}}{
\mathfrak{g}^{\alpha}}.
\end{equation}
If $(\mathfrak{g}_0,\mathfrak{q})$ is weakly nondegenerate, then
\begin{equation}
  \label{eq:ke}
  \{Z\in\mathfrak{x}\mid [Z,\bar{\mathfrak{q}}]\subset
(\mathfrak{q}+\bar{\mathfrak{q}})\}\subset\mathfrak{q}^n.
\end{equation}
\end{lem}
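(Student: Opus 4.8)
The statement says: for a weakly nondegenerate parabolic $CR$ algebra, the elements $Z$ of the ``holomorphic part'' $\mathfrak{x}=\sum_{\alpha\in\mathcal{Q}\setminus\bar{\mathcal{Q}}}\mathfrak{g}^{\alpha}$ whose adjoint action maps $\bar{\mathfrak{q}}$ into $\mathfrak{q}+\bar{\mathfrak{q}}$ already lie in the nilradical $\mathfrak{q}^n$. The natural strategy is a root-space computation. First I would fix an adapted Cartan pair $(\vartheta,\mathfrak{h}_0)$ and pass to the root system $\mathcal{R}=\mathcal{R}(\mathfrak{g},\mathfrak{h})$, so that every subalgebra in sight is $\mathfrak{h}$-graded: $\mathfrak{q}=\mathfrak{h}+\sum_{\alpha\in\mathcal{Q}}\mathfrak{g}^\alpha$, $\bar{\mathfrak{q}}=\mathfrak{h}+\sum_{\alpha\in\bar{\mathcal{Q}}}\mathfrak{g}^\alpha$ (using $\overline{\mathfrak{g}^\alpha}=\mathfrak{g}^{\bar\alpha}$), and $\mathfrak{q}+\bar{\mathfrak{q}}=\mathfrak{h}+\sum_{\alpha\in\mathcal{Q}\cup\bar{\mathcal{Q}}}\mathfrak{g}^\alpha$. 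Since $\mathfrak{x}$ is a sum of root spaces, it suffices to prove the inclusion for a single root vector $X_\beta\in\mathfrak{g}^\beta$ with $\beta\in\mathcal{Q}\setminus\bar{\mathcal{Q}}$: I must show that if $[\,X_\beta,\mathfrak{g}^\gamma]\subset\mathfrak{q}+\bar{\mathfrak{q}}$ for all $\gamma\in\bar{\mathcal{Q}}$, then $\beta\in\mathcal{Q}^n$.

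\textbf{Key steps.} Suppose, for contradiction, that $\beta\in\mathcal{Q}\setminus\bar{\mathcal{Q}}$ but $\beta\notin\mathcal{Q}^n$, i.e.\ $\beta\in\mathcal{Q}^r$, so $-\beta\in\mathcal{Q}$ as well; combined with $\beta\notin\bar{\mathcal{Q}}$ this forces $-\beta\in\bar{\mathcal{Q}}$ (since $\bar{\mathcal{Q}}\cup(-\bar{\mathcal{Q}})=\mathcal{R}$), hence $-\beta\notin\mathcal{Q}^r$ would be false — rather $\pm\beta\in\mathcal{Q}^r$ and $-\beta\in\bar{\mathcal{Q}}^n$, $\beta\in\bar{\mathcal{Q}}^n$ with the appropriate signs sorted out. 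Now I would build the subalgebra $\mathfrak{q}'=\mathfrak{q}+\mathbb{C}X_\beta$ — more precisely the span of $\mathfrak{q}$ together with $\mathfrak{g}^{-\beta}$ if $-\beta\in\bar{\mathcal{Q}}$. The hypothesis $\mathrm{ad}(X_\beta)(\bar{\mathfrak{q}})\subset\mathfrak{q}+\bar{\mathfrak{q}}$, together with the fact that $\mathfrak{q}$ is already a subalgebra and $X_\beta\in\mathfrak{q}$, should show that $\mathfrak{q}':=\mathfrak{h}+\sum_{\alpha\in\mathcal{Q}\cup\{-\beta\}}\mathfrak{g}^\alpha$ (closed up under root addition) is a complex Lie subalgebra satisfying $\mathfrak{q}\subsetneqq\mathfrak{q}'\subset\mathfrak{q}+\bar{\mathfrak{q}}$: the containment in $\mathfrak{q}+\bar{\mathfrak{q}}$ holds because $-\beta\in\bar{\mathcal{Q}}$ and the parabolic set $\mathcal{Q}\cup\{-\beta\}$ generates (under addition, intersected with $\mathcal{R}$) only roots already in $\mathcal{Q}\cup\bar{\mathcal{Q}}$ — this is where $\mathrm{ad}(X_\beta)(\bar{\mathfrak{q}})\subset\mathfrak{q}+\bar{\mathfrak{q}}$ is used, since $[X_\beta,\cdot]$ controls exactly the new brackets $[\mathfrak{g}^{-\beta},\mathfrak{g}^\gamma]$ after conjugation. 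But $\mathfrak{q}\subsetneqq\mathfrak{q}'\subset\mathfrak{q}+\bar{\mathfrak{q}}$ contradicts weak nondegeneracy (Definition \ref{dfn:ac}, negation of \eqref{eq:ad}). This contradiction gives $\beta\in\mathcal{Q}^n$, hence $\mathfrak{g}^\beta\subset\mathfrak{q}^n$, proving \eqref{eq:ke}.

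\textbf{Main obstacle.} The delicate point is verifying that $\mathfrak{q}'$ is actually a Lie \emph{subalgebra} (closed under brackets) and that it stays inside $\mathfrak{q}+\bar{\mathfrak{q}}$ — i.e.\ that adjoining $\mathfrak{g}^{-\beta}$ to $\mathfrak{q}$ does not force in any root space $\mathfrak{g}^\delta$ with $\delta\notin\mathcal{Q}\cup\bar{\mathcal{Q}}$. For this I would argue root-theoretically: any $\delta$ produced is of the form $\delta=-\beta+\sum(\text{roots in }\mathcal{Q})$ or iterated such sums; apply the conjugation $\sigma$ (or use that $-\beta\in\bar{\mathcal{Q}}$), so $\bar\delta$ is a nonnegative combination built from $\bar{\mathcal{Q}}$-roots and $\beta\in\mathcal{Q}$, and use the hypothesis on $\mathrm{ad}(X_\beta)$ to control the only genuinely new brackets. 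One must also handle the subtlety that $\mathfrak{q}'$ as I have written it should really be the \emph{parabolic} subalgebra generated by $\mathcal{Q}\cup\{-\beta\}$ (taking the additive closure), and check that this closure does not escape $\mathcal{Q}\cup\bar{\mathcal{Q}}$; here the hypothesis $[\mathfrak{g}^{-\beta},\bar{\mathfrak{q}}]\subset\mathfrak{q}+\bar{\mathfrak{q}}$ (equivalent, after applying $\sigma$, to the stated $[Z,\bar{\mathfrak{q}}]\subset\mathfrak{q}+\bar{\mathfrak{q}}$ for $Z=X_\beta$) is exactly the combinatorial input needed. If the closure argument proves awkward to state directly, an alternative is to invoke \cite[Lemma 5.7]{MN05} on the existence of the maximal $\mathfrak{q}'$ with $\mathfrak{q}\subset\mathfrak{q}'\subset\mathfrak{q}+\bar{\mathfrak{q}}$ and show that $\mathfrak{x}\cap\mathfrak{q}^r$ would contribute a strictly larger such $\mathfrak{q}'$, again contradicting weak nondegeneracy.
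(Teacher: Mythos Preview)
Your approach has a genuine gap that you yourself flag as the ``main obstacle'' but do not resolve.

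First, there is a confusion between $-\beta$ and $\bar\beta$. You take $\beta\in\mathcal{Q}\setminus\bar{\mathcal{Q}}$ with $\beta\notin\mathcal{Q}^n$, so $\beta\in\mathcal{Q}^r$. But then \emph{both} $\pm\beta\in\mathcal{Q}$, so $\mathfrak{g}^{\pm\beta}\subset\mathfrak{q}$ already, and your proposed $\mathfrak{q}'=\mathfrak{h}+\sum_{\alpha\in\mathcal{Q}\cup\{-\beta\}}\mathfrak{g}^\alpha$ is just $\mathfrak{q}$. Applying the conjugation $\sigma$ to the hypothesis $[\mathfrak{g}^\beta,\bar{\mathfrak{q}}]\subset\mathfrak{q}+\bar{\mathfrak{q}}$ yields $[\mathfrak{g}^{\bar\beta},\mathfrak{q}]\subset\mathfrak{q}+\bar{\mathfrak{q}}$, a statement about $\bar\beta$ (which does lie in $\bar{\mathcal{Q}}\setminus\mathcal{Q}$), not about $-\beta$.

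Second, even after correcting to $\bar\beta$, the subalgebra generated by $\mathfrak{q}$ and $\mathfrak{g}^{\bar\beta}$ need not stay inside $\mathfrak{q}+\bar{\mathfrak{q}}$. The hypothesis controls only the first bracket $[\mathfrak{g}^{\bar\beta},\mathfrak{q}]\subset\mathfrak{q}+\bar{\mathfrak{q}}$; it may produce new root spaces $\mathfrak{g}^\delta$ with $\delta\in\bar{\mathcal{Q}}\setminus\mathcal{Q}$, and then $[\mathfrak{q},\mathfrak{g}^\delta]$ is uncontrolled (since $\mathfrak{q}+\bar{\mathfrak{q}}$ is not a subalgebra). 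Your closure argument would need to show that every iterated bracket stays in $\mathcal{Q}\cup\bar{\mathcal{Q}}$, and the single hypothesis on $\beta$ does not provide this.

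The paper avoids this difficulty altogether by working in the opposite direction. Instead of building a larger subalgebra, it invokes the root-chain characterization of weak nondegeneracy from \cite[Theorem~6.2]{MN05}: for each $\alpha\in\mathcal{Q}\setminus\bar{\mathcal{Q}}$ there exist $\beta_1,\ldots,\beta_k\in\bar{\mathcal{Q}}$ with all partial sums $\gamma_i=\alpha+\beta_1+\cdots+\beta_i$ roots and $\gamma_k\notin\mathcal{Q}\cup\bar{\mathcal{Q}}$. Taking $k$ minimal, the $\beta_i$ may be permuted freely; since some $\beta_i\notin\mathcal{Q}$ one can arrange $\beta_1\in\mathcal{Q}^{-n}$, and then for $\alpha\in\mathcal{Q}^r$ one gets $\gamma_1\in\mathcal{Q}^{-n}\cap\bar{\mathcal{Q}}^{-n}$, forcing $k=1$. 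Thus $[\mathfrak{g}^\alpha,\mathfrak{g}^{\beta_1}]\not\subset\mathfrak{q}+\bar{\mathfrak{q}}$ directly, so $\mathfrak{g}^\alpha$ is not in the left-hand side of \eqref{eq:ke}.
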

\begin{proof}
Set 
${\mathcal{Q}}^{-n}=\mathcal{R}\setminus{\mathcal{Q}}=
\{-\alpha\mid
\alpha\in\mathcal{Q}^n\}$ and
$\bar{\mathcal{Q}}^{-n}=\{\bar\alpha\mid\alpha\in{\mathcal{Q}}^{-n}\}$.
We apply an
argument similar to that of the proof of
\cite[Proposition~12.3]{AMN06}. 
The left hand side of \eqref{eq:ke} decomposes into a direct sum
of root spaces $\mathfrak{g}^{\alpha}$. 
Since 
\begin{equation*}
(\mathcal{Q}\setminus\bar{\mathcal{Q}})=(\mathcal{Q}^r\cap
\bar{\mathcal{Q}}^{-n})\cup(\mathcal{Q}^n\cap
\bar{\mathcal{Q}}^{-n}),
\end{equation*}
it suffices to show that 
the left hand side of \eqref{eq:ke}
does not contain $\mathfrak{g}^{\alpha}$
if $\alpha\in\mathcal{Q}^r\cap\bar{\mathcal{Q}}^{-n}$. 
By the assumption that 
$(\mathfrak{g}_0,\mathfrak{q})$ is weakly nondegenerate, 
by \cite[Theorem 6.2]{MN05} there
exist roots $\beta_1,\ldots,\beta_k\in\bar{\mathcal{Q}}$ such that:
\begin{enumerate}
\item[$(i)$]
$\gamma_i=\alpha+\sum_{j=1}^i\beta_i\in\mathcal{R}$ for
$j=1,\ldots,k$,
\item[$(ii)$]
$\gamma_k\notin\mathcal{Q}\cup\bar{\mathcal{Q}}$. 
\end{enumerate}
If we take
a sequence $(\beta_1,\ldots,\beta_k)$ 
satisfying $(i)$ and $(ii)$
with $k$ minimal, then $(i)$ and $(ii)$ hold true for
all sequences $(\beta_{s_1},\hdots,\beta_{s_k})$
obtained by a permutation of the $\beta_i$'s. 
At least one of the
$\beta_i$'s does not belong to $\mathcal{Q}$, so we can assume
that $\beta_1\in\mathcal{Q}^{-n}$. Then
$\gamma_1\in\mathcal{Q}^{-n}$, because $\alpha\in\mathcal{Q}^r$,
and $\mathcal{Q}^r\cup\mathcal{Q}^{-n}$ is a parabolic set.
We also have
$\gamma_1\in\bar{\mathcal{Q}}^{-n}$, because
$\beta_2,\ldots,\beta_k\in\bar{\mathcal{Q}}$ and
$\gamma_k=\gamma_1+\sum_{j=2}^k\beta_k\in\bar{\mathcal{Q}}^{-n}$. This
shows that $k=1$ and hence $\mathfrak{g}^{\alpha}$ is not contained
in the left hand side of \eqref{eq:ke}.
\end{proof}
\begin{lem}
  \label{lem:kd}
  If  $(\mathfrak{g}_0,\mathfrak{q})$ is weakly nondegenerate, then
  \begin{equation}
    \label{eq:kf}
\mathfrak{q}_w\cap\bar{\mathfrak{q}}_w  \subset 
\mathfrak{q}_a\subset\mathfrak{q}_w+\bar{\mathfrak{q}}_w.
  \end{equation}
\end{lem}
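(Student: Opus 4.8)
The plan is to work entirely on the level of root systems, with respect to a fixed adapted Cartan pair $(\vartheta,\mathfrak{h}_0)$, since all four subalgebras in \eqref{eq:kf} contain $\mathfrak{h}$ and are therefore direct sums of $\mathfrak{h}$ with root spaces. Write $\mathcal{Q}$ for the parabolic set of $\mathfrak{q}$, and recall from Theorem \ref{thm:ec} that the parabolic set $\mathcal{Q}_w$ of $\mathfrak{q}_w$ can be described concretely: choosing $A\in\mathfrak{h}_{\mathbb{R}}$ defining $\mathcal{Q}$ and a small $\epsilon>0$, we have $\mathcal{Q}_w=\{\alpha\mid\alpha(A+\epsilon\bar{A})\ge 0\}$. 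In particular $\mathcal{Q}_w^r=\mathcal{Q}_w\cap\bar{\mathcal{Q}}_w$ is cut out by the hyperplane $\{\alpha(A+\epsilon\bar A)=0\}$, and since $A+\epsilon\bar A$ is $\tau$-invariant (equivalently, $\bar A + \epsilon A$ lies on the same half-line up to the symmetry $A\leftrightarrow\bar A$ — one checks $\overline{A+\epsilon\bar A}=\bar A+\epsilon A$, whose facet is the conjugate facet), one gets $\mathcal{Q}_w^r=\{\alpha\mid\alpha(A+\epsilon\bar A)=0\}=\overline{\mathcal{Q}_w^r}$, which re-proves \eqref{eq:eh}. The element $\delta=\sum_{\alpha\in\mathcal{Q}^n\cap\bar{\mathcal{Q}}^n}\alpha$ of Definition \ref{dfn:ka} is $\tau$-invariant (the sum is over a $\tau^*$-stable set of roots, $\tau^*\alpha=-\bar\alpha$ — wait: $\mathcal{Q}^n\cap\bar{\mathcal{Q}}^n$ is stable under $\alpha\mapsto\bar\alpha$, hence under $\sigma^*$, so $\bar\delta=\delta$, i.e. $\delta\in\mathfrak{h}_0^{-*}$, the real form). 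By Lemma \ref{lem:kb}, $\mathfrak{q}_a$ is unchanged if we replace $\mathfrak{q}$ by $\mathfrak{q}_w$, since $\mathfrak{q}+\bar{\mathfrak{q}}=\mathfrak{q}_w+\bar{\mathfrak{q}}_w$ by \eqref{eq:ef}; so throughout we may and do assume $\mathfrak{q}=\mathfrak{q}_w$ is polarized, i.e. $\mathcal{Q}^r=\bar{\mathcal{Q}}^r$.

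For the first inclusion $\mathcal{Q}^r\subset\mathcal{Q}_a$: if $\alpha\in\mathcal{Q}^r=\bar{\mathcal{Q}}^r$ then $\mathfrak{g}^{\pm\alpha}\subset\mathfrak{q}\cap\bar{\mathfrak{q}}$, and I claim $(\delta\mid\alpha)=0$. Indeed, the reflection $s_\alpha$ preserves $\mathcal{Q}^r$ (it is the parabolic set of the reductive Levi, whose root system is $s_\alpha$-stable), and also preserves $\bar{\mathcal{Q}}^n=\{-\beta\mid\beta\in\mathcal{Q}^r{}^c\cdots\}$; more directly, $s_\alpha$ permutes $\mathcal{Q}^n$: for $\beta\in\mathcal{Q}^n$, $s_\alpha\beta=\beta-(\beta\mid\alpha^\vee)\alpha$ still has positive value on $A$ (since $\alpha(A)=0$), so lies in $\mathcal{Q}^n$; similarly $s_\alpha$ permutes $\bar{\mathcal{Q}}^n$ because $\bar\alpha\in\mathcal{Q}^r$ too and $\bar\alpha(A)=0$. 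Hence $s_\alpha$ permutes $\mathcal{Q}^n\cap\bar{\mathcal{Q}}^n$, so $s_\alpha\delta=\delta$, which forces $(\delta\mid\alpha)=0$ and a fortiori $\alpha\in\mathcal{Q}_a$; symmetrically $-\alpha\in\mathcal{Q}_a$. Thus $\mathfrak{q}^r=\mathfrak{q}_w^r\subset\mathfrak{q}_a$, giving $\mathfrak{q}_w\cap\bar{\mathfrak{q}}_w=\mathfrak{q}_w^r\subset\mathfrak{q}_a$.

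For the second inclusion $\mathcal{Q}_a\subset\mathcal{Q}\cup\bar{\mathcal{Q}}$ (which, together with $\mathfrak{h}\subset\mathfrak{q}_a$, is exactly $\mathfrak{q}_a\subset\mathfrak{q}+\bar{\mathfrak{q}}$): suppose $\alpha\in\mathcal{Q}_a\setminus(\mathcal{Q}\cup\bar{\mathcal{Q}})$, so $(\delta\mid\alpha)\ge 0$ but both $-\alpha$ and $-\bar\alpha$ lie in $\mathcal{Q}^n$; equivalently $\alpha\in(-\mathcal{Q}^n)\cap(-\bar{\mathcal{Q}}^n)$, i.e. $-\alpha\in\mathcal{Q}^n\cap\bar{\mathcal{Q}}^n$. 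Then I want to reach a contradiction with weak nondegeneracy. The key computation is that $(\delta\mid\,\cdot\,)$ is, up to positive scalar, the linear functional $A+\epsilon\bar A$ appearing in the polarized description — more precisely, I expect to show $\delta$ lies in the \emph{open} facet corresponding to $\mathcal{Q}$ (using that $\mathcal{Q}$ is polarized: then $\mathcal{Q}^n\cap\bar{\mathcal{Q}}^n=\mathcal{Q}^n$, and $\delta=\sum_{\alpha\in\mathcal{Q}^n}\alpha = 2\rho^{\mathcal{Q}^n}$ pairs strictly positively with every $\beta\in\mathcal{Q}^n$ and is zero on $\mathcal{Q}^r$ by the argument above). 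But then $-\alpha\in\mathcal{Q}^n$ gives $(\delta\mid-\alpha)>0$, i.e. $(\delta\mid\alpha)<0$, contradicting $\alpha\in\mathcal{Q}_a$. Hence $\mathcal{Q}_a\subset\mathcal{Q}\cup\bar{\mathcal{Q}}=\mathcal{Q}_w\cup\bar{\mathcal{Q}}_w$, as desired.

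The step I expect to be the real obstacle is the last one: the clean assertion "$(\delta\mid\beta)>0$ for all $\beta\in\mathcal{Q}^n$" is where one must be careful, because $\delta=\sum_{\gamma\in\mathcal{Q}^n}\gamma$ and it is not automatic that each pairing is positive — it is true because $\mathcal{Q}^n$ is the set of roots that are positive on the facet normal $A$, so $\delta$ is a nonnegative combination of roots all of which pair nonnegatively with $\beta$ except for a controlled set; one typically invokes that $\delta$ is $W(\mathcal{Q}^r)$-invariant and dominant, or argues via $\langle\delta,\beta^\vee\rangle = \#\{\gamma\in\mathcal{Q}^n:\gamma-\beta\notin\mathcal{Q}^n\cup\{0\}\}-\#\{\cdots\} + \cdots$ in the style of the classical computation of $2\rho$. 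This is exactly the point at which the hypothesis "weakly nondegenerate" is genuinely needed — via Lemma \ref{lem:kc} and \cite[Theorem 6.2]{MN05}, which supply the string of roots $\beta_1,\dots,\beta_k$ that rules out the remaining bad case where $\alpha\in\mathcal{Q}^r\cap\bar{\mathcal{Q}}^{-n}$ could otherwise slip into $\mathcal{Q}_a$. So I would organize the proof as: (i) reduce to $\mathfrak{q}$ polarized via Lemma \ref{lem:kb}; (ii) show $\delta$ kills $\mathcal{Q}^r$ by the $s_\alpha$-invariance argument, giving $\mathfrak{q}^r\subset\mathfrak{q}_a$; (iii) show $\delta$ is strictly positive on $\mathcal{Q}^n$, invoking Lemma \ref{lem:kc}/\cite[Theorem 6.2]{MN05} for the one delicate sub-case, giving $\mathcal{Q}_a\subset\mathcal{Q}\cup\bar{\mathcal{Q}}$.
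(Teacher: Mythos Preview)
Your reduction to the polarized case is the fatal step. You invoke Lemma~\ref{lem:kb} to replace $\mathfrak{q}$ by $\mathfrak{q}_w$, citing \eqref{eq:ef} for $\mathfrak{q}+\bar{\mathfrak{q}}=\mathfrak{q}_w+\bar{\mathfrak{q}}_w$. But \eqref{eq:ef} asserts equality of the \emph{intersections} $\mathfrak{q}_w\cap\bar{\mathfrak{q}}_w=\mathfrak{q}\cap\bar{\mathfrak{q}}$, not of the sums; in fact
\[
(\mathcal{Q}\cup\bar{\mathcal{Q}})\setminus(\mathcal{Q}_w\cup\bar{\mathcal{Q}}_w)
=(\mathcal{Q}^r\cap\bar{\mathcal{Q}}^{-n})\cup(\bar{\mathcal{Q}}^r\cap\mathcal{Q}^{-n}),
\]
which is nonempty whenever $\mathfrak{q}$ is not already polarized. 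So $(\mathfrak{q}_w)_a\neq\mathfrak{q}_a$ in general, and the reduction is illegitimate. Worse, after passing to $\mathfrak{q}_w$ the weak nondegeneracy hypothesis is destroyed: by Lemma~\ref{lem:ej} a polarized parabolic $CR$ algebra is either totally real or weakly degenerate, so you can no longer appeal to Lemma~\ref{lem:kc}. A further slip: even in the polarized case, ``$\mathcal{Q}^n\cap\bar{\mathcal{Q}}^n=\mathcal{Q}^n$'' is false (polarized means $\mathcal{Q}^r=\bar{\mathcal{Q}}^r$, not $\mathcal{Q}^n=\bar{\mathcal{Q}}^n$; any imaginary root in $\mathcal{Q}^n$ lies in $\bar{\mathcal{Q}}^{-n}$), so your computation of $\delta$ as $2\rho^{\mathcal{Q}^n}$ collapses.

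The paper does \emph{not} reduce to the polarized case. For the first inclusion it observes that $\mathfrak{q}\cap\bar{\mathfrak{q}}$ normalizes $\mathfrak{q}^n\cap\bar{\mathfrak{q}}^n$ and then quotes Wolf's result $\mathbf{N}_{\mathfrak{g}}(\mathfrak{q}^n\cap\bar{\mathfrak{q}}^n)\subset\mathfrak{q}_a$. For the second inclusion it first uses Wolf's $\mathfrak{q}_a\subset\mathfrak{q}+\bar{\mathfrak{q}}$ and the conjugation-invariance of $\mathfrak{q}_a$ to reduce to a putative $\alpha\in\mathcal{Q}_a\cap\mathcal{Q}$ with $\alpha\notin\mathcal{Q}_w\cup\bar{\mathcal{Q}}_w$; the partition above forces $\alpha\in\mathcal{Q}^r\cap\bar{\mathcal{Q}}^{-n}$. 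Then $-\alpha$ normalizes $\mathfrak{q}^n\cap\bar{\mathfrak{q}}^n$, so $\mathfrak{g}^{\pm\alpha}\subset\mathfrak{q}_a^r$, and Wolf's finer inclusions \eqref{eq:kh}--\eqref{eq:ki} give $\mathfrak{g}^{\pm\alpha}\subset\mathbf{N}^r_{\mathfrak{g}}(\mathfrak{q}^n\cap\bar{\mathfrak{q}}^n)$, which also normalizes $\mathfrak{q}+\bar{\mathfrak{q}}$. Now Lemma~\ref{lem:kc} (this is where weak nondegeneracy enters) forces $\alpha\in\mathcal{Q}^n$, a contradiction. The key idea you are missing is to work through the normalizer $\mathbf{N}_{\mathfrak{g}}(\mathfrak{q}^n\cap\bar{\mathfrak{q}}^n)$ and its Wolf decomposition rather than trying to compute $(\delta\mid\alpha)$ directly.
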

\begin{proof}
We keep the notation introduced in the proof of the previous lemma.
\par
Let us consider the normalizer of $\mathfrak{q}^n\cap\bar{\mathfrak{q}}^n$
in $\mathfrak{g}$:
  \begin{equation}
    \label{eq:kg}
\mathbf{N}_{\mathfrak{g}}(\mathfrak{q}^n\cap\bar{\mathfrak{q}}^n)
=\{Z\in\mathfrak{g}\mid [Z,\mathfrak{q}^n\cap\bar{\mathfrak{q}}^n]
\subset\mathfrak{q}^n\cap\bar{\mathfrak{q}}^n\}.
\end{equation}
The first inclusion in \eqref{eq:kf} is obvious, because 
$\mathfrak{q}_w\cap\bar{\mathfrak{q}}_w
=\mathfrak{q}\cap\bar{\mathfrak{q}}$, and
$\mathfrak{q}_a$ contains 
$\mathbf{N}_{\mathfrak{g}}(\mathfrak{q}^n\cap\bar{\mathfrak{q}}^n)\supset
\mathfrak{q}\cap\bar{\mathfrak{q}}$.
Since $\mathbf{N}_{\mathfrak{g}}(\mathfrak{q}^n\cap\bar{\mathfrak{q}}^n)$
is the direct sum of $\mathfrak{h}$ and of root spaces, we can decompose
$\mathbf{N}_{\mathfrak{g}}(\mathfrak{q}^n\cap\bar{\mathfrak{q}}^n)$
into the direct sum of its nilradical 
$\mathbf{N}_{\mathfrak{g}}^n(\mathfrak{q}^n\cap\bar{\mathfrak{q}}^n)$
and of a $\tau$-invariant reductive complement
$\mathbf{N}_{\mathfrak{g}}^r(\mathfrak{q}^n\cap\bar{\mathfrak{q}}^n)$ 
(recall that $\tau=\sigma\circ\vartheta$ is the conjugation with respect
to the compact form $\mathfrak{u}_0=\mathfrak{k}_0+i\mathfrak{p}_0$
of $\mathfrak{g}$). Wolf proved (see \cite[Theorem 8.5(2)]{Wolf69})
that
\begin{equation}
  \label{eq:kh}
\mathbf{N}_{\mathfrak{g}}^n(\mathfrak{q}^n\cap\bar{\mathfrak{q}}^n)
\subset\mathfrak{q}_a^n,\quad
\mathbf{N}_{\mathfrak{g}}^r(\mathfrak{q}^n\cap\bar{\mathfrak{q}}^n)
\subset\mathfrak{q}_a^r,  
\end{equation}
so that in particular
\begin{equation}
  \label{eq:ki}
\mathbf{N}_{\mathfrak{g}}^n(\mathfrak{q}^n\cap\bar{\mathfrak{q}}^n)=
 \mathbf{N}_{\mathfrak{g}}(\mathfrak{q}^n\cap\bar{\mathfrak{q}}^n) \cap
\mathfrak{q}_a^n,\quad
\mathbf{N}_{\mathfrak{g}}^r(\mathfrak{q}^n\cap\bar{\mathfrak{q}}^n)=
 \mathbf{N}_{\mathfrak{g}}(\mathfrak{q}^n\cap\bar{\mathfrak{q}}^n) \cap
\mathfrak{q}_a^r.
\end{equation}
To complete the proof of \eqref{eq:kf},
it suffices to show that
$\mathcal{Q}_a\subset(\mathcal{Q}_w\cup\bar{\mathcal{Q}}_w)$.
Since (see \cite[Theorem 8.5(2)]{Wolf69}) 
$\mathfrak{q}_a$
is contained in 
$\mathfrak{q}+\bar{\mathfrak{q}}$ and is invariant by conjugation,
we can assume, by
substituting, if needed, $\bar\alpha$ to $\alpha$, 
that $\alpha\in\mathcal{Q}$. 
Assume by contradiction that
$\alpha\notin\mathcal{Q}_w\cup\bar{\mathcal{Q}}_w$.
Since we have the partition
\begin{equation*}
  \mathcal{R}\setminus\big(\mathcal{Q}_w\cup\bar{\mathcal{Q}}_w\big)=
\big(\mathcal{Q}^r\cap\bar{\mathcal{Q}}^{-n}\big)\cup
\big(\bar{\mathcal{Q}}^r\cap{\mathcal{Q}}^{-n}\big)\cup
\big({\mathcal{Q}}^{-n}\cap\bar{\mathcal{Q}}^{-n}\big),
\end{equation*}
we obtain that
$\alpha\in\mathcal{Q}\setminus\big(\mathcal{Q}_w\cup\bar{\mathcal{Q}}_w\big)
=\mathcal{Q}^r\cap\bar{\mathcal{Q}}^{-n}$.
This implies that
$\mathfrak{g}^{-\alpha}$ is contained in 
$\mathbf{N}_{\mathfrak{g}}(\mathfrak{q}^n\cap\bar{\mathfrak{q}}^n)
\subset\mathfrak{q}_a$. Then $\mathfrak{g}^{\pm\alpha}\subset\mathfrak{q}^r_a$,
and hence, by \eqref{eq:ki}, 
$\mathfrak{g}^{\pm\alpha}\subset
\mathbf{N}_{\mathfrak{g}}^r(\mathfrak{q}^n\cap\bar{\mathfrak{q}}^n)$.
But $\mathbf{N}_{\mathfrak{g}}^r(\mathfrak{q}^n\cap\bar{\mathfrak{q}}^n)$
also normalizes $(\mathfrak{q}^{-n}\cap\bar{\mathfrak{q}}^{-n})$,
and hence $(\mathfrak{q}+\bar{\mathfrak{q}})$.
By Lemma \ref{lem:kc} this implies that 
${\alpha}\in\mathcal{Q}^n\subset\mathcal{Q}_w$, yielding a contradiction.
\end{proof}  
\begin{exam}\label{exam:kf}
  Denote by $\mathcal{F}^7_{d_1,d_2,\hdots,d_r}$ the complex manifold
  consisting of the flags $(\ell_1,\ell_2,\hdots,\ell_r)$ where
  $\ell_j$ is a $d_j$-dimensional linear complex subspace of
  $\mathbb{C}^7$ and
  $\ell_1\subsetneq\ell_2\subsetneq\cdots\subsetneq\ell_r$.\par
  Let $(\mathrm{e}_1,\ldots,\mathrm{e}_7)$ be a basis of the real form
  $\mathbb{R}^7$ of $\mathbb{C}^7$, and
  $(\epsilon_1,\ldots,\epsilon_7)$ the basis of $\mathbb{C}^7$ given
  by
  \begin{equation*}
    \begin{gathered}
      \epsilon_1=\mathrm{e}_1+i\mathrm{e}_7,\quad
      \epsilon_2=\mathrm{e}_2,\quad
      \epsilon_3=\mathrm{e}_3+i\mathrm{e}_6,\quad
      \epsilon_4=\mathrm{e}_4,\\
      \epsilon_5=\mathrm{e}_5,\quad
      \epsilon_6=\mathrm{e}_3-i\mathrm{e}_6,\quad
      \epsilon_7=\mathrm{e}_1-i\mathrm{e}_7.
    \end{gathered}
  \end{equation*}
Let $\mathbf{Q}$ be the Borel subgroup of 
$\mathbf{G}=\mathbf{SL}(7,\mathbb{C})$ that stabilizes the complete
  flag
  \begin{equation*}
    \varepsilon=\big(\langle\epsilon_1\rangle_{\mathbb{C}},
    \langle\epsilon_1,\epsilon_2\rangle_{\mathbb{C}}, \ldots,
    \langle\epsilon_1,\ldots,\epsilon_6\rangle_{\mathbb{C}}\big)\in
    \mathcal{F}^7_{1,2,3,4,5,6}.
  \end{equation*}
Set $\mathbf{G}_0=\mathbf{SL}(7,\mathbb{R})$, and
consider the orbit
  $M=\mathbf{G}_0\cdot\varepsilon\subset\mathcal{F}^7_{1,2,3,4,5,6} $, 
  with associated $CR$ algebra
  $(\mathfrak{g}_0,\mathfrak{q})$, where
  $\mathfrak{g}_0=\mathfrak{sl}(7,\mathbb{R})$ and $\mathfrak{q}$ is
  the Lie algebra of $\mathbf{Q}$.  The weakly nondegenerate reduction
  of $M$ is the $\mathbf{G}_0$-orbit
  $M^{(0)}\subset\mathcal{F}^7_{2,4}$ through the flag
  \begin{equation*}
    \varepsilon^0=\big(
    \langle\epsilon_1,\epsilon_2\rangle_{\mathbb{C}},
    \langle\epsilon_1,\epsilon_2,\epsilon_3,\epsilon_4\rangle_{\mathbb{C}}
    \big)\in\mathcal{F}^7_{2,4}.
  \end{equation*}
  \par
  Continuing with the construction of \S~\ref{sec:h} we obtain that
  $M^{(1)}\subset\mathcal{F}^7_{1,3,5,6}$ is the $\mathbf{G}_0$-orbit
  through the flag
  \begin{equation*}
    \varepsilon^1=\big(\langle\epsilon_2\rangle_{\mathbb{C}},
    \langle\epsilon_2,\epsilon_1,\epsilon_4\rangle_{\mathbb{C}}, \langle
    \epsilon_2, \epsilon_1, \epsilon_4, \epsilon_3, \epsilon_7
    \rangle_{\mathbb{C}}, \langle \epsilon_2, \epsilon_1, \epsilon_4,
    \epsilon_3, \epsilon_7,\epsilon_6 \rangle_{\mathbb{C}} \big)
    \in\mathcal{F}^7_{1,3,5,6},
  \end{equation*}
  and $M^{(2)}\subset\mathcal{F}^7_{1,2,4,6} $ is the
  $\mathbf{G}_0$-orbit through the flag
  \begin{equation*}
    \varepsilon^{2}=\big(\langle\epsilon_2\rangle_{\mathbb{C}},
    \langle\epsilon_2,\epsilon_4\rangle_{\mathbb{C}}, \langle
    \epsilon_2, \epsilon_4, \epsilon_1, \epsilon_7
    \rangle_{\mathbb{C}}, \langle \epsilon_2, \epsilon_4, \epsilon_1,
    \epsilon_7, \epsilon_3,\epsilon_6 \rangle_{\mathbb{C}} \big)
    \in\mathcal{F}^7_{1,2,4,6}. 
  \end{equation*}
  Being totally real, $M^{(2)}$ coincides with
  $M_{\mathfrak{e}}$.\par
  Denote by $\lambda^{\! *}:\mathcal{R}\to\mathcal{R}^{\mathfrak{e}}$ the
  Cayley transform with respect to the roots $\epsilon_1-\epsilon_7$
  and $\epsilon_3-\epsilon_6$ According to \eqref{eq:ik},
\eqref{eq:il},
  the fundamental group of $M_{\mathfrak{e}}$ is generated by
  $\xi_1=\xi_{\lambda^{\! *}(\epsilon_2-\epsilon_4)}$,
  $\xi_2=\xi_{\lambda^{\! *}(\epsilon_4-\epsilon_1)}$,
  $\xi_4=\xi_{\lambda^{\! *}(\epsilon_7-\epsilon_3)}$ and
  $\xi_6=\xi_{\lambda^{\! *}(\epsilon_6-\epsilon_5)}$, with relations
  $\xi_i^2=1$ and $\xi_i \xi_j=\xi_j \xi_i$ for $i,j=1,2,4,6$.\par
  The semisimple part $\mathfrak{s}_0^{\mathfrak{e}}$ of the isotropy
  subgroup of $M_{\mathfrak{e}}$ is the subgroup corresponding to the
  root subsystem $\{\pm(\epsilon_1-\epsilon_7),
  \pm(\epsilon_3-\epsilon_6)\}$, the subset $E$ in Remark~\ref{rmk:ih}
  is $\{\epsilon_1-\epsilon_7, \epsilon_3-\epsilon_6\}$ and the map $f:
  \mathbf{W}(E)\to\:\Lambda^{*}/2\varpi(\Lambda)$ is a bijection. By
  Theorem~\ref{thm:ig} the fundamental group of $M$ is generated by
  $\xi_1$ and $\xi_2\xi_4\xi_6$ and is isomorphic to
  $\mathbb{Z}_2^2$.\par 
The space of algebraic arc components of $M$ is the
  $\mathbf{G}_0$-orbit $M_a\subset\mathcal{F}^7_{1,4,6}$ through the
  flag
  \begin{equation*}
    \varepsilon^a=\big(\langle\epsilon_2\rangle_{\mathbb{C}},
    \langle \epsilon_2, \epsilon_4, \epsilon_1, \epsilon_7
    \rangle_{\mathbb{C}}, \langle \epsilon_2, \epsilon_4, \epsilon_1,
    \epsilon_7, \epsilon_3,\epsilon_6 \rangle_{\mathbb{C}} \big)
    \in\mathcal{F}^7_{1,4,6}, 
  \end{equation*}
  showing that in this case
  $\mathfrak{e}\subsetneq\mathfrak{q}_a$. Furthermore, the algebraic
  arc components are not simply connected, and their fundamental group
  is isomorphic to $\mathbb{Z}_2$.
\end{exam}
\begin{exam}\label{exam:kg}
We keep the notation of the previous example. We consider now the
basis $(w_1,\hdots,w_7)$ of $\mathbb{C}^7$ given by
\begin{equation*}
  \begin{gathered}
    w_1=e_1+ie_7,\quad w_2=e_2,\quad w_3=e_3, \quad w_4=e_4\\
w_5=e_5,\quad w_6=e_6,\quad w_7=e_1-ie_7.
  \end{gathered}
\end{equation*}
With $\mathbf{G}_0=\mathbf{SL}(7,\mathbb{R})$, we consider the
$\mathbf{G}_0$-orbit $M\subset\mathcal{F}^7_{2,3}$ through the flag
\begin{equation*}
  \omega=\big(\langle{w_1,w_2}\rangle,\langle{w_1,w_2,w_3}\rangle\big)
\in \mathcal{F}^7_{2,3}.
\end{equation*}
Then $M_{\mathfrak{e}}=M^{(1)}\subset\mathcal{F}^7_{1,4}$ is the
$\mathbf{G}_0$-orbit through the flag
\begin{equation*}
  \omega^{(1)}=\big(\langle{w_2}\rangle,\langle{w_1,w_2,w_3,w_7}\rangle
\big) 
\in\mathcal{F}^7_{1,4}.
\end{equation*}
On the other hand, the space $M_a$ of the algebraic arc
components of $M$ is the $\mathbf{G}_0$-orbit 
$M_a\subset\mathcal{F}^7_{1,2,4}$ through the flag
\begin{equation*}
  \omega^{(a)}=\big(\langle{w_2}\rangle,\langle{w_2,w_3}\rangle,
\langle{w_1,w_2,w_3,w_7}\rangle\big)
\in\mathcal{F}^7_{1,2,4}.
\end{equation*}
Thus we obtain in this case that $\mathfrak{q}_a\subsetneq\mathfrak{e}$.
\end{exam}
\begin{prop}
  \label{prop:kg}
  Let $(\mathfrak{g}_0,\mathfrak{q})$ be a parabolic $CR$ algebra,
$(\vartheta,\mathfrak{h}_0)$ an adapted Cartan pair, $\mathcal{R}$
the root system of $\mathfrak{g}$ with respect to the complexification
of $\mathfrak{h}_0$, and $\mathcal{Q}$ the parabolic set of 
$\mathfrak{q}$. 
Assume that
there is an $S$-fit Weyl chamber in $\mathfrak{C}(\mathcal{R},\mathcal{Q})$
with
\begin{equation}
  \label{eq:kj}
  \bar\alpha\succ 0 \quad\text{if}\quad\alpha\in\mathcal{R}_{\mathrm{cpx}}
\quad\text{and}\quad\alpha\succ 0.
\end{equation}
Then:
\begin{align}
\label{eq:km}
&\mathfrak{q}_a=\mathfrak{e},\\
  \label{eq:kk}
&\text{$\mathfrak{q}_w+\bar{\mathfrak{q}}_w$ is a Lie subalgebra of
$\mathfrak{g}$,}\\
\label{eq:kl}
&
\text{if moreover $(\mathfrak{g}_0,\mathfrak{q})$ is weakly nondegenerate, then
$\mathfrak{e}=\mathfrak{q}_w+\bar{\mathfrak{q}}_w$.}
\end{align}\par
In particular \eqref{eq:km} holds true when 
$(\mathfrak{g}_0,\mathfrak{q})$ is the parabolic $CR$ algebra
  corresponding to a closed $\mathbf{G}_0$-orbit. \end{prop}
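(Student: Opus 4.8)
The plan is to exploit hypothesis \eqref{eq:kj} in order to identify $\mathfrak{q}_w+\bar{\mathfrak{q}}_w$ with an honest parabolic subalgebra, and then to read off $\mathfrak{e}$ and $\mathfrak{q}_a$ from it. First I would fix the $S$-fit chamber $C$ satisfying \eqref{eq:kj} (equivalently, the basis $\mathcal{B}=\mathcal{B}(C)$ underlies a Satake diagram), choose $A$ in the facet of $\mathfrak{q}$ so that $\mathcal{Q}=\{\alpha\in\mathcal{R}\mid\alpha(A)\geq 0\}$, and set $\bar A=\sigma(A)\in\mathfrak{h}_{\mathbb{R}}$, so that $\bar\alpha(A)=\alpha(\bar A)$ for every $\alpha$. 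The key elementary observation is that $B:=A+\bar A\in\bar C$: for a $C$-positive complex $\alpha$ we have $\bar\alpha\succ 0$ by \eqref{eq:kj}, hence $\alpha(A)\geq 0$ and $\alpha(\bar A)=\bar\alpha(A)\geq 0$; for $C$-positive imaginary $\alpha$, $\alpha(\bar A)=-\alpha(A)$, so $\alpha(B)=0$; for $C$-positive real $\alpha$, $\alpha(B)=2\alpha(A)\geq 0$. Thus $B$ defines a parabolic set $\mathcal{P}=\{\alpha\mid\alpha(B)\geq 0\}\supseteq\mathcal{R}^+(C)$, and since $\sigma(B)=B$ one gets $\bar\alpha(B)=\alpha(B)$, so $\mathcal{P}$ is $\sigma^{*}$-invariant and the corresponding parabolic subalgebra $\mathfrak{p}$ satisfies $\bar{\mathfrak{p}}=\mathfrak{p}$, i.e. $(\mathfrak{g}_0,\mathfrak{p})$ is totally real.

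Next I would prove \eqref{eq:kk} by showing $\mathcal{Q}_w\cup\bar{\mathcal{Q}}_w=\mathcal{P}$, which gives $\mathfrak{q}_w+\bar{\mathfrak{q}}_w=\mathfrak{p}$, a parabolic (in particular a Lie) subalgebra, and totally real. Recall from the proof of Theorem~\ref{thm:ec} that for $\epsilon>0$ small $\mathcal{Q}_w=\{\alpha\mid\alpha(A+\epsilon\bar A)\geq 0\}$, hence $\bar{\mathcal{Q}}_w=\{\alpha\mid\alpha(\bar A+\epsilon A)\geq 0\}$; as above $A+\epsilon\bar A$ and $\bar A+\epsilon A$ also lie in $\bar C$. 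If $\alpha(B)\geq 0$ but $\alpha(A+\epsilon\bar A)<0$ and $\alpha(\bar A+\epsilon A)<0$, adding gives $(1+\epsilon)\alpha(B)<0$, a contradiction, so $\mathcal{P}\subseteq\mathcal{Q}_w\cup\bar{\mathcal{Q}}_w$. Conversely a $C$-positive $\alpha\in\mathcal{Q}_w$ has $\alpha(B)\geq 0$ since $B\in\bar C$; and a $C$-negative $\alpha\in\mathcal{Q}_w$ must lie in $\mathcal{Q}_w^{r}$ (the nilradical roots being $C$-positive), and since $(\mathfrak{g}_0,\mathfrak{q}_w)$ is polarized (Remark~\ref{rmk:ee}) also $\bar\alpha\in\mathcal{Q}_w^{r}$, so $\alpha(A+\epsilon\bar A)=0=\alpha(\bar A+\epsilon A)$, whence $\alpha(A)=\alpha(\bar A)=0$ and $\alpha\in\mathcal{P}^{r}$. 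Using $\sigma^{*}$-invariance of $\mathcal{P}$ this yields $\mathcal{Q}_w\cup\bar{\mathcal{Q}}_w\subseteq\mathcal{P}$, proving \eqref{eq:kk} and that $\mathfrak{q}_w+\bar{\mathfrak{q}}_w$ is totally real.

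For \eqref{eq:kl} I would assume $(\mathfrak{g}_0,\mathfrak{q})$ weakly nondegenerate, so that $\mathfrak{q}^{(0)}=\mathfrak{q}$. By the previous step $\mathfrak{q}_w+\bar{\mathfrak{q}}_w=\mathfrak{p}$ is a complex subalgebra with $\mathfrak{q}_w\subseteq\mathfrak{p}\subseteq\mathfrak{q}_w+\bar{\mathfrak{q}}_w$; being the top of this interval it is the unique maximal such subalgebra, so the weakly nondegenerate reduction of $(\mathfrak{g}_0,\mathfrak{q}_w)$ is $(\mathfrak{g}_0,\mathfrak{p})$, i.e. $\mathfrak{q}^{(1)}=\mathfrak{p}$; as $\mathfrak{p}=\bar{\mathfrak{p}}$ one has $\mathfrak{p}_w=\mathfrak{p}$ and $\mathfrak{p}+\bar{\mathfrak{p}}=\mathfrak{p}$, so the process \eqref{eq:hc} is stationary from step $1$ on and $\mathfrak{e}=\mathfrak{p}=\mathfrak{q}_w+\bar{\mathfrak{q}}_w$. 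For \eqref{eq:km}, by Lemma~\ref{lem:kb} and the definition of the real core, replacing $\mathfrak{q}$ by its weakly nondegenerate reduction (which still admits $C$ as an $S$-fit chamber satisfying \eqref{eq:kj}, a condition on $\mathcal{R}$ alone) changes neither $\mathfrak{q}_a$ nor $\mathfrak{e}$, so we may assume $(\mathfrak{g}_0,\mathfrak{q})$ weakly nondegenerate; then $\mathfrak{e}=\mathfrak{p}$ and Lemma~\ref{lem:kd} gives $\mathfrak{q}_a\subseteq\mathfrak{q}_w+\bar{\mathfrak{q}}_w=\mathfrak{e}$. For the reverse inclusion I would show $\mathcal{Q}^n\cap\bar{\mathcal{Q}}^n=\mathcal{P}^n$, the nilradical of $\mathfrak{p}$: the inclusion $\subseteq$ is immediate since $\alpha\in\mathcal{Q}^n\cap\bar{\mathcal{Q}}^n$ forces $\alpha(A)>0$ and $\alpha(\bar A)=\bar\alpha(A)>0$, hence $\alpha(B)>0$. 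Once this equality is established, the vector $\delta$ of \eqref{eq:ka} is the sum of the roots in the nilradical of the parabolic $\mathfrak{p}$, which is dominant with stabilizer exactly the Levi of $\mathfrak{p}$; hence $\mathcal{Q}_a=\{\alpha\mid(\delta|\alpha)\geq 0\}=\mathcal{P}$ and $\mathfrak{q}_a=\mathfrak{p}=\mathfrak{e}$, which is \eqref{eq:km}. Finally, a closed $\mathbf{G}_0$-orbit has isotropy containing a maximally noncompact Cartan subalgebra of $\mathfrak{g}_0$ and, by the Satake-diagram description of \cite{AMN06}, admits a fit Weyl chamber satisfying \eqref{eq:kj}, so \eqref{eq:km} applies in that case.

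The main obstacle I expect is the inclusion $\mathcal{P}^n\subseteq\mathcal{Q}^n\cap\bar{\mathcal{Q}}^n$ in \eqref{eq:km}. Using \eqref{eq:kj} one reduces it to excluding roots $\alpha\in\mathcal{Q}^{r}$ with $\bar\alpha\in\mathcal{Q}^{n}$; for such an $\alpha$ one has $-\alpha\in\mathcal{Q}\setminus\bar{\mathcal{Q}}$, so $\mathfrak{g}^{-\alpha}$ lies in the subspace $\mathfrak{x}$ of Lemma~\ref{lem:kc}, and the crux is to verify that $[\mathfrak{g}^{-\alpha},\bar{\mathfrak{q}}]\subseteq\mathfrak{q}+\bar{\mathfrak{q}}$, so that weak nondegeneracy (Lemma~\ref{lem:kc}) forces $\mathfrak{g}^{-\alpha}\subseteq\mathfrak{q}^{n}$, contradicting $\alpha\in\mathcal{Q}^{r}$. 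Checking that bracket relation requires a careful bookkeeping of supports against the Satake data, and this is the step I expect to demand the most work.
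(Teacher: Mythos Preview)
Your treatment of \eqref{eq:kk} and \eqref{eq:kl} via the element $B=A+\bar A$ is correct and is in fact a pleasant alternative to the paper's direct bracket computation: identifying $\mathfrak{q}_w+\bar{\mathfrak{q}}_w$ with the parabolic $\mathfrak{p}$ attached to $B$ gives \eqref{eq:kk} and total reality in one stroke, and \eqref{eq:kl} then follows exactly as you say. The inclusion $\mathfrak{q}_a\subset\mathfrak{e}$ via Lemma~\ref{lem:kd} is also the paper's argument.

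The gap is in your reverse inclusion $\mathfrak{e}\subset\mathfrak{q}_a$. The intermediate equality $\mathcal{Q}^n\cap\bar{\mathcal{Q}}^n=\mathcal{P}^n$ that you plan to prove is \emph{false} in general, even under \eqref{eq:kj} and weak nondegeneracy. Take $\mathfrak{g}_0=\mathfrak{su}(2,1)$ with a maximally noncompact Cartan subalgebra: the two simple roots $\alpha_1,\alpha_2$ are complex with $\bar\alpha_1=\alpha_2$, and $\alpha_1+\alpha_2$ is real. With $\Phi=\{\alpha_2\}$ one has $\alpha_1\in\mathcal{Q}^r$ and $\bar\alpha_1=\alpha_2\in\mathcal{Q}^n$; condition \eqref{eq:kj} holds, and $(\mathfrak{g}_0,\mathfrak{q}_\Phi)$ is weakly nondegenerate (the only strictly larger parabolic is $\mathfrak{g}$, which is not contained in $\mathfrak{q}+\bar{\mathfrak{q}}$). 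Here $\mathcal{Q}^n\cap\bar{\mathcal{Q}}^n=\{\alpha_1+\alpha_2\}$, while $\mathcal{P}^n=\mathcal{R}^+$. Your proposed use of Lemma~\ref{lem:kc} cannot rescue this: with $\alpha=\alpha_1$ and $\beta=-\alpha_2\in\bar{\mathcal{Q}}\setminus\mathcal{Q}$, the root $-\alpha+\beta=-(\alpha_1+\alpha_2)$ lies outside $\mathcal{Q}\cup\bar{\mathcal{Q}}$, so the bracket hypothesis of Lemma~\ref{lem:kc} fails. (Note that in this example one still has $\mathfrak{q}_a=\mathfrak{p}$, since $\delta=\alpha_1+\alpha_2$ is regular dominant; the conclusion is true, only your route to it breaks down.)

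The paper bypasses this by not aiming at $\mathcal{Q}^n\cap\bar{\mathcal{Q}}^n=\mathcal{P}^n$ at all. Instead it shows directly that
\[
\mathfrak{e}\;=\;\mathfrak{q}^n+\bar{\mathfrak{q}}^n+(\mathfrak{q}\cap\bar{\mathfrak{q}})
\;\subset\;
\mathbf{N}_{\mathfrak{g}}(\mathfrak{q}^n\cap\bar{\mathfrak{q}}^n)
\;+\;\sum_{\alpha\in\mathcal{R}_{\mathrm{im}}}\mathfrak{g}^{\alpha},
\]
the point being that if $\mathfrak{g}^\alpha\subset\mathfrak{e}$ but $\mathfrak{g}^\alpha\not\subset\mathbf{N}_{\mathfrak{g}}(\mathfrak{q}^n\cap\bar{\mathfrak{q}}^n)$ then $\alpha\in\mathcal{Q}^n\setminus\bar{\mathcal{Q}}$ (or its conjugate), and under \eqref{eq:kj} such $\alpha$ must be imaginary. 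One then concludes using Wolf's inclusion $\mathbf{N}_{\mathfrak{g}}(\mathfrak{q}^n\cap\bar{\mathfrak{q}}^n)\subset\mathfrak{q}_a$ and the observation that $\delta=\bar\delta$ forces $(\delta\mid\alpha)=0$ for every imaginary $\alpha$, so all imaginary root spaces lie in $\mathfrak{q}_a^r$. This argument is short and avoids the obstruction you anticipated.
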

\begin{proof}
Fix an adapted Cartan pair $(\vartheta,\mathfrak{h}_0)$ 
  and choose an $S$-fit Weyl
  chamber $C$ for $(\mathfrak{g}_0,\mathfrak{q})$,
such that \eqref{eq:kj} is satisfied.  We have
  $\mathfrak{q}_w+\bar{\mathfrak{q}}_w=
  \mathfrak{q}^n+\bar{\mathfrak{q}}^n +
  (\mathfrak{q}\cap\bar{\mathfrak{q}})$. 
Hence, to prove \eqref{eq:kk},
we only need to show that, if a root $\gamma$ is the sum
$\gamma=\alpha+\beta$ of a root 
  $\alpha\in\mathcal{Q}^n$ and a root 
  $\beta\in\bar{\mathcal{Q}}^n\setminus\mathcal{Q}$, then
  $\gamma\in\mathcal{Q}_w\cup\bar{\mathcal{Q}}_w$.  Because
of \eqref{eq:kj}, $\beta$ is a negative imaginary root.
If also $\alpha$ is imaginary,
  then $\gamma$ is imaginary and hence belongs to
  $\mathcal{Q}_w\cup\bar{\mathcal{Q}}_w$. If $\alpha$ is not
  imaginary, then $\gamma$ is positive and not imaginary, and
  thus belongs to $\mathcal{Q}\cap\bar{\mathcal{Q}}$. \par
Statement \eqref{eq:kl} follows from \eqref{eq:kk}.\par
While proving \eqref{eq:km}, 
  by Lemma~\ref{lem:kb} we can assume that
  $(\mathfrak{g}_0,\mathfrak{q})=(\mathfrak{g}_0,\mathfrak{q}^{(0)})$
  is weakly nondegenerate. 
Since by \eqref{eq:kl} we have in this case
$\mathfrak{q}_w+\bar{\mathfrak{q}}_w=\mathfrak{q}^{(1)}=\mathfrak{e}$,
the inclusion $\mathfrak{q}_a\subset\mathfrak{e}$
follows by  Lemma~\ref{lem:kd}.\par
We claim that
$\mathfrak{e} \subset
\mathbf{N}_{\mathfrak{g}}(\mathfrak{q}^n\cap\bar{\mathfrak{q}}^n) +
\sum_{\alpha\in\mathcal{R}_{\mathrm{im}}} \mathfrak{g}^{\alpha}$. Indeed
let $\alpha\in\mathcal{R}$ be a root for which
$\mathfrak{g}^{\alpha}\subset\mathfrak{e}$ but $\mathfrak{g}^{\alpha}
\not\subset
\mathbf{N}_{\mathfrak{g}}(\mathfrak{q}^n\cap\bar{\mathfrak{q}}^n)$. 
Since $\mathfrak{e}=\mathfrak{q}^n+\bar{\mathfrak{q}}^n+(\mathfrak{q}\cap
\bar{\mathfrak{q}})$, 
then either $\alpha\in\mathcal{Q}^n\setminus\bar{\mathcal{Q}}$,
or $\alpha\in\bar{\mathcal{Q}}^n\setminus{\mathcal{Q}}$.
By \eqref{eq:kj}, in both cases we have
$\alpha\in\mathcal{R}_{\mathrm{im}}$. Since
$\mathbf{N}_{\mathfrak{g}}(\mathfrak{q}^n\cap\bar{\mathfrak{q}}^n)
\subset \mathfrak{q}_a$ by \cite[Theorem~8.5(2)]{Wolf69}, and
$\sum_{\alpha\in\mathcal{R}_{\mathrm{im}}} \mathfrak{g}^{\alpha}
\subset \mathfrak{q}_a$ because $\mathfrak{q}_a$ is the
complexification of a real parabolic subalgebra of $\mathfrak{g}_0$,
we obtain that $\mathfrak{e}\subset\mathfrak{q}_a$.\par
The last statement is a consequence of the fact (see e.g. \cite{AMN08})
that for a closed orbit there are $S$-fit Weyl chambers for which
\eqref{eq:kj} holds true.
\end{proof}
\bibliographystyle{amsplain}
\renewcommand{\MR}[1]{}
\providecommand{\bysame}{\leavevmode\hbox to3em{\hrulefill}\thinspace}
\providecommand{\MR}{\relax\ifhmode\unskip\space\fi MR }
\providecommand{\MRhref}[2]{%
  \href{http://www.ams.org/mathscinet-getitem?mr=#1}{#2}
}
\providecommand{\href}[2]{#2}

\end{document}